\theoremstyle{plain} 
\newtheorem{thm}{Theorem}[section]
\newcolumntype{P}[1]{>{\raggedright\arraybackslash}p{\dimexpr#1\linewidth-2\tabcolsep}}
\newtheorem{lem}[thm]{Lemma}
\newtheorem{cor}[thm]{Corollary}
\theoremstyle{definition}
\newtheorem{dfn}[thm]{Definition}
\newtheorem{exam}[thm]{Example}
\newtheorem{rem}[thm]{Remark}
\def\L{\mathcal{L}}
\def\LJ{\mathbf{LJ}}
\newcommand{\calF}{\mathcal{F}}
\newcommand{\calM}{\mathcal{M}}
\def\phi{\varphi}
\newcommand{\G}{G}
\begin{document}

\title{Universal Proof Theory: Feasible Admissibility in Intuitionistic Modal Logics} 
\author{
  Amirhossein Akbar Tabatabai\footnote{Support by the FWF project P 33548 is gratefully acknowledged.}\\
  \small{University of Groningen}
  \and
  Raheleh Jalali\footnote{Support by the Netherlands Organisation for Scientific Research under grant 639.073.807
is gratefully acknowledged.}\\
  \small{Utrecht University}
}

\date{\today}
\maketitle

\begin{abstract}
In this paper, we introduce a general family of sequent-style calculi over the modal language and its fragments to capture the essence of all constructively acceptable systems. Calling these calculi \emph{constructive}, we show that any strong enough constructive sequent calculus, satisfying a mild technical condition, feasibly admits all Visser's rules, i.e., there is a polynomial time algorithm that reads a proof of the premise of a Visser's rule and provides a proof for its conclusion. As a positive application, we show the feasible admissibility of Visser's rules in several sequent calculi for intuitionistic modal logics, including $\mathsf{CK}$, $\mathsf{IK}$ and their extensions by the modal axioms $T$, $B$, $4$, $5$, the modal axioms of bounded width and depth and the propositional lax logic. On the negative side, we show that if a strong enough intuitionistic modal logic (satisfying a mild technical condition) does not admit at least one of Visser's rules, then it cannot have a constructive sequent calculus. Consequently, no intermediate logic other than $\mathsf{IPC}$ has a constructive sequent calculus.\\

\noindent \emph{Keywords:} admissible rules, feasible disjunction property, intuitionistic modal logics
\end{abstract}

\newpage

\tableofcontents

\newpage

\section{Introduction} 
Universal proof theory \cite{Tab1,Tab} is a recent research project whose aim is to investigate the \emph{generic} behavior of proof systems, considered as independent mathematical objects standing on their own feet. It opposes the usual attitude in proof theory that sees a proof system as an auxiliary object by which we investigate the logical system it captures. Similar to any other field in mathematics (e.g. group theory), the goal in universal proof theory is to provide a \emph{classification} of proof systems of a given form up to a given equivalence. For that purpose, one must first address the following two problems: first, the \emph{existence problem} that investigates the existence of the proof systems of a given form and second, the \emph{equivalence problem}, focusing on the natural equivalence relations between the systems. 
So far, the main focus of the project has been on its first and most accessible problem, i.e., the existence problem. To attack this problem, as it is usual in mathematics, one must invoke the \emph{method of invariants}. Roughly speaking, the main idea is to show that the special form of a proof system for a logic $L$ implies a pure logical property for $L$, depending only on $L$ and not its proof systems. Therefore, the lack of the property can be used as a method to show the non-existence of the proof system.
As the first instance of implementing this method, Iemhoff  \cite{Iemhoff,Iemhoff1} and then the authors \cite{Tab1,Tab} studied the relationship between a certain general form for the rules in a sequent calculus and different flavours of the interpolation property for the corresponding logic.
In this paper, we continue this type of study by employing the \emph{admissibility of Visser's rules} as the logical property. To explain how, 
let us first provide a historical context for the admissible rules in general, Visser's rules in particular, and the corresponding complexity-theoretic issues.


A rule is called \emph{admissible} in a logic $L$ if the set of theorems of $L$ is closed under that rule. Admissible rules have been studied from various perspectives including decidability, decision and proof complexity, and their explicit bases. As an early instance of such an interest, the decidability problem for the admissible rules of the intuitionistic propositional logic, $\mathsf{IPC}$, was posed by Friedman in \cite{Friedman} and answered positively in a series of works by Rybakov gathered in \cite{Rybakov}. He showed that admissibility is decidable in many intermediate logics and various modal logics extending $\mathsf{K4}$. Later, Ghilardi \cite{Ghil1, Ghil2} provided a characterization for the admissible rules in some modal and intermediate logics using projective formulas. Building on his results, Iemhoff \cite{IemhoffAd} gave an explicit basis for $\mathsf{IPC}$ and some other intermediate logics \cite{ IemhoffAd1, IemhoffAd2}. The basis consists of what is called Visser's rules. Later,
Je\v{r}\'{a}bek employed the techniques used for $\mathsf{IPC}$ to provide a basis for a series of normal modal logics extending $\mathsf{K4}$ \cite{EmilAd}, and investigated the admissible rules of Lukasiewicz logic \cite{EmilLukasad,EmilLukasad2}. Recently, van der Giessen 
studied the admissible rules of some intuitionistic modal logics and provided a basis for them \cite{Iris}. 

Admissible rules are also interesting from a complexity-theoretic lens. Two instances of such interests are the decision and the proof complexity of the admissible rules. As for the decision complexity, Je\v{r}\'{a}ebk \cite{EmilComplexity} showed that even in basic transitive modal logics such as $\mathsf{K4, S4, GL}$, and $\mathsf{Grz}$, the decision procedure is coNEXP-complete and for Lukasiewicz logic, it is PSPACE-complete \cite{EmilLukas}. 
As for the proof complexity of admissible rules, some investigations have been done on the special case of the disjunction property. 
Buss and Mints \cite{BussMints} and later Buss and Pudl\'{a}k \cite{BussPudlak} showed that the natural deduction system and the sequent calculus for $\mathsf{IPC}$, \emph{feasibly} (i.e., in polynomial time) admit the disjunction property, respectively. More precisely, they showed that given the proof systems for $\mathsf{IPC}$, there exists a polynomial time algorithm that reads a proof of the formula $A \vee B$ and outputs a proof of either $A$ or $B$. In both papers, a form of normalization or cut elimination is required.
Later, Ferrari et al. \cite{Ferrari02,Ferrari05} provided a uniform framework to study the proof complexity of the disjunction property in intuitionistic logic and some modal and intuitionistic modal logics. The method they used is based on a calculus called the extraction calculus. The benefit of their method, compared to the previous ones, is that they do not take the structural properties of the system in use into account and hence there is no need for normalization/cut elimination. Moreover, there is a weaker version of the disjunction property suitable for the classical modal logics. For this property, the feasibility has been shown by B\'{i}lkov\'{a} \cite{Marta} and for Frege systems for any extensible modal logic, by Je\v{r}\'{a}bek \cite{Emil}. For more on the disjunction property of intermediate logics, see \cite{DPintermediate}.

\subsubsection*{Our contribution}
In this paper, we first identify a class of rules called the \emph{constructive} rules to provide a general enough family of constructively acceptable rules for intuitionistic modal logics over the language $\mathcal{L}=\{\wedge, \vee, \to, \top, \bot, \Box, \Diamond\}$ and its fragments. To find such a family, we will use the well-known heuristic that the constructive rules, whatever they are, must be careful in introducing any disjunction-like connectives. Restricting all the direct and indirect ways to produce such a connective, we will provide a tight and robust family of rules to use.
Then, as mentioned before, we set the admissibility of Visser's rules as the logical invariant property. In fact, we use a stronger property called the \emph{feasible Visser-Harrop property}. A calculus $G$ is called to have the feasible Visser-Harrop property, if
there is a polynomial time algorithm $f$ such that for any proof $\pi$ of $\Gamma, \{A_i \to B_i\}_{i\in I} \Rightarrow C \vee D$ in $G$, $f(\pi)$ is a $G$-proof of either
$\Gamma, \{A_i \to B_i\}_{i\in I} \Rightarrow C$, or $\Gamma, \{A_i \to B_i\}_{i\in I} \Rightarrow D$, or $\Gamma, \{A_i \to B_i\}_{i\in I} \Rightarrow A_i$ for some $i \in I$, where $\Gamma$ consists of a modal version of the Harrop formulas. Note that the feasible Visser-Harrop property is a property of the calculus and not the logic. In this sense it deviates from the basic setting of the invariant technique we discussed. However, forgetting the feasibility condition, it is easy to rephrase the property as a property of the logic which is a modest generalization of the admissibility of Visser's rules.

Having the two ingredients of form and property settled, we show that over $\mathcal{L}$ and its fragments, any strong enough sequent calculus, 
consisting only of constructive rules and some basic modal rules, and satisfying a mild technical condition has the feasible Visser-Harrop property and hence, its logic admits Visser's rules.
As an application, on the positive side, we show that the sequent calculi for several intuitionistic modal logics have the feasible Visser-Harrop property and consequently enjoy the feasible disjunction property. It includes the usual sequent calculi for the constructive modal logic $\mathsf{CK}$, intuitionistic modal logic $\mathsf{IK}$, their extensions by the 
modal axioms $T$, $B$, $4$, $5$ or more generally the modal axioms of bounded width and depth $ga_{klmn}: \Diamond^k \Box^l p \to \Box^m \Diamond^n p$, unless $k=m=0$, and many others. We also prove the same result for the fragments of the language, where the system for the propositional lax logic, $\mathsf{PLL}$, is also covered.
On the negative side, we show that if an intuitionistic modal logic extending $\mathsf{CK}$ does not admit Visser's rules, then it cannot have a sequent calculus as explained above. As there are many such intermediate modal logics, we provide an interesting family of non-existence results. One interesting example lives over the propositional fragment where we prove that the only intermediate logic with a calculus consisting of constructive rules is $\mathsf{IPC}$.

The method we use to prove the main result is also interesting by its own. It is inspired by the technique that Hrube\v{s} used in \cite{Hrubes} to prove an exponential lower bound on the lengths of proofs in the intuitionistic Frege system. Usually, to prove the admissibility of an admissible rule in a logic, one needs to design a sequent calculus for the logic and then eliminate the cut rule to make the proof combinatorially controllable. Such a drastic change in the proof structure is not possible in many cases if the logic has no cut-free sequent calculi. Even in the cases that it does, the process of cut elimination or the like is usually extremely costly. However, our method in the present paper is based on using translations and as translations commute with the cut rule, there is no need for any sort of cut elimination. As a consequence of this liberal attitude towards cut, the extraction method becomes feasible.

The paper is organized as follows.
In Preliminaries, Section \ref{SectionPrem}, we recall several well-known constructive and intuitionistic modal logics and their sequent calculi.
In Section \ref{SectionAlmostnegative}, we introduce the constructive rules and as a witness to their generality, we present a wide range of its instances. In Section \ref{SectionMain}, we present our main result, which is proving the feasible Visser-Harrop property for any strong enough sequent calculus consisting of constructive rules and some basic modal rules (up to a mild technical property). Finally, the analogue of the main result is provided in Subsections \ref{SubsectionDiamond},  \ref{SubsectionBox}, and  \ref{SubsectionPropositional}, for the $\Diamond$-free, $\Box$-free, and propositional fragments, respectively.\\

\noindent \textbf{Acknowledgements}
\noindent We are thankful to Rosalie Iemhoff for the interesting discussions we had on the subject.

\section{Preliminaries}
\label{SectionPrem}

Let $\mathcal{L}=\{\wedge, \vee, \to, \top, \bot, \Box, \Diamond\}$ be the language of intuitionistic modal logic.
In this paper, we mainly work over the language $\mathcal{L}$. However, we are also interested in its fragments $\mathcal{L}_{\Box}=\mathcal{L} \setminus \{\Diamond\}$, $\mathcal{L}_{\Diamond}=\mathcal{L} \setminus \{\Box\}$ and $\mathcal{L}_{p}=\mathcal{L} \setminus \{\Box, \Diamond\}$, addressed in Subsections \ref{SubsectionDiamond}, \ref{SubsectionBox}, and \ref{SubsectionPropositional}, respectively. To refer to any of these languages, we use the variable $\mathfrak{L}$ with the condition $\mathfrak{L} \in \{\mathcal{L}, \mathcal{L}_{\Box}, \mathcal{L}_{\Diamond}, \mathcal{L}_p\}$. Fixing $\mathfrak{L}$, by \emph{$\mathfrak{L}$-formulas}, we mean the formulas over the language $\mathfrak{L}$ in its usual sense. We use small Roman letters $p, q, \dots$ for \emph{atomic formulas (atoms)}. Small Greek letters $\phi, \psi, \dots$ and capital Roman letters $A, B, \dots$ denote  $\mathfrak{L}$-formulas. The bar notation, as in $\overline{\phi}, \overline{\psi}, \overline{A}, \overline{B}, \ldots$, is reserved for finite multisets of $\mathfrak{L}$-formulas. Capital Greek letters $\Gamma, \Delta, \dots$ denote finite multisets of $\mathfrak{L}$-formulas as well as multiset variables (also called contexts). The latter are the variables that can be substituted by finite multisets as it is usual in the sequent-style rules. It will always be clear from the text which one we are using. The set of
\emph{immediate subformulas} of a formula $A$ is defined recursively as follows. The immediate subformula of an atom is itself; the immediate subformulas of $A \circ B$ are $A$ and $B$, for $\circ \in \{\wedge , \vee, \to\}$ and the immediate subformula of $\bigcirc A$ is $A$, for $\bigcirc \in \{\Box, \Diamond\}$. For a formula $A$, define $\neg A$ as $A \to \bot$ and $\bigcirc^n A$ recursively by $\bigcirc^0 A=A$ and $\bigcirc^{n+1} A= \bigcirc \bigcirc^n A$, where $\bigcirc \in \{\Box, \Diamond\}$. For a multiset $\Gamma$, by $\bigcirc \Gamma$, we mean $\{\bigcirc \gamma \mid \gamma \in \Gamma\}$. And, for $\Gamma=\{\gamma_1, \dots , \gamma_n\}$, by $\bigwedge \Gamma$ (resp. $\bigvee \Gamma$), we mean $\gamma_1 \wedge \ldots \wedge \gamma_n$ (resp. $\gamma_1 \vee \ldots \vee \gamma_n$). We use the convention $\bigwedge \varnothing =\top$ and $\bigvee \varnothing =\bot$. By $\mathsf{IPC}$, we mean the intuitionistic propositional logic and by $\mathsf{CPC}$, the classical propositional logic.

\subsection{Intuitionistic Modal Logics} \label{SubsectionPreliminariesIntuitionisticModal}
We begin with a definition of logics over a given language $\mathfrak{L} \in \{\mathcal{L}, \mathcal{L}_\Box, \mathcal{L}_\Diamond, \mathcal{L}_p\}$.
\begin{dfn}\label{DefLogic}
Let $\mathfrak{L} \in \{\mathcal{L}, \mathcal{L}_\Box, \mathcal{L}_\Diamond, \mathcal{L}_p\}$ be a language. A \emph{logic $L$ over $\mathfrak{L}$ }is a set of $\mathfrak{L}$-formulas closed under the substitution, modus ponens and
\begin{itemize}
\item
the necessitation rule, if $\mathfrak{L}=\mathcal{L}$ or $\mathfrak{L}=\mathcal{L}_{\Box}$,
\item
no other rule, if $\mathfrak{L}=\mathcal{L}_\Diamond$ or $\mathfrak{L}=\mathcal{L}_p$, where 
\end{itemize}

\begin{itemize}
\item[$\triangleright$]
the \emph{modus ponens} rule means that if $\phi, \phi \to \psi \in L$ then $\psi \in L$,
\item[$\triangleright$]
the \emph{necessitation} rule means that if $\phi \in L$ then $\Box \phi \in L$.
\end{itemize}
Let $L_1$ and $L_2$ be logics over the language $\mathfrak{L}$. Then, $L_2$ \emph{extends} $L_1$, if $L_1 \subseteq L_2$. Given a logic $L$ defined over $\mathfrak{L}$ and a set of $\mathfrak{L}$-formulas $\{A_i\}_{i \in I}$, by $L +\{A_i\}_{i\in I}$, we mean the smallest logic over $\mathfrak{L}$, extending $L$ and containing all the formulas in $\{A_i\}_{i\in I}$.
\end{dfn}
To introduce intuitionistic modal logics, notice that over the language $\mathcal{L}$, the modalities $\Box$ and $\Diamond$ are not supposed to be the dual of each other, i.e, $\Diamond$ is not equivalent to $\neg \Box \neg$. Consequently, there are many possibilities to define the intuitionistic counterpart of a given classical modal logic (e.g., see \cite{Wolter}). 
In the following, we introduce some intuitionistic modal logics starting from two well-known intuitionistic counterparts of the classical modal logic $\mathsf{K}$. First, consider the family of axioms in Table \ref{tableAxiom}.
\begin{table}[!ht]
\footnotesize{\begin{tabular}{ |c| c| c| c| }
\hline
name & axiom & name & axiom\\
\hline
$K_a$ & $\Box(p \to q) \to (\Box p \to \Box q)$ & $K_b$ & $\Box(p \to q) \to(\Diamond p \to \Diamond q)$\\
\hline
$\Diamond \bot$ & $\neg \Diamond \bot$ & $\Diamond \vee$ & $\Diamond (p \vee q) \to \Diamond p \vee \Diamond q$\\
\hline
$\Box \to $ & $(\Diamond p \to \Box q)\to \Box (p \to q)$ & $D$&$\Box p \to \Diamond p$\\
\hline
$D_a$ & $\Box \bot \to \bot$ &  $D_b$ & $\top \to \Diamond \top$ \\
\hline
$T_a$ & $\Box p \to p$ &  $T_b$ & $p \to \Diamond p$ \\
\hline
$4_a$ &
$\Box p \to \Box \Box p$ & $4_b$ &$\Diamond \Diamond p \to \Diamond p$ \\
\hline
$B_a$ & $\Diamond \Box p \to p$ & $B_b$ &$p \to \Box \Diamond p$\\
\hline
$5_a$ & $\Diamond \Box p \to \Box p$ & $5_b$ & $\Diamond p \to \Box \Diamond p$\\
\hline
$c_a$ & $p \to \Box p$ & $c_b$ & $\Diamond p \to p$\\
\hline
$den_{n,a}$ & $\Box^{n+1} p \to \Box^n p$ & $den_{n,b}$ & $\Diamond^n p \to \Diamond^{n+1} p$\\
\hline
$4_{n,m,a}$ & $\Box^{n} p \to \Box^m p$ & $4_{n,m,b}$ & $\Diamond^m p \to \Diamond^{n} p$ \\
\hline
$tra_{n,a}$ & $\bigwedge_{i=1}^n \Box^{i} p \to \Box^{n+1} p$ & $tra_{n,b}$ & $\Diamond^{n+1} p \to \bigvee_{i=0}^n \Diamond^{i} p$ \\
\hline
$ga$ & $\Diamond \Box p \to \Box \Diamond p$ & $ga_{klmn}$ & $\Diamond^k \Box^l p \to \Box^m \Diamond^n p$\\
\hline
$.2$ & $\Diamond (p \wedge \Box q) \to \Box (p \vee \Diamond q)$ & $d_1$ & $\neg \Diamond p \to \Box \neg p$ \\
\hline
$d_2$ & $\Box \neg p \to \neg \Diamond p$ & $d_3$ & $\Diamond \neg p \to \neg \Box p$ \\
\hline
$bw_r$ & $\bigwedge_{i=0}^r \Diamond p_i \to \bigvee_{0 \leq i \neq j}^r \Diamond (p_i \wedge (p_j \vee \Diamond p_j))$ & $H$ & $p \to \Box (\Diamond p \to p)$\\
\hline
$bd_n$ & $\Diamond (\Box p_n \wedge bd_n) \to p_n$  & $dir$ & $\Diamond (\Box p \wedge q) \to \Box (\Diamond p \vee q)$\\
\hline
$M^{\to}_{\Diamond}$ & $(p \to q) \to (\Diamond p \to \Diamond q)$  &   &  \\
\hline
\end{tabular}}
\caption{\small{Some modal axioms. Everywhere, we assume $n \geq 0$ and $r\geq 1$. In $4_{n, m, a}$ and $4_{n, m, b}$ we assume $0 \leq n < m$ and in $ga_{klmn}$ we have $k,l,m,n \geq 0$. The formula $bd_{n}$ is defined recursively: $bd_0:=\bot$, and $bd_{n+1}:=p_n \vee \Box (\Diamond \neg p_n \vee bd_n)$.}}\label{tableAxiom}
\end{table}
Most of them 
are well-known for the properties they impose on the classical Kripke frames that validate them: the axiom $(dir)$, when added to the logic $\mathsf{K}$, is sound and complete with respect to directed Kripke frames. Or, a Kripke frame $\mathcal{F}$ validates ($T_a$) if and only if $\mathcal{F}$ is reflexive. Note that beside their classical importance, the axioms $ga_{klmn}$, for $0 \leq k,l,m,n$, have been studied both syntactically and semantically in the realm of intuitionistic modal logics (e.g., see \cite{Wolter}).

Over the language $\mathcal{L}$, there are two basic modal logics considered as the intuitionistic counterparts of the logic $\mathsf{K}$. The first, is the \emph{constructive} $\mathsf{K}$, denoted by $\mathsf{CK}$, studied for instance in \cite{bierman}, and the second is the \emph{intuitionistic} $\mathsf{K}$, denoted by $\mathsf{IK}$, introduced in \cite{Fischer1,Fischer2} and studied in \cite{Simpson} in detail:
 \begin{center}
$\mathsf{CK}:=  \mathsf{IPC} + \{K_a, K_b\} \quad \quad \quad \mathsf{IK}:= \mathsf{IPC} +\{ K_a , K_b , \Diamond \vee , \Box \to , \Diamond \bot\}$
\end{center}
\normalsize
$\mathsf{CK}$ is the weakest intuitionistic modal logic considered in this paper. Using the axioms in Table \ref{tableAxiom}, we can design  constructive versions of well-known classical modal logics. To name a few, let $X \subseteq \{T, B, 4, 5\}$. By $\mathsf{CK}X$, we mean the smallest logic over $\mathcal{L}$ extending $\mathsf{CK}$ and all the axioms in $X$ in both $(a)$ and $(b)$ versions.
An interesting logic is the constructive counterpart of $\mathsf{S4}$, introduced in \cite{bierman} and defined as $\mathsf{CK}T4$, which is $\mathsf{CK}+\{T_a, T_b, 4_a, 4_b\}$.

Although $\mathsf{CK}$ is natural and interesting in some settings, including the type theoretical approach to modality, the system $\mathsf{IK}$ is the one that is claimed to be the \emph{true} intuitionistic analogue of the classical modal logic $\mathsf{K}$. In \cite{Fischer1,Fischer2}, Fischer-Servi provided two types of evidence to support this claim. First, she mapped $\mathsf{IK}$ to an extension of the fusion of $\mathsf{K}$ and $\mathsf{S4}$, using a natural generalization of G\"{o}del's translation. Second, using the standard translation of modal formulas to first order formulas, in the same manner that $\mathsf{K}$ is mapped into the classical first order logic, $\mathsf{IK}$ is mapped into the intuitionistic first order logic. Several extensions of $\mathsf{IK}$ have been introduced and studied (e.g., see \cite{amati}), where the intuitionistic version of several modal logics such as $\mathsf{KD, KT, KB, S4,}$ and $\mathsf{S5}$ are investigated. To be more precise, for any $X \subseteq \{T, B, 4, 5\}$, define $\mathsf{IK}X$ as the smallest logic over $\mathcal{L}$ extending $\mathsf{IK}$ and all the axioms in $X$ in both $(a)$ and $(b)$ versions.
An important logic here is the logic $\mathsf{MIPC}$ defined as
    $\mathsf{MIPC}:= \mathsf{IK}T45.$
    
It is possible to introduce some intuitionistic modal logics over the fragments of $\mathcal{L}$. Over the language $\mathcal{L}_{\Box}$, the basic system we are interested in is
$\mathsf{CK}_{\Box}:= \mathsf{IPC} + \{K_a\}$.
In \cite{Wolter}, $\mathsf{CK}_{\Box}$ is introduced by the name $\mathsf{IntK_{\Box}}$ and $\Diamond \phi$ is defined as $\neg \Box \neg \phi$. Consequently, $\Diamond$ is not distributed over disjunction. This relation between $\Box$ and $\Diamond$ is too binding from the intuitionistic perspective.
Again, we can extend $\mathsf{CK}_{\Box}$ by the $\Diamond$-free axioms in Table  \ref{tableAxiom}. More precisely, let $X \subseteq \{T, 4\}$. By $\mathsf{CK}_{\Box}X$, we mean the smallest logic over $\mathcal{L}_{\Box}$ extending $\mathsf{CK}_{\Box}$ and all the axioms in $X$ in their $(a)$ version.
These logics, specially with the $\mathsf{S4}$ and $\mathsf{S5}$ flavours, are studied in \cite{Ono} by model theoretical methods.

Finally, the most basic logic in this paper over $\mathcal{L}_{\Diamond}$ is the \emph{basic lax logic}, $\mathsf{BLL}:=\mathsf{IPC}+\{M^{\to}_{\Diamond}\}$, (see Table \ref{tableAxiom}). If we add the axioms $(T_b)$ and $(4_b)$ to the logic $\mathsf{BLL}$, we get the \emph{propositional lax logic}, $\mathsf{PLL}:=\mathsf{IPC} + \{M^{\to}_{\Diamond}, T_b, 4_b\}$, see \cite{Fairtlough}. For a nice survey on intuitionistic modal logics, see \cite{Simpson}.

So far,\! we have seen two approaches to design modal logics over the intuitionistic base. We can also lift these approaches to the intermediate base.
\begin{dfn}\label{DefIntermediateModalLogics}
Let $L$ be an intermediate logic, i.e., a logic over $\mathcal{L}_p$ such that $\mathsf{IPC} \subseteq L \subseteq \mathsf{CPC}$. Then, by $L\mathsf{CK}$ (resp. $L\mathsf{IK}$), we mean the smallest logic over $\mathcal{L}$, extending $L \cup \mathsf{CK}$ (resp. $L \cup \mathsf{IK}$). Similarly, we define $L\mathsf{CK}_{\Box}$ as the smallest logic over $\mathcal{L}_{\Box}$, extending $L \cup \mathsf{CK}_{\Box}$ and we set $L\mathsf{BLL}$ as the smallest logic over $\mathcal{L}_{\Diamond}$, extending $L \cup \mathsf{BLL}$.
\end{dfn}

Moving to Kripke frames, there are also different proposals for their intuitionistic counterparts. 
Here, we define the most general one that captures our base logic $\mathsf{CK}$. However, as we will only use them when they have one node, the choice of the model is immaterial in this paper. 

\begin{dfn}\label{def: Kripke Frames and Models}
\cite{Kojima} A \emph{constructive modal Kripke frame} is a quadruple $\calF=(W, \leq , R, F)$, where $W$ is a non-empty set, $\leq$ is a preorder (a reflexive and transitive binary relation) on $W$, $R$ is a binary relation on $W$ and $F \subseteq W$, a set of fallible worlds, such that:

   
 
\begin{itemize}
    \item
  $F$ is closed under $\leq$: if $v \in F$ and $v \leq w$ then $w \in F$, 
    \item
    $F$ is closed under $R$: if $(v, w) \in R$ and $v \in F$ then $w \in F$, 
    \item
    $R$ is serial on $F$: if $v \in F$ then there exists $w \in F$ such that $(v, w) \in R$. 
\end{itemize}
A \emph{constructive modal Kripke model} based on the frame $\mathcal{F}=(W, \leq, R, F)$ is a tuple $\calM=(W, \leq, R, F, V)$, where $V$ is a valuation function mapping each world in $W$ to a set of atomic formulas in $\mathcal{L}$ such that for every $v,w \in W$ if $v \leq w$ then $V(v) \subseteq V(w)$, and if $w \in F$ then $V(w)$ has all the atomic formulas. Define a formula $\phi$ to be \emph{true at the world $w$ in $\calM$}, denoted by $\calM, w \vDash \phi$ (or $w \vDash \phi$, for short), by recursion on the construction of $\phi$:

\vspace{5pt}
\begin{tabular}{lll}
$\calM, w \vDash \top$ & & \\
$\calM, w \vDash \bot$ & if{f} & $w \in F$;\\
$\calM, w \vDash p$ & if{f} & $p \in V(w)$, for an atomic formula $p$;\\
$\calM, w \vDash \varphi \wedge \psi$ & if{f} & $\calM, w \vDash \varphi$ and $\calM, w \vDash \psi$;\\
$\calM, w \vDash \varphi \vee \psi$ & if{f} & $\calM, w \vDash \varphi$ or $\calM, w \vDash \psi$;\\
$\calM, w \vDash \varphi \to \psi$ & if{f} & $\forall v \geq w$, if $\calM, v \vDash \varphi$ then $\calM, v \vDash \psi$;\\
$\calM, w \vDash \Box \varphi$ & if{f} &
$\forall u \geq w, \forall v \in W$ \big(if $(u, v) \in R$, then $\calM, v \vDash \phi\big)$;\\
$\calM, w \vDash \Diamond \varphi$ & if{f} & 
$\forall u \geq w$, $\exists v \in W$ $\big((u, v) \in R$ and $\calM, v \vDash \phi$\big).
\end{tabular}
\linebreak

\noindent A formula $\phi$ is \emph{valid in $\calM$}, denoted by $\calM \vDash \phi$, when $\calM, w \vDash \phi$ for all $w \in W$, and it is \emph{valid in~$\calF$} when it is valid in all models based on $\calF$. A logic $L$ is \emph{valid in} $\calF$ if $\mathcal{F} \vDash \phi$, for any $\phi \in L$. 
By the \emph{reflexive node frame}, we mean the frame $\mathcal{K}_r=(\{w\}, =, \{(w, w)\}, \varnothing)$ and by the \emph{irreflexive node frame}, we mean $\mathcal{K}_i=(\{w\}, =, \varnothing, \varnothing)$. 
\end{dfn}



\subsection{Sequent Calculi}

A \emph{sequent} $S$ over the language $\mathfrak{L}$ is an expression of the form $\Sigma \Rightarrow \Lambda$, where $\Sigma$ and $\Lambda$ are finite multisets of $\mathfrak{L}$-formulas. By the formula interpretation of the sequent $S=\Sigma \Rightarrow \Lambda$, we mean $I(S)=\bigwedge \Sigma \to \bigvee \Lambda$. The multiset $\Sigma$ is called the \emph{antecedent} and $\Lambda$ the \emph{succedent} of the sequent $S$. If the succedent of a sequent has at most one formula, the sequent is called \emph{single-conclusion}. A \emph{meta-sequent} $\mathcal{S}$ is defined in the same way as sequents, except that here we are also allowed to use multiset variables both in the antecedents and in the 
succedents and we can also use the boxes of the multiset variables in the antecedents. More precisely, a meta-sequent is in the form 
\begin{center}
    $\{\Box\Gamma_i\}_{i \in I}, \{\Pi_j\}_{j \in J}, \overline{A} \Rightarrow \Delta \quad$ or $\quad \{\Box\Gamma_i\}_{i \in I}, \{\Pi_j\}_{j \in J}, \overline{A} \Rightarrow \overline{B}$
\end{center}
where $\Gamma_i, \Pi_j,$ and $\Delta$ are multiset variables and $I$ is a (possibly empty) set of indices, and $\overline{A}$ and $\overline{B}$ are (possibly empty) multisets of $\mathfrak{L}$-formulas.
Notice that the meta-sequents, as defined, are not in their most general form. For instance, it is possible to allow more complex expressions such as $\Diamond \Box \Gamma$ in the meta-sequents, where $\Gamma$ is a multiset variable. However, we use this restricted form as it is the only form we need in the present paper. 

A \emph{rule} is an expression of the form 
\small{\begin{tabular}{c}
 \AxiomC{$\mathcal{S}_1, \;\; \ldots, \;\; \mathcal{S}_n$}
 \UnaryInfC{$\mathcal{S}$}
 \DisplayProof
\end{tabular}}
\normalsize where $\mathcal{S}$, called the \emph{conclusion}, and $\mathcal{S}_i$'s, called the \emph{premises}, are meta-sequents. 
A rule with no premise is called an \emph{axiom}. By an \emph{instance} of a rule or an axiom over $\mathfrak{L}$, we mean the result of substituting the multiset variables by multisets of $\mathfrak{L}$-formulas and substituting the atomic formulas by  $\mathfrak{L}$-formulas.
A \emph{sequent calculus} (\emph{calculus}, for short) over $\mathfrak{L}$ is defined as a finite set of rules over $\mathfrak{L}$.
A sequent calculus $H$ \emph{extends} the sequent calculus $G$ when they are over the same language and $G \subseteq H$. For a set of rules $\mathcal{R}$, by $G+ \mathcal{R}$, we mean the calculus obtained by adding every rule in $\mathcal{R}$ to $G$. For a set of formulas $\mathcal{A}$, by $G+ \mathcal{A}$, we mean the calculus obtained by adding the meta-sequent $(\, \Rightarrow A)$, for every $A \in \mathcal{A}$ as an axiom to $G$. A \emph{proof} $\pi$ in the calculus $G$ for a sequent $T$ from a finite set of sequents $\{T_i\}_{i \in I}$ is a sequence of sequents $\{S_j\}_{j=1}^m$ such that $S_m=T$ and each sequent $S_j$ is either one of the sequents $T_i$, or an instance of an axiom in $G$, or derived from an instance of a rule in $G$ from some $S_{j_1}, \dots, S_{j_k}$, where $j_1, \dots, j_k <j$. When $\pi$ is a proof of $T$ from $\{T_i\}_{i \in I}$ in $G$, we write $\{T_i\}_{i \in I} \vdash^{\pi}_G T$, and we call it a \emph{$G$-proof} of $T$ from the set of assumptions $\{T_i\}_{i \in I}$. If $\pi$ is a proof of the sequent $S$ in $G$ from an empty set of assumptions, we write $G \vdash^{\pi} S$ and we call $\pi$ a \emph{$G$-proof} of $S$. We say the formulas $A$ and $B$ are $G$-\emph{equivalent} or \emph{equivalent in $G$}, denoted by $G \vdash A \Leftrightarrow B$, when $G \vdash A \Rightarrow B$ and $G \vdash B \Rightarrow A$. 
A rule $R$ is called \emph{admissible} in $G$, if for any instance \small{\begin{tabular}{c}
 \AxiomC{$S_1, \;\; \ldots, \;\; S_n$}
 \UnaryInfC{$S$}
 \DisplayProof
\end{tabular}}
\normalsize of $R$, the provability of all the premises in $G$ implies the provability of the conclusion in $G$. Similarly, $R$ is \emph{admissible} in a logic $L$, when for any instance \small{\begin{tabular}{c}
 \AxiomC{$S_1, \;\; \ldots, \;\; S_n$}
 \UnaryInfC{$S$}
 \DisplayProof
\end{tabular}}
\normalsize of $R$, if $I(S_1), \dots , I(S_n) \in L$ then $I(S)\in L$. The rule $R$ is called \emph{provable} in $G$, if for any instance \small{\begin{tabular}{c}
 \AxiomC{$S_1, \;\; \ldots, \;\; S_n$}
 \UnaryInfC{$S$}
 \DisplayProof
\end{tabular}}
\normalsize we have $S_1, \ldots, S_n \vdash_G S$. We say $L$ is the \emph{logic of the sequent calculus} $G$ or equivalently $G$ is the \emph{sequent calculus of the logic} $L$, when $\Sigma \Rightarrow \Lambda$ is provable in $G$ if and only if $\bigwedge \Sigma \to \bigvee \Lambda \in L$, for any sequent $\Sigma \Rightarrow \Lambda$. 
Usually, we prefer to use the same name for a logic and its canonical sequent calculus. However, we use different fonts to distinguish them. We use the boldface letters, as in $\mathbf{CK}$, for a sequent calculus while the sans-serif font, as in $\mathsf{CK}$, is reserved for the logic. Finally, if $\calM$ is a constructive modal Kripke model and $S=(\Sigma \Rightarrow \Lambda)$ a sequent, then \emph{$S$ is valid in $\calM$} if $I(S)=\bigwedge \Sigma \to \bigvee \Lambda$ is valid in $\calM$. A similar definition for frames is in place. Moreover,
a sequent calculus $G$ is \emph{valid in the frame} $\calF$ (model $\mathcal{M}$) if every $G$-provable sequent is valid in $\calF$ (model $\mathcal{M}$).

For $\mathsf{IPC}$ we use the single-conclusion Gentzen-style sequent calculus, $\mathbf{LJ}$\footnote{The calculus mentioned here is almost identical to Gentzen's original calculus $\mathbf{LJ}$, where $\Gamma$ and $\Delta$ are sequences of formulas, as opposed to finite multisets, and the axioms are slightly different. However, it is easy to see that these two systems are equivalent.}, presented in Table \ref{tableLJ}.
\begin{table}[!ht]
  \centering
\resizebox{\columnwidth}{!}{%
 \begin{tabular}{ p{3cm} p{3cm} p{3cm} p{3cm} }
   \multicolumn{2}{c}{ \footnotesize  $\Gamma, p \Rightarrow p \;\;$ \scriptsize$(id)$}  & \hspace{-3em} \footnotesize $\Gamma, \bot \Rightarrow \Delta \;\;$ \scriptsize$(L \bot)$ & \hspace{-3em} \footnotesize $\Gamma \Rightarrow \top \;\;$ \scriptsize$(R \top)$\\
   \midrule    
   
 \footnotesize  \AxiomC{$\Gamma \Rightarrow \Delta$}
 \RightLabel{\scriptsize{$(L w)$} }
\UnaryInfC{$\Gamma, p \Rightarrow \Delta$}
 \DisplayProof
& \footnotesize  \AxiomC{$\Gamma \Rightarrow $}
 \RightLabel{\scriptsize{$(R w)$} }
\UnaryInfC{$\Gamma \Rightarrow p$}
 \hspace*{4em}\DisplayProof
 & 
\multicolumn{2}{c}{ \footnotesize  
 \AxiomC{$\Gamma, p, p \Rightarrow \Delta$}
 \RightLabel{\scriptsize{$(L c)$} }
 \UnaryInfC{$\Gamma, p \Rightarrow \Delta$}
  \hspace*{2em}\DisplayProof}\\
 \midrule 
   
\multicolumn{4}{c}{
 \footnotesize 
 \AxiomC{$\Gamma \Rightarrow p$}
 \AxiomC{$\Gamma, p \Rightarrow \Delta$}
 \RightLabel{\scriptsize{$(cut)$} }
 \BinaryInfC{$\Gamma \Rightarrow \Delta$}
 \DisplayProof}\\
 \midrule
     
  \footnotesize  
 \AxiomC{$\Gamma, p \Rightarrow \Delta$}
 \RightLabel{\scriptsize{$(L \wedge_1)$} }
 \UnaryInfC{$\Gamma, p \wedge q \Rightarrow \Delta$}
 \DisplayProof
& \footnotesize 
\AxiomC{$\Gamma, q \Rightarrow \Delta$}
 \RightLabel{\scriptsize{$(L \wedge_2)$} }
 \UnaryInfC{$\Gamma, p \wedge q \Rightarrow \Delta$}
 \hspace*{2em}\DisplayProof
 & 
\multicolumn{2}{c}{ \footnotesize   \AxiomC{$\Gamma \Rightarrow p$}
 \AxiomC{$\Gamma \Rightarrow q$}
 \RightLabel{\scriptsize{$(R \wedge)$} }
 \BinaryInfC{$\Gamma \Rightarrow p \wedge q$}
  \hspace*{2em}\DisplayProof}\\
 \midrule
 
 \footnotesize  
  \AxiomC{$\Gamma, p \Rightarrow \Delta$}
 \AxiomC{$\Gamma, q \Rightarrow \Delta$}
 \RightLabel{\scriptsize{$(L \vee)$} }
 \BinaryInfC{$\Gamma, p \vee q \Rightarrow \Delta$}
 \DisplayProof
& 
\multicolumn{2}{c}{\footnotesize 
 \AxiomC{$\Gamma \Rightarrow p$}
 \RightLabel{\scriptsize{$(R \vee_1)$} }
 \UnaryInfC{$\Gamma \Rightarrow p \vee q$}
 \hspace*{2em}\DisplayProof}
 & 
 \footnotesize   \AxiomC{$\Gamma \Rightarrow q$}
 \RightLabel{\scriptsize{$(R \vee_2)$} }
 \UnaryInfC{$\Gamma \Rightarrow p \vee q$}
 \hspace*{-3em}\DisplayProof\\
 \midrule

    \multicolumn{2}{c}{
\footnotesize   \AxiomC{$\Gamma \Rightarrow p$}
 \AxiomC{$\Gamma, q \Rightarrow \Delta$}
 \RightLabel{\scriptsize{$(L \to)$} }
 \BinaryInfC{$\Gamma,  p \to q \Rightarrow \Delta$}
 \hspace*{2em}\DisplayProof}
&
\multicolumn{2}{c}{
\footnotesize  \AxiomC{$\Gamma, p \Rightarrow q$}
 \RightLabel{\scriptsize{$(R \to)$} }
 \UnaryInfC{$\Gamma \Rightarrow p \to q$}
 \hspace*{-1em}\DisplayProof}
 \end{tabular} }
\caption{Single-conclusion sequent calculus $\mathbf{LJ}$ \label{tableLJ}}
\end{table} 
Note that being single-conclusion means that in each rule, $\Delta$ has at most one formula. In Table \ref{tableAxiom}, several modal axioms are introduced. Adding them to $\LJ$ will result in various sequent calculi for intuitionistic modal logics. To present sequent calculi for the logics introduced in Subsection \ref{SubsectionPreliminariesIntuitionisticModal}, consider the following modal rules:
\begin{center}
\begin{tabular}{c c c}
 \AxiomC{$\Gamma \Rightarrow p$}
 \RightLabel{\small$(K_{\Box})$}
 \UnaryInfC{$\Box \Gamma \Rightarrow \Box p$}
 \DisplayProof
& 
\AxiomC{$\Gamma, p \Rightarrow q$}
 \RightLabel{\small$(K_{\Diamond})$}
 \UnaryInfC{$\Box \Gamma, \Diamond p \Rightarrow \Diamond q$}
 \DisplayProof 
 & 
\AxiomC{$\Gamma, p \Rightarrow q$}
 \RightLabel{\small$(\Diamond L)$}
\UnaryInfC{$ \Gamma, \Diamond p \Rightarrow \Diamond q$}
 \DisplayProof
 \end{tabular}
\end{center}
Over $\mathcal{L}$, the most basic intuitionistic modal calculus that we are interested in is $\mathbf{CK}$ defined as $\LJ + \{K_{\Box}, K_{\Diamond}\}$.
Note that the cut rule is explicitly present in $\mathbf{LJ}$ and hence in $\mathbf{CK}$. For the logic $\mathsf{CK}X$, where $X \subseteq \{T, B, 4, 5\}$, by $\mathbf{CK}X$, we mean the calculus $\mathbf{CK}$ extended by the axioms in $X$ in both $(a)$ and $(b)$ versions.
Over $\mathcal{L}_{\Box}$, we denote the calculus $\LJ+\{K_\Box\}$ by $\mathbf{CK}_\Box$. For the logic $\mathsf{CK}_{\Box}X$, where $X \subseteq \{T, 4\}$, by $\mathbf{CK}_{\Box}X$, we mean the calculus $\mathbf{CK}_{\Box}$ extended by the axioms in $X$ in their $(a)$ version. Over $\mathcal{L}_{\Diamond}$, the interesting systems are the calculus $\mathbf{BLL}$ defined as $\mathbf{LJ}+\{\Diamond L\}$. If we add the axioms $(T_b)$ and $(4_b)$ to the calculus $\mathbf{BLL}$, we reach the calculus $\mathbf{PLL}= \LJ + \{\Diamond L, T_b, 4_b\}$. It is easy to see that the systems introduced here are the sequent calculi for their corresponding logics, introduced in Subsection \ref{SubsectionPreliminariesIntuitionisticModal}.

\begin{rem}
Unlike the usual situation where sequent calculi are assumed to be ``well-behaved", here we are quite liberal to accept the problematic rules such as cut, the unfamiliar rules such as
\begin{tabular}{c}
 \AxiomC{$\Gamma \Rightarrow p \wedge q$}
 \UnaryInfC{$\Gamma \Rightarrow p$}
 \DisplayProof
\end{tabular}
and initial sequents like $\Gamma \Rightarrow \Box p \to p$, in our systems. Therefore, one may read our sequent calculi as our preferred way to represent the general proof systems and nothing more. For instance, it is easy to represent the natural deduction systems using the sequent calculi we allow. In this sense, our results about sequent calculi can be safely applied on the natural deduction systems, as well.
\end{rem}

\begin{dfn}\label{Def: Visser Rules}
Let $L$ be a logic over the language $\mathfrak{L} \in \{\mathcal{L}, \mathcal{L}_\Box, \mathcal{L}_\Diamond, \mathcal{L}_p\}$. We say that $L$ has the \emph{disjunction property (DP)}, if $A \vee B \in L$ implies either $A \in L$ or $B \in L$, for any $\mathfrak{L}$-formulas $A$ and $B$.
Define \emph{Visser's rules} as
\begin{center}
\begin{tabular}{c}
$V_n$ \quad
 \AxiomC{$\Rightarrow \left(\bigwedge_{i=1}^n\left(p_i \rightarrow q_i\right) \rightarrow p_{n+1} \vee p_{n+2}\right) \vee r$}
 \UnaryInfC{$\Rightarrow \bigvee_{j=1}^{n+2}\left(\bigwedge_{i=1}^n\left(p_i \rightarrow q_i\right) \rightarrow p_j\right) \vee r$}
 \DisplayProof
\end{tabular}
\end{center}
for $n \geq 1$. By abuse of notation, if $L$ has DP, we say $L$ admits $V_0$ to consider DP as one of Visser's rules. We say the logic $L$ \emph{admits all Visser's rules} when it admits all $V_n$ for $n\geq 0$.
\end{dfn}

\begin{rem}
It is easy to see that a logic $L$ admits all Visser's rules if and only if it has the property that if $\bigwedge_{i \in I}(A_i \to B_i) \to (C \vee D) \in L$, for some (possibly empty) finite index set $I$, then one of the following formulas:\small \begin{center}\begin{tabular}{c c c}$\bigwedge_{i \in I}(A_i \to B_i) \to C\;$ or & $\bigwedge_{i \in I}(A_i \to B_i) \to D\;$ or& $\bigwedge_{i \in I}(A_i \to B_i) \to A_i\;$,\\\end{tabular}\end{center}\normalsize for some $i \in I$, is in $L$.
\end{rem}

\begin{lem}\label{LemAdStrong}
Let $G$ be a calculus over $\mathcal{L}$ for a logic $L$ over $\mathcal{L}$ such that $L \supseteq \mathsf{CK}$. Then, $G+\mathbf{CK}$ is also a calculus for $L$. The same holds if we replace the triple $(\mathcal{L}, \mathsf{CK}, \mathbf{CK})$ by $(\mathcal{L}_{\Box},\mathsf{CK}_{\Box}, \mathbf{CK}_{\Box})$, $(\mathcal{L}_{\Diamond}, \mathsf{BLL}, \mathbf{BLL})$, or $(\mathcal{L}_p, \mathsf{IPC}, \mathbf{LJ})$.
\end{lem}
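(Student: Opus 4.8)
The plan is to verify directly the defining equivalence of "being a calculus for $L$", namely that $\Sigma \Rightarrow \Lambda$ is provable in $G+\mathbf{CK}$ if and only if $\bigwedge \Sigma \to \bigvee \Lambda \in L$; I would treat the case $(\mathcal{L}, \mathsf{CK}, \mathbf{CK})$ in full and note that the other three triples are handled identically. The completeness direction is immediate: if $I(\Sigma \Rightarrow \Lambda) \in L$, then since $G$ is already a calculus for $L$ the sequent $\Sigma \Rightarrow \Lambda$ is $G$-provable, and as $G \subseteq G+\mathbf{CK}$ every $G$-proof is a $(G+\mathbf{CK})$-proof. Thus all the content is in the soundness direction, and the argument is a plain soundness induction — no cut elimination or structural surgery on proofs is involved, which is the point to emphasise.

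For soundness it suffices to establish the local claim that \emph{every rule of $G+\mathbf{CK}$ preserves $L$-membership of formula interpretations}: for any instance with premises $S_1,\dots,S_n$ and conclusion $S$, if $I(S_1),\dots,I(S_n)\in L$ then $I(S)\in L$. Granting this, an induction on the length of a $(G+\mathbf{CK})$-proof gives $I(S_j)\in L$ for every sequent $S_j$ occurring in it (axioms being the case $n=0$), in particular for the endsequent. The claim naturally splits into the $G$-rules and the $\mathbf{CK}$-rules. For a rule of $G$ the preservation is essentially free: if $I(S_i)\in L$ for all $i$, then since $G$ is a calculus for $L$ each $S_i$ is $G$-provable, so applying the given rule instance produces a $G$-proof of $S$, whence $I(S)\in L$.

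The work is thus in checking the rules of $\mathbf{CK}$ from the hypotheses $\mathsf{CK}\subseteq L$ and the closure of $L$ under modus ponens, substitution and necessitation. For every rule of $\mathbf{LJ}$ (and, in the $\mathcal{L}_\Diamond$-case, for $(\Diamond L)$) the base logic already proves the single implication $\bigwedge_i I(S_i)\to I(S)$; for instance $(cut)$ corresponds to the $\mathsf{IPC}$-theorem $(\bigwedge\Gamma\to A)\wedge(\bigwedge\Gamma\wedge A\to\bigvee\Delta)\to(\bigwedge\Gamma\to\bigvee\Delta)$, and $(\Diamond L)$ uses $M^{\to}_{\Diamond}$. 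Since $L$ extends the base logic and is closed under conjunction and modus ponens, $I(S_i)\in L$ for all $i$ forces $I(S)\in L$. The only rules not of this shape are $(K_\Box)$ and $(K_\Diamond)$, where the premise-to-conclusion implication is not a base-logic theorem but is derivable with the help of necessitation.

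These two modal rules are the main thing to get right. For $(K_\Box)$, from $\bigwedge\Gamma\to p\in L$ necessitation gives $\Box(\bigwedge\Gamma\to p)\in L$, and then $K_a$ together with the $\mathsf{CK}$-provable equivalence $\Box\bigwedge\Gamma \Leftrightarrow \bigwedge\Box\Gamma$ (itself obtained from $K_a$ and necessitation) yields $\bigwedge\Box\Gamma\to\Box p\in L$, which is $I(S)$. The rule $(K_\Diamond)$ is treated the same way, additionally invoking $K_b$ to pass from $\Box(p\to q)$ to $\Diamond p\to\Diamond q$. This is precisely where $L\supseteq\mathsf{CK}$ and closure of $L$ under necessitation are used, and it explains why the statement is organised by language: in the $\mathcal{L}_\Diamond$ and $\mathcal{L}_p$ cases there is no necessitation available, but there is also no rule that needs it, so the uniform induction goes through with $\mathsf{BLL}$ or $\mathsf{IPC}$ in place of $\mathsf{CK}$. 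Beyond this routine case analysis no real obstacle arises.
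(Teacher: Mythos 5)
Your proof is correct and takes essentially the same route as the paper: both arguments reduce the claim to showing that every axiom and rule of $\mathbf{CK}$ is admissible in $L$ (i.e., preserves $L$-membership of formula interpretations), verified from $\mathsf{CK}$-theorems and closure under modus ponens, with necessitation plus $K_a$, $K_b$ and the provable implication $\bigwedge\Box\Gamma \to \Box\bigwedge\Gamma$ handling $(K_\Box)$ and $(K_\Diamond)$. The only difference is presentational: you spell out the soundness induction and the step for $G$'s own rules, which the paper compresses into the remark that adding $L$-admissible rules to $G$ does not change the set of provable sequents.
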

\begin{proof}
We only prove the case for the language $\mathcal{L}$. The rest are similar. For the case $\mathcal{L}$, we know that adding the admissible rules of $L$ to $G$ does not change the theorems of $L$. Therefore, it is enough to show that all the axioms and rules of $\mathbf{CK}$ are admissible in $L$. We only investigate the crucial rules of cut and $(K_\Box)$, as the other are similar.
First, as $L \supseteq \mathsf{CK}$, we have:

\begin{enumerate}
    \item\label{va} $(C \to D) \to (C \to C \wedge D) \in L$,
    \item\label{na}
    $(E \to F) \to [(F \to I) \to (E \to I)] \in L$.
    \item\label{nna}
    $\bigwedge \Box \Gamma \to \Box \bigwedge \Gamma \in L$
\end{enumerate}
for any formulas $C, D, E, F$, $I$ and multiset $\Gamma$. The reason is that all these formulas are in $\mathsf{CK}$ and hence in $L$.
Now, for the admissibility of the cut rule, suppose $\bigwedge \Gamma \to A \in L$ and $(\bigwedge \Gamma \wedge A) \to B \in L$. By \ref{va} and the former formula, as $L$ is closed under modus ponens, we get $\bigwedge \Gamma \to (\bigwedge \Gamma \wedge A) \in L$. By \ref{na}, we have
\[
(\bigwedge \Gamma \to (\bigwedge \Gamma \wedge A)) \to [(\bigwedge \Gamma \wedge A) \to B) \to (\bigwedge \Gamma \to B)] \in L.
\]
Therefore, as $L$ is closed under modus ponens, we have $\bigwedge \Gamma \to B \in L$. \\
For the admissibility of the rule $(K_\Box)$, suppose $\bigwedge \Gamma \to A \in L$. By definition of a logic, $L$ is closed under necessitation. Hence, $\Box (\bigwedge \Gamma \to A) \in L$. As $L \supseteq \mathsf{CK}$, we have $\Box (\bigwedge \Gamma \to A) \to (\Box \bigwedge \Gamma \to \Box A) \in L$. Since $L$ is closed under modus ponens, we get $\Box \bigwedge \Gamma \to \Box A \in L$. By \ref{nna}, $\bigwedge \Box \Gamma \to \Box \bigwedge \Gamma \in L$. Therefore, by \ref{na} and modus ponens, we get $\bigwedge \Box \Gamma \to \Box A \in L$.
\end{proof}

Finally, we set some basic conventions on the complexity-theoretic part of the paper. The \emph{size} of a formula $A$, a multiset $\Gamma$, a sequent $S$ or a proof $\pi$ is defined as the number of symbols in it, and denoted by $|A|$, $|\Gamma|$, $|S|$, and $|\pi|$, respectively.
For a multiset $\Gamma$, by $\parallel \Gamma \parallel$, we mean the cardinality of $\Gamma$, i.e., the number of the elements in $\Gamma$, counting their multiplicity. An algorithm is called \emph{feasible}, if it runs in polynomial time in the size of the input. We use feasible and polynomial time computable, interchangeably. A proof is called \emph{tree-like}, if every sequent in the proof is used at most once as a hypothesis of a rule in the proof. From the complexity theoretic point of view, usually it makes a difference if we use tree-like proofs or \emph{general (dag-like)} proofs. However, in the presence of the cut rule, conjunction and implication in the language and their intuitionistic rules, it is possible to feasibly simulate dag-like proofs by tree-like ones, see \cite{Jan}. Therefore, w.l.o.g., throughout this paper, whenever we write down a proof, we always mean a tree-like proof. The reason for this preference is the inequality 
\begin{center}
  $\sum_{i \in I} |\pi_i|+|S| \leq |\pi|,$  
\end{center}
where $\pi$ is a tree-like proof with the conclusion $S$ and the immediate sub-proofs $\{\pi_i\}_{i \in I}$, i.e., the proofs of the premises. This inequality helps to bound the time of different constructions in which we will use $\pi$ as an input.


\begin{dfn}\label{dfnAdmissiblyStrong}
Let $\mathfrak{L} \in \{\mathcal{L}, \mathcal{L}_\Box, \mathcal{L}_\Diamond, \mathcal{L}_p\}$ be a language and $G$ be a sequent calculus over $\mathfrak{L}$. A rule $R$ over $\mathfrak{L}$ is called \emph{feasibly provable} in $G$, if there is a polynomial time algorithm $f_R$ such that for any $\mathfrak{L}$-instance of $R$ with the premises $\{S_i\}_{i \in I}$ and the consequence $S$, $f(\{S_i\}_{i \in I}, S)$ provides a $G$-proof of $S$ with the set of assumptions $\{S_i\}_{i \in I}$, i.e.,  $\{S_i\}_{i \in I} \vdash_G^{f(\{S_i\}_{i \in I}, S)} S$. A sequent calculus $G$ is called \emph{strong} over the language
\begin{itemize}
\item
$\mathcal{L}$, if every axiom and rule of $\mathbf{CK}$ is feasibly provable in $G$;
\item
$\mathcal{L}_\Box$, if every axiom and rule of $\mathbf{CK}_\Box$ is feasibly provable in $G$;
\item
$\mathcal{L}_\Diamond$, if every axiom and rule of $\mathbf{BLL}$ is feasibly provable in $G$;
\item
$\mathcal{L}_p$, if every axiom and rule of $\mathbf{LJ}$ is feasibly provable in $G$.
\end{itemize}
\end{dfn}

\begin{rem}\label{RemarkRule}
Clearly if a rule is in $G$, it is feasibly provable in $G$. Hence, if $G$ is defined over $\mathcal{L}$ and has all the axioms and rules of $\mathbf{CK}$ as its axioms and rules, then $G$ is trivially strong over $\mathcal{L}$. A corresponding claim also holds for the fragments of $\mathcal{L}$. 
\end{rem}


\begin{dfn}
Let $G$ and $H$ be two sequent calculi over the same language. A \emph{polynomial deduction simulation} (\emph{pd-simulation}) of $G$ in $H$ is
a polynomial time function $f$ such that for any proof $\pi$ of $S$ from $\{S_i\}_{i \in I}$ in $G$, $f(\pi)$ is a proof of $S$ from $\{S_i\}_{i \in I}$ in $H$. We say $H$ \emph{pd-simulates} $G$, denoted by $G \leq_{pd} H$ when there is a pd-simulation of $G$ in $H$. The calculi $G$ and $H$ are \emph{pd-equivalent}, when $G \leq_{pd} H$ and $H \leq_{pd} G$. Sequent calculi $G$ and $H$ are \emph{d-equivalent} if $\{S_i\}_{i \in I} \vdash_G S$ if and only if $\{S_i\}_{i \in I} \vdash_H S$, for any family of sequents $\{S_i\}_{i \in I} \cup \{S\}$.
\end{dfn}

\begin{rem}
The well-known notion of p-simulation of one proof system by another is a feasible machinery to simulate proofs without any assumptions. Our pd-simulation strengthens this notion to also cover \emph{deductions}, i.e., proofs where assumptions can also be present. For the specific systems we are interested in, these two notions are equivalent. However, to avoid any confusion, throughout this paper, we only use the pd-simulations.
\end{rem}


\begin{lem}\label{LocalToGlobal}
Let $G$ and $H$ be two sequent calculi over $\mathfrak{L} \in \{\mathcal{L}, \mathcal{L}_\Box, \mathcal{L}_\Diamond, \mathcal{L}_p\}$.
\begin{enumerate}
    \item \label{Lemmamorede1}
    For any rule $R$ over $\mathfrak{L}$, if $G \leq_{pd} H$ and $R$ is  feasibly provable in $G$, then $R$ is feasibly provable in $H$.
    \item \label{Lemmamorede2}
    If each rule of $G$ is feasibly provable in $H$, then $G \leq_{pd} H$.
\end{enumerate}
\end{lem}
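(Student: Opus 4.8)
The plan is to prove both parts by composing the relevant polynomial-time procedures, reading a proof as the sequence of sequents given in the definition.

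For part \ref{Lemmamorede1}, I would let $g$ denote a pd-simulation of $G$ in $H$ and let $f_R$ be the polynomial-time algorithm witnessing that $R$ is feasibly provable in $G$. Given an $\mathfrak{L}$-instance of $R$ with premises $\{S_i\}_{i\in I}$ and conclusion $S$, I first run $f_R$ to obtain a $G$-proof $\pi = f_R(\{S_i\}_{i\in I}, S)$ of $S$ from $\{S_i\}_{i \in I}$, and then apply $g$ to get $g(\pi)$, which is an $H$-proof of $S$ from the same set of assumptions, since a pd-simulation preserves both the conclusion and the assumption set. The map $(\{S_i\}_{i\in I}, S) \mapsto g(f_R(\{S_i\}_{i\in I}, S))$ is the required witness, and it is polynomial time because the composition of two polynomial-time functions is polynomial time: the intermediate output $\pi$ has size polynomial in the size of the instance, and $g$ runs in time polynomial in $|\pi|$.

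For part \ref{Lemmamorede2}, I would build the pd-simulation $f$ by local replacement along a given $G$-proof. Since $G$ is a finite set of rules, for each rule $R$ of $G$ (including the axioms, viewed as premise-free rules) I fix the polynomial-time algorithm $f_R$ witnessing its feasible provability in $H$. Given a $G$-proof $\pi = (S_1, \ldots, S_m)$ of $S = S_m$ from $\{T_i\}_{i\in I}$, I process the lines in order and attach to each $j$ a short $H$-derivation $\rho_j$: if $S_j$ is one of the assumptions $T_i$, take $\rho_j = (S_j)$; if $S_j$ is a $G$-axiom instance, let $\rho_j$ be the $H$-proof of $S_j$ from $\varnothing$ returned by the corresponding $f_R$; and if $S_j$ is obtained by a rule $R$ of $G$ from earlier lines $S_{j_1}, \ldots, S_{j_k}$, let $\rho_j$ be the $H$-proof of $S_j$ from $\{S_{j_1}, \ldots, S_{j_k}\}$ returned by $f_R$. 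The output $f(\pi)$ is the concatenation $\rho_1 \cdots \rho_m$. I would then check the two requirements of the definition. First, $f(\pi)$ is a genuine $H$-proof of $S$ from $\{T_i\}_{i\in I}$: every line is one of the global assumptions $T_i$, an $H$-axiom instance, or derived by an $H$-rule from earlier lines inside the same $\rho_j$; the assumption lines of a segment $\rho_j$ are exactly the sequents $S_{j_1}, \ldots, S_{j_k}$ with $j_l < j$, which already occur earlier in the concatenation as the conclusions of $\rho_{j_1}, \ldots, \rho_{j_k}$, and identifying each such assumption occurrence with its earlier justified occurrence makes the whole sequence conform to the definition of a proof. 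Since $\rho_m$ ends in $S_m = S$, the conclusion and the assumption set are preserved, so $f$ witnesses $G \leq_{pd} H$. Second, $f$ is polynomial time: each $f_R$ runs in time polynomial in the size of its input instance, which is at most $|\pi|$, so each $\rho_j$ is computed in time (and has size) polynomial in $|\pi|$, and as there are $m \le |\pi|$ segments the total is polynomial in $|\pi|$.

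The computations are essentially routine, and I expect the only point requiring genuine care to be the bookkeeping in part \ref{Lemmamorede2}: verifying that splicing the local $H$-derivations together still satisfies the literal definition of a proof once the repeated assumption sequents are identified with their earlier derived occurrences, and confirming that the cumulative size stays polynomial. Both hinge on the elementary facts that a polynomial-time algorithm can only emit a polynomially-sized object and that $G$ has only finitely many rules, so that a single bound covers all the calls to the finitely many algorithms $f_R$.
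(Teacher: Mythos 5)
Your proposal is correct and follows essentially the same route as the paper: part \ref{Lemmamorede1} by composing the pd-simulation with the witnessing algorithm $f_R$, and part \ref{Lemmamorede2} by locally replacing each rule instance in the given $G$-proof with the $H$-derivation supplied by the corresponding $f_R$, using the finiteness of $G$ to get a single polynomial bound and the fact that every sequent in $\pi$ has size at most $|\pi|$ to bound the total running time. Your extra bookkeeping about splicing the segments and identifying repeated assumption occurrences with their earlier derived occurrences is a detail the paper passes over silently, but it is the correct justification of the same construction.
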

\begin{proof}
Part \ref{Lemmamorede1} is easy. For \ref{Lemmamorede2}, by the assumption, for each rule $R$ of $G$, there is a feasible function $f_R$ such that for any instance $(\{S_k\}_{k \in K}, S)$ of $R$, $f_R(\{S_k\}_{k \in K}, S)$ is an $H$-proof of $S$ from the assumptions $\{S_k\}_{k \in K}$. Therefore, $f_R$ is computable in time $O((\sum_{k \in K}|S_k|+|S|)^{c_R})$, for some constant $c_R$. As $G$ has finitely many rules, we set $c$ as the maximum of all $c_R$'s. Hence, each  $f_R$ is computable in time $O((\sum_{k \in K}|S_k|+|S|)^{c})$. Now, to simulate any $G$-proof $\pi$ in $H$, it is enough to replace the instance $(\{S_k\}_{k \in K}, S)$ of any rule $R$ used in $\pi$ by $f_R(\{S_k\}_{k \in K}, S)$. Call the result of this replacement $g(\pi)$ and notice that $g(\pi)$ is a proof in $H$. To compute the time that $g$ requires, note that any sequent in $\pi$ is bounded in size by $|\pi|$. Therefore, for each $f_R(\{S_k\}_{k \in K}, S)$ in $g(\pi)$, we need $O(|\pi|^{c})$ steps for the computation and as the number of the rules in $\pi$ is less than $|\pi|$, the whole time that $g(\pi)$ requires is $O(|\pi|^{c+1})$.
\end{proof}

\begin{rem}
Let $\mathcal{A}$ be a finite set of axioms and $G$ be a sequent calculus in which the weakening rules are feasibly provable. Recall that we defined $G+\mathcal{A}$ as a sequent calculus obtained by adding the meta-sequent $(\,\Rightarrow A)$ as an axiom, for any $A \in \mathcal{A}$. It is also possible to define $G+\mathcal{A}$ as the calculus obtained by adding the meta-sequent $\Gamma \Rightarrow A$, for any $A \in \mathcal{A}$, where $\Gamma$ is a multiset variable. Call this new definition $G+^{'} \mathcal{A}$. We claim that $G+^{'} \mathcal{A}$ and $G+^{'} \mathcal{A}$ are pd-equivalent. It is clear that $G+^{'} \mathcal{A}$ pd-simulates $G+ \mathcal{A}$. For the converse, as the weakening rule is feasibly provable in $G$, it is easy to see that the axiom $\Gamma \Rightarrow A$ is feasibly provable in $G$ and hence by Lemma \ref{LocalToGlobal}, $G+ \mathcal{A}$ pd-simulates $G+^{'} \mathcal{A}$. Using this observation, from now on, as we only work with the calculi that feasibly prove the weakening rules, we will use these two definitions interchangeably.
\end{rem}

\section{Constructive Formulas and Rules} \label{SectionAlmostnegative}
One of the main goals of the present paper is to identify a general form for \textit{constructive rules} in the intuitionistic realm of modal logics.  Informally speaking, by a constructive rule, we mean a rule that its addition to the base system $\mathbf{CK}$ respects the constructive character of the intuitionistic ground. In this section, we first provide a formalization for these constructive rules. Then, by presenting many examples, we see how general these rules are and finally we provide a justification for our choice.

Our strategy to find a natural candidate for constructive rules consists of two parts. First, by employing the least possible syntactical limit on formulas, we introduce a class of \textit{constructive formulas} as the simplest case of the general form of the constructive rules. 
By a constructive formula, we mean a formula $A$ such that if we extend $\mathbf{CK}$ with the axiom $(\, \Rightarrow A)$, the calculus remains constructively acceptable. Second, we exhaustively investigate a
general form for rules and select the ones that are equivalent to a constructive formula. To show that this syntactical limit provides a \emph{true} constructive system, in Section \ref{SectionMain} we show that any calculus extending $\mathbf{CK}$ by some constructive rules and satisfying a mild technical condition admits the feasible version of Visser's rules and specially has the feasible disjunction property. This provides a substantial evidence for the claim that the introduced rules are constructive, as well as a uniform machinery to prove the feasible version of Visser's rules for a huge family of intuitionistic modal logics. 

\subsection{Constructive Formulas}\label{SubsectionConstructiveFormulas}
To identify the aforementioned class of constructive formulas, we employ the well-known heuristic that constructive formulas, whatever they are, must be careful with positive occurrences of the \emph{disjunction-like} connectives, i.e., $\{\vee, \Diamond\}$. For instance, the axioms\footnote{Although the sequent $(\, \Rightarrow A)$ is the axiom, for simplicity, we also call $A$ an axiom.}
$\neg (p \wedge q) \rightarrow \neg p \vee \neg q$ and
$ \neg \Box p \rightarrow \Diamond \neg p$ are clearly non-constructive.  The former proves the axiom of the weak excluded middle, i.e., $\neg r \vee \neg \neg r$, that breaks the disjunction property and hence is not constructively acceptable. For the latter, using the Kripke-style first-order reading of a modal formula, $ \neg \Box p \rightarrow \Diamond \neg p$  is reminiscent of the first-order formula $\neg \forall x P(x) \to \exists x \neg P(x)$, which is not constructively acceptable.

Following this heuristic, one might naively demand that constructive formulas must be defined as formulas that have no positive occurrences of the connectives $\{\vee, \Diamond\}$. Here are two objections to this proposal. First, note that this restriction is too strict and rejects even some constructively accepted formulas with positive occurrences of $\{\vee, \Diamond\}$ that are essential for a constructive reasoning. For instance, consider the axioms
$p \rightarrow  p \vee q$
and $ \Diamond p \vee \Diamond q \rightarrow  \Diamond(p \vee q)$, where the former is even an axiom of the intuitionistic logic. 
Second, even in such a harsh form, the restriction may not be sufficient to ensure the constructive character of the resulting system. The reason is that the disjunction-like connectives are sometimes introduced in an indirect manner via an occurrence of a nested \emph{implication-like} connective, i.e., $\{\Box, \to\}$. For instance, the axiom 
$\neg \neg p \rightarrow p$ indirectly proves $q \vee \neg q$ in a constructively acceptable manner. Hence, to have a constructive axiom, we should also be careful with nested implication-like connectives. Here, one might object that what really is problematic is nested implications and boxes are harmless. However, reading the box as what its first-order interpretation dictates, it is clear that the box has the same character as the universal quantifier and hence behaves similarly as the implication.

To solve these issues, we first extend the heuristic of avoiding the positive occurrences of $\{\vee, \Diamond\}$, by also adding a limit on the depth of the nested implication-like connectives. We limit this depth to two, as the problematic formulas such as $\neg \neg p \rightarrow p$ have the depth three or more.\footnote{By the depth function, we formally mean the function defined by:
$d(p)=d(\bot)=d(\top)=0$, $d(\phi \circ \psi)=max\{d(\phi), d(\psi)\}$, for any $\circ \in \{\wedge, \vee\}$, $d(\Diamond \phi)=d(\phi)$, $d(\phi \to \psi)=max\{d(\phi)+1, d(\psi)\}$ and $d(\Box \phi)=max\{d(\phi), 1\}$. For instance, the depth of $(p \to q) \to r$ and $\Box \Box p \to q$ are two, while the depth of $p \to (q \to r)$ and $p \to \Box q$ are one. However, for the sake of our informal discussion, what we need is an informal notion of depth, counting the depth of the nested implications appearing in the antecedents of the implications.} 
This new limit hopefully solves the second problem we encountered. For the first problem, though, we need to observe that the real problem with the disjunction-like connectives is not their positive occurrences but the mix of such occurrences with the implication-like connectives. For instance, think about the formulas $p \vee \neg p$ and $(p \to q) \vee (q \to p)$, where the disjunction-like connectives are applied on the formulas with an implication-like connective inside. Having this observation, it seems that if we confine the disjunction-like connectives to some blocks that have no occurrence of the implication-like connectives, then substituting the atoms of an acceptable formula with these blocks may keep the formula unproblematic. Therefore, as the final proposal, we define the class of all constructive formulas as the least class of formulas containing the ones without any positive occurrence of $\{\vee, \Diamond\}$ or any nested occurrence of $\{\Box, \to\}$ with depth three or more, closed under the substitution by the formulas that only consist of $\{\wedge, \vee, \Diamond\}$. Note that by excluding $\{\Box, \to\}$ from the substituting formulas, we also avoid any increase in depth which has a crucial role in our investigation. We will see that this weakening covers many natural constructively acceptable axioms with positive occurrences of disjunction-like connectives, while it is still constructively acceptable. To justify the latter claim, we show that if we add any of these formulas to $\mathbf{CK}$ as new axioms, if the result satisfies a mild technical condition, the resulting system admits all Visser's rules and specially enjoys the disjunction property. 

\noindent In the following, we see a more precise definition for constructive formulas.
\begin{dfn}\label{DefPositiveFormulas}
We define the following three sets of $\L$-formulas: 
\begin{itemize}
\item
The set of \emph{basic} formulas is the smallest set containing the atomic formulas and the constants $\top$ and $\bot$ and closed under $\{\wedge, \vee, \Diamond\}$.
\item
The set of \emph{almost positive} formulas is the smallest set containing the basic formulas and closed under $\{\wedge, \vee, \Box, \Diamond\}$ and the implications of the form $A \to B$, where $A$ is basic and $B$ is almost positive.
\item
The set of \emph{constructive} formulas is the smallest set containing the basic formulas and closed under $\{\wedge, \Box\}$ and the implications of the form $A \to B$, where $A$ is almost positive and $B$ is constructive.
\end{itemize}
\noindent A formula in the languages $\mathcal{L}_{\Box}$, $\mathcal{L}_{\Diamond}$ and $\mathcal{L}_p$ is called basic, almost positive or constructive, if it is basic, almost positive or constructive as a formula in the extended language $\mathcal{L}$.
\end{dfn}

\begin{rem}
Note that the basic formulas are exactly the ones used for the substitutions in the opening discussion of Subsection \ref{SubsectionConstructiveFormulas}. Another point is that it is customary to call the formulas constructed from the atoms by the positive operators $\{\wedge, \vee, \Box, \Diamond\}$, the positive formulas. Our almost positive formulas deviate from this usual definition by allowing a very limited form of implication, i.e., the ones with the basic antecedents. Therefore, almost positive formulas provide a definition for the formulas that avoid any nested application of the implication-like connectives. Finally, notice that the constructive formulas are exactly the ones we motivated before. First, by their structure, we see that they do not contain any positive occurrence of the connectives $\{\vee, \Diamond\}$, except when the occurrences are confined in the basic formulas. Second, the antecedent of any implication in a constructive formula is almost positive which implies that the depth of the nested implication-like connectives in a constructive formula is at most two.
\end{rem}

\begin{exam} \label{ExampleOfFormulas}  The following table provides some examples and non-examples for basic, almost positive and constructive formulas

\vspace{-15pt}
\footnotesize \begin{center}
\begin{tabular}{c c c}
 & \textbf{is} & \textbf{is not} \\
 
 \hline

\textbf{basic} & $p \wedge q$, $p \vee q$, $\Diamond^n p$ 
& \hspace{-10pt}$\neg p$, $p \to q$, $\Box p$\\

\hline

\textbf{almost \!positive} & $\neg p$, $(p \vee \neg p)$, $\Diamond^m \Box^n p$, $\Box^m \Diamond^n p$ & \hspace{-10pt}$\neg \neg p$, $(p \to q) \to r$, $\Box p \to q$\\

\hline

\textbf{constructive} &  $\neg \neg p$, $(p \to q) \to r$, $\Box^m \Diamond^n p$  , $\Box (\Diamond p \vee q)$ & \hspace{-10pt}$(p \vee \neg p)$, $\Diamond^{m+1} \Box^{n+1} p$\\

\end{tabular}
\end{center}
\normalsize
where $m,n \geq 0$. It is easy to see that the formulas $p$, $(p \wedge q)$, $(p \vee q)$, $(p \to q)$, $\neg p$, $\Box p$, $\Diamond p$ as well as all the basic formulas are both almost positive and constructive. For more constructive formulas, see Table \ref{tableAxiom}. To have some examples of the formulas that are neither almost positive nor constructive, consider $((p \to q) \to r) \to s$ and $(\Box p \to q) \to r$.
\end{exam}



\subsection{Constructive Rules}
In this subsection, we present our proposal for the promised constructive rules and see some of their examples and non-examples. Then, in Subsection \ref{SubsectionJustification}, to justify our proposal, we present a general form for the sequent-style rules and show that the constructive rules are exactly the ones that are equivalent to a constructive formula over $\mathbf{CK}$. This observation provides a justification for our definition for the constructive rules and its tightly chosen form.
\begin{dfn} \label{DefnegativeRules}
 Let $I$ and $J$ be finite (possibly empty) index sets, $\Gamma$ and $\Delta$ multiset variables, $\overline{P}, \overline{P_i}$ multisets of almost positive formulas, $\overline{C}, \overline{C_j}$ multisets of constructive formulas, and $\overline{B_i}$ a multiset of basic formulas, for any $i \in I$ and $j \in J$. A single-conclusion rule is called:

\begin{itemize}
\item
\emph{left constructive}, when it is of the form
\begin{center}
\begin{tabular}{c c c}
\AxiomC{$ \{ \Gamma, \overline{B_i} \Rightarrow \overline{P_i} \}_{i \in I}$}
\AxiomC{$ \{ \Gamma, \overline{C_j} \Rightarrow \Delta \}_{j \in J}$}
 \BinaryInfC{$\Gamma, \overline{P} \Rightarrow  \Delta $}
 \DisplayProof
\end{tabular}
\end{center}
and if $|J| > 1$, then all the formulas in $\bigcup_{j \in J} (\overline{C_j})$ are basic. Note that by convention, we assume that even if $|J|=0$, the multiset variable $\Delta$ appears in the succedent of the conclusion.




\item 
\emph{right constructive}, when it is of the form
\begin{center}
\begin{tabular}{c}
 \AxiomC{$\{ \Gamma, \overline{B_i} \Rightarrow \overline{P_i} \}_{i\in I}$}
 \UnaryInfC{$\Gamma, \overline{P} \Rightarrow \overline{C}$}
 \DisplayProof
\end{tabular}
\end{center}
\end{itemize}
Note that as both types of the rules are single-conclusion, each of the multisets $\overline{P_i}, \overline{C}$, and $\Delta$ can have at most one formula.

As a special case, an axiom is called \emph{constructive} if it is either of the form $\Gamma, \overline{P} \Rightarrow \Delta$ or of the form $\Gamma, \overline{P} \Rightarrow \overline{C}$. 
\end{dfn}

\begin{rem}
Here is a terminological remark. Note that the form we used for our left constructive rules generalizes the usual form of the left rules in the single-conclusion sequent-style systems. For instance, consider the left rules $(L \to), (L \wedge)$ and $(L \vee)$ in the calculus $\mathbf{LJ}$. However, our form is slightly more general, as in $\overline{P}$ in the antecedent of the conclusion, we also allow the empty set or more than one formula to appear. The form we use in our right constructive rules also generalizes the usual form of the right rules such as $(R \to), (R \wedge)$ and $(R \vee)$. It is also slightly more general, as it allows a multiset $\overline{P}$ in the antecedent of the conclusion and the empty set in its succedent. One might object that as the right rules have formulas both in the antecedent and succedent of the conclusion, it is not reasonable to call them right, anymore. However, in the presence of $\mathbf{CK}$, having formulas in the left hand side in $\overline{P}$ is not a real extension, as one can safely change $\overline{P}$ and $\overline{C}$ to $\bigwedge \overline{P}$ and $\bigvee \overline{C}$, respectively and then move $\bigwedge \overline{P}$ to the right, by $(R \to)$. Note that the formula $\bigwedge \overline{P} \to \bigvee \overline{C}$ is still constructive. It is also possible to reveres the changes. It is enough to use cut with the $\mathbf{CK}$-provable sequent $\bigwedge \overline{P} \to \bigvee \overline{C}, \bigwedge \overline{P} \Rightarrow \bigvee \overline{C}$ and then cut with the $\mathbf{CK}$-provable sequents $\overline{P} \Rightarrow \bigwedge \overline{P}$ and $\bigvee \overline{C} \Rightarrow \overline{C}$. Therefore, one can argue that the right constructive rule as introduced is still essentially a right rule.
\end{rem}

\begin{rem}
To avoid confusion, let us emphasize that if we accept a constructive rule in a system, all of its substitutions are also allowed. However, it does not mean that a substitution of a constructive rule is also constructive. For instance, although the rule $(L \vee)$ is constructive, its substituted version
\begin{center}
\begin{tabular}{c}
 \AxiomC{$\Gamma, p \Rightarrow \Delta $} 
 \AxiomC{$\Gamma, \neg p \Rightarrow \Delta $} 
 \BinaryInfC{$\Gamma, p \vee \neg p \Rightarrow \Delta$}
 \DisplayProof
\end{tabular}
\end{center}
is not constructive, as the rule has two premises while $\neg p$ is not basic.
\end{rem}

\begin{dfn}\label{ConstructiveSystem}
Let $\mathfrak{L} \in \{\mathcal{L}, \mathcal{L}_\Box, \mathcal{L}_\Diamond, \mathcal{L}_p\}$ be a language. A rule $R$ over $\mathfrak{L}$ is called \emph{constructive} if it is constructive as a rule over the extended language $\mathcal{L}$. A sequent calculus over $\mathfrak{L}$ is called \emph{constructive}, when its axioms and rules are all either constructive or
\begin{itemize}
\item
$(K_{\Box})$ or $(K_{\Diamond})$, if $\mathfrak{L}=\mathcal{L}$;
\item
$(K_{\Box})$, if $\mathfrak{L}=\mathcal{L}_{\Box}$;
\item
no other rules, if $\mathfrak{L}=\mathcal{L}_{\Diamond}$ or $\mathfrak{L}=\mathcal{L}_{p}$.
\end{itemize}
\end{dfn}


\begin{table}[!ht]
  \centering
  \vline 
\resizebox{\columnwidth}{!}{%
  \begin{tabular}{ p{4cm} p{4cm} p{4cm} p{3cm} } 
   \toprule \small
   \AxiomC{$\Gamma \Rightarrow \Diamond \bot$}
 \UnaryInfC{$ \Gamma \Rightarrow \bot$}
\DisplayProof
& \small
\AxiomC{$\Gamma \Rightarrow \Diamond (p \vee q)$}
 \UnaryInfC{$ \Gamma \Rightarrow \Diamond p \vee \Diamond q$}
 \DisplayProof
 & 
 \small \AxiomC{$\Gamma, \Diamond p \Rightarrow \Box q$}
 \UnaryInfC{$ \Gamma \Rightarrow \Box(p \to q)$}
  \DisplayProof
  &
  \small  \AxiomC{$\Gamma \Rightarrow \Box p $}
 \UnaryInfC{$ \Gamma \Rightarrow \Diamond p$}
\DisplayProof\\
 \midrule
 
 \small \AxiomC{}
 \UnaryInfC{$\Gamma, \Diamond \bot \Rightarrow \Delta$}
 \DisplayProof 
 &
 \small \AxiomC{}
 \UnaryInfC{$\Gamma, \Box p \Rightarrow \Diamond p$}
 \DisplayProof 
 &
\multicolumn{2}{c}{\small \AxiomC{$\Gamma, \Diamond p \Rightarrow \Delta$}
 \AxiomC{$\Gamma, \Diamond q \Rightarrow \Delta$}
 \BinaryInfC{$\Gamma, \Diamond (p \vee q) \Rightarrow \Delta$}
 \DisplayProof }
 \\
 \midrule
  
 \small
 \AxiomC{$\Gamma \Rightarrow \Box p$}
 \UnaryInfC{$ \Gamma \Rightarrow p$}
 \DisplayProof
& \small
\AxiomC{$\Gamma \Rightarrow p$}
 \UnaryInfC{$ \Gamma \Rightarrow \Diamond p$}
 \DisplayProof
 & \small
 \AxiomC{$\Gamma \Rightarrow  \Box p$}
 \UnaryInfC{$ \Gamma \Rightarrow \Box \Box p$}
 \DisplayProof
& \small
\AxiomC{$\Gamma \Rightarrow \Diamond \Diamond p$}
 \UnaryInfC{$ \Gamma \Rightarrow \Diamond p$}
 \DisplayProof \\
 \midrule
 
 \small \AxiomC{$\Gamma \Rightarrow \Diamond \Box p$}
 \UnaryInfC{$ \Gamma \Rightarrow p$}
 \DisplayProof
& \small
\AxiomC{$\Gamma \Rightarrow p$}
 \UnaryInfC{$ \Gamma \Rightarrow \Box \Diamond p$}
 \DisplayProof 
 &
  \small \AxiomC{$\Gamma \Rightarrow \Diamond \Box p$}
 \UnaryInfC{$ \Gamma \Rightarrow   \Box p$}
 \DisplayProof
 &
 \small \AxiomC{$\Gamma \Rightarrow   \Diamond p$}
 \UnaryInfC{$ \Gamma \Rightarrow \Box \Diamond p$}
 \DisplayProof
  \\
 \midrule

\small \AxiomC{$\Gamma \Rightarrow \Box^{n+1} p$}
 \UnaryInfC{$ \Gamma \Rightarrow \Box^n p$}
 \DisplayProof
& \small \AxiomC{$\Gamma \Rightarrow \Diamond^n p$}
 \UnaryInfC{$ \Gamma \Rightarrow \Diamond^{n+1} p$}
 \DisplayProof
 &
  \small \AxiomC{$\Gamma \Rightarrow \Box^n p$}
 \UnaryInfC{$ \Gamma \Rightarrow \Box^m p$}
 \DisplayProof
 &
 \small \AxiomC{$\Gamma \Rightarrow \Diamond^m p$}
 \UnaryInfC{$ \Gamma \Rightarrow \Diamond^n p$}
 \DisplayProof
  \\
 \midrule
 
\small \AxiomC{$\{ \Gamma \Rightarrow \Box^i p\}_{i=0}^n$}
 \UnaryInfC{$ \Gamma \Rightarrow \Box^{n+1} p$}
 \DisplayProof
 &
\small \AxiomC{$ \Gamma \Rightarrow \Diamond^{n+1} p$}
 \UnaryInfC{$ \Gamma \Rightarrow \bigvee_{0 \leq i}^n \Diamond^{i} p$}
 \DisplayProof
 &
 \small \AxiomC{$\Gamma \Rightarrow \Diamond \Box p$}
 \UnaryInfC{$ \Gamma \Rightarrow \Box \Diamond p$}
 \DisplayProof
 &
 \small \AxiomC{$\Gamma \Rightarrow \Diamond^k \Box^l p$}
 \UnaryInfC{$ \Gamma \Rightarrow \Box^m \Diamond^n p$}
 \DisplayProof
 \\
 \midrule
 
 \small \AxiomC{$\Gamma, p \Rightarrow \Delta$}
 \UnaryInfC{$ \Gamma, \Box p \Rightarrow \Delta$}
 \DisplayProof
 &
\small \AxiomC{$\Gamma \Rightarrow p$}
 \UnaryInfC{ $\Gamma \Rightarrow   \Box p$}
\hspace*{-1em}
\DisplayProof 
 & 
\small  \AxiomC{$\Gamma, p \Rightarrow \Delta $}
 \UnaryInfC{$ \Gamma, \Diamond p \Rightarrow \Delta$}
 \hspace*{-3em}
 \DisplayProof
 &
 \small \begin{tabular}{c c}
 
 \AxiomC{$ $}
 \UnaryInfC{$ \Gamma, \Box \bot \Rightarrow \Delta$}
 \hspace*{-6em}
 \DisplayProof
 \hspace{1em}
 \small \AxiomC{$ $}
 \UnaryInfC{$ \Gamma \Rightarrow \Diamond \top$}
 \DisplayProof 
  \end{tabular}
 \\
 \midrule
 
\small \AxiomC{$ \Gamma \Rightarrow \Diamond (p \wedge \Box q)$}
 \UnaryInfC{$ \Gamma \Rightarrow \Box (p \vee \Diamond q )$}
 \DisplayProof
 &
 \small \AxiomC{$\Gamma \Rightarrow \neg \Diamond p$}
 \UnaryInfC{$\Gamma \Rightarrow \Box \neg p$}
 \DisplayProof
 &
 \small \AxiomC{$\Gamma \Rightarrow \Box \neg p$}
 \UnaryInfC{$\Gamma \Rightarrow \neg \Diamond p$}
 \DisplayProof 
 &\small   \AxiomC{$\Gamma \Rightarrow  \Diamond \neg p$}
 \UnaryInfC{$\Gamma \Rightarrow \neg \Box p$}
 \DisplayProof
 \\
 \midrule  
 
 \small \AxiomC{$\{ \Gamma, \Diamond^{i} p \Rightarrow \Delta\}_{i=0}^n$}
 \UnaryInfC{$ \Gamma, \Diamond^{n+1} p \Rightarrow \Delta$}
 \DisplayProof
 &
 \small \AxiomC{$\Gamma, \Diamond p \Rightarrow  \bot$}
 \UnaryInfC{$\Gamma \Rightarrow \Box \neg p$}
 \DisplayProof
 &
 \small \AxiomC{$\Gamma \Rightarrow \Box \neg p$}
 \UnaryInfC{$\Gamma, \Diamond p \Rightarrow \Delta$}
 \DisplayProof 
 &
 \small   \AxiomC{$\Gamma \Rightarrow  \Diamond \neg p$}
 \UnaryInfC{$\Gamma, \Box p \Rightarrow \Delta$}
 \DisplayProof
 
 \\
 \midrule

\multicolumn{2}{c}{\small \AxiomC{$\{\Gamma, \Diamond (p_i \wedge (p_j \vee \Diamond p_j)) \Rightarrow \Delta \}_{0 \leq i \neq j \leq r} $}
 \UnaryInfC{$ \Gamma, \{\Diamond p_i\}_{i=0}^r \Rightarrow \Delta$}
 \hspace*{2em}\DisplayProof}
 &
 \multicolumn{2}{c}{\small \AxiomC{$\Gamma, p_n \Rightarrow \Delta$}
 \UnaryInfC{$ \Gamma, \Diamond (\Box p_n \wedge bd_n) \Rightarrow \Delta$}
 \DisplayProof}
 \\
 \midrule 
 
\multicolumn{2}{c}{\small \AxiomC{$\Gamma \Rightarrow p$}
 \UnaryInfC{$ \Gamma \Rightarrow \Box (\Diamond p \to p)$}
 \DisplayProof}
&
\multicolumn{2}{c}{\small \AxiomC{$\Gamma \Rightarrow \Diamond (\Box p \wedge q)$}
 \UnaryInfC{$ \Gamma \Rightarrow \Box (\Diamond p \vee q)$}
 \DisplayProof}\\
   \bottomrule  
 \end{tabular}\vline }
\caption{\footnotesize The numbers $n, m, l, k \geq 0$ and $r\geq 1$ are arbitrary natural numbers. The formula $bd_{n}$ is defined recursively by $bd_0=\bot$, and $bd_{n+1}=p_n \vee \Box (\Diamond \neg p_n \vee bd_n)$.\label{table}}
\end{table}

\normalsize\begin{exam}
The rules of Table \ref{table} correspond to the axioms in Table \ref{tableAxiom}. Some of them may seem unfamiliar or even unnatural. However, as the presence of the cut rule in the studied systems will be assumed, they are equivalent to the usual rules used for the corresponding axioms. Moreover, notice that some of the rules in Table \ref{table} are equivalent. The point to mention the different versions is to convey the different forms that a rule might have. \\
Now, referring to Example \ref{ExampleOfFormulas}, we easily see that the rules of $\mathbf{LJ}$, and the rules in Table \ref{table} are all constructive. For instance, consider the rules:
\begin{center}
\begin{tabular}{c c c}
 \AxiomC{$\Gamma \Rightarrow p $}
  \AxiomC{$\Gamma, p \Rightarrow \Delta $}
    \RightLabel{\small$(cut)$}
 \BinaryInfC{$ \Gamma \Rightarrow \Delta$}
 \DisplayProof
 &
 \AxiomC{$\Gamma \Rightarrow p$}
  \RightLabel{\small$(R\vee)$}
 \UnaryInfC{$ \Gamma \Rightarrow p \vee q$}
 \DisplayProof
 &
 \AxiomC{$\Gamma, p \Rightarrow q$}
  \RightLabel{\small$(R\to)$}
 \UnaryInfC{$ \Gamma \Rightarrow p \to q$}
 \DisplayProof
\end{tabular}
\end{center}
The cut rule is left constructive as $p$ is both almost positive and constructive. The rules $(R\vee)$ and $(R\to)$ are right constructive as $p$ is basic, $q$ is almost positive and $p \to q$ and $p \vee q$ are both constructive. 
To see a modal example, consider the following rule which corresponds to the axiom $(ga_{klmn})$:
\begin{center}
\begin{tabular}{c c c}
 \AxiomC{$\Gamma \Rightarrow \Diamond^k \Box^l p $}
 \UnaryInfC{$ \Gamma \Rightarrow \Box^m \Diamond^n p$}
 \DisplayProof
\end{tabular}
\end{center}
It is a right constructive rule as $\Diamond^k \Box^l p$ is almost positive and $\Box^m \Diamond^n p$ is constructive. This implies that all the rules in the third, fourth and fifth rows of Table \ref{table} 
are also constructive, for they are special cases of the rule we just mentioned. To have more complex examples, consider:
\begin{center}
\begin{tabular}{c c}
\AxiomC{$\Gamma \Rightarrow \Diamond (p \vee q)$}
 \UnaryInfC{$ \Gamma \Rightarrow \Diamond p \vee \Diamond q$}
 \DisplayProof \;\;\;
 & 
\AxiomC{$\Gamma, \Diamond p \Rightarrow \Box q$}
 \UnaryInfC{$ \Gamma \Rightarrow \Box(p \to q)$}
\DisplayProof
\end{tabular}
\end{center}
The left rule is a right constructive rule as $\Diamond (p \vee q)$ is almost positive and $\Diamond p \vee \Diamond q$ is basic and hence constructive. The right rule is also a right constructive rule as $\Diamond p$ is basic, $\Box q$ is almost positive, and $\Box(p \to q)$ is a constructive formula. 
To have an example that uses the full power of the definition of the basic formulas, consider the rule:
\begin{center}
\begin{tabular}{c}
 \AxiomC{$\{ \Gamma \Rightarrow \Diamond p_i\}_{i=0}^r$}
 \UnaryInfC{$ \Gamma \Rightarrow \bigvee_{0 \leq i \neq j}^r \Diamond (p_i \wedge (p_j \vee \Diamond p_j))$}
 \DisplayProof
\end{tabular}
\end{center}
Despite its complicated form, the rule is also a right constructive rule as the complex formula $\bigvee_{0 \leq i \neq j}^r \Diamond (p_i \wedge (p_j \vee \Diamond p_j))$ is basic and $\Diamond p_i$ is almost positive. 
\end{exam}

\begin{exam}\label{Non-example}
To have some non-examples, consider the following rules:
\begin{center}
\small{
\begin{tabular}{c c c c}
 \AxiomC{ }
 \UnaryInfC{$\Gamma \Rightarrow p \vee \neg p $}
 \DisplayProof
&
 \AxiomC{$\Gamma \Rightarrow \neg \neg p$}
 \UnaryInfC{$\Gamma \Rightarrow p $}
 \DisplayProof
 &
 \AxiomC{$\Gamma, \neg p \Rightarrow \bot$}
 \UnaryInfC{$\Gamma \Rightarrow p $}
 \DisplayProof
 &
 \AxiomC{$\Gamma, p \Rightarrow \Delta$}
  \AxiomC{$\Gamma, \neg p \Rightarrow \Delta$}
 \BinaryInfC{$\Gamma \Rightarrow \Delta$}
 \DisplayProof
\end{tabular}}
\end{center}
None of these rules are constructive. For instance, the leftmost rule is an axiom, and to be a constructive axiom, the formula in the succedent of the sequent must be a constructive formula. However,  according to Definition \ref{DefPositiveFormulas} and as observed in Example \ref{ExampleOfFormulas}, the formula $p \vee \neg p$ is not constructive. For the other three, the reason is that the formula $\neg \neg p$ is not almost positive and $\neg p$ is not basic as observed in Example \ref{ExampleOfFormulas}. Another non-example is 
\begin{center}
\small{
\begin{tabular}{c}
 \AxiomC{$\Gamma, \neg p \Rightarrow \bot$}
 \AxiomC{$\Gamma, p \Rightarrow \Delta$}
 \BinaryInfC{$\Gamma \Rightarrow  \Delta $}
 \DisplayProof
\end{tabular}}
\end{center}
Here, according to the side conditions in Definition \ref{DefnegativeRules}, all the formulas in the antecedent of $(\Gamma, \neg p \Rightarrow \bot)$ must be basic, which is not the case, as $\neg p$ is not basic. 
Note that each of these rules implies an equivalent version of the axiom of the excluded middle. (In the rightmost rule, set $\Gamma= \{\neg \neg p\}$ and $\Delta=\{p\}$ to obtain $\neg \neg p \Rightarrow p$). Hence, we see how the conditions in the definition of the constructive rules are necessary to keep the system constructive. 

To see some modal rules as the non-examples, first note that the rules $(K_{\Box})$ and $(K_{\Diamond})$ are not constructive, as they change the context $\Gamma$ to $\Box \Gamma$ and hence
do not follow the enforced form. To see some other types of non-examples, consider the following rules: 
\begin{center}
\small \begin{tabular}{c c c}
 \AxiomC{$\Gamma, \Box(\Box p \to q) \Rightarrow \Delta$} 
 \AxiomC{$\Gamma, \Box(\Box q \to p) \Rightarrow \Delta$}
 \RightLabel{$(sc)$}
 \BinaryInfC{$\Gamma \Rightarrow \Delta$}
 \DisplayProof
 &
 \small   \AxiomC{$\Gamma \Rightarrow \Box \Diamond p$}
 \RightLabel{$(ma)$}
 \UnaryInfC{$\Gamma \Rightarrow \Diamond \Box p$}
 \DisplayProof
\end{tabular}
\end{center}
\begin{center}
\small \begin{tabular}{c c c c}
\AxiomC{$\Gamma \Rightarrow \Box (\Box p \to p)$}
 \RightLabel{$(la)$}
 \UnaryInfC{$\Gamma \Rightarrow \Box p $}
 \DisplayProof
 &
  \AxiomC{$\Gamma \Rightarrow \Box (\Box (p \to \Box p)\to p) $} 
  \RightLabel{$(grz)$}
 \UnaryInfC{$\Gamma \Rightarrow p$}
 \DisplayProof
 &
 \small
   \AxiomC{$\Gamma \Rightarrow \neg \Box p $} 
 \UnaryInfC{$\Gamma \Rightarrow \Diamond \neg p$}
 \DisplayProof
\end{tabular}
\end{center}
\begin{center}
\small \begin{tabular}{c c }
 \AxiomC{$\Gamma \Rightarrow \Box \Diamond p$} 
 \AxiomC{$\Gamma \Rightarrow \Diamond p$}
 \RightLabel{$(.1)$}
 \BinaryInfC{$\Gamma \Rightarrow \Diamond \Box p \vee \Box p$}
 \DisplayProof
 &
 \small \AxiomC{$ $}
\UnaryInfC{$\Gamma \Rightarrow \Diamond \top \vee \Box \bot$} 
\DisplayProof
\end{tabular}
\end{center}
None of these rules are constructive. The rule $(sc)$ is not constructive, as otherwise, it must have been a left constructive rule, while it has two premises, with $\Delta$ as the succedents, and the formulas $\Box (\Box p \to q)$ and $\Box (\Box q \to p)$ in the antecedents are not basic. The rules $(ma)$ and $(la)$ are not constructive either, as otherwise, they must have been right constructive rules, but the formulas $\Box (\Box p \to p)$ and $\Diamond \Box p$ are not almost positive and constructive, respectively. The other rules are not constructive, as the formula $\Box (\Box (p \to \Box p)\to p) $ is not almost positive and the formulas $\Diamond \neg p$, $\Diamond \Box p \vee \Box p$ and $\Diamond \top \vee \Box \bot$ are not constructive.
It is worth mentioning that the rules $(ma)$, $(la)$ and $(grz)$ correspond to the McKinsey axiom $\Box \Diamond p \to \Diamond \Box p$, the L\"{o}b axiom $\Box (\Box p \to p) \to \Box p$, and the Grzegorczyk axiom $\Box (\Box (p \to \Box p)\to p) \to p$, respectively. Note that none of these formulas are constructive.
\end{exam}

\subsection{General Rules and the Justification} \label{SubsectionJustification}
To follow our strategy as explained in the opening discussion of Section \ref{SectionAlmostnegative}, let us first introduce a general form for the rules that we want to consider and then select the ones that are equivalent to a constructive formula.
\begin{dfn}\label{Def:RulesR}
Let $\mathfrak{R}$ be the set of all the rules in one of the forms: 
\begin{center}
\begin{tabular}{c c}
\AxiomC{$ \{ \Gamma, \overline{\phi}_i \Rightarrow \overline{\psi}_i \}_{i \in I}$}
\AxiomC{$ \{ \Gamma, \overline{\theta}_j \Rightarrow \Delta \}_{j \in J}$}
 \BinaryInfC{$\Gamma, \overline{\eta} \Rightarrow  \Delta $}
 \DisplayProof
&
 \AxiomC{$\{ \Gamma, \overline{\phi}_i \Rightarrow \overline{\psi}_i \}_{i\in I}$}
 \UnaryInfC{$\Gamma, \overline{\theta} \Rightarrow \overline{\eta} $}
 \DisplayProof
\end{tabular}
\end{center}
where $\Gamma$ and $\Delta$ are multiset  variables, free to be substituted by any multisets and $\overline{\phi}_i$'s, $\overline{\psi}_i$'s, $\overline{\theta}_j$'s, $\overline{\theta}$ and $\overline{\eta}$ are multiset of formulas. We call any rule in the left form a \emph{left} rule and any rule in the right form a \emph{right} rule. 
\end{dfn}
These two forms capture a general common structure in the usual single-conclusion rules covering all possible combinations of formulas and multiset variables subject to two main restrictions. First, they must be single-conclusion and hence in the succedent we do not allow to have both multisets of formulas and $\Delta$. The second is the presence of the multiset variables that are free for any multiset substitution. Note that they remain intact in the rule application. These forms cover all the axioms and rules of the system $\mathbf{LJ}$ including the cut rule. They also cover all possible axioms in the form $\Gamma \Rightarrow \eta$. Note that in the presence of the cut rule, many rules are equivalent to an axiom and hence the forms must be seen as quite powerful and general. Having that said, we must also emphasize that these forms unfortunately do not cover all possible interesting rules in the literature. The reason is mostly the restriction on the context.
For instance, consider the following rules:
\begin{center}
\begin{tabular}{c c c}
\AxiomC{$\neg p \Rightarrow q \vee r$}
\RightLabel{\footnotesize $(KP)$}
 \UnaryInfC{$ \Rightarrow  (\neg p \to q) \vee (\neg p \to r) $}
 \DisplayProof
&
 \AxiomC{$\Box\Gamma \Rightarrow p$}
 \RightLabel{\footnotesize $(RS4)$}
 \UnaryInfC{$\Box\Gamma \Rightarrow \Box p$}
 \DisplayProof
 &
  \AxiomC{$\Gamma \Rightarrow p$}
  \RightLabel{\footnotesize $(K_{\Box})$}
 \UnaryInfC{$\Box \Gamma \Rightarrow \Box p$}
 \DisplayProof
\end{tabular}
\end{center}
In $(KP)$, as typical for non-derivable admissible rules, the multiset variable $\Gamma$ is missing. In $(RS4)$, there is a restriction on the form of the antecedents of the meta-sequents (only boxed formulas are allowed). And in $(K_{\Box})$, the multiset variable changes from $\Gamma$ in the premise to $\Box \Gamma$ in the conclusion.

Having the forms fixed, in the second step (the following theorem), we prove that over $\mathbf{CK}$, any rule $R \in \mathfrak{R}$ is equivalent to a formula, denoted by $Ax_R$, and $R$ is constructive if{f} $Ax_R$ is constructive. Therefore, we can argue that the constructive rules are all constructively acceptable and as the forms of the rules in $\mathfrak{R}$ are chosen to be quite general, the constructive rules are also sufficiently general to capture the constructively acceptable rules. 

\begin{thm}\label{AxR}
For any rule $R \in \mathfrak{R}$, there is a formula $Ax_R$ such that:
\begin{itemize}
\item[$(i)$]
$\mathbf{LJ}+R$ feasibly proves $(\Rightarrow Ax_R)$. 
\item[$(ii)$]
$\mathbf{LJ}+Ax_R$ feasibly proves the rule $R$.
\end{itemize}
Moreover, $R$ is constructive if and only if $Ax_R$ is a constructive formula.
\end{thm}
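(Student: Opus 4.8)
The plan is to read each rule through the formula interpretation, \emph{forgetting} the context variable $\Gamma$ and turning the succedent variable $\Delta$ of a left rule into the disjunction of the antecedents of its $\Delta$-premises. Explicitly, for a left rule (in the notation of Definition \ref{Def:RulesR}) I put
\[ Ax_R := \Big(\bigwedge_{i\in I}(\textstyle\bigwedge\overline{\phi}_i\to\bigvee\overline{\psi}_i)\Big)\to\Big(\textstyle\bigwedge\overline{\eta}\to\bigvee_{j\in J}\bigwedge\overline{\theta}_j\Big), \]
and for a right rule with conclusion $\Gamma,\overline{\theta}\Rightarrow\overline{\eta}$ I put
\[ Ax_R := \Big(\bigwedge_{i\in I}(\textstyle\bigwedge\overline{\phi}_i\to\bigvee\overline{\psi}_i)\Big)\to(\textstyle\bigwedge\overline{\theta}\to\bigvee\overline{\eta}). \]
Because every rule in $\mathfrak{R}$ is single-conclusion, each $\overline{\psi}_i$ and (in the right case) the succedent $\overline{\eta}$ carry at most one formula, so $\bigvee\overline{\psi}_i$ and $\bigvee\overline{\eta}$ are a single formula or $\bot$; the only place a genuine disjunction can appear is the $\bigvee_{j\in J}$ of a left rule.

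For $(i)$ I instantiate $R$ itself with $\Gamma:=\{A\}$, where $A:=\bigwedge_{i\in I}(\bigwedge\overline{\phi}_i\to\bigvee\overline{\psi}_i)$ is the antecedent of $Ax_R$, and (left case) $\Delta:=\{\bigvee_{j\in J}\bigwedge\overline{\theta}_j\}$. Each resulting premise is $\LJ$-provable with a polynomial-size proof: an $I$-premise $A,\overline{\phi}_i\Rightarrow\overline{\psi}_i$ is obtained by projecting the conjunct $\bigwedge\overline{\phi}_i\to\bigvee\overline{\psi}_i$ out of $A$ and applying $(L\to)$ together with the $\LJ$-provable sequents $\overline{\phi}_i\Rightarrow\bigwedge\overline{\phi}_i$ and $\bigvee\overline{\psi}_i\Rightarrow\overline{\psi}_i$, while a $\Delta$-premise $A,\overline{\theta}_j\Rightarrow\bigvee_{j}\bigwedge\overline{\theta}_j$ follows from $\overline{\theta}_j\Rightarrow\bigwedge\overline{\theta}_j$ by disjunction introduction and weakening. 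One application of $R$ gives $A,\overline{\eta}\Rightarrow\bigvee_{j}\bigwedge\overline{\theta}_j$ (resp. $A,\overline{\theta}\Rightarrow\overline{\eta}$), and two applications of $(R\to)$, after folding $\overline{\eta}$ (resp. $\overline{\theta}$) into a conjunction by cut, yield $\Rightarrow Ax_R$. For $(ii)$ I run this backwards on a given instance with provable premises: turning each $\Gamma,\overline{\phi}_i\Rightarrow\overline{\psi}_i$ into $\Gamma\Rightarrow\bigwedge\overline{\phi}_i\to\bigvee\overline{\psi}_i$ and conjoining by $(R\wedge)$ gives $\Gamma\Rightarrow A$; cutting against the instance of $\Rightarrow Ax_R$ gives $\Gamma\Rightarrow\bigwedge\overline{\eta}\to\bigvee_{j}\bigwedge\overline{\theta}_j$, hence $\Gamma,\overline{\eta}\Rightarrow\bigvee_{j}\bigwedge\overline{\theta}_j$; and the $\Delta$-premises, after $(L\wedge)$ and $(L\vee)$, give $\Gamma,\bigvee_{j}\bigwedge\overline{\theta}_j\Rightarrow\Delta$, so a final cut delivers $\Gamma,\overline{\eta}\Rightarrow\Delta$ (when $J=\varnothing$ the disjunction is $\bot$ and $(L\bot)$ closes the branch). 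All auxiliary steps are $\LJ$-provable with proofs polynomial in the data and the number of premises is linear in $|R|$, so both transformations are feasible; the right-rule case is the same argument with the $\Delta$-premises and the $(L\vee)$ step deleted.

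For the final equivalence I first record the \emph{shape lemma} that follows at once from Definition \ref{DefPositiveFormulas}: since the closure operators of the three classes carry pairwise distinct top symbols, membership is decided by the outermost connective. In particular a $\wedge$-formula is basic / almost positive / constructive iff both immediate subformulas are; a $\vee$-formula is basic iff both subformulas are basic, almost positive iff both are almost positive, but constructive iff it is \emph{basic}; and an implication $A\to B$ is never basic, is almost positive iff $A$ is basic and $B$ almost positive, and is constructive iff $A$ is almost positive and $B$ constructive. Applying this to $Ax_R=A\to Y$: as $A$ and $Y$ have $\wedge$ and $\to$ as top symbols, $Ax_R$ is constructive iff $A$ is almost positive and $Y$ constructive; $A$ is almost positive iff every $\overline{\phi}_i$ consists of basic formulas and every $\overline{\psi}_i$ is almost positive; and $Y=\bigwedge\overline{\eta}\to\bigvee_{j}\bigwedge\overline{\theta}_j$ is constructive iff $\overline{\eta}$ is almost positive and $\bigvee_{j}\bigwedge\overline{\theta}_j$ is constructive. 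These are precisely the clauses ``$\overline{B_i}$ basic, $\overline{P_i}$ almost positive, $\overline{P}$ almost positive'' of Definition \ref{DefnegativeRules}.

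The delicate point, and the one I expect to be the real obstacle, is the last conjunct, constructivity of $\bigvee_{j}\bigwedge\overline{\theta}_j$. If $|J|\le 1$ this formula is $\bot$ or a single $\bigwedge\overline{\theta}_1$, so it is constructive iff each $\overline{\theta}_1$-formula (i.e. each $\overline{C_j}$-formula) is constructive; but if $|J|>1$ it is a genuine top-level disjunction, and by the shape lemma a disjunction is constructive only when it is basic, which forces \emph{all} formulas in $\bigcup_{j}\overline{\theta}_j$ to be basic. This matches exactly the side condition of Definition \ref{DefnegativeRules} and shows it is neither too weak nor too strong; reading all the equivalences in both directions gives $R$ constructive if and only if $Ax_R$ is constructive. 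The right-rule case is identical but simpler, as its succedent $\overline{\eta}$ is a single constructive formula, so $\bigvee\overline{\eta}$ carries no nontrivial disjunction and no side condition is needed.
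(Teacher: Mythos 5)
Your proposal is correct and takes essentially the same route as the paper: the same formula interpretation of the rule (yours merely curried, $P \to (Q \to S)$ in place of the paper's $(P \wedge Q) \to S$, which produces the identical constructivity conditions), the same instantiation $\Gamma := \{A\}$, $\Delta := \{\bigvee_{j}\bigwedge\overline{\theta}_j\}$ plus one application of $R$ for part $(i)$, and the same cut-against-the-axiom-instance argument for part $(ii)$. The ``shape lemma'' you isolate is exactly the inversion on outermost connectives that the paper applies implicitly in its case analysis for the final equivalence, including the $|J|>1$ side condition forcing all $\overline{\theta}_j$-formulas to be basic.
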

\begin{proof}
For any right rule $R \in \mathfrak{R}$
\begin{center}
\begin{tabular}{c c}
 \AxiomC{$\{ \Gamma, \overline{\phi}_i \Rightarrow \overline{\psi}_i \}_{i\in I}$}
 \UnaryInfC{$\Gamma, \overline{\theta} \Rightarrow \overline{\eta} $}
 \DisplayProof
\end{tabular}
\end{center}
define $Ax_R$ as $[\bigwedge_{i \in I} [\bigwedge \overline{\phi}_i \to  \bigvee \overline{\psi}_i] \wedge \bigwedge \overline{\theta}] \to \bigvee \overline{\eta}$ and for any left rule $R \in \mathfrak{R}$
\begin{center}
\begin{tabular}{c c c}
\AxiomC{$ \{ \Gamma, \overline{\phi}_i \Rightarrow \overline{\psi}_i \}_{i \in I}$}
\AxiomC{$ \{ \Gamma, \overline{\theta}_j \Rightarrow \Delta \}_{j \in J}$}
 \BinaryInfC{$\Gamma, \overline{\eta} \Rightarrow  \Delta $}
 \DisplayProof
\end{tabular}
\end{center}
define $Ax_R$ as $\bigwedge_{i \in I} (\bigwedge \overline{\phi}_i \to \bigvee \overline{\psi}_i) \wedge \bigwedge \overline{\eta} \to \bigvee_{j \in J} \bigwedge \overline{\theta}_j$. We only prove the claim for left rules. The case for right rules is similar. 

For $(i)$, we first show that $(\Rightarrow Ax_R)$ is provable in $\mathbf{LJ} + R$. The algorithm is as follows. Set $\Gamma =\{\bigwedge \overline{\phi}_i \to \bigvee \overline{\psi}_i\}_{i \in I}$ and $\Delta= \bigvee_{j \in J}(\bigwedge \overline{\theta}_j)$. Therefore, we have $\mathbf{LJ} \vdash \Gamma, \overline{\phi}_i \Rightarrow \bigvee \overline{\psi}_i$ and $\mathbf{LJ} \vdash \Gamma, \overline{\theta}_j \Rightarrow \Delta$. By applying the rule $R$, we reach
$\mathbf{LJ} + R \vdash \{\bigwedge \overline{\phi}_i \to \bigvee \overline{\psi}_i\}_{i \in I}, \overline{\eta} \Rightarrow \bigvee_{j \in J}(\bigwedge \overline{\theta}_j)$. Then, by using the rules $(L\wedge)$, $(Lc)$ and $(R \to)$ in $\mathbf{LJ}$, we get
$\mathbf{LJ} + R \vdash (\Rightarrow Ax_R)$. The feasibility of the algorithm is clear.

For $(ii)$, denoting $\mathbf{LJ}+Ax_R $ by $H$, we have to provide a polynomial time algorithm $f_R$ that reads the sequents in the multiset $\{ \Gamma, \overline{\phi}_i \Rightarrow \overline{\psi}_i \}_{i \in I} \cup \{ \Gamma, \overline{\theta}_j \Rightarrow \Delta \}_{j \in J}$ and $\Gamma, \overline{\eta} \Rightarrow \Delta$ and provides an $H$-proof witnessing
\begin{center}
$\{ \Gamma, \overline{\phi}_i \Rightarrow \overline{\psi}_i \}_{i \in I} , \{ \Gamma, \overline{\theta}_j \Rightarrow \Delta \}_{j \in J} \vdash_H \Gamma, \overline{\eta} \Rightarrow \Delta$. 
\end{center}
We only present the algorithm $f_R$. The fact that it is polynomial time is trivial.
First, observe that by a simple application of the cut rule on the axiom $(\Rightarrow Ax_R)$ and the $\mathbf{LJ}$-provable sequent
\[
Ax_R, [\bigwedge_{i \in I}(\bigwedge \overline{\phi}_i \to \bigvee \overline{\psi}_i) \wedge \bigwedge \overline{\eta}] \Rightarrow \bigvee_{j \in J}(\bigwedge \overline{\theta}_j).
\]
we have
\[
H \vdash [\bigwedge_{i \in I}(\bigwedge \overline{\phi}_i \to \bigvee \overline{\psi}_i) \wedge \bigwedge \overline{\eta}] \Rightarrow \bigvee_{j \in J}(\bigwedge \overline{\theta}_j)
\]
Then, notice that 
\begin{center}
    $\{\Gamma, \overline{\phi}_i \Rightarrow \overline{\psi}_i \}_{i \in I} \vdash_H \Gamma \Rightarrow \bigwedge_{i \in I}(\bigwedge \overline{\phi}_i \to \bigvee \overline{\psi}_i)$
\end{center}
and hence
\begin{center}
   $\{\Gamma, \overline{\phi}_i \Rightarrow \overline{\psi}_i \}_{i \in I} \vdash_H \Gamma, \overline{\eta} \Rightarrow \bigwedge_{i \in I}(\bigwedge \overline{\phi}_i \to \bigvee \overline{\psi}_i) \wedge \bigwedge \overline{\eta}$.
\end{center}
By using the cut rule on the above sequent and the aforementioned $H$-provable sequent $[\bigwedge_{i \in I}(\bigwedge \overline{\phi}_i \to \bigvee \overline{\psi}_i) \wedge \bigwedge \overline{\eta}] \Rightarrow \bigvee_{j \in J}(\bigwedge \overline{\theta}_j)$, we have 
\begin{center}
$\{\Gamma, \overline{\phi}_i \Rightarrow \overline{\psi}_i \}_{i \in I} \vdash_H \Gamma, \overline{\eta} \Rightarrow \bigvee_{j \in J}(\bigwedge \overline{\theta}_j)$ \;\; (1)
\end{center}
On the other hand, we have 
\begin{center}
    $\{ \Gamma, \overline{\theta}_j \Rightarrow \Delta \}_{j \in J} \vdash_H \Gamma, \bigvee_{j \in J}(\bigwedge \overline{\theta}_j) \Rightarrow \Delta$ \;\; (2)
\end{center}
Now, using the cut rule on (1) and (2) and several weakening rules we get 
\begin{center}
   $\{\Gamma, \overline{\phi}_i \Rightarrow \overline{\psi}_i \}_{i \in I}, \{ \Gamma, \overline{\theta}_j \Rightarrow \Delta \}_{j \in J} \vdash_H \Gamma, \overline{\eta} \Rightarrow \Delta$.
\end{center}
The above process of using some simple $H$-provable facts with their simple $H$-proofs and then applying some cut rules clearly takes polynomial time in the sum of the sizes of the sequents $\{ \Gamma, \overline{\phi}_i \Rightarrow \overline{\psi}_i \}_{i \in I} \cup \{ \Gamma, \overline{\theta}_j \Rightarrow \Delta \}_{j \in J}$ and $\Gamma, \overline{\eta} \Rightarrow \Delta$. The presence of the cut rule, assumed to be a primitive rule in $\mathbf{LJ}$, is crucial to provide the required short proofs and make $f_R$ feasible.

As the last part of the proof, we show that $R$ is constructive if{f} $Ax_R$ is constructive. 
For any left rule $R \in \mathfrak{R}$
\begin{center}
\begin{tabular}{c c c}
\AxiomC{$ \{ \Gamma, \overline{\phi}_i \Rightarrow \overline{\psi}_i \}_{i \in I}$}
\AxiomC{$ \{ \Gamma, \overline{\theta}_j \Rightarrow \Delta \}_{j \in J}$}
 \BinaryInfC{$\Gamma, \overline{\eta} \Rightarrow  \Delta $}
 \DisplayProof
\end{tabular}
\end{center}
the formula $Ax_R$ defined as 
\begin{center}
    $[\bigwedge_{i \in I}(\bigwedge \overline{\phi}_i \to \bigvee \overline{\psi}_i) \wedge \bigwedge \overline{\eta}] \to \bigvee_{j \in J}(\bigwedge \overline{\theta}_j)$
\end{center}
is constructive if{f} the antecedent, i.e., $\bigwedge_{i \in I}(\bigwedge \overline{\phi}_i \to \bigvee \overline{\psi}_i) \wedge \bigwedge \overline{\eta}$, is almost positive and the succedent, i.e., $\bigvee_{j \in J}(\bigwedge \overline{\theta}_j)$ is constructive. To enforce that, there are several cases to consider. For the succedent:
\begin{itemize}
    \item 
    If $|J|>1$, then all the formulas in $\overline{\theta}_j$ must be basic as the disjunction of at least two formulas is constructive if{f} all of them are basic.
\item
If $|J|=1$, then all the formulas in any of $\overline{\theta}_j$'s must be constructive.
\item
If $|J|=0$, then  $\bigvee_{j \in J}(\bigwedge \overline{\theta}_j)=\bot$, hence it is constructive automatically.
\end{itemize}
For the antecedent of the formula, 
\begin{itemize}
\item
If $|I|=0$, then the antecedent of the formula is defined as $\top$, which is an almost positive formula automatically.
    \item 
    If $|I|\geq 1 $, then all the formulas in $\overline{\phi}_i$'s must be basic while all the formulas in $\overline{\psi}_i$'s and $\overline{\eta}$ must be almost positive as this is the only way to make the antecedent almost positive.
\end{itemize}
These conditions altogether are the ones to make the rule constructive.\\
For any right rule $R \in \mathfrak{R}$ of the form
\begin{center}
\begin{tabular}{c c}
 \AxiomC{$\{ \Gamma, \overline{\phi}_i \Rightarrow \overline{\psi}_i \}_{i\in I}$}
 \UnaryInfC{$\Gamma, \overline{\theta} \Rightarrow \overline{\eta} $}
 \DisplayProof
\end{tabular}
\end{center}
the formula $Ax_R$ defined as $[\bigwedge_{i \in I} [\bigwedge \overline{\phi}_i \to  \bigvee \overline{\psi}_i] \wedge \bigwedge \overline{\theta}] \to \bigvee \overline{\eta}$ is constructive if{f} $[\bigwedge_{i \in I} [\bigwedge \overline{\phi}_i \to  \bigvee \overline{\psi}_i]$ and  $\bigwedge \overline{\theta}$ are almost positive and $\bigvee \overline{\eta}$ is constructive. This means that all $\bigwedge \overline{\phi}_i \to  \bigvee \overline{\psi}_i$'s and all formulas in $\overline{\theta}$ are almost positive and any formula in $\overline{\eta}$ is constructive. Notice that as the rule is single-conclusion, the disjunction in $\bigvee \overline{\eta}$ is over either one or zero formulas and hence is not problematic. Finally, $\bigwedge \overline{\phi}_i \to  \bigvee \overline{\psi}_i$ is almost positive if{f} all the elements in $\bar{\phi}_i$'s and $\overline{\psi}_i$'s are basic and almost positive, respectively. These conditions altogether are the ones to make the rule constructive.
\end{proof}

\begin{cor}\label{EquivProvableAndFeasiblyProvable}
Let $\mathfrak{L} \in \{\mathcal{L}, \mathcal{L}_{\Box}, \mathcal{L}_{\Diamond}, \mathcal{L}_{p}\}$ be a language, $R \in \mathfrak{R}$ a rule over $\mathfrak{L}$ and $G$ a sequent calculus over $\mathfrak{L}$ such that $G$ feasibly proves all the rules in $\mathbf{LJ}$. Then, $R$ is provable in $G$ if{f} it is feasibly provable in $G$.
\end{cor}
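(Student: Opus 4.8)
The plan is to prove the non-trivial implication, namely that provability of $R$ in $G$ upgrades to feasible provability; the converse is immediate, since a feasible proof-producing algorithm witnesses provability directly. The central difficulty is the gap between the two notions: provability of $R$ in $G$ only guarantees that \emph{some} $G$-proof exists for each instance, with no uniform control over its shape or size, whereas feasible provability demands a single polynomial-time algorithm that outputs such proofs for \emph{all} instances at once. The key observation that bridges this gap is that $R \in \mathfrak{R}$ is one fixed rule, so the associated formula $Ax_R$ of Theorem \ref{AxR} is fixed, and consequently a $G$-proof of the schematic sequent $(\Rightarrow Ax_R)$ is a single constant object; closure of sequent proofs under substitution of atoms then spreads this one proof over all substitution instances in polynomial time.

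Concretely, I would first show $G \vdash (\Rightarrow Ax_R)$. By Theorem \ref{AxR}$(i)$ there is a proof of $(\Rightarrow Ax_R)$ in $\mathbf{LJ}+R$. In this fixed proof every application of an $\mathbf{LJ}$-rule may be replaced by a $G$-derivation (since $G$ feasibly, hence in particular, proves all rules of $\mathbf{LJ}$), and each of the finitely many applications of $R$ --- each a fixed instance --- may be replaced by a $G$-proof of its conclusion from its premises, which exists because $R$ is provable in $G$. Splicing these together produces a fixed $G$-proof $\pi_1$ of $(\Rightarrow Ax_R)$, whose size does not depend on any input. No feasibility is needed here, as everything in sight is constant.

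Next I would promote this single fact to feasible provability of the axiom $(\Rightarrow Ax_R)$ in $G$. Given an arbitrary instance $(\Rightarrow \sigma(Ax_R))$, arising from an atom-substitution $\sigma$, I apply $\sigma$ to every sequent of $\pi_1$. Because each step of $\pi_1$ is an instance of a rule of $G$ and instances are closed under further substitution, $\sigma(\pi_1)$ is again a $G$-proof, now of $(\Rightarrow \sigma(Ax_R))$. Since $\pi_1$ is fixed, computing $\sigma(\pi_1)$ costs time polynomial in $|\sigma(Ax_R)|$, hence in the input size, so the axiom $(\Rightarrow Ax_R)$ is feasibly provable in $G$ in the sense of Definition \ref{dfnAdmissiblyStrong}.

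Finally I would assemble the pieces through the transfer lemmas. Having shown that every rule of $\mathbf{LJ}$ and the axiom $(\Rightarrow Ax_R)$ are feasibly provable in $G$, Lemma \ref{LocalToGlobal}$(2)$ gives $\mathbf{LJ}+Ax_R \leq_{pd} G$. On the other hand, Theorem \ref{AxR}$(ii)$ says $R$ is feasibly provable in $\mathbf{LJ}+Ax_R$. Combining these via Lemma \ref{LocalToGlobal}$(1)$ yields that $R$ is feasibly provable in $G$, as desired. The only step requiring care is the passage from a single existing $G$-proof of $(\Rightarrow Ax_R)$ to a feasible procedure; I expect this substitution argument --- together with the observation that $\pi_1$ is genuinely input-independent --- to be the crux, while the rest is routine bookkeeping with the already-established lemmas.
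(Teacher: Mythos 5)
Your proposal is correct and follows essentially the same route as the paper's own proof: derive $G \vdash (\Rightarrow Ax_R)$ from Theorem \ref{AxR}$(i)$ and the provability of $R$ in $G$, then use the substitution-of-a-fixed-proof argument to upgrade this to feasible provability of the axiom, and finally combine Theorem \ref{AxR}$(ii)$ with Lemma \ref{LocalToGlobal} to transfer feasible provability of $R$ from $\mathbf{LJ}+Ax_R$ to $G$. The only difference is cosmetic: you spell out the splicing argument for why $G$ proves $(\Rightarrow Ax_R)$ in more detail than the paper does.
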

\begin{proof}
We prove the claim for $\mathcal{L}$, the others are similar. One direction is clear. For the other, assume that $R$ is provable in $G$. As $G$ proves all the rules in $\mathbf{LJ}$ and $\mathbf{LJ}+R$ proves $(\, \Rightarrow Ax_R)$ by Theorem \ref{AxR}, then $G$ proves $(\, \Rightarrow Ax_R)$. We claim that $G$ also feasibly proves $(\, \Rightarrow Ax_R)$. Take a proof of $(\, \Rightarrow Ax_R)$ in $G$ and call it $\rho$. Then, for any instance of $(\, \Rightarrow Ax_R)$ resulting from the substitution $\sigma$, the application $\sigma$ on $\rho$, denoted by $\sigma(\rho)$, is a proof for that instance. As $\rho$ is fixed, clearly the process of reading the instance of $(\Rightarrow Ax_R)$ and providing the $G$-proof $\sigma(\rho)$ is feasible. Hence, $G$ feasibly proves $(\, \Rightarrow Ax_R)$.
By Theorem \ref{AxR}, $\mathbf{LJ}+Ax_R$ feasibly proves $R$ and as $G$ feasibly proves all the rules in $\mathbf{LJ}+Ax_R$, by Lemma \ref{LocalToGlobal} we have $G$ pd-simulates $\mathbf{LJ}+Ax_R$. Finally, by Lemma \ref{LocalToGlobal} part \ref{Lemmamorede1}, $R$ is feasibly provable in $G$.
\end{proof}

\begin{cor}\label{thm: p-simulation of admissibly strong}
Let $G$ be a strong constructive sequent calculus over $\mathcal{L}$. Then, there exists a finite set of constructive $\mathcal{L}$-formulas $\mathcal{C}$ such that $G$ is pd-equivalent to $\mathbf{CK}+\mathcal{C}$. A similar claim holds, replacing the pair $(\mathcal{L}, \mathbf{CK})$ by $(\mathcal{L}_{\Box}, \mathbf{CK}_{\Box})$, $(\mathcal{L}_{\Diamond}, \mathbf{BLL})$, or $(\mathcal{L}_{p}, \mathbf{LJ})$.
\end{cor}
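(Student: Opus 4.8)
The plan is to read the set $\mathcal{C}$ directly off the rules of $G$ via the rule--formula translation of Theorem~\ref{AxR}, and then to verify the two pd-simulations rule by rule using Lemma~\ref{LocalToGlobal}. Since $G$ is constructive, by Definition~\ref{ConstructiveSystem} each of its finitely many axioms and rules is either constructive or one of the modal rules $(K_\Box)$, $(K_\Diamond)$. The first thing I would observe is that every constructive rule (and every constructive axiom, viewed as a premiseless rule) is literally a special case of the general schema $\mathfrak{R}$ of Definition~\ref{Def:RulesR}: comparing Definition~\ref{DefnegativeRules} with Definition~\ref{Def:RulesR}, a left (resp.\ right) constructive rule is exactly a left (resp.\ right) rule of $\mathfrak{R}$, with the side conditions merely restricting which formulas may occur. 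Hence Theorem~\ref{AxR} applies to each such rule $R$, producing a formula $Ax_R$ that, by the ``moreover'' clause, is constructive precisely because $R$ is. I then set $\mathcal{C}:=\{Ax_R \mid R \text{ is a constructive axiom or rule of } G\}$, which is a finite set of constructive $\mathcal{L}$-formulas since $G$ has finitely many rules.

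For the direction $G \leq_{pd} \mathbf{CK}+\mathcal{C}$, by Lemma~\ref{LocalToGlobal}(\ref{Lemmamorede2}) it suffices to show that each axiom and rule of $G$ is feasibly provable in $\mathbf{CK}+\mathcal{C}$. The rules $(K_\Box)$ and $(K_\Diamond)$ lie in $\mathbf{CK}$, hence are feasibly provable in $\mathbf{CK}+\mathcal{C}$ by Remark~\ref{RemarkRule}. For a constructive rule $R$ of $G$, Theorem~\ref{AxR}$(ii)$ gives that $\mathbf{LJ}+Ax_R$ feasibly proves $R$; since $\mathbf{CK}+\mathcal{C}$ contains every rule of $\mathbf{LJ}+Ax_R$ (because $\mathbf{CK}=\mathbf{LJ}+\{K_\Box,K_\Diamond\}$ and $(\Rightarrow Ax_R)$ is an axiom of $\mathbf{CK}+\mathcal{C}$), Lemma~\ref{LocalToGlobal}(\ref{Lemmamorede2}) yields $\mathbf{LJ}+Ax_R \leq_{pd} \mathbf{CK}+\mathcal{C}$, and then Lemma~\ref{LocalToGlobal}(\ref{Lemmamorede1}) transfers the feasible provability of $R$ to $\mathbf{CK}+\mathcal{C}$.

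For the converse $\mathbf{CK}+\mathcal{C} \leq_{pd} G$, again by Lemma~\ref{LocalToGlobal}(\ref{Lemmamorede2}) I would check each rule of $\mathbf{CK}+\mathcal{C}$. The rules of $\mathbf{CK}$ are feasibly provable in $G$ immediately from the strength hypothesis (Definition~\ref{dfnAdmissiblyStrong}). For an axiom $(\Rightarrow Ax_R)\in\mathcal{C}$, the associated $R$ is itself a rule of $G$, hence feasibly provable in $G$ by Remark~\ref{RemarkRule}; moreover all $\mathbf{LJ}$-rules are feasibly provable in $G$ by strength, so every rule of $\mathbf{LJ}+R$ is feasibly provable in $G$, whence $\mathbf{LJ}+R \leq_{pd} G$ by Lemma~\ref{LocalToGlobal}(\ref{Lemmamorede2}). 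Since Theorem~\ref{AxR}$(i)$ says $(\Rightarrow Ax_R)$ is feasibly provable in $\mathbf{LJ}+R$, Lemma~\ref{LocalToGlobal}(\ref{Lemmamorede1}) carries this over to $G$. Combining the two directions gives the claimed pd-equivalence.

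The fragment cases run through the same argument, the only change being the base calculus and the corresponding strength condition: over $\mathcal{L}_\Box$, $\mathcal{L}_\Diamond$, $\mathcal{L}_p$ the bases $\mathbf{CK}_\Box=\mathbf{LJ}+\{K_\Box\}$, $\mathbf{BLL}=\mathbf{LJ}+\{\Diamond L\}$, and $\mathbf{LJ}$ all contain $\mathbf{LJ}$ and are feasibly provable in $G$ by the respective strength hypothesis. I expect the only real (and minor) obstacle to be the bookkeeping around the context-changing modal rules $(K_\Box)$ and $(K_\Diamond)$: these lie \emph{outside} $\mathfrak{R}$, so Theorem~\ref{AxR} does not apply to them and they must instead be absorbed into the base calculus and handled purely through the strength assumption. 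Note that $(\Diamond L)$ is in fact already a right constructive rule, so over $\mathcal{L}_\Diamond$ no rule escapes $\mathfrak{R}$ at all; everything else is a routine combination of Theorem~\ref{AxR} with the two parts of Lemma~\ref{LocalToGlobal}.
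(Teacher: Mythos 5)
Your proposal is correct and follows essentially the same route as the paper's proof: you take $\mathcal{C}=\{Ax_R \mid R \text{ a constructive rule of } G\}$ and establish both pd-simulations by combining Theorem \ref{AxR} with the two parts of Lemma \ref{LocalToGlobal}, handling $(K_\Box)$ and $(K_\Diamond)$ through the strength hypothesis and containment in $\mathbf{CK}$, exactly as the paper does. Your explicit remarks that constructive rules fall under the schema $\mathfrak{R}$ and that the ``moreover'' clause guarantees each $Ax_R$ is constructive simply make precise what the paper uses implicitly.
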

\begin{proof}
We only prove the case for $(\mathcal{L}, \mathbf{CK})$. The proof for the fragments is similar. Define $\mathcal{R}$ as the set of the constructive rules of $G$ and set $\mathcal{C}=\{Ax_R \mid R \in \mathcal{R}\}$. As $G$ is strong, it feasibly proves all the rules in $\mathbf{CK}$ and specifically all the rules in $\LJ$. Hence, by Lemma \ref{LocalToGlobal} $G$ pd-simulates $\LJ$. As $R$ is in $G$, by Remark \ref{RemarkRule}, $G$ feasibly proves $R$. By Theorem \ref{AxR}, $\mathbf{LJ}+R$ feasibly proves $(\Rightarrow Ax_R)$. Therefore,  by Lemma \ref{LocalToGlobal} part \ref{Lemmamorede1}, $G$ also feasibly proves $(\, \Rightarrow Ax_R)$. Hence, again by Lemma \ref{LocalToGlobal}, $G$ pd-simulates $\mathbf{CK}+\mathcal{C}$. For the other direction, notice that $\mathbf{CK}+\mathcal{C}$ feasibly proves the rule $R \in \mathcal{R}$, by Theorem \ref{AxR} and Lemma \ref{LocalToGlobal}. For the other rules of $G$, as they can only be $(K_{\Box})$ or $(K_{\Diamond})$, they are feasibly provable in $\mathbf{CK}+\mathcal{C}$. Therefore, by Lemma \ref{LocalToGlobal}, $\mathbf{CK}+\mathcal{C}$ pd-simulates $G$.
\end{proof}

\section{Feasible Visser-Harrop Property} \label{SectionMain}
In this section, we prove the main result of the paper stating that any strong constructive sequent calculus over $\mathcal{L}$, satisfying a modest technical condition, feasibly admits a generalization of Visser's rules. For the moment, let us ignore the generalization. By the feasible admissibility of Visser's rule, we mean there is a polynomial time algorithm that reads a $G$-proof $\pi$ for \[ \{ A_i \to B_i \}_{i \in I} \Rightarrow C \vee D,\] where $I$ is an index set and outputs a $G$-proof for one of the following sequents
\small \begin{center}
\begin{tabular}{c c c}
$ \{ A_i \to B_i \}_{i \in I} \Rightarrow C\;$ or & $  \{ A_i \to B_i \}_{i \in I} \Rightarrow D\;$ or
& $ \{ A_i \to B_i \}_{i \in I} \Rightarrow A_i$,\\
\end{tabular}
\end{center}
\normalsize for some $i \in I$. 
Using Corollary \ref{thm: p-simulation of admissibly strong}, it is enough to prove the claim for the systems $G=\mathbf{CK}+\mathcal{C}$, where $\mathcal{C}$ is a finite set of constructive formulas. To prove this claim, we need to develop the following two ingredients. First, a translation function to transform the provable sequent $\{ A_i \to B_i \}_{i \in I} \Rightarrow C \vee D$ to a simpler provable sequent $\Sigma,\{\neg p_i \}_{i \in I} \Rightarrow q \vee r$, where $p_i$'s, $q$ and $r$ are all atomic formulas and $\Sigma$ consists of some sort of simple formulas. The second ingredient is a proof theoretical version of the usual unit propagation algorithm to read the result of the first part, i.e., $\Sigma,\{ \neg p_i \}_{i \in I} \Rightarrow q \vee r$ and find a proof for either $\Sigma, \{\neg p_i\}_{i \in I} \Rightarrow q$ or $\Sigma, \{\neg p_i\}_{i \in I} \Rightarrow q$ or $\Sigma, \{\neg p_i \}_{i \in I} \Rightarrow p_i$, for some $i \in I$. Finally, by applying the converse of the transformation of the first part, we can provide a proof for either $\{A_i \to B_i \}_{i \in I} \Rightarrow C$ or $\{A_i \to B_i \}_{i \in I} \Rightarrow D$ or $\{A_i \to B_i \}_{i \in I} \Rightarrow A_i$, for some $i \in I$.
We will cover the first ingredient in Subsection \ref{SubsectionTranslation} and Subsection \ref{SubsectionPreservability} and the second ingredient will be explained in Subsection \ref{SubsectionUnitPropagation} and Subsection \ref{SubsectionTfreeTfull}. Finally, in Subsection \ref{SubsectionMain}, we will combine these two ingredients to prove the theorem.

\subsection{The Translations and their Properties}\label{SubsectionTranslation}
In this subsection, we introduce the two translations we mentioned before and investigate their effects on basic, almost positive and constructive formulas.

\begin{dfn} \label{Language}
For any formula $\phi \in \mathcal{L}$, set $\langle \phi \rangle$ as a fresh 
 atomic formula, 
 called an \emph{angled atom}, and add it to $\mathcal{L}$. The new language is denoted 
 by $\mathcal{L}^{+}$.
\end{dfn}
Note that an \emph{atom} in
$\mathcal{L}^+$ is either an atom in $\mathcal{L}$ or an angled atom, and these possibilities do not intersect. Moreover, notice that in computing the size of the atom $\langle \phi \rangle$, we consider all the symbols in $\langle \phi \rangle$. Hence, $|\langle \phi \rangle|=|\phi|+2$.

\begin{dfn} \label{Translation}
The translation function $t: \mathcal{L} \to \mathcal{L}^+$ is defined as:
\begin{itemize}
\item[$\bullet$] 
$\bot^t = \bot$, $\top^t = \langle \top \rangle$, and $p^t= \langle p \rangle$, for any atomic formula $p$;
\item[$\bullet$]
$(A \circ B) ^t = (A^t \circ B^t) \wedge \langle A \circ B \rangle$, for any $\circ \in \{\wedge, \vee, \to\}$;
\item[$\bullet$]
$(\bigcirc A)^t =(\bigcirc A^t) \wedge \langle \bigcirc A \rangle$, for any $\bigcirc \in \{\Box, \Diamond\}$. 
\end{itemize}
For a multiset $\Gamma$, by $\Gamma^t$, we mean the multiset consisting of the translation of all the elements of $\Gamma$, i.e., $\Gamma^t = \{\gamma^t \mid \gamma \in \Gamma\}$. 
\end{dfn}
\begin{dfn}
The \emph{standard} substitution $s: {\L}^+ \to \L$ is defined as:
\begin{itemize}
\item[$\bullet$]
$\langle \varphi \rangle^s = \varphi$, $p^s=p$, for any formula $\varphi \in \mathcal{L}$ and $p$ is an atom in $\mathcal{L}$, $\bot$ or $\top$;
\item[$\bullet$]
$(A \circ B) ^s = A^s \circ B^s$, for any $\circ \in \{\wedge, \vee, \to\}$;
\item[$\bullet$]
$(\bigcirc A)^s =\bigcirc A^s$, for any $\bigcirc \in \{\Box, \Diamond\}$.
\end{itemize}
Define $\Gamma^s$ in the usual way, i.e., $\Gamma^s =\{\gamma^s \mid \gamma \in \Gamma\}$.   
\end{dfn}
The standard substitution $s: {\L}^+ \to \L$ is the map that substitutes the angled atom $\langle \phi \rangle$ by $\phi$ and leaves the non-angled atomic formulas intact. By induction, it is easy to see that for any formula $A(p_1, \cdots, p_n) \in \mathcal{L}$, we have  $A(\langle \phi_1 \rangle , \dots , \langle \phi_n \rangle)^s = A (\phi_1, \dots, \phi_n)$.
The substitution $s$ can be interpreted as a translation function, mapping $\mathcal{L}^+$ into $\mathcal{L}$, cancelling all the changes made by the translation $t$ and tracing back the original formula.
\begin{lem}\label{LemTimeT}
The 
functions $t$ and $s$ are polynomial time computable. 
\end{lem}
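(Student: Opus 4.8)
The plan is to treat the two functions separately, handling $s$ first since it is the simpler of the two, and then isolating the only genuine subtlety for $t$, namely controlling the size of the output it produces.

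For the standard substitution $s$, I would argue by structural induction on $\mathcal{L}^+$-formulas that $|A^s| \le |A|$: at an angled atom $\langle \varphi \rangle$ the value $A^s = \varphi$ has size $|\varphi| = |\langle \varphi \rangle| - 2 < |\langle \varphi \rangle|$, while at each compound formula the outermost connective is simply copied and the induction hypothesis is applied to the immediate subformulas. Since the defining clauses are a plain structural recursion that visits each node of the syntax tree of $A$ exactly once, performing at each node only a constant amount of bookkeeping together with, at the angled-atom leaves, the copying of the bracket content, the whole computation runs in time linear in $|A|$. Hence $s$ is polynomial time computable.

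For the translation $t$, the recursive clauses duplicate structure, so the point requiring care — and the main obstacle in the whole argument — is showing that this duplication causes only a polynomial, rather than exponential, blowup. The key observation is that in the clause $(A \circ B)^t = (A^t \circ B^t) \wedge \langle A \circ B \rangle$ the duplicated summand is the angled atom $\langle A \circ B \rangle$ built from the \emph{original}, untranslated subformula, whose size is merely $|A \circ B| + 2$; it is \emph{not} a second copy of $A^t \circ B^t$, which is precisely what would have produced an exponential explosion. The same remark applies to the unary clause $(\bigcirc A)^t = (\bigcirc A^t) \wedge \langle \bigcirc A \rangle$. I would make this precise by showing, by induction on $|A|$, that $|A^t| = O(|A|^2)$: each node $v$ of the syntax tree of $A$ contributes its own connective symbol, a fixed overhead, and one angled atom of size at most $|A_v| + 2$, where $A_v$ is the subformula rooted at $v$. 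Thus $|A^t|$ is bounded by a constant multiple of $\sum_v |A_v|$, the total size of all subformula occurrences of $A$; since there are $O(|A|)$ nodes and each satisfies $|A_v| \le |A|$, this sum is $O(|A|^2)$.

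With the quadratic bound on the output in hand, the running time follows routinely. Computing $A^t$ amounts to a single traversal of the syntax tree of $A$, which has $O(|A|)$ nodes; at each node the algorithm performs its recursive calls, writes out the angled atom for the subformula rooted there in time $O(|A_v|) \le O(|A|)$, and concatenates the results. As the total work is bounded by the number of nodes times the per-node cost, together with the size of the final output, it is $O(|A|^2)$, which is polynomial. Once one notices that the angled atoms reference the source formula rather than its translation, both the polynomial size bound and the polynomial time bound are straightforward inductions.
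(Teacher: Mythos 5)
Your proposal is correct and follows essentially the same route as the paper: a structural recursion over the syntax tree in which each node costs time proportional to the size of the subformula rooted there, yielding the quadratic bound $O(|A|^2)$ for $t$ (the paper phrases this as recurrence inequalities $T_t(B \circ C) \leq T_t(B)+T_t(C)+|B|+|C|+O(1)$ and solves them by induction, which is the same computation as your sum $\sum_v |A_v|$). Your explicit remark that the angled atoms are built from the \emph{untranslated} subformulas, and your sharper linear-time observation for $s$, are nice clarifications but do not constitute a different argument.
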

\begin{proof}
First, we prove the claim for the function $t$. Consider the canonical recursive algorithm that computes $A^t$ and denote the time of this algorithm by $T_t(A)$. We show that $T_t(A) \leq O(|A|^2)$, using the following inequalities:
\begin{itemize}
\item
$T_t(A) \leq O(1)$, for the atomic $A$ (including $\bot$ and $\top$),
\item
$T_t(B \circ C) \leq T_t(B)+T_t(C)+|B|+|C|+O(1)$,
\item
$T_t(\bigcirc B) \leq T_t(B)+|B|+O(1)$.
\end{itemize}
for any $\circ \in \{\wedge, \vee, \to\}$ and $\bigcirc \in \{\Box, \Diamond\}$. The atomic case is obvious. For the second case, to compute $(B \circ C)^t$ we have to first compute $B^t$ and $C^t$, put them together separated by $\circ$ and finally add $\langle B \circ C \rangle$ to its end, separated by a $\wedge$. Therefore, the bound is clear. The argument for the third case is similar. By these inequalities and induction on the structure of $A$, it is easy to see that $T_t(A) \leq O(|A|^2)$. The case for the function  $s$ is similar and in fact easier. Consider the canonical recursive algorithm that computes $A^s$ by first implementing the substitutions for the atoms and then mimicking the structure of $A$. Denoting the time of this algorithm by $T_s(A)$ and using a similar type of inequalities as before, we get $T_s(A) \leq O(|A|^2)$. 
\end{proof}

\begin{lem} \label{TranslationAndAtoms}
$\mathbf{CK} \vdash A^t \Rightarrow \langle A \rangle$, for any formula $A \in \mathcal{L}$.
\end{lem}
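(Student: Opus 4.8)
The plan is to argue by a direct case analysis on the outermost structure of $A$, observing that in every non-atomic case the angled atom $\langle A \rangle$ already occurs as an explicit top-level conjunct of $A^t$, so that it can simply be projected out using rules of $\mathbf{LJ} \subseteq \mathbf{CK}$. No genuine induction hypothesis is invoked; the work is entirely bookkeeping.

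First I would dispatch the atomic base cases. If $A = p$ is an atom or $A = \top$, then by definition $A^t = \langle A \rangle$, so the target sequent $\langle A \rangle \Rightarrow \langle A \rangle$ is an instance of $(id)$. If $A = \bot$, then $A^t = \bot$, and $\bot \Rightarrow \langle \bot \rangle$ is an instance of $(L\bot)$, taking the context empty and $\langle \bot \rangle$ as the succedent. These exhaust the atomic possibilities.

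For every compound $A$, the definition of $t$ guarantees that $A^t$ has the shape $(\cdots) \wedge \langle A \rangle$: indeed $(B \circ C)^t = (B^t \circ C^t) \wedge \langle B \circ C \rangle$ for $\circ \in \{\wedge, \vee, \to\}$, and $(\bigcirc B)^t = (\bigcirc B^t) \wedge \langle \bigcirc B \rangle$ for $\bigcirc \in \{\Box, \Diamond\}$. In each such case I would start from the instance $\langle A \rangle \Rightarrow \langle A \rangle$ of $(id)$ and apply $(L\wedge_2)$ once, instantiating its schematic left conjunct by $(\cdots)$ and its schematic right conjunct by $\langle A \rangle$; this yields exactly $(\cdots) \wedge \langle A \rangle \Rightarrow \langle A \rangle$, i.e.\ $A^t \Rightarrow \langle A \rangle$. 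Note that the modal clauses need nothing beyond this single step, since the modality sits inside the discarded conjunct $(\cdots)$ and is never analysed.

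Since $(id)$, $(L\bot)$ and $(L\wedge_2)$ all belong to $\mathbf{LJ}$, hence to $\mathbf{CK} = \mathbf{LJ} + \{K_\Box, K_\Diamond\}$ read over the extended language $\mathcal{L}^+$ with angled atoms treated as ordinary atoms, the whole argument lives inside $\mathbf{CK}$. The only point requiring a word of care is that the schematic atoms of the $\mathbf{LJ}$-rules may be instantiated by arbitrary $\mathcal{L}^+$-formulas, which is precisely what the notion of an \emph{instance} from the Preliminaries allows. There is thus no substantive obstacle: the lemma is immediate once one records that $t$ always places $\langle A \rangle$ as the rightmost conjunct of $A^t$.
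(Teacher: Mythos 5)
Your proof is correct and takes essentially the same approach as the paper: the atomic cases are immediate since $A^t = \langle A \rangle$ or $A^t = \bot$, and in every compound case $\langle A \rangle$ occurs as the top-level right conjunct of $A^t$, so it is projected out (via $(L\wedge_2)$ applied to an instance of $(id)$, all within $\mathbf{CK}$ read over $\mathcal{L}^+$). The paper leaves this last projection step implicit with ``the proof is clear,'' whereas you spell out the rule applications; the argument is otherwise identical.
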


\begin{proof}
The case where $A$ is atomic, $\bot$, or $\top$ is trivial, as $A^t=\langle A \rangle$ or $A^t=\bot$.  If $A$ is of the form $B \circ C$ or $\bigcirc B$, where $\circ \in \{\wedge, \vee, \to\}$ or $\bigcirc \in \{\Box, \Diamond\}$, then by Definition \ref{Translation}, we have $(B \circ C)^t = (B^t \circ C^t) \wedge \langle B \circ C \rangle= (B^t \circ C^t) \wedge \langle A \rangle$ and $(\bigcirc B)^t=\bigcirc B^t \wedge \langle \bigcirc B \rangle =\bigcirc B^t \wedge \langle A \rangle$. In either case, the proof is clear.
\end{proof}


Recall that an atom $p \in \L^+$ is either an atomic formula in the original language $\mathcal{L}$ or a new added angled atom.

\begin{dfn}\label{ImplicationalHorn}
The set of \emph{implicational Horn} formulas is the smallest set of $\mathcal{L}^+$-formulas containing $\bot$, atomic formulas, and closed under implications of the form $\bigwedge Q \to r$, where $Q=\{q_1, \dots , q_n\} \subseteq \L^+$ is a non-empty multiset of atoms and $r$ is either $\bot$ or an atom in $\L^+$. The set of \emph{modal Horn} formulas is the smallest set of $\mathcal{L}^+$-formulas containing $\bot$, atomic formulas, and closed under $\Box$ and implications of the form $A \to B$, where $A$ is of the form $\bigwedge_{i=1}^{k} \Diamond^{n_i} p_{i}$ for some $k \geq 1$ and $n_i \geq 0$ and $B$ is a modal Horn formula. 
\end{dfn}

\begin{rem}
Here are two remarks. First, as mentioned in Preliminaries, by $\Diamond^0 p$, we mean $p$. Therefore, it is easy to see that any implicational Horn formula is also a modal Horn formula. Second, in the literature usually Horn formulas are defined as formulas in the conjunctive normal form (CNF) such that each conjunct contains at most one positive literal. In the classical logic, our implicational Horn formulas are equivalent to Horn formulas defined as CNF's. However, this is not generally the case for non-classical logics. 
\end{rem}
Our first aim, as it usually happens in translations, is to show that $t$ preserves the provability in $\mathbf{CK}+\mathcal{C}$, for any finite set $\mathcal{C}$ of constructive formulas. Unfortunately, due to the addition of the new atoms and their use in the translation, the preservation in general does not hold. However, a slightly weaker form is true if we see the sequents up to some ``harmless'' modal Horn formulas in the antecedents. More precisely, if $\Gamma \Rightarrow \Delta$ is provable in $\mathbf{CK}+\mathcal{C}$, then there is a multiset $\Sigma$ of modal Horn formulas such that $\Sigma, \Gamma^t \Rightarrow \Delta^t$ is also provable in $\mathbf{CK}+\mathcal{C}$, while the formulas in $\Sigma$ are harmless in the sense that $\mathbf{CK}+\mathcal{C} \vdash \, \Rightarrow \bigwedge \Sigma^s$. Roughly speaking, the translation preserves the provability, up to the modal Horn formulas that $s$ sees as $(\mathbf{CK}+\mathcal{C})$-provable. To prove this property, we need a machinery to commute the translation $t$ with the constructive formulas, simply because we need to show that the translation of an instance of an axiom is an instance of the axiom itself. The most desirable such commutation would be the provability of both $A(\overline{\phi})^t \Rightarrow A(\overline{\phi^t})$ and $A(\overline{\phi^t}) \Rightarrow A(\overline{\phi})^t$ in $\mathbf{CK}$, for any constructive formula $A(\overline{p})$ and any formulas $\overline{\phi}$. Unfortunately, such a situation rarely takes place. To solve the issue, again adding a harmless multiset $\Sigma'$ of modal Horn formulas helps. In fact, we show that for a basic, almost positive or constructive formula $A(\overline{p})$ either $\Sigma', A(\overline{\phi})^t \Rightarrow A(\overline{\phi^t})$ or $\Sigma', A(\overline{\phi^t}) \Rightarrow A(\overline{\phi})^t$ or both are provable in $\mathbf{CK}$. For basic formulas $A(\overline{p})$, both directions are provable. However, for almost positive and constructive formulas, only one direction can be proved, and in the case of constructive formulas, even an additional formula is needed to make the sequent provable in $\mathbf{CK}$. The following theorem is devoted to these commutations. Note that although our main goal is the commutation with the constructive formulas, we also need to address the other two families as the steps to reach our goal.

\begin{thm}\label{Commutation}
We have the following commutations.
\begin{itemize}
\item[$(i)$]
For any basic formula $A(\overline{p}) \in \mathcal{L}$ and formulas $\overline{\phi} \in \mathcal{L}^+$, there is a multiset of modal Horn formulas $\Phi_{A, \overline{\phi}}$ constructed from angled atoms such that
\[
\mathbf{CK} \vdash \Phi_{A, \overline{\phi}}, (A(\overline{\phi}))^t \Rightarrow A(\overline{\phi^t}) \quad \text{and} \quad \mathbf{CK} \vdash \Phi_{A, \overline{\phi}}, A(\overline{\phi^t}) \Rightarrow (A(\overline{\phi}))^t.
\]
\item[$(ii)$]
For any almost positive formula $A(\overline{p})\in \mathcal{L}$ and formulas $\overline{\phi} \in \mathcal{L}^+$, there is a multiset of modal Horn formulas $\Pi_{A, \overline{\phi}}$ constructed from angled atoms such that 
\begin{center}
    $\mathbf{CK} \vdash \Pi_{A, \overline{\phi}}, (A(\overline{\phi}))^t \Rightarrow A(\overline{\phi^t}).$
\end{center}
\item[$(iii)$]
For any constructive formula $A(\overline{p})\in \mathcal{L}$ and formulas $\overline{\phi} \in \mathcal{L}^+$, there is a multiset of modal Horn formulas $\Upsilon_{A, \overline{\phi}}$ constructed from angled atoms such that 
\begin{center}
    $\mathbf{CK} \vdash \Upsilon_{A, \overline{\phi}}, \langle A(\overline{\phi}) \rangle, A(\overline{\phi^t}) \Rightarrow (A(\overline{\phi}))^t.$
\end{center}
\end{itemize}
For any $\Theta \in \{\Phi, \Pi, \Upsilon \}$ and formulas $A(\overline{p}) \in \mathcal{L}$ and $\overline{\phi} \in \mathcal{L}^+$, there is a proof $\sigma_{\Theta, A, \overline{\phi}}$ such that $\mathbf{CK} \vdash^{\sigma_{\Theta, A, \overline{\phi}}} (\, \Rightarrow \bigwedge \Theta_{A, \overline{\phi}}^{s})$, where $s$ is the standard substitution. Moreover, the processes of finding $\Theta_{A, \overline{\phi}}$ and $\sigma_{\Theta, A, \overline{\phi}}$ are polynomial time computable in the inputs $A(\overline{p})$ and $\overline{\phi}$. 
\end{thm}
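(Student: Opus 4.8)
The plan is to prove the three commutations $(i)$, $(ii)$, $(iii)$ \emph{separately}, each by induction on the structure of the template $A(\overline{p})$, and in exactly this order, so that the induction for $(ii)$ may call on $(i)$ and the induction for $(iii)$ on both $(i)$ and $(ii)$. At a leaf $A=p_i$ the two formulas $(A(\overline{\phi}))^t$ and $A(\overline{\phi^t})$ literally coincide (both equal $\phi_i^t$), so the empty multiset works; the base cases of $(ii)$ and $(iii)$ for a \emph{basic} $A$ are handed over directly from $(i)$, taking its forward direction for $(ii)$ and its reverse direction for $(iii)$ (the extra hypothesis $\langle A(\overline{\phi})\rangle$ simply going unused). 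Throughout, the harmless multiset is enlarged only by implicational/modal Horn formulas built from angled atoms whose $s$-image is a visibly $\mathbf{CK}$-provable sequent, and the witnessing proof $\sigma_{\Theta,A,\overline{\phi}}$ is assembled in lockstep with $\Theta_{A,\overline{\phi}}$ by the same recursion.

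For part $(i)$ I would handle $\wedge,\vee,\Diamond$. In the $\wedge$ and $\vee$ cases both directions follow from the inductive hypotheses for $B$ and $C$ and the $\mathbf{LJ}$-rules; the only new ingredient is rebuilding the top angled atom $\langle A(\overline{\phi})\rangle$ in the reverse direction, which I extract from the angled atoms of the immediate subformulas (available by Lemma \ref{TranslationAndAtoms}) using an added Horn formula such as $\langle B(\overline{\phi})\rangle\wedge\langle C(\overline{\phi})\rangle\to\langle B(\overline{\phi})\wedge C(\overline{\phi})\rangle$ or $\langle B(\overline{\phi})\rangle\to\langle B(\overline{\phi})\vee C(\overline{\phi})\rangle$, whose $s$-images are the provable sequents $Z\to Z$ and $X\to X\vee Y$. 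The genuinely modal case $A=\Diamond B$ is the delicate one: to push the inductive hypothesis $\Phi_{B},B(\overline{\phi})^t\Rightarrow B(\overline{\phi^t})$ under the diamond I apply $(K_{\Diamond})$, which forces me to set $\Phi_{A,\overline{\phi}}\supseteq\Box\Phi_{B,\overline{\phi}}$ (still modal Horn, and $s$-provable by necessitation), and to recover $\langle\Diamond B(\overline{\phi})\rangle$ I add the modal Horn formula $\Diamond\langle B(\overline{\phi})\rangle\to\langle\Diamond B(\overline{\phi})\rangle$, feeding it the premise $\Diamond\langle B(\overline{\phi})\rangle$ obtained from $\Diamond B(\overline{\phi})^t$ by one more use of $(K_{\Diamond})$ on Lemma \ref{TranslationAndAtoms}.

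The inductions for $(ii)$ and $(iii)$ reuse this machinery; the one new phenomenon is contravariance at implications. For an almost positive $A=B\to C$ with $B$ basic I apply $(R\to)$, assume $B(\overline{\phi^t})$, feed it to the \emph{reverse} direction of $(i)$ to reach $B(\overline{\phi})^t$, fire $(L\to)$ against $B(\overline{\phi})^t\to C(\overline{\phi})^t$ to get $C(\overline{\phi})^t$, and close with the inductive hypothesis of $(ii)$ for $C$ --- this is precisely why $(i)$ must deliver \emph{both} directions. For a constructive $A=B\to C$ with $B$ almost positive the dual move uses the \emph{forward} direction of $(ii)$ on $B$ to pass from $B(\overline{\phi})^t$ to $B(\overline{\phi^t})$, then the given $A(\overline{\phi^t})$ to reach $C(\overline{\phi^t})$, then the inductive hypothesis of $(iii)$ for $C$; here the extra hypothesis $\langle A(\overline{\phi})\rangle$ of $(iii)$ earns its keep, since I recover the needed $\langle C(\overline{\phi})\rangle$ from $\langle B(\overline{\phi})\rangle$ (Lemma \ref{TranslationAndAtoms}) and $\langle A(\overline{\phi})\rangle=\langle B(\overline{\phi})\to C(\overline{\phi})\rangle$ through the Horn formula $\langle B(\overline{\phi})\rangle\wedge\langle B(\overline{\phi})\to C(\overline{\phi})\rangle\to\langle C(\overline{\phi})\rangle$, whose $s$-image is the provable modus ponens sequent. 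The cases $A=\Box B$ in $(ii)$ and $(iii)$ repeat the $\Diamond$-pattern with $(K_{\Box})$ and the added Horn formula $\langle\Box B(\overline{\phi})\rangle\to\Box\langle B(\overline{\phi})\rangle$, and the $\wedge$ case of $(iii)$ uses projection Horn formulas $\langle B\wedge C\rangle\to\langle B\rangle$, $\langle B\wedge C\rangle\to\langle C\rangle$.

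Finally, the ``Moreover'' clause falls out of the recursion: every Horn formula I adjoin has the shape $Z\to Z$, a projection, a disjunction introduction, a modus ponens, or a $\Box/\Diamond$-commutation, each with a $\mathbf{CK}$-provable $s$-image, and boxing a multiset preserves $s$-provability by necessitation; hence $\bigwedge\Theta_{A,\overline{\phi}}^{s}$ is provable and its proof $\sigma$ is built alongside. Since each connective contributes only $O(1)$ Horn formulas and at most one layer of boxing, and the modal depth is bounded by $|A|$, both $\Theta_{A,\overline{\phi}}$ and $\sigma_{\Theta,A,\overline{\phi}}$ remain polynomial in $|A|$ and $|\overline{\phi}|$ and are computed in polynomial time. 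I expect the main obstacle to be exactly the modal cases: boxing the harmless multisets so that $(K_{\Box})/(K_{\Diamond})$ becomes applicable while keeping them modal Horn and $s$-provable, and correctly threading the angled-atom conversions between $\langle\Box B\rangle,\langle\Diamond B\rangle$ and $\Box\langle B\rangle,\Diamond\langle B\rangle$ via Lemma \ref{TranslationAndAtoms}.
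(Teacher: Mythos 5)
Your proposal follows essentially the same route as the paper's own proof: the same three-stage induction with the same dependency order, the same harmless Horn formulas at each step (the conjunction/disjunction introductions and projections, the modus ponens formula $\langle B\rangle\wedge\langle B\to C\rangle\to\langle C\rangle$, and the conversions $\Diamond\langle B\rangle\to\langle\Diamond B\rangle$ and $\langle\Box B\rangle\to\Box\langle B\rangle$), the same boxing of the harmless multiset so that $(K_{\Box})/(K_{\Diamond})$ applies, the same use of Lemma \ref{TranslationAndAtoms} to thread the angled atoms, and the same lockstep construction and complexity accounting for $\sigma_{\Theta,A,\overline{\phi}}$. The only slip is at the leaves: for $A=\top$ the two sides do not literally coincide (since $\top^t=\langle\top\rangle$), so one must take $\Phi_{\top,\overline{\phi}}=\{\langle\top\rangle\}$ as the paper does --- a trivial fix.
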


\begin{proof} 
We first provide $\Theta_{A, \overline{\phi}}$ and $\sigma_{\Theta, A, \overline{\phi}}$ in each case, ignoring the complexity issues altogether. Then, we will address the feasibility of the algorithms in the last part of the proof.

For $(i)$, we use recursion on the structure of the basic formula $A(\overline{p})$ to define $\Phi_{A, \overline{\phi}}$ and $\sigma_{\Phi, A, \overline{\phi}}$. If $A(\overline{p})$ is either an atom or $\bot$, for any formulas $\overline{\phi}$, we take $\Phi_{A, \overline{\phi}}$ to be the empty set. Therefore, both sequents are trivially provable in $\mathbf{CK}$. Moreover, since $\bigwedge \emptyset$ is defined as $\top$, the sequent $(\, \Rightarrow \bigwedge \Phi_{A, \overline{\phi}}^{s})$ is an axiom an hence provable in $\mathbf{CK}$. Define the proof $\sigma_{\Phi, A, \overline{\phi}}$ as $(\Rightarrow \top)$.
If $A(\overline{p})=\top$, we take $\Phi_{A, \overline{\phi}}$ to be $\{\langle \top \rangle\}$. 
The sequents in this case are $\langle \top \rangle , \langle \top \rangle \Rightarrow \top$ and $\langle \top \rangle, \top \Rightarrow \langle \top \rangle$, both provable in $\mathbf{CK}$. Moreover, $ \bigwedge \Phi_{A, \overline{\phi}}^{s}$ is equal to $\top$ and hence $(\, \Rightarrow \bigwedge \Phi_{A, \overline{\phi}}^{s})$ is an instance of an axiom an provable in $\mathbf{CK}$. Define the proof $\sigma_{\Phi, A, \overline{\phi}}$ as this axiom.\\
For $A(\overline{p})= B(\overline{p}) \wedge C(\overline{p})$, by Definition \ref{Translation},  $(A(\overline{\phi}))^t=(B(\overline{\phi}))^t \wedge (C(\overline{\phi}))^t \wedge \langle B(\overline{\phi}) \wedge C(\overline{\phi}) \rangle$. By recursion, there are multisets $\Phi_{B, \overline{\phi}}$ and $\Phi_{C, \overline{\phi}}$ such that
\begin{center}
    $\Phi_{B, \overline{\phi}}, (B(\overline{\phi}))^t \Rightarrow B(\overline{\phi^t}) \quad (1) \quad , \quad \Phi_{B, \overline{\phi}}, B(\overline{\phi^t}) \Rightarrow (B(\overline{\phi}))^t \quad (2),$
\end{center}
\begin{center}
    $\Phi_{C, \overline{\phi}}, (C(\overline{\phi}))^t \Rightarrow C(\overline{\phi^t}) \quad (3) \quad , \quad \Phi_{C, \overline{\phi}}, C(\overline{\phi^t}) \Rightarrow (C(\overline{\phi}))^t \quad (4),$
\end{center}
hold in $\mathbf{CK}$. Define $F_{\Phi, A, \overline{\phi}}=\big(\langle B(\overline{\phi}) \rangle \wedge \langle C(\overline{\phi}) \rangle\big) \to \langle B(\overline{\phi})  \wedge  C(\overline{\phi}) \rangle$ and 
\begin{center}
   $\Phi_{A, \overline{\phi}}=\Phi_{B, \overline{\phi}} \cup \Phi_{C, \overline{\phi}} \cup \{F_{\Phi, A, \overline{\phi}}\}$
\end{center}
Note that $\Phi_{A, \overline{\phi}}$ only consists of modal Horn formulas constructed from angled atoms. Now,
let us first prove the trickier sequent, namely $\Phi_{A, \overline{\phi}}, A(\overline{\phi^t}) \Rightarrow (A(\overline{\phi}))^t$. Using the rules in $\mathbf{CK}$, we easily get from $(2)$ and $(4)$
\begin{center}
    $\mathbf{CK} \vdash \Phi_{B, \overline{\phi}}, \Phi_{C, \overline{\phi}}, B(\overline{\phi^t}) \wedge C(\overline{\phi^t}) \Rightarrow (B(\overline{\phi}))^t \wedge (C(\overline{\phi}))^t \quad (5).$
\end{center}
By Lemma \ref{TranslationAndAtoms}, both $(B(\overline{\phi}))^t \Rightarrow \langle B(\overline{\phi}) \rangle$ and $(C(\overline{\phi}))^t \Rightarrow \langle C(\overline{\phi}) \rangle$ are 
provable in $\mathbf{CK}$, hence so is 
\begin{center}
    $(B(\overline{\phi}))^t \wedge (C(\overline{\phi}))^t \Rightarrow \langle B(\overline{\phi}) \rangle \wedge \langle C(\overline{\phi}) \rangle \quad (6).$
\end{center}
Applications of the cut rule on the $\mathbf{CK}$-provable sequent
\begin{center}
    $\big(\langle B(\overline{\phi}) \rangle \wedge \langle C(\overline{\phi}) \rangle\big) \to \langle B(\overline{\phi})  \wedge  C(\overline{\phi}) \rangle , \langle B(\overline{\phi}) \rangle \wedge \langle C(\overline{\phi}) \rangle \Rightarrow \langle B(\overline{\phi})  \wedge  C(\overline{\phi}) \rangle$
\end{center}
and $(5)$ and $(6)$, we get 
\begin{center}
    $\mathbf{CK} \vdash \Phi_{A, \overline{\phi}}, A(\overline{\phi^t}) \Rightarrow \langle B(\overline{\phi})  \wedge  C(\overline{\phi}) \rangle.$
\end{center}
Moreover, using $(Lw)$ on $(5)$ we get \begin{center}
    $\mathbf{CK} \vdash \Phi_{A, \overline{\phi}}, A(\overline{\phi^t}) \Rightarrow (B(\overline{\phi}))^t \wedge (C(\overline{\phi}))^t$
\end{center}
which together with the above sequent we finally obtain
\begin{center}
    $\mathbf{CK} \vdash \Phi_{A, \overline{\phi}}, A(\overline{\phi^t}) \Rightarrow (A(\overline{\phi}))^t.$
\end{center}
The other sequent, i.e., $\Phi_{A, \overline{\phi}}, (A(\overline{\phi}))^t \Rightarrow A(\overline{\phi^t})$, is easier. Using the rules in $\mathbf{CK}$, we easily get from $(1)$ and $(3)$
\begin{center}
    $\mathbf{CK} \vdash \Phi_{B, \overline{\phi}}, \Phi_{C, \overline{\phi}}, (B(\overline{\phi}))^t \wedge (C(\overline{\phi}))^t \Rightarrow B(\overline{\phi^t}) \wedge C(\overline{\phi^t}).$
\end{center}
Then, using the weakening and the rule $(L\wedge)$, we have 
\begin{center}
    $\mathbf{CK} \vdash \Phi_{A, \overline{\phi}}, (A(\overline{\phi}))^t \Rightarrow A(\overline{\phi^t}).$
\end{center}
Finally, to provide $\sigma_{\Phi, A, \overline{\phi}}$, first note that by the axiom $(id)$ and then applying the rule $(R\to)$ in $\mathbf{CK}$, we have $\mathbf{CK} \vdash \; \Rightarrow (F_{\Phi, A, \overline{\phi}})^s$. Call this proof $\rho_{\Phi, A, \overline{\phi}}$. We have already the $\mathbf{CK}$-proofs $\sigma_{\Phi, B, \overline{\phi}}$ and $\sigma_{\Phi, C, \overline{\phi}}$ for $(\, \Rightarrow \bigwedge \Phi_{B, \overline{\phi}}^{s})$ and $(\, \Rightarrow \bigwedge \Phi_{C, \overline{\phi}}^{s})$, respectively. Therefore, together with $\rho_{\Phi, A, \overline{\phi}}$ and some applications of $(R\wedge)$, they form a proof $\sigma_{\Phi, A, \overline{\phi}}$ for $\mathbf{CK} \vdash (\, \Rightarrow \bigwedge \Phi_{A, \overline{\phi}}^{s})$. \\
Similarly, we can prove that in the case $A(\overline{p}) = B(\overline{p}) \vee C(\overline{p})$, setting
\begin{center}
$\Phi_{A, \overline{\phi}}=\Phi_{B, \overline{\phi}} \cup \Phi_{C, \overline{\phi}} \cup \{F_{\Phi, A, \overline{\phi}},  F'_{\Phi, A, \overline{\phi}}\}$
\end{center}
works, where $F_{\Phi, A, \overline{\phi}}=\langle B(\overline{\phi}) \rangle  \to \langle B(\overline{\phi})  \vee C(\overline{\phi}) \rangle$ and  $F'_{\Phi, A, \overline{\phi}}=\langle C(\overline{\phi}) \rangle \to \langle B(\overline{\phi})  \vee  C(\overline{\phi}) \rangle$. Finding $\sigma_{\Phi, A, \overline{\phi}}$ is also similar.\\
For the case $A(\overline{p})= \Diamond B(\overline{p})$, by Definition \ref{TranslationAndAtoms}, we have $(A(\overline{\phi}))^t=\Diamond (B(\overline{\phi}))^t \wedge \langle \Diamond B(\overline{\phi}) \rangle$. We already know that
\begin{center}
    $\Phi_{B, \overline{\phi}}, (B(\overline{\phi}))^t \Rightarrow B(\overline{\phi^t}) \quad (7) \quad , \quad \Phi_{B, \overline{\phi}}, B(\overline{\phi^t}) \Rightarrow (B(\overline{\phi}))^t \quad (8)$
\end{center}
are provable in $\mathbf{CK}$. Define
\begin{center}
    $\Phi_{A, \overline{\phi}} = \Box \Phi_{B, \overline{\phi}} \cup \{F_{\Phi, A, \overline{\phi}}\}$,
\end{center}
where $F_{\Phi, A, \overline{\phi}}=\Diamond \langle B(\overline{\phi}) \rangle \to \langle \Diamond B(\overline{\phi}) \rangle$. Note that $\Phi_{A, \overline{\phi}}$ consists of modal Horn formulas constructed from angled atoms. Let us investigate the more complicated case, namely the provability of the sequent $\Phi_{A, \overline{\phi}}, A(\overline{\phi^t}) \Rightarrow (A(\overline{\phi}))^t$ in $\mathbf{CK}$. The other case is easier. Now, applying the rule $(K_{\Diamond})$ on $(8)$, we get 
\begin{center}
    $\mathbf{CK} \vdash \Box \Phi_{B, \overline{\phi}}, \Diamond B(\overline{\phi^t}) \Rightarrow \Diamond (B(\overline{\phi}))^t \quad (9).$
\end{center}
By Lemma \ref{TranslationAndAtoms}, we have $\mathbf{CK} \vdash (B(\overline{\phi}))^t \Rightarrow \langle B(\overline{\phi}) \rangle$ and by $(K_{\Diamond})$, we get $\mathbf{CK} \vdash \Diamond (B(\overline{\phi}))^t \Rightarrow \Diamond \langle B(\overline{\phi}) \rangle$. Therefore, using cut and $(9)$, we get $\mathbf{CK} \vdash \Box \Phi_{B, \overline{\phi}}, \Diamond B(\overline{\phi^t}) \Rightarrow \Diamond \langle B(\overline{\phi}) \rangle$. By cut on the latter sequent and the $\mathbf{CK}$-provable sequent $\Diamond \langle B(\overline{\phi}) \rangle, \Diamond \langle B(\overline{\phi}) \rangle \to \langle \Diamond B(\overline{\phi}) \rangle \Rightarrow \langle \Diamond B(\overline{\phi}) \rangle$, we get
\begin{center}
    $\mathbf{CK} \vdash \Box \Phi_{B, \overline{\phi}}, \Diamond B(\overline{\phi^t}) , \Diamond \langle B(\overline{\phi}) \rangle \to \langle \Diamond B(\overline{\phi}) \rangle\Rightarrow \langle \Diamond B(\overline{\phi}) \rangle \quad (10).$
\end{center}
Using the rule $(L w)$ on $(9)$ and then applying the rule $(R \wedge)$ on the resulting sequent and $(10)$, we get $\Phi_{A, \overline{\phi}}, A(\overline{\phi^t}) \Rightarrow (A(\overline{\phi}))^t$ in $\mathbf{CK}$. \\
To provide $\sigma_{\Phi, A, \overline{\phi}}$, note that by the axiom $(id)$ and then applying $(R\to)$, we have $\mathbf{CK} \vdash \; \Rightarrow (F_{\Phi, A, \overline{\phi}})^s$.
Call this proof $\rho_{\Phi, A, \overline{\phi}}$. Applying $(K_{\Box})$ on the already existing proof $\mathbf{CK} \vdash^{\sigma_{\Phi, B, \overline{\phi}}} \; \Rightarrow \bigwedge (\Phi_{B, \overline{\phi}})^s$, we get $\mathbf{CK} \vdash \; \Rightarrow \Box \bigwedge (\Phi_{B, \overline{\phi}})^s$. To prove $\mathbf{CK} \vdash \; \Rightarrow \bigwedge (\Box \Phi_{B, \overline{\phi}})^s$, we will provide a $\mathbf{CK}$-proof for $\Box \bigwedge \Gamma \Rightarrow \bigwedge \Box \Gamma$ and investigate its complexity, for any multiset $\Gamma$. First, notice that by the axiom $(id)$, we have $\mathbf{CK} \vdash \gamma \Rightarrow \gamma$, for any $\gamma \in \Gamma$. Applying the rule $(L \wedge)$ for $\parallel \Gamma \parallel-1$ many times, we get $\mathbf{CK} \vdash \bigwedge \Gamma \Rightarrow \gamma$, where $\parallel \Gamma \parallel$ is the cardinality of $\Gamma$. Applying the rule $(K \Box)$, we get $\mathbf{CK} \vdash \Box \bigwedge \Gamma \Rightarrow \Box \gamma$, for any $\gamma \in \Gamma$. Applying the rule $(R \wedge)$ for $\parallel \Gamma \parallel-1$ many times, we finally get $\mathbf{CK} \vdash \Box \bigwedge \Gamma \Rightarrow \bigwedge \Box \Gamma$.
Notice that producing the whole proof takes $|\Gamma|^{O(1)}$ many steps.
Using this proof for $\Gamma=(\Phi_{B, \overline{\phi}})^s$, we get a $\mathbf{CK}$-proof for $\Box \bigwedge (\Phi_{B, \overline{\phi}})^s \Rightarrow \bigwedge \Box (\Phi_{B, \overline{\phi}})^s$, in time $|\Phi_{B, \overline{\phi}}|^{O(1)}$. Then, using the proof $\rho_{\Phi, A, \overline{\phi}}$, we can easily construct a proof $\sigma_{\Phi, A, \overline{\phi}}$ for $\mathbf{CK} \vdash \; \Rightarrow \bigwedge (\Phi_{A, \overline{\phi}})^s$.

For $(ii)$, again, we use recursion on the structure of the almost positive formula $A(\overline{p})$. The base case, where $A(\overline{p})$ is a basic formula, is covered in $(i)$. The cases $A(\overline{p})=B(\overline{p}) \circ C(\overline{p})$ or $A(\overline{p})=\bigcirc B(\overline{p})$, where $\circ \in \{\wedge, \vee\}$ and $\bigcirc \in \{\Box, \Diamond\}$ are simple and similar to the cases in $(i)$. It is easy to see that in the former cases $\Pi_{A, \overline{\phi}}= \Pi_{B, \overline{\phi}} \cup \Pi_{C, \overline{\phi}}$ and in the latter cases $\Pi_{A, \overline{\phi}}= \Box \Pi_{B, \overline{\phi}}$ works. The structure of $\sigma_{\Pi, A, \overline{\phi}}$ is similar to that of the case $(i)$. The only remaining case, which is also simple, is when $A(\overline{p}) = B(\overline{p}) \to C(\overline{p})$, where $B(\overline{p})$ is a basic formula and $C(\overline{p})$ is almost positive. Here again $\Pi_{A, \overline{\phi}}= \Phi_{B, \overline{\phi}} \cup \Pi_{C, \overline{\phi}}$ works. It is clear that $\Pi_{A, \overline{\phi}}$ is a multiset of modal Horn formulas constructed from angled atoms. Using Definition \ref{Translation}, we have $(A(\overline{\phi}))^t = [(B(\overline{\phi}))^t \to (C(\overline{\phi}))^t] \wedge \langle B(\overline{\phi}) \to C(\overline{\phi}) \rangle$. By $(i)$ and the recursive step, we have the multisets $\Phi_{B, \overline{\phi}}$ and $\Pi_{C, \overline{\phi}}$ such that
\begin{center}
    $\Phi_{B, \overline{\phi}}, B(\overline{\phi^t}) \Rightarrow (B(\overline{\phi}))^t \quad (11) \quad, \quad
\Pi_{C, \overline{\phi}}, (C(\overline{\phi}))^t \Rightarrow C(\overline{\phi^t}) \quad (12)$
\end{center}
are provable in $\mathbf{CK}$. Using the rule $(L w)$ and then $(L \to)$ on $(11)$ and $(12)$ and then the rule $(R \to)$, we get
\begin{center}
    $\mathbf{CK} \vdash \Phi_{B, \overline{\phi}}, \Pi_{C, \overline{\phi}}, (B(\overline{\phi}))^t \to (C(\overline{\phi}))^t \Rightarrow B(\overline{\phi^t}) \to C(\overline{\phi^t}).$
\end{center}
Now, using the rule $(L \wedge_1)$ to introduce $\langle B(\overline{\phi}) \to C(\overline{\phi}) \rangle$ in the antecedent of the sequent, and setting $\Pi_{A, \overline{\phi}}= \Phi_{B, \overline{\phi}} \cup \Pi_{C, \overline{\phi}}$, we get $\mathbf{CK} \vdash \Pi_{A, \overline{\phi}}, (A(\overline{\phi}))^t \Rightarrow A(\overline{\phi^t})$. Finally, similar to the case $(i)$, it is easy to use $\sigma_{\Phi, B, \overline{\phi}}$ and $\sigma_{\Pi, C, \overline{\phi}}$ to construct $\sigma_{\Pi, A, \overline{\phi}}$ such that $\mathbf{CK} \vdash^{\sigma_{\Pi, A, \overline{\phi}}} \; \Rightarrow \bigwedge \Theta_{A, \overline{\phi}}^{s}$.

For $(iii)$, the proof again proceeds by recursion on the structure of the constructive formula $A(\overline{p})$. The base case is covered in $(i)$. For the case $A(\overline{p})= B(\overline{p}) \wedge C(\overline{p})$, set $\Upsilon_{A, \overline{\phi}}$ as
\begin{center}
    $\Upsilon_{B, \overline{\phi}} \cup \Upsilon_{C, \overline{\phi}} \cup
\{F_{\Upsilon, A, \overline{\phi}}, F'_{\Upsilon, A, \overline{\phi}}\},$
\end{center}
where $F_{\Upsilon, A, \overline{\phi}}=\langle B(\overline{\phi}) \wedge C(\overline{\phi}) \rangle \to \langle B(\overline{\phi}) \rangle$ and $F'_{\Upsilon, A, \overline{\phi}}=\langle B(\overline{\phi}) \wedge C(\overline{\phi}) \rangle \to \langle C(\overline{\phi}) \rangle$, and for the case that $A(\overline{p})=\Box B(\overline{p})$ take
\begin{center}
    $\Upsilon_{A, \overline{\phi}}= \Box \Upsilon_{B, \overline{\phi}} \cup \{ F_{\Upsilon, A, \overline{\phi}} \},$
\end{center}
where $F_{\Upsilon, A, \overline{\phi}}=\langle \Box B (\overline{\phi}) \rangle \to  \Box \langle B(\overline{\phi}) \rangle$.
It is easy to see that in both cases $\Upsilon_{A, \overline{\phi}}$ works, it is a multiset of modal Horn formulas constructed from angled atoms, and $\mathbf{CK} \vdash (\, \Rightarrow \bigwedge \Theta_{A, \overline{\phi}}^{s})$ by a proof $\sigma_{\Upsilon, A, \overline{\phi}}$, constructed in a similar fashion as in $(i)$. The only case left is $A(\overline{p})=B(\overline{p}) \to C(\overline{p})$, where $B(\overline{p})$ is almost positive and $C(\overline{p})$ is constructive.  By Definition \ref{TranslationAndAtoms}, we have $(A(\overline{\phi}))^t=[(B(\overline{\phi}))^t \to (C(\overline{\phi}))^t] \wedge \langle B(\overline{\phi}) \to C(\overline{\phi}) \rangle$. By $(ii)$ and the recursive step, we have the multisets $\Pi_{B, \overline{\phi}}$ and $\Upsilon_{C, \overline{\phi}}$ such that
\begin{center}
    $\Pi_{B, \overline{\phi}}, (B(\overline{\phi}))^t \Rightarrow B(\overline{\phi^t}) \quad (13) \quad , \quad \Upsilon_{C, \overline{\phi}}, \langle C(\overline{\phi}) \rangle, C(\overline{\phi^t}) \Rightarrow (C(\overline{\phi}))^t \quad (14)$
\end{center}
are provable in $\mathbf{CK}$. We claim taking
\begin{center}
    $\Upsilon_{A, \overline{\phi}}=\Pi_{B, \overline{\phi}} \cup \Upsilon_{C, \overline{\phi}} \cup \{F_{\Upsilon, A, \overline{\phi}}\}$
\end{center}
where $F_{\Upsilon, A, \overline{\phi}}=\big(\langle B(\overline{\phi}) \to C(\overline{\phi}) \rangle \wedge \langle B(\overline{\phi}) \rangle\big) \to \langle C(\overline{\phi}) \rangle$ works. Applying the rule $(L w)$ and then $(L \to)$ on $(13)$ and $(14)$, we have 
\begin{center}
    $\mathbf{CK} \vdash \Pi_{B, \overline{\phi}}, \Upsilon_{C, \overline{\phi}}, (B(\overline{\phi}))^t  , \langle C(\overline{\phi}) \rangle, B(\overline{\phi^t}) \to C(\overline{\phi^t}) \Rightarrow (C(\overline{\phi}))^t.$
\end{center}
Using the cut rule on the above sequent and $(B(\overline{\phi}))^t, (B(\overline{\phi}))^t \to \langle C(\overline{\phi}) \rangle \Rightarrow \langle C(\overline{\phi}) \rangle $ and then $(L c)$ and $(R \to)$, we get
\begin{center}
    $\Pi_{B, \overline{\phi}}, \Upsilon_{C, \overline{\phi}}, (B(\overline{\phi}))^t \to \langle C(\overline{\phi}) \rangle, B(\overline{\phi^t}) \to C(\overline{\phi^t}) \Rightarrow (B(\overline{\phi}))^t  \to (C(\overline{\phi}))^t \;\; (15)$
\end{center}
is provable in $\mathbf{CK}$.
Using the cut rule on the following $\mathbf{CK}$-provable sequents
\begin{center}
    $\langle A(\overline{\phi}) \rangle, \langle A(\overline{\phi}) \rangle \wedge \langle B(\overline{\phi}) \rangle \to \langle C(\overline{\phi}) \rangle \Rightarrow  \langle B(\overline{\phi}) \rangle \to \langle C(\overline{\phi}) \rangle \quad ,$ \par $\langle B(\overline{\phi}) \rangle \to \langle C(\overline{\phi}) \rangle \Rightarrow (B(\overline{\phi}))^t \to \langle C(\overline{\phi}) \rangle$
\end{center}
we get
\begin{center}
    $\mathbf{CK} \vdash \langle A(\overline{\phi}) \rangle, \langle A(\overline{\phi}) \rangle \wedge \langle B(\overline{\phi}) \rangle \to \langle C(\overline{\phi}) \rangle \Rightarrow  (B(\overline{\phi}))^t \to \langle C(\overline{\phi}) \rangle$.
\end{center}
Using the cut rule on the above sequent and $(15)$, we obtain
\begin{center}
    $\mathbf{CK} \vdash \Pi_{B, \overline{\phi}}, \Upsilon_{C, \overline{\phi}}, \langle A(\overline{\phi}) \rangle, \langle A(\overline{\phi}) \rangle \wedge \langle B(\overline{\phi}) \rangle \to \langle C(\overline{\phi}) \rangle, B(\overline{\phi^t}) \to C(\overline{\phi^t}) \Rightarrow  (B(\overline{\phi}))^t \to (C(\overline{\phi}))^t.$
\end{center}
Using the left weakening rule on $\langle A(\overline{\phi}) \rangle \Rightarrow \langle A(\overline{\phi}) \rangle$ we get
\begin{center}
    $\mathbf{CK} \vdash \Upsilon_{A, \overline{\phi}}, \langle A(\overline{\phi}) \rangle, A(\overline{\phi^t}) \Rightarrow  \langle A(\overline{\phi}) \rangle.$
\end{center}
Applying the rule $(R \wedge)$ on the above two sequent, we get
\begin{center}
    $\mathbf{CK} \vdash \Upsilon_{A, \overline{\phi}}, \langle A(\overline{\phi}) \rangle, A(\overline{\phi^t}) \Rightarrow (A(\overline{\phi}))^t,$
\end{center}
as required. Again, it is clear that $\Upsilon_{A, \overline{\phi}}$ is a multiset of modal Horn formulas constructed from angled atoms and $\mathbf{CK} \vdash^{\sigma_{\Upsilon, A, \overline{\phi}}} (\, \Rightarrow \bigwedge \Upsilon_{A, \overline{\phi}}^{s})$, where the structure of $\sigma_{\Upsilon, A, \overline{\phi}}$ is similar to that of the case $(i)$.

The only issue remained to investigate is the feasibility of the algorithms for $\Theta_{A, \overline{\phi}}$ and $\sigma_{\Theta, A, \overline{\phi}}$. From now on, for simplicity, we use $A$ and $|A|$, when we want to refer to $A(\overline{p})$ and $|A(\overline{p})|$. For $\Theta_{A, \overline{\phi}}$, we use the above algorithm that reads $A$ and $\overline{\phi}$ and computes $\Theta_{A, \overline{\phi}}$, by recursion on the structure of $A$. Let $\Theta \in \{\Phi, \Pi, \Upsilon \}$, $\circ \in \{\wedge, \vee\}$, and $\bigcirc \in \{\Box, \Diamond\}$. First, we need an upper bound on $\parallel \Theta_{A, \overline{\phi}} \parallel$ and $|\Theta_{A, \overline{\phi}}|$. For the former, by a simple induction on $A$, observe that $\parallel \Theta_{A, \overline{\phi}} \parallel \leq O(|A|)$. For the latter, we have the following inequalities:
\begin{enumerate}
\item
$|\Theta_{A, \overline{\phi}}| \leq O(1)$, where $A(\overline{p})$ is an atom, $\bot$, or $\top$,
\item
$ |\Theta_{B \circ C, \overline{\phi}}| \leq |\Theta_{B, \overline{\phi}}| + |\Theta_{C, \overline{\phi}}| + O(|A||\overline{\phi}|),$ except when $\{\Theta=\Upsilon, \circ=\vee\}$,
\item
$|\Pi_{B \to C, \overline{\phi}}| \leq |\Phi_{B, \overline{\phi}}| + |\Pi_{C, \overline{\phi}}| + O(|A||\overline{\phi}|),$ 
\item
$|\Upsilon_{B \to C, \overline{\phi}}| \leq |\Pi_{B, \overline{\phi}}| + |\Upsilon_{C, \overline{\phi}}| + O(|A||\overline{\phi}|),$ 
\item
$|\Theta_{\bigcirc B, \overline{\phi}}| \leq |\Theta_{B, \overline{\phi}}| + O(|A||\overline{\phi}|),$ except when $\{\Theta=\Phi, \bigcirc=\Box\}$ or $\{\Theta=\Upsilon, \bigcirc=\Diamond\}$.
\end{enumerate}
First, note that the cases which are excluded in $2$ and $5$, are the cases where the theorem does not apply to. For instance, in $5$, we have excluded the case where $\Theta=\Phi$ and $\bigcirc=\Box$. The reason is that the multiset $\Phi_{A, \overline{\phi}}$ corresponds to the case where $A$ is a basic formula, and by Definition \ref{DefPositiveFormulas}, $A$ cannot be of the form $\Box B$. Similarly for the other excluded cases.

Now, to justify the inequalities, based on how the multisets were constructed, the bounds are easy to compute. In each case, $\Theta_{A, \overline{\phi}}$ is the union or the box of $\Phi$, $\Pi$ or $\Upsilon$ of the immediate subformulas of $A$ and the formulas $F_{\Theta, A, \overline{\phi}}$ and $F'_{\Theta, A, \overline{\phi}}$. As the addend $O(|A||\overline{\phi}|)$ represents an upper bound for $|F_{\Theta, A, \overline{\phi}}|$ or $|F_{\Theta, A, \overline{\phi}}| + |F'_{\Theta, A, \overline{\phi}}|$, depending on the case, the bounds are trivially in place. Now, using a simple induction on the structure of $A$ (first starting with basic formulas, then almost positive and finally constructive formulas) and the above inequalities, we can show that $|\Theta_{A, \overline{\phi}}| \leq (|A|+ |\overline{\phi}|)^{O(1)}$.

Having the upper bounds 
on $\parallel \Theta_{A, \overline{\phi}} \parallel$ and $|\Theta_{A, \overline{\phi}}|$
established, we are now ready to address the feasibility of the computation of $ \Theta_{A, \overline{\phi}}$. Denote the time that the algorithm takes to compute $\Theta_{A, \overline{\phi}}$ by $T_{\Theta}(A, \overline{\phi})$. We have:
\begin{enumerate}
\item
$T_{\Theta}(A, \overline{\phi}) \leq O(1)$, where $A(\overline{p})$ is an atom, $\bot$, or $\top$,
\item
$
 T_{\Theta}(B \circ C, \overline{\phi}) \leq T_{\Theta}(B, \overline{\phi}) + T_{\Theta}(C, \overline{\phi})+ O(|A||\overline{\phi}|),
$
except when $\{\Theta=\Upsilon, \circ=\vee\}$,
\item
$
 T_{\Pi}(B \to C, \overline{\phi}) \leq T_{\Phi}(B, \overline{\phi}) + T_{\Pi}(C, \overline{\phi})+ O(|A||\overline{\phi}|),
$
\item
$
 T_{\Upsilon}(B \to C, \overline{\phi}) \leq T_{\Pi}(B, \overline{\phi}) + T_{\Upsilon}(C, \overline{\phi})+ O(|A||\overline{\phi}|), 
$
\item
$
 T_{\Theta}(\bigcirc B, \overline{\phi}) \leq T_{\Theta}(B, \overline{\phi})+O(|\Theta_{B, \overline{\phi}}|+\parallel \Theta_{B, \overline{\phi}} \parallel) + O(|A||\overline{\phi}|), 
$
except when $\{\Theta=\Phi, \bigcirc=\Box\}$ or $\{\Theta=\Upsilon, \bigcirc=\Diamond\}$.
\end{enumerate}
It is easy to see why these inequalities hold, based on how the multisets were constructed. The reason simply is that in each case, we must first compute the appropriate multiset among $\Phi$, $\Pi$ or $\Upsilon$ of the immediate subformulas of $A$, and then possibly the formulas $F_{\Theta, A, \overline{\phi}}$ and $F'_{\Theta, A, \overline{\phi}}$. 
Note that in the modal cases (the last inequality), the algorithm also needs to add boxes to $\Theta_{A, \overline{\phi}}$ that takes
$O(|\Theta_{B, \overline{\phi}}|+\parallel \Theta_{B, \overline{\phi}} \parallel)$ steps. As the addend $O(|A||\overline{\phi}|)$ represents an upper bound on the time to compute $F_{\Theta, A, \overline{\phi}}$ and $F'_{\Theta, A, \overline{\phi}}$, the bounds are trivially in place.
Now, using a simple induction on the structure of $A$ (first starting with the basic formulas, then the almost positive and finally the constructive formulas), by the above inequalities together with the fact that $\parallel \Theta_{A, \overline{\phi}} \parallel \leq O(|A|)$ and $|\Theta_{A, \overline{\phi}}| \leq (|A|+ |\overline{\phi}|)^{O(1)}$, we can show that $T_\Theta(A, \overline{\phi}) \leq (|A|+ |\overline{\phi}|)^{O(1)}$. 

Similarly, we follow the above algorithm to compute $\sigma_{\Theta, A, \overline{\phi}}$, where the time of the algorithm is denoted by $T_{\sigma, \Theta}(A, \overline{\phi})$. We have the inequalities:
\begin{itemize}
\item[$1'.$]
 $T_{\sigma, \Theta}(A, \overline{\phi}) \leq  O(1),$ where $A(\overline{p})$ is an atom, $\bot$, or $\top$,
\item[$2'.$]
 $T_{\sigma, \Theta}(B \circ C, \overline{\phi}) \leq T_{\sigma, \Theta}(B, \overline{\phi}) + T_{\sigma, \Theta}(C, \overline{\phi}) + T_{\rho, \Theta}(A, \overline{\phi}) + O(|\Theta_{B, \overline{\phi}}| + |\Theta_{C, \overline{\phi}}| + |A||\overline{\phi}|),$ except when $\{\Theta=\Upsilon, \circ=\vee\}$,
 \item[$3'.$]
 $T_{\sigma, \Pi}(B \to C, \overline{\phi}) \leq T_{\sigma, \Phi}(B, \overline{\phi}) + T_{\sigma, \Pi}(C, \overline{\phi}) + T_{\rho, \Pi}(A, \overline{\phi}) + O(|\Phi_{B, \overline{\phi}}| + |\Pi_{C, \overline{\phi}}| + |A||\overline{\phi}|),$
 \item[$4'.$]
 $T_{\sigma, \Upsilon}(B \to C, \overline{\phi}) \leq T_{\sigma, \Pi}(B, \overline{\phi}) + T_{\sigma, \Upsilon}(C, \overline{\phi}) + T_{\rho, \Upsilon}(A, \overline{\phi}) + O(|\Pi_{B, \overline{\phi}}| + |\Upsilon_{C, \overline{\phi}}| + |A||\overline{\phi}|),$
\item[$5'.$]
$T_{\sigma, \Theta}(\bigcirc B, \overline{\phi}) \leq T_{\sigma, \Theta}(B, \overline{\phi}) + T_{\rho, \Theta}(A, \overline{\phi}) + (|\Theta_{B, \overline{\phi}}|)^{O(1)}+O(|A||\overline{\phi}|),$\\ except when $\{\Theta=\Phi, \bigcirc=\Box\}$ or $\{\Theta=\Upsilon, \bigcirc=\Diamond\}$.
\end{itemize}
where $T_{\rho, \Theta}(A, \phi)$ is the time to compute $\rho_{\Theta, A, \overline{\phi}}$. To show why, as $1'$ is trivial, we split the bounds $2'-5'$ into two families: the propositional and the modal cases. For the propositional cases, $2'-4'$, the proof $\sigma_{\Theta, A, \overline{\phi}}$ is the combination of the corresponding proofs for $\Phi$, $\Pi$ or $\Upsilon$ of the immediate subformulas of $A$, the proofs of the sequents $(\, \Rightarrow (F_{\Theta, A, \overline{\phi}})^{s})$ and $(\, \Rightarrow (F'_{\Theta, A, \overline{\phi}})^{s})$, denoted by $\rho_{\Theta, A, \overline{\phi}}$ throughout the construction, and finally some constant number of the applications of the rule $(R \wedge)$. Note that the last part expands the time of the computation by a constant number of the sum of the sizes of $\Phi$, $\Pi$ or $\Upsilon$ for the immediate subformulas and the size of $F_{\Theta, A, \overline{\phi}}$ and $F'_{\Theta, A, \overline{\phi}}$.
For instance, in the case $4'$, the  proof $\sigma_{\Upsilon, B \to C, \overline{\phi}}$ looks like:
\begin{center}
\begin{tabular}{c}
\small 
\AxiomC{$\sigma_{\Pi, B, \overline{\phi}}$}
\noLine
\small \UnaryInfC{$\Rightarrow \bigwedge \Pi^s_{B, \overline{\phi}}$}
\AxiomC{$\sigma_{\Upsilon, C, \overline{\phi}}$}
\noLine
\small \UnaryInfC{$\Rightarrow \bigwedge \Upsilon^s_{C, \overline{\phi}}$}
\small \BinaryInfC{$\Rightarrow \bigwedge \Pi^s_{B, \overline{\phi}} \wedge \bigwedge \Upsilon^s_{C, \overline{\phi}}$}
\AxiomC{$\rho_{\Upsilon, A, \overline{\phi}}$}
\noLine
\small \UnaryInfC{$\Rightarrow (F_{\Upsilon, A, \overline{\phi}})^s$}
\small \BinaryInfC{$\Rightarrow \bigwedge \Pi^s_{B, \overline{\phi}} \wedge \bigwedge \Upsilon^s_{C, \overline{\phi}} \wedge (F_{\Upsilon, A, \overline{\phi}})^s$}
 \DisplayProof
 \end{tabular}
\end{center}
For the modal case, i.e., $5'$, we start with $\sigma_{\Theta, B, \overline{\phi}}$ to which we apply the rule $(K_{\Box})$. Then, we prove $\Box \bigwedge \Theta_{B, \overline{\phi}}^s \Rightarrow \bigwedge \Box (\Theta_{B, \overline{\phi}})^s$ and finally we add the proof $\rho_{\Theta, A, \overline{\phi}}$ for $F_{\Theta, A, \overline{\phi}}$ together with some constant number of the applications of $(R\wedge)$. The addend $(|\Theta_{B, \overline{\phi}}|)^{O(1)}$ in $5'$ is a bound for the time of the box distribution part, while $O(|A||\overline{\phi}|)$ is a bound for the size of $F_{\Theta, A, \overline{\phi}}$.\\
Finally, having the inequalities established, by using the inequalities $|\Theta_{A, \overline{\phi}}| \leq (|A|+|\overline{\phi}|)^{O(1)}$ and $T_{\rho, \Theta}(A, \overline{\phi}) \leq (|A|+|\overline{\phi}|)^{O(1)}$, it is easy to use an induction on the structure of $A$ to prove $T_{\sigma, \Theta}(A, \overline{\phi}) \leq (|A|+|\overline{\phi}|)^{O(1)} $.
\end{proof}

\begin{rem}
Here are two remarks. First, note that Theorem \ref{Commutation} holds for \emph{any} multiset of formulas $\overline{\phi}$, as long as the formula $A$ has the described structure. Another point to make is that in the proof of Theorem \ref{Commutation}, other (sometimes simpler) choices exist for the set of modal Horn formulas such that it makes the translated sequent provable in $\mathbf{CK}$. The crucial point of our choices for these sets of modal Horn formulas is the condition that the standard translation of each of their elements are provable in $\mathbf{CK}$.
\end{rem}

The last part of this subsection is devoted to investigate the relationship between the translation $t$ and the Harrop formulas as defined below.

\begin{dfn} \label{DfnHarrop}
The set of \emph{Harrop} formulas in the language $\mathcal{L}$ is the smallest set of formulas containing the atoms in $\mathcal{L}$ and $\bot$, $\top$, and is closed under $\wedge, \Box$, and implications of the form $A \to B$, where $A$ is an arbitrary formula and $B$ a Harrop formula.
A formula in the language $\mathcal{L}_{\Box}$, $\mathcal{L}_{\Diamond}$ or $\mathcal{L}_p$ is called Harrop, if it is Harrop as a formula in the extended language $\mathcal{L}$.
\end{dfn}


\begin{lem} \label{LemHarrop}
There is a feasible algorithm that reads a Harrop formula $A \in \mathcal{L}$ and outputs 
a multiset $\Gamma_A$ and a proof $\sigma_A$ such that:
\begin{enumerate}
\item[$(i)$]
$\Gamma_A$ consists of modal Horn formulas, constructed only from $\bot$, and angled atoms, 
\item[$(ii)$]
$\mathbf{CK} \vdash \Gamma_A \Rightarrow A^t$, and
\item[$(iii)$]
$\mathbf{CK} \vdash^{\sigma_A} \bigwedge \Gamma_A^s \Leftrightarrow A$.
\end{enumerate}
\end{lem}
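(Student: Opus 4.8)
The plan is to build $\Gamma_A$ and $\sigma_A$ by recursion on the construction of the Harrop formula $A$, treating the implication clause as the only genuinely delicate case. For the base cases I would simply set $\Gamma_{\bot}=\{\bot\}$, $\Gamma_{\top}=\{\langle\top\rangle\}$ and $\Gamma_p=\{\langle p\rangle\}$ for an atom $p$; in each case $A^t$ is literally the single formula in $\Gamma_A$ (or $\bot$), so (i) and (ii) are immediate, and since $\langle\top\rangle^s=\top$, $\bot^s=\bot$, $\langle p\rangle^s=p$, the equivalence (iii) is trivial. For $A=B\wedge C$ I would take $\Gamma_A=\Gamma_B\cup\Gamma_C\cup\{\langle B\wedge C\rangle\}$, and for $A=\Box B$ I would take $\Gamma_A=\Box\Gamma_B\cup\{\langle\Box B\rangle\}$. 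In both cases the angled atom $\langle A\rangle$ is added only to reconstruct the outermost conjunct of $A^t$; since $\langle A\rangle^s=A$ is already $\mathbf{CK}$-equivalent to the conjunction coming from the subformulas, it does no harm to (iii).

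The crux is $A=B\to C$ with $B$ arbitrary and $C$ Harrop. Here $A^t=(B^t\to C^t)\wedge\langle B\to C\rangle$, and the naive choices fail: including $\Gamma_C$ outright, or guarding it by $\langle B\to C\rangle$, forces $\bigwedge\Gamma_A^s$ to entail $C$, which is strictly stronger than $B\to C$ and breaks (iii). The fix is to guard the recursive data by $\langle B\rangle$, setting
\[
\Gamma_A=\{\langle B\to C\rangle\}\cup\{\,\langle B\rangle\to\gamma \mid \gamma\in\Gamma_C\,\}.
\]
Each $\langle B\rangle\to\gamma$ is modal Horn (an atomic antecedent $\langle B\rangle$ over a modal Horn consequent $\gamma$), so (i) is preserved. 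For (iii) one computes $(\langle B\rangle\to\gamma)^s=B\to\gamma^s$, whence $\bigwedge\Gamma_A^s$ is $\mathbf{CK}$-equivalent to $(B\to C)\wedge\bigl(B\to\bigwedge\Gamma_C^s\bigr)$; using the inductive equivalence $\bigwedge\Gamma_C^s\Leftrightarrow C$ this collapses to $B\to C=A$, with the direction $A\Rightarrow\bigwedge\Gamma_A^s$ supplied by the inductive half $C\Rightarrow\bigwedge\Gamma_C^s$ and the reverse direction being immediate from the conjunct $\langle B\to C\rangle^s$.

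For (ii) in the implication case I would apply $(R\to)$ and reduce to $\Gamma_A,B^t\Rightarrow C^t$: from $B^t$ Lemma~\ref{TranslationAndAtoms} gives $\langle B\rangle$, each guard $\langle B\rangle\to\gamma$ then fires to yield $\gamma$, and cutting these against the inductive proof of $\Gamma_C\Rightarrow C^t$ produces $C^t$; the remaining conjunct $\langle B\to C\rangle$ of $A^t$ is present as an assumption, so $(R\wedge)$ finishes. The $\wedge$ and $\Box$ cases of (ii) are routine: $(R\wedge)$ combined with the inductive proofs, and for $\Box$ an application of $(K_{\Box})$ to $\Gamma_B\Rightarrow B^t$ giving $\Box\Gamma_B\Rightarrow\Box B^t$. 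The one reusable $\mathbf{CK}$-fact needed repeatedly for (iii) is the box distribution $\Box\bigwedge\Gamma\Leftrightarrow\bigwedge\Box\Gamma$, for which the feasible proof already constructed inside Theorem~\ref{Commutation} can be invoked.

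Finally, feasibility follows the same bookkeeping as in Theorem~\ref{Commutation}: a straightforward induction gives $\parallel\Gamma_A\parallel\leq O(|A|)$ and $|\Gamma_A|\leq |A|^{O(1)}$ (the implication clause contributes one guard per element of $\Gamma_C$, each of size $O(|A|)$), and the recursive assembly of $\sigma_A$ adds only polynomially many instances of weakening, $(L\to)$, $(R\to)$, $(R\wedge)$, $(K_{\Box})$, cut, and the box-distribution proof, so the whole algorithm runs in time $|A|^{O(1)}$. I expect the main obstacle to be exactly the implication clause — pinning down a guard that is simultaneously modal Horn, strong enough to rebuild $C^t$ for (ii), and weak enough that its standard substitution does not over-commit to $C$ for (iii); once the guard $\langle B\rangle\to\gamma$ is identified, every other case is a direct induction.
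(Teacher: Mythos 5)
Your proposal is correct and follows essentially the same route as the paper's proof: identical recursive definitions of $\Gamma_A$ in every case (including the key guard $\{\langle B\rangle\to\gamma \mid \gamma\in\Gamma_C\}$ for implications), the same use of Lemma~\ref{TranslationAndAtoms} to fire the guards for condition $(ii)$, the same collapse $\bigwedge\Gamma_A^s \Leftrightarrow (B\to C)\wedge(B\to\bigwedge\Gamma_C^s) \Leftrightarrow A$ for condition $(iii)$, the same reuse of the box-distribution proof from Theorem~\ref{Commutation}, and the same polynomial bookkeeping for feasibility.
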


\begin{proof}
We first explain the algorithm to compute $\Gamma_A$ and $\sigma_A$. The feasibility part will be explained afterwards. To construct $\Gamma_A$ and $\sigma_A$, we use recursion on the structure of $A$.
If $A$ is atomic, $\bot$, or $\top$, then it is easy to see that $\Gamma_{A}= \{A^t\}$ satisfies the Conditions $(i)$ and $(ii)$. Moreover, notice that $\bigwedge \Gamma_A^s \Leftrightarrow A$ is an instance of the axiom $(id)$ in $\mathbf{CK}$. Therefore, it is enough to define $\sigma_A$ as that instance. If $A=B \wedge C$, where $B$ and $C$ are Harrop formulas, define $\Gamma_A$ as $\Gamma_B \cup \Gamma_C \cup \{\langle B \wedge C \rangle \}$. From the recursion step, we know that the multisets $\Gamma_B$ and $\Gamma_C$ only consist of modal Horn formulas, $\mathbf{CK} \vdash \Gamma_B \Rightarrow B^t$, $\mathbf{CK} \vdash \Gamma_C \Rightarrow C^t$, and $\sigma_B$ and $\sigma_C$ satisfy $\mathbf{CK} \vdash^{\sigma_B} \bigwedge \Gamma_B^s \Leftrightarrow B$ and $\mathbf{CK} \vdash^{\sigma_C} \bigwedge \Gamma_C^s \Leftrightarrow C$. Given this data, it is easy to see that Conditions $(i)$ and $(ii)$ are satisfied for $\Gamma_{A}$. For $\sigma_A$, it is easy to use the proofs $\sigma_B$ and $\sigma_C$ to construct the proof $\sigma_A$ for $\bigwedge \Gamma_B^s \wedge \bigwedge \Gamma_C^s \wedge \langle B \wedge C \rangle^s \Leftrightarrow A$. \\
For $A =B \to C$, where $C$ is Harrop, take 
\begin{center}
    $\Gamma_A = \{ \langle B \to C \rangle \} \cup \{\langle B \rangle \to \gamma \mid \gamma \in \Gamma_C \}.$
\end{center}
Condition $(i)$ is satisfied for $\Gamma_A$: the formula $\langle B \to C \rangle$ is an angled atom and as $\langle B \rangle$ is an atom, $\Gamma_C$ consists of modal Horn formulas, and by Definition \ref{ImplicationalHorn} the set of modal Horn formulas are closed under implications with atomic antecedent and modal Horn succedents, the formula $\langle B \rangle \to \gamma$ is modal Horn.

For Condition $(ii)$, we know that $\mathbf{CK} \vdash \Gamma_C \Rightarrow C^t$ and hence $\mathbf{CK} \vdash \bigwedge\Gamma_C \Rightarrow C^t$. By Lemma \ref{TranslationAndAtoms} we have $\mathbf{CK} \vdash B^t \Rightarrow \langle B \rangle$ and by $(L \to)$ we get $\mathbf{CK} \vdash B^t, \langle B \rangle \to \bigwedge \Gamma_C \Rightarrow C^t$. As $\mathbf{CK} \vdash \{\langle B \rangle \to \gamma \mid \gamma \in \Gamma_C \} \Rightarrow \langle B \rangle \to \bigwedge \Gamma_C$, we finally get $\mathbf{CK} \vdash \{ \langle B \to C \rangle \} \cup \{\langle B \rangle \to \gamma \mid \gamma \in \Gamma_C \} \Rightarrow (B \to C)^t $. 

For $\sigma_{A}$, take the following derivable sequents in $\mathbf{CK}$:
\small \begin{center}
\begin{tabular}{c c c}
$\bigwedge \Gamma_C^s \Leftrightarrow C$ , & $\langle B \to C \rangle^s \Leftrightarrow B \to C$ ,
& $\bigwedge_{\gamma \in \Gamma_C} (\langle B \rangle \to \gamma)^s \Leftrightarrow B \to \bigwedge \Gamma_C^s$,\\
\end{tabular}
\end{center}
\normalsize where the leftmost sequent is provable by $\sigma_C$, the middle one is an instance of the axiom $(id)$ and the left to right direction of the rightmost sequent is a result of applying the rule $(R \wedge)$ for $\parallel \Gamma_C \parallel -1$ many times on the canonical proof of $\bigwedge_{\gamma \in \Gamma_C} (B \to \gamma^s), B \Rightarrow \gamma^s$ and then using $(R \to)$, while the other direction is clear. Using these three proofs, it is easy to construct the proof $\sigma_A$.
It is noteworthy that although the choice $\{\langle B \rangle \to \bigwedge \Gamma_C , \langle A \rangle\}$ for $\Gamma_A$ seems more reasonable, it is not a possibility, as the formula $\langle B \rangle \to \bigwedge \Gamma_C$ is not necessarily in the modal Horn form as the class of modal Horn formulas is not closed under conjunctions.\\ 
For $A = \Box B$, where $B$ is Harrop, define $\Gamma_A= \Box \Gamma_B \cup \{\langle \Box B \rangle\}$. First, as the set of modal Horn formulas is closed under box, $\Gamma_A$ consists of modal Horn formulas, built only from angled atoms and $\bot$. Second, as we already have $\mathbf{CK} \vdash \Gamma_B \Rightarrow B^t$, using the rules $(K_{\Box}), (L w)$, and $(R \wedge)$, we have $\mathbf{CK} \vdash \Box \Gamma_B, \langle \Box B \rangle \Rightarrow (\Box B)^t$. Third, using $\sigma_B$ followed by two applications of $(K_{\Box})$, we have a proof for $\Box \bigwedge \Gamma_B^s \Leftrightarrow \Box B$. As observed in the proof of Theorem \ref{Commutation}, for any multiset $\Omega$ we have $\mathbf{CK} \vdash \Box \bigwedge \Omega \Rightarrow \bigwedge \Box \Omega$ and the proof takes $|\Omega|^{O(1)}$ many steps. Using this proof for $\Omega=\Gamma_B^s$, we get a proof for $\bigwedge (\Box \Gamma_B^s) \Leftrightarrow \Box B$ which provides the proof $\sigma_A$ for $\bigwedge \Gamma_A^s \Leftrightarrow A$ in $\mathbf{CK}$.

Now, we discuss the feasibility of the above algorithms we used to compute $\Gamma_A$ and $\sigma_A$. Let us start with $\Gamma_A$ and denote the time that the algorithm takes to compute $\Gamma_A$ by $T_{\Gamma}(A)$. 
First, we need an upper bound on the cardinality and the size of $\Gamma_A$.  For the former, based on how $\Gamma_A$ is constructed, we have:
\begin{itemize}
    \item 
    $\parallel \Gamma_A \parallel =1$, when $A$ is either an atom, $\top$ or $\bot$;
    \item 
    $\parallel \Gamma_{B \wedge C} \parallel =\parallel \Gamma_B \parallel + \parallel \Gamma_C \parallel +1$;
    \item 
    $\parallel \Gamma_{B \to C} \parallel = \parallel \Gamma_C \parallel +1$;
    \item 
    $\parallel \Box \Gamma_{B} \parallel =\parallel \Gamma_B \parallel +1$.
\end{itemize}
Hence, $\parallel \Gamma_A \parallel \leq |A|$. For $|\Gamma_A|$, we trivially have the following inequalities:
\begin{itemize}
    \item
$|\Gamma_A| \leq O(1)$, when $A$ is either an atom, $\top$, or $\bot$;
    \item 
    $|\Gamma_{B \wedge C}| \leq |\Gamma_B|+ |\Gamma_C| + O(|A|)$, for $A=B \wedge C$;
    \item
    $|\Gamma_{B \to C}| \leq |\Gamma_C| + O(\parallel \Gamma_C \parallel |B|) +O(|A|)$, for $A= B \to C$;
    \item
    $|\Gamma_{\Box B}| \leq |\Gamma_B| + O(\parallel \Gamma_B \parallel) +O(|A|)$, for $A=\Box B$;
\end{itemize}
Therefore, using the upper bound $\parallel \Gamma_A \parallel \leq |A|$, we can prove $|\Gamma_A| \leq |A|^{O(1)}$. Now, having the upper bounds on $\parallel \Gamma_A \parallel$ and $|\Gamma_A|$ established, we can move to compute $T_{\Gamma}(A)$. For that purpose, we have the following inequalities:
\begin{itemize}
\item
$T_\Gamma(A) \leq O(1)$, when $A$ is either an atom, $\top$, or $\bot$;
    \item 
    $T_\Gamma(B \wedge C) \leq T_\Gamma(B)+ T_\Gamma(C) + O(|A|)$, for $A=B \wedge C$;
    \item
    $T_\Gamma(B \to C) \leq T_\Gamma(C) + O(|\Gamma_C|+\parallel \Gamma_C \parallel |B|) +O(|A|)$, for $A= B \to C$;
    \item
    $T_\Gamma(\Box B) \leq T_\Gamma(B) + O(\parallel \Gamma_B \parallel+|\Gamma_B|) +O(|A|)$, for $A=\Box B$;
\end{itemize}
To justify, note that the addend $O(|\Gamma_C|+\parallel \Gamma_C \parallel |B|)$ is the time required to compute $\{\langle B \rangle \to \gamma \mid \gamma \in \Gamma_C \}$ in $\Gamma_{B \to C}$ and the addend $O(\parallel \Gamma_B \parallel+|\Gamma_B|)$ is the time required to compute $\Box \Gamma_B$ in $\Gamma_{\Box B}$. Finally, using these inequalities and the upper bounds $\parallel \Gamma_A \parallel \leq |A|$ and $|\Gamma_A| \leq |A|^{O(1)}$, 
we get $T_\Gamma(A) \leq |A|^{O(1)}$.\\
To compute the time complexity of $\sigma_A$, denote the time that the algorithm takes by $T_\sigma(A)$. Using each recursive step of the construction of $\sigma_A$, we have:
\begin{itemize}
\item
$T_\sigma(A) \leq O(1)$, where $A$ is either an atom, $\top$ or $\bot$;
\item
$T_\sigma(B \wedge C) \leq T_\sigma (B)+ T_\sigma(C)+ O(|\Gamma_B|+|\Gamma_C|+|A|)$, for $A=B \wedge C$;
\item
$T_\sigma(B \to C) \leq T_\sigma(C) + O(\parallel \Gamma_C \parallel (|\Gamma_C|+|A|))$, for $A=B \to C$;
\item
$T_\sigma(\Box B) \leq T_\sigma(B)+|\Gamma_B|^{O(1)}+ O(|A|)$, for $A=\Box B$.
\end{itemize}
It is easy to see why these inequalities hold. We only explain the modal case $A=\Box B$. The only non-trivial addend is $|\Gamma_B|^{O(1)}$ which is the bound on the time of the computation of the proof of $\bigwedge (\Box \Gamma^s_B) \Leftrightarrow \Box \bigwedge \Gamma^s_B$.
Finally, using the inequalities and the bounds we used before, we get $T_\sigma(A) \leq |A|^{O(1)}$.
\end{proof}

\subsection{The Provability Preservation}\label{SubsectionPreservability}
In this subsection, we complete the first ingredient of our strategy as explained in the opening of Section \ref{SectionMain}. We show that the translation $t$ preserves the $(\mathbf{CK}+\mathcal{C})$-provability, for any finite set $\mathcal{C}$ of constructive formulas. As mentioned before, this preservation holds if we interpret the provability up to the presence of a multiset of modal Horn assumptions that the standard substitution $s$ sees as provable. 

\begin{thm} (Provability Preservation) \label{MainTheorem}
Let $G=\mathbf{CK}+\mathcal{C}$, where $\mathcal{C}$ is a finite set of constructive formulas.
There is a feasible algorithm that reads a $G$-proof $\pi$ of $\Omega \Rightarrow \Lambda$ and outputs a multiset $\Sigma_{\pi}$ and a $G$-proof $\sigma_{\pi}$ such that:
\begin{enumerate}
\item\label{0}
$G \vdash \Sigma_{\pi}, \Omega^t \Rightarrow  \Lambda^t$,
\item\label{i}
formulas in $\Sigma_{\pi}$ are modal Horn formulas constructed from angled atoms,
\item\label{ii}
$G\vdash^{\sigma_{\pi}} \; \Rightarrow \bigwedge \Sigma_{\pi}^{s}$, where $s$ is the standard substitution.
\end{enumerate}
\end{thm}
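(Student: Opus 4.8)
The plan is to induct on the structure (the number of sequents) of the $G$-proof $\pi$, processing it from the leaves to the root and, for each node $S=(\Gamma \Rightarrow \Delta)$, producing a multiset $\Sigma_S$ of modal Horn formulas built from angled atoms together with a $G$-proof of $\Sigma_S,\Gamma^t \Rightarrow \Delta^t$ and a proof $\sigma_S$ of $(\Rightarrow \bigwedge \Sigma_S^s)$. Throughout I use the standing fact that, since $\pi$ is assumption-free and tree-like, every sequent occurring in it is itself $G$-provable and its subproof is available inside $\pi$. The conceptual heart is the treatment of the axioms coming from $\mathcal{C}$, which I handle first; everything else is uniform bookkeeping.

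For an instance $(\Rightarrow C(\overline{\phi}))$ of a constructive axiom $C(\overline{p})\in\mathcal{C}$, with $\overline{\phi}$ a tuple of $\mathcal{L}$-formulas, observe that $(\Rightarrow C(\overline{\phi^t}))$ is again an instance of the same axiom, now over $\mathcal{L}^+$, obtained by the substitution $p_i \mapsto \phi_i^t$; hence $G \vdash \, \Rightarrow C(\overline{\phi^t})$. Applying Theorem \ref{Commutation}$(iii)$ to $C$ and $\overline{\phi}$ yields a modal Horn multiset $\Upsilon_{C,\overline{\phi}}$ of angled atoms with $\mathbf{CK} \vdash \Upsilon_{C,\overline{\phi}}, \langle C(\overline{\phi})\rangle, C(\overline{\phi^t}) \Rightarrow (C(\overline{\phi}))^t$, so a single cut on $C(\overline{\phi^t})$ gives $G \vdash \Upsilon_{C,\overline{\phi}}, \langle C(\overline{\phi})\rangle \Rightarrow (C(\overline{\phi}))^t$. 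I set $\Sigma_S=\Upsilon_{C,\overline{\phi}}\cup\{\langle C(\overline{\phi})\rangle\}$, which is modal Horn and built from angled atoms. Its standard substitution is provable since $\bigwedge \Upsilon_{C,\overline{\phi}}^s$ is $\mathbf{CK}$-provable by Theorem \ref{Commutation} and $\langle C(\overline{\phi})\rangle^s=C(\overline{\phi})$ is provable, being exactly the axiom instance we began with; combining these two proofs by $(R\wedge)$ produces $\sigma_S$. This is the case that genuinely exploits the commutation machinery, the angled atom $\langle C(\overline{\phi})\rangle$ being what allows $s$ to recover the provable axiom.

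The $\mathbf{LJ}$-axioms and the structural and left rules are then routine. For $(id)$, $(L\bot)$ and $(R\top)$ one takes $\Sigma_S$ empty or $\{\langle \top\rangle\}$. For cut, the left rules $(L\wedge_i),(L\vee),(L{\to})$, the weakenings and contraction, the same rule is applied to (weakened unions of) the inductive derivations of the premises, the needed subformula translation being extracted from a compound antecedent translation by $(L\wedge_1)$; here $\Sigma_S$ is merely the union of the premises' multisets and no new definitional formula is introduced, which is exactly the remark that the translation commutes with cut. For a right introduction rule producing a compound $E$ in the succedent of a conclusion $\Gamma'\Rightarrow E$ — the cases $(R\wedge),(R\vee),(R{\to}),(K_\Box),(K_\Diamond)$ — the structural conjunct of $E^t$ is obtained by applying the corresponding rule to the inductive derivation(s); to obtain the angled conjunct $\langle E\rangle$ I adjoin to $\Sigma_S$ the implicational Horn formula $\bigwedge_{\delta\in\Gamma'}\langle\delta\rangle \to \langle E\rangle$, whose antecedents are derivable from $(\Gamma')^t$ via Lemma \ref{TranslationAndAtoms} and whose standard substitution $\bigwedge\Gamma'\to E$ is $G$-provable because $\Gamma'\Rightarrow E$ is a node of $\pi$. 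For the two modal rules one additionally boxes the premise multiset, replacing $\Sigma$ by $\Box\Sigma$; this stays modal Horn by closure under $\Box$, and $(\Rightarrow\bigwedge(\Box\Sigma)^s)$ is re-derived from $(\Rightarrow\bigwedge\Sigma^s)$ by necessitation followed by the box-distribution $\Box\bigwedge\Omega\Rightarrow\bigwedge\Box\Omega$ already established inside the proof of Theorem \ref{Commutation}.

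The main remaining work is feasibility. As in Theorem \ref{Commutation} one first bounds $\parallel \Sigma_S\parallel$ and $|\Sigma_S|$ by a polynomial in $|\pi|$ by induction, using that each node contributes $O(1)$ new gadget formulas plus at most one box operation, and then bounds the running time of the construction of $\Sigma_S$ and $\sigma_S$ similarly, invoking the tree-like inequality $\sum_i |\pi_i|+|S|\le |\pi|$ to control the accumulation across the whole proof. I expect the subtlest structural point to be the modal rules, where one must simultaneously keep $\Sigma_S$ modal Horn — so the definitional gadgets must be implicational Horn with atomic (here angled) antecedents, never boxes — and re-establish provability of its standard substitution through box-distribution; and I expect the main grind to be carrying the polynomial size and time bounds uniformly through all the rule cases, exactly parallel to the bookkeeping in Theorem \ref{Commutation} and Lemma \ref{LemHarrop}.
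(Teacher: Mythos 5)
Your proposal is correct and follows essentially the same route as the paper's own proof: the same recursion on the structure of $\pi$, the same treatment of constructive axioms via Theorem \ref{Commutation}$(iii)$ together with the added angled atom $\langle C(\overline{\phi})\rangle$ (whose standard substitution is discharged by the axiom instance itself), plain unions of the premise multisets at cut and left/structural rules, the Horn gadgets $\bigwedge_{\delta \in \Gamma'} \langle \delta \rangle \to \langle E \rangle$ at right and modal rules with $\Box \Sigma$ and the box-distribution $\Box \bigwedge \Omega \Rightarrow \bigwedge \Box \Omega$ handling $\sigma_{\pi}$, and the same inductive size/time bookkeeping for feasibility. No substantive deviation or gap.
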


\begin{proof} 
We first provide the recursive algorithm that produces $\Sigma_{\pi}$ and $\sigma_{\pi}$ and check the feasibility of the algorithm later. There are three cases to consider. Either the proof $\pi$ for $\Omega \Rightarrow \Lambda$ is an instance of an axiom in $\mathbf{CK}$, or an instance $(\, \Rightarrow C_\pi(\overline{\phi}_\pi))$ of the axiom $(\, \Rightarrow C_\pi(\overline{p}))$, where $C_\pi(\overline{p}) \in \mathcal{C}$ is a constructive formula, or it is a consequence of a rule in $\mathbf{CK}$. In the first case, set $\Sigma_{\pi}=\{\langle \top \rangle\}$. For \eqref{0}, it is easy to see that $\Sigma_{\pi}, \Omega^t \Rightarrow \Lambda^t$ is an instance of an axiom in $\mathbf{CK}$ and hence is provable in $G$.
Condition \eqref{i} is clear. For \eqref{ii}, as $(\, \Rightarrow \top)$ is an instance of an axiom, we can use it itself as the proof $\sigma_{\pi}$.

For the second case,
set $\Sigma_{\pi}=\Upsilon_{C_\pi, \overline{\phi}_\pi} \cup \{\langle C_\pi(\overline{\phi}_\pi) \rangle\}$, where $\Upsilon_{C_\pi, \overline{\phi}_\pi}$ is the set of modal Horn formulas constructed from angled atoms provided by Theorem \ref{Commutation}. Note that there are only a constant number of formulas in $\mathcal{C}$. Therefore, finding $C_\pi$ and $\overline{\phi}_\pi$ from $\pi$ is a polynomial time process and hence it is possible to feasibly compute $\Upsilon_{C_\pi, \overline{\phi}_\pi}$. For \eqref{0}, as $(\, \Rightarrow C_\pi(\overline{\phi_\pi^t}))$ is an instance of the axiom and by Theorem \ref{Commutation}, we have $\Upsilon_{C_\pi, \overline{\phi}_\pi}, \langle C_\pi(\overline{\phi}_\pi) \rangle, C_\pi(\overline{\phi_\pi^t}) \Rightarrow C_\pi(\overline{\phi}_\pi)^t$, using the cut rule, we get $\Sigma_{\pi} \Rightarrow C_\pi(\overline{\phi}_\pi)^t$ in $G$. Condition \eqref{i} is clear by Theorem \ref{Commutation}. For \eqref{ii}, by Theorem \ref{Commutation}, we have $\mathbf{CK} \vdash^{\sigma_{\Upsilon, C_\pi, \overline{\phi}_\pi}} (\Rightarrow \bigwedge \Upsilon^s_{C_\pi, \overline{\phi}_\pi})$.
Moreover, the standard translation of $\langle C_\pi(\overline{\phi}_\pi) \rangle$ is $ C_\pi(\overline{\phi}_\pi)$ and $(\, \Rightarrow C_\pi(\overline{\phi}_\pi))$ is an axiom in $G$ with the proof $\pi$. Therefore, it is easy to construct the proof $\sigma_{\pi}$ for the sequent $(\, \Rightarrow \bigwedge \Sigma_{\pi}^{s})$, by applying $(R\wedge)$ on $\sigma_{\Upsilon, C_\pi, \overline{\phi}_\pi}$ and $\pi$.

Now, suppose that we are in the third case. Here, we need to investigate all the possibilities for the last rule in the proof $\pi$. We will only address the cases where the last rule is either cut, the conjunction rules, or the modal rules. The other cases are similar. 
For cut, the last rule is of the form
\begin{center}
\begin{tabular}{c c c}
\AxiomC{$\pi_{1}$}
\noLine
\UnaryInfC{$\Omega \Rightarrow A$}
\AxiomC{$\pi_{2}$}
\noLine
\UnaryInfC{$\Omega, A \Rightarrow \Lambda$}
\BinaryInfC{$\Omega \Rightarrow \Lambda$}
\DisplayProof
\end{tabular}
\end{center}
By recursion, we have $\Sigma_{\pi_1}$ and $\Sigma_{\pi_2}$ such that $G \vdash \Sigma_{\pi_1}, \Omega^t \Rightarrow A^t $ and $G \vdash \Sigma_{\pi_2}, \Omega^t, A^t \Rightarrow \Lambda^t$. Set $\Sigma_{\pi}=\Sigma_{\pi_1} \cup \Sigma_{\pi_2}$. We have:
\begin{center}
\begin{tabular}{c c c}
\AxiomC{$\Sigma_{\pi_1}, \Omega^t \Rightarrow A^t$}
\doubleLine 
\UnaryInfC{$\Sigma_{\pi_1}, \Sigma_{\pi_2}, \Omega^t \Rightarrow A^t$}
\AxiomC{$\Sigma_{\pi_2}, \Omega^t, A^t \Rightarrow \Lambda^t$}
\doubleLine 
\UnaryInfC{$\Sigma_{\pi_1}, \Sigma_{\pi_2}, \Omega^t, A^t \Rightarrow \Lambda^t$}
\BinaryInfC{$\Sigma_{\pi_1}, \Sigma_{\pi_2}, \Omega^t \Rightarrow \Lambda^t$}
\DisplayProof
\end{tabular}
\end{center}
where the double lines mean applying the left weakening rule as many times as needed to get the required sequents and the last rule is cut. This proves Condition \eqref{0}. Condition \eqref{i} is clearly satisfied. For \eqref{ii}, it is enough to define $\sigma_{\pi}$ as an application of $(R\wedge)$ over the proofs $\sigma_{\pi_1}$ and $\sigma_{\pi_2}$.

If the last rule is $(L \wedge_1)$, then the proof is of the form
\begin{center}
\begin{tabular}{c}
\AxiomC{$\pi_{1}$}
\noLine
\UnaryInfC{$\Omega, A \Rightarrow \Lambda $}
\UnaryInfC{$\Omega, A \wedge B \Rightarrow \Lambda$}
\DisplayProof
\end{tabular}
\end{center}
By recursion, we have $\Sigma_{\pi_1}$ such that $G \vdash \Sigma_{\pi_1}, \Omega^t, A^t \Rightarrow \Lambda^t$. Set
$\Sigma_\pi=\Sigma_{\pi_1}$. For Condition \eqref{0}, by the rule itself: 
\begin{center}
\begin{tabular}{c}
\AxiomC{$\Sigma_{\pi_1}, \Omega^t, A^t \Rightarrow \Lambda^t $}
\UnaryInfC{$\Sigma_{\pi_1}, \Omega^t, A^t \wedge B^t \Rightarrow \Lambda^t$}
\DisplayProof
\end{tabular}
\end{center}
As $G \vdash (A \wedge B)^t \Rightarrow A^t \wedge B^t$, by cut we have
$G \vdash \Sigma_{\pi_1}, \Omega^t, (A \wedge B)^t \Rightarrow \Lambda^t$. Condition \eqref{i} is clear. For Condition \eqref{ii}, setting $\sigma_{\pi}=\sigma_{\pi_1}$ clearly works.

If the last rules is $(R \wedge)$, then the proof is of the form
\begin{center}
\begin{tabular}{c c c}
\AxiomC{$\pi_{1}$}
\noLine
\UnaryInfC{$\Omega \Rightarrow A $}
\AxiomC{$\pi_{2}$}
\noLine
\UnaryInfC{$\Omega \Rightarrow B $}
\BinaryInfC{$\Omega \Rightarrow A \wedge B$}
\DisplayProof
\end{tabular}
\end{center}
By recursion, we have $\Sigma_{\pi_1}$ and $\Sigma_{\pi_2}$ such that $G \vdash \Sigma_{\pi_1}, \Omega^t \Rightarrow A^t $ and $G \vdash \Sigma_{\pi_2}, \Omega^t \Rightarrow B^t$. Set $\Sigma_\pi=\{\bigwedge_{\omega \in \Omega} \langle \omega \rangle \to \langle A \wedge B \rangle\} \cup \Sigma_{\pi_1} \cup \Sigma_{\pi_2}$. For Condition \eqref{0}, we have:
\begin{center}
\begin{tabular}{c c c}
\AxiomC{$\Sigma_{\pi_1}, \Omega^t \Rightarrow A^t $}
\doubleLine 
\UnaryInfC{$\Sigma_{\pi_1}, \Sigma_{\pi_2}, \Omega^t \Rightarrow A^t $}
\AxiomC{$\Sigma_{\pi_2}, \Omega^t \Rightarrow B^t $}
\doubleLine 
\UnaryInfC{$\Sigma_{\pi_1}, \Sigma_{\pi_2}, \Omega^t \Rightarrow B^t $}
\BinaryInfC{$\Sigma_{\pi_1}, \Sigma_{\pi_2}, \Omega^t \Rightarrow A^t \wedge B^t$}
\DisplayProof
\end{tabular}
\end{center}
where the double lines again mean applying the left weakening rule for multiple times and the last rule is $(R\wedge)$. This is almost what we wanted, except that the succedent of the conclusion must be of the form $(A \wedge B)^t=(A^t \wedge B^t) \wedge \langle A \wedge B \rangle$. However, by Lemma \ref{TranslationAndAtoms}, we have $\mathbf{CK} \vdash \omega^t \Rightarrow \langle \omega \rangle$, for any $\omega \in \Omega$. Therefore, as $\bigwedge_{\omega \in \Omega} \langle \omega \rangle \to \langle A \wedge B \rangle \in \Sigma_{\pi}$, we get $\Sigma_\pi, \Omega^t \Rightarrow (A \wedge B)^t$. Condition \eqref{i} is clear by Definition \ref{ImplicationalHorn}. For Condition \eqref{ii}, use the proof $\pi$ for the sequent $\Omega \Rightarrow A \wedge B$ together with $\parallel \Omega \parallel$ many applications of $(L\wedge)$, $\parallel \Omega \parallel-1$ many contractions and then one application of $(R\to)$ to prove $(\, \Rightarrow \bigwedge \Omega \to  A \wedge B)$ or equivalently $\, \Rightarrow (\bigwedge_{\omega \in \Omega} \langle \omega \rangle \to \langle A \wedge B \rangle)^s$. Then,
using the proofs $\sigma_{\pi_1}$ and $\sigma_{\pi_2}$, it is easy to construct the proof $\sigma_{\pi}$.

Before moving to the modal rules, we explain how a general rule in $\LJ$ is addressed. The structural rules are easy to handle, as they always commute with the translations. If the last rule in $\pi$ is a left rule in $\LJ$, the multiset $\Sigma_\pi$ is defined as the union of the multisets $\Sigma_{\pi_i}$, obtained from the recursive step for the proofs $\pi_i$ of the premises of the rule. If the last rule is a right rule for $\circ \in \{\wedge, \vee, \to\}$ in $\LJ$, we also need to add the formula $\bigwedge_{\omega \in \Omega} \langle \omega \rangle \to \langle A \circ B \rangle$ to the union of $\Sigma_{\pi_i}$'s, where $\Omega$ is the multiset variable in the antecedent of the conclusion. The proof $\sigma_\pi$ is obtained similar to the cases investigated above.

If the last rule in $\pi$ is either $(K \Box)$ or $(K \Diamond)$, the proof is of the form
\begin{center}
\begin{tabular}{c c}
\AxiomC{$\pi_1$}
\noLine 
 \UnaryInfC{$\Omega \Rightarrow A$}
  \RightLabel{$(K_{\Box})$}
 \UnaryInfC{$\Box \Omega \Rightarrow \Box A$}
 \DisplayProof
& \hspace{10 pt}
\AxiomC{$\pi_1$}
\noLine 
 \UnaryInfC{$\Omega, A \Rightarrow B$}
  \RightLabel{$(K_{\Diamond})$}
 \UnaryInfC{$\Box \Omega, \Diamond A \Rightarrow \Diamond B$}
 \DisplayProof 
 \end{tabular}
\end{center}
For $(K \Box)$, set $\Sigma_{\pi}=\Box \Sigma_{\pi_1} \cup \{\bigwedge_{\omega \in \Omega} \langle \Box \omega \rangle \to \langle \Box A \rangle\}$ and for $(K \Diamond)$, set $\Sigma_{\pi}=\Box \Sigma_{\pi_1} \cup \{\bigwedge_{\omega \in \Omega} \langle \Box \omega \rangle \wedge \langle \Diamond A \rangle \to \langle \Diamond B \rangle \}$. We will only investigate the latter case, as the former is similar to this case and to the case of the right rules in $\LJ$. Using Definition \ref{ImplicationalHorn}, by the fact that the set of modal Horn formulas is closed under box, Condition \eqref{i} is satisfied. For Condition \eqref{0}, by recursion, we have $\Sigma_{\pi_1}, \Omega^t , A^t \Rightarrow B^t$. Using the rule $(K \Diamond)$, we get
\begin{center}
$\Box \Sigma_{\pi_1}, \Box \Omega^t , \Diamond A^t \Rightarrow \Diamond B^t$.
\end{center}
By Lemma \ref{TranslationAndAtoms}, we have 
$\mathbf{CK} \vdash (\Diamond A)^t \Rightarrow \langle \Diamond A \rangle$ and $\mathbf{CK} \vdash (\Box \omega)^t \Rightarrow \langle \Box \omega \rangle$, for any $\omega \in \Omega$. Hence,
\begin{center}
$\mathbf{CK} \vdash \Box \Sigma_{\pi_1}, (\bigwedge_{\omega \in \Omega} \langle \Box \omega \rangle \wedge \langle \Diamond A \rangle \to \langle \Diamond B \rangle), (\Box \Omega)^t, (\Diamond A)^t \Rightarrow \Diamond B^t \wedge \langle \Diamond B \rangle$.
\end{center}
For Condition \eqref{ii}, to provide $\sigma_{\pi}$, consider the proof $\pi$ for $\mathbf{CK} \vdash \Box \Omega, \Diamond A \Rightarrow \Diamond B$. Then, use $\parallel \Omega \parallel+1$ many applications of $(L\wedge)$, $\parallel \Omega \parallel$ many contractions and then one application of $(R\to)$ to prove $( \, \Rightarrow \bigwedge \Box \Omega \wedge \Diamond A \to  \Diamond B)$ or equivalently $\Rightarrow (\bigwedge_{\omega \in \Omega} \langle \Box \omega \rangle \wedge \langle \Diamond A \rangle \to \langle \Diamond B \rangle)^s$. Furthermore, by recursion, we have $\sigma_{\pi_1}$ such that $\mathbf{CK} \vdash^{\sigma_{\pi_1}} (\Rightarrow \bigwedge \Sigma_{\pi_1}^s)$. By $(K_{\Box})$, we have $\mathbf{CK} \vdash (\Rightarrow \Box \bigwedge \Sigma_{\pi_1}^s)$ and by the provable sequent $\Box \bigwedge \Sigma_{\pi_1}^s \Rightarrow \bigwedge \Box \Sigma_{\pi_1}^s$, we get $\mathbf{CK} \vdash (\Rightarrow \bigwedge \Box \Sigma_{\pi_1}^s)$. Putting these proofs together, we easily provide the proof $\sigma_{\pi}$.

Finally, we have to prove the feasibility of the described processes to find $\Sigma_{\pi}$ and $\sigma_{\pi}$. Denote the time that the algorithm needs to compute $\Sigma_\pi$ and $\sigma_{\pi}$, by $T_{\Sigma}(\pi)$ and $T_{\sigma}(\pi)$, respectively. We will show that
\begin{itemize}
    \item 
    $T_{\Sigma} (\pi) \leq |\pi|^{O(1)}$, if $\pi$ is an instance of an axiom in $G$, 
    \item 
    $T_{\Sigma} (\pi) \leq T_{\Sigma} (\pi_1) + |\pi|^{O(1)}$, if the last rule used in $\pi$ is a one premise rule in $G$ with the immediate subproof $\pi_1$, and
    \item
    $T_{\Sigma} (\pi) \leq T_{\Sigma} (\pi_1) + T_{\Sigma} (\pi_2) + |\pi|^{O(1)}$, if the last rule used in $\pi$ is a two premise rule in $G$ with the immediate subproofs $\pi_1$ and $\pi_2$.
\end{itemize}
and similarly for $T_{\sigma}(\pi)$. Using these upper bounds, it is clear that both $T_{\Sigma}(\pi)$ and $T_{\sigma}(\pi)$ are polynomially bounded in $|\pi|$. To prove the inequalities, if $\pi$ is an instance of an axiom in $\mathbf{CK}$, it is clear from the construction that the upper bounds are in place. For the other base case, suppose $\pi$ is an instance $(\, \Rightarrow C_{\pi}(\overline{\phi}_{\pi}))$ of the axiom $(\, \Rightarrow C_{\pi}(\overline{p}))$, for a constructive formula $C_{\pi} \in \mathcal{C}$. Recall from the beginning of the proof that the process of finding $C_{\pi}$ and $\overline{\phi}_{\pi}$ from $\pi$ is polynomial time. Now, we first use the following two inequalities to prove the upper bounds for $T_{\Sigma}(\pi)$ and $T_{\sigma}(\pi)$ and then we will justify them:
\begin{enumerate}
\item \label{Sigma}
$T_{\Sigma}(\pi) \leq T_{C_\pi, \overline{\phi}_\pi}(\pi)+ T_{\Upsilon}(C_{\pi}, \overline{\phi}_{\pi})+ O(|\pi|),$ and 
\item \label{sigma}
$T_{\sigma}(\pi) \leq T_{C_\pi, \overline{\phi}_\pi}(\pi)+T_{\sigma, \Upsilon}(C_{\pi}, \overline{\phi}_{\pi})+O(|\sigma_{\Upsilon, C_{\pi}, \overline{\phi}_{\pi}}|)+O(|\pi|)$,
\end{enumerate}
where $T_{C_\pi, \overline{\phi}_\pi}(\pi)$ is the time to compute $C_{\pi}$ and $\overline{\phi}_{\pi}$ from $\pi$, the addends $T_{\Upsilon}(C_{\pi}, \overline{\phi}_{\pi})$ and $T_{\sigma, \Upsilon}(C_{\pi}, \overline{\phi}_{\pi})$ are the times obtained from Theorem \ref{Commutation}, both polynomial in $|C_{\pi}|$ and $|\overline{\phi}_{\pi}|$ and $\sigma_{\Upsilon, C_{\pi}, \overline{\phi}_{\pi}}$ is the proof constructed in Theorem \ref{Commutation}, polynomially computable in $C_{\pi}$ and $\overline{\phi}_{\pi}$. As both $|C_{\pi}|$ and $|\overline{\phi}_{\pi}|$ are bounded by $|\pi|$, the addends $T_{\Upsilon}(C_{\pi}, \overline{\phi}_{\pi})$, $T_{\sigma, \Upsilon}(C_{\pi}, \overline{\phi}_{\pi})$ and $|\sigma_{\Upsilon, C_{\pi}, \overline{\phi}_{\pi}}|$ are all polynomial in $|\pi|$ which proves the intended upper bounds. To justify the inequalities, for 
the first one, notice that to compute $\Sigma_{\pi}$, the algorithm reads the proof $\pi$ and decides whether it is of the form $(\, \Rightarrow C_{\pi}(\overline{\phi}_{\pi}))$ and if it is, it finds $C_{\pi}(\overline{p})$ and $\overline{\phi}_{\pi}$. This process takes $T_{C_\pi, \overline{\phi}_\pi}(\pi)$ many steps. Then, it must compute $\Upsilon_{C_{\pi},\overline{\phi}_{\pi}}$ that takes $T_{\Upsilon}(C_{\pi}, \overline{\phi}_{\pi})$ steps and also write down $\langle C_{\pi}(\overline{\phi}_{\pi}) \rangle$ which needs $O(|\pi|)$ amount of time. 
For 
the second inequality, the algorithm starts similarly, but then it has to find $\sigma_{\Upsilon, C_{\pi}, \overline{\phi}_{\pi}}$ which takes $T_{\sigma, \Upsilon}(C_{\pi}, \overline{\phi}_{\pi})$ steps. Then, we use $\pi$ as a proof for $(\, \Rightarrow C_{\pi}(\overline{\phi}_{\pi}))$ and finally use the rule $(R \wedge)$ that needs $O(|\sigma_{\Upsilon, C_{\pi}, \overline{\phi}_{\pi}}|)+O(|\pi|)$ many steps.

Now, we consider each case where the last rule in $\pi$ is one of the rules in $\mathbf{CK}$. Before diving into the details of these cases, let us first establish some upper bounds on $\parallel \Sigma_{\pi} \parallel$ and $|\Sigma_{\pi}|$ in general. We have:
\begin{itemize}
    \item
$\parallel \Sigma_{\pi} \parallel \leq O(1)$, if $\pi$ is an instance of an axiom in $\mathbf{CK}$, and
    \item
$\parallel \Sigma_{\pi} \parallel \leq \, \parallel \Upsilon_{C_{\pi}, \overline{\phi}_{\pi}} \parallel +1$, if $\pi$ is an instance $(\, \Rightarrow C_{\pi}(\overline{\phi}_{\pi}))$ of the axiom $(\, \Rightarrow C_{\pi}(\overline{p}))$, for a constructive formula $C_{\pi} \in \mathcal{C}$, 
    \item $\parallel \Sigma_{\pi} \parallel \leq \, \parallel \Sigma_{\pi_1} \parallel +1$, when the last rule in $\pi$ is a one premise rule,
    \item
    $\parallel\! \Sigma_{\pi} \!\parallel \leq \, \parallel \!\Sigma_{\pi_1}\!\! \parallel\! +\!\parallel \!\Sigma_{\pi_2}\!\! \parallel \!\!+1$, when the last rule in $\pi$ is a two premise rule.
\end{itemize}
Using these inequalities and, as we observed, the polynomial bound on $| \Upsilon_{C_{\pi}, \overline{\phi}_{\pi}} |$, it is easy  to use an induction on the structure of the proof $\pi$ to show that $\parallel \Sigma_{\pi} \parallel \leq |\pi|^{O(1)}$. Again, in a similar fashion, we have:
\begin{itemize}
     \item
$|\Sigma_{\pi}| \leq O(1)$, if $\pi$ is an instance of an axiom in $\mathbf{CK}$, 
    \item
$|\Sigma_{\pi}| \leq |\Upsilon_{C_{\pi}, \overline{\phi}_{\pi}}| + O(|\pi|)$, if $\pi$ is an instance $(\, \Rightarrow C_{\pi}(\overline{\phi}_{\pi}))$ of the axiom $(\, \Rightarrow C_{\pi}(\overline{p}))$, for a constructive formula $C_{\pi} \in \mathcal{C}$, 
    \item
    $|\Sigma_\pi| \leq |\Sigma_{\pi_1}|\! +\! O(|\pi|)$, if the last rule in $\pi$ is a one premise rule in $\mathbf{LJ}$,  
    \item
    $|\Sigma_\pi| \leq |\Sigma_{\pi_1}| + |\Sigma_{\pi_2}| + O(|\pi|)$, if the last rule in $\pi$ is a two premise rule in $\mathbf{LJ}$, and
    \item
    $|\Sigma_\pi| \leq\! |\Sigma_{\pi_1}| +\! \parallel\! \Sigma_{\pi_1\!}\! \parallel\! \!+ O(|\pi|)$, if the last rule in $\pi$ is a modal rule in $\mathbf{CK}$.
\end{itemize}
Using these inequalities and the polynomial bound on $|\Upsilon_{C_{\pi}, \overline{\phi}_{\pi}}|$, it is easy  to use an induction on the structure of the proof $\pi$ to show that $|\Sigma_{\pi}| \leq |\pi|^{O(1)}$. 

Now, we examine the different cases of the last rule. We only investigate the case where the last rule in $\pi$ is $(K\Diamond)$. The rest are similar. We claim:
\begin{enumerate}
    \item $T_{\Sigma} (\pi) \leq T_{\Sigma} (\pi_1) + O(|\Sigma_{\pi_1}|) + O(|\pi|)$, and
    \item
    $T_{\sigma} (\pi) \leq T_{\sigma}(\pi_1) + O(|\Sigma_{\pi_1}|)+ O(\parallel \Sigma_{\pi_1} \parallel |\Sigma_{\pi_1}|)+ O(|\pi|)$.
\end{enumerate}
The first inequality is derived by inspecting how $\Sigma_\pi$ is defined. More precisely, the algorithm first computes $\Sigma_{\pi_1}$ which requires $T_{\Sigma} (\pi_1)$ many steps. Then, adding boxes to the formulas in $\Sigma_{\pi_1}$ takes $O(|\Sigma_{\pi_1}|)$ many steps. The rest is adding boxes to $\Omega$ and diamonds to $A$ and $B$ and forming the angles which takes $O(|\pi|)$ steps. The second inequality is obtained by investigating the proof $\sigma_\pi$. As $\sigma_{\pi_1}$ is needed in the proof, the addend $T_{\sigma} (\pi_1)$ appears. The time $O(|\Sigma_{\pi_1}|)$ is needed to compute $(\Rightarrow \Box \bigwedge \Sigma^s_{\pi_1})$ from $(\Rightarrow \bigwedge \Sigma^s_{\pi_1})$. The addend $O(\parallel \Sigma_{\pi_1} \parallel |\Sigma_{\pi_1}|)$ is the time needed for the $\mathbf{CK}$-proof of $\Box \bigwedge \Sigma^s_{\pi_1} \Rightarrow \bigwedge \Box \Sigma^s_{\pi_1}$. Finally, $O(|\pi|)$ is the time needed to provide a proof for $\Rightarrow (\bigwedge_{\gamma \in \Gamma} \langle \Box \gamma \rangle \wedge \langle \Diamond C \rangle \to \langle \Diamond D \rangle)^s$. Having the claims established and using $\parallel \Sigma_{\pi} \parallel \leq |\pi|^{O(1)}$ and $|\Sigma_{\pi}| \leq |\pi|^{O(1)}$, the inequalities $T_{\Sigma} (\pi) \leq T_{\Sigma} (\pi_1) + |\pi|^{O(1)}$
   and
    $T_{\sigma} (\pi) \leq T_{\sigma}(\pi_1) + |\pi|^{O(1)}$ are in place as intended.
\end{proof}

\subsection{The Unit Propagation}\label{SubsectionUnitPropagation}
In this subsection, we will cover the second ingredient of our strategy as explained in the opening of Section \ref{SectionMain}. We will present a feasible algorithm to read a classically valid sequent $\Gamma \Rightarrow \bigvee_{i=1}^{n} p_i$, where $\Gamma$ is a multiset of implicational Horn formulas and $p_1, p_2, \dots , p_n$ are atomic formulas in $\mathcal{L}_p$, and output an index $1 \leq i \leq n$ and a proof $\pi$ such that $\mathbf{LJ} \vdash^{\pi} \Gamma \Rightarrow p_{i}$. The algorithm is a proof-theoretic version of the well-known \emph{unit propagation} or \emph{one-literal rule}, used to show the feasibility of Horn satisfiability. Here, we repeat the algorithm as described in \cite{Unit}.
It is also worth emphasizing that the algorithm is designed for the propositional language and hence there is no modality in this subsection. However, later in Subsection \ref{SubsectionTfreeTfull}, we will show how to lift the machinery of this subsection to the modal language.

\begin{thm}(Unit propagation) \label{UnitPropagation}
There is a feasible algorithm that reads classically valid sequents in the form $S=(\Gamma \Rightarrow \bigvee_{i=1}^{n} p_i)$, where $\Gamma$ is a multiset of implicational Horn formulas and $p_1, p_2, \dots , p_n$ are atomic formulas in $\mathcal{L}_p$, and outputs an index $1 \leq i \leq n$ and a proof $\pi$ such that $\mathbf{LJ} \vdash^{\pi} \Gamma \Rightarrow p_{i}$.
\end{thm}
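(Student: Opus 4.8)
The plan is to run the standard \emph{forward chaining} (unit propagation) procedure and read off both the index $i$ and an $\mathbf{LJ}$-proof from its trace, using the fact that intuitionistic modus-ponens reasoning already suffices for Horn consequence. First I would fix the set $\mathrm{At}$ of atoms occurring in $S$ (of cardinality at most $|S|$) and maintain a set $U \subseteq \mathrm{At}$ of \emph{derived} atoms. Initialize $U$ to the atoms that occur in $\Gamma$ as unit clauses, i.e.\ as implicational Horn formulas that are themselves atoms. Then repeatedly scan $\Gamma$ for a definite clause $\bigwedge Q \to h$ with $h$ an atom, $Q \subseteq U$ and $h \notin U$, and add $h$ to $U$; in parallel, test whether $\bot \in \Gamma$ or some negative clause $\bigwedge Q \to \bot$ has $Q \subseteq U$, in which case I flag \emph{inconsistency}. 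Since each successful round enlarges $U$ and $U \subseteq \mathrm{At}$, the loop stops after at most $|\mathrm{At}|$ rounds, each of which costs polynomially many steps; while running, I record for each atom entering $U$ the clause and the earlier members of $Q$ that triggered it.

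The second step is to extract proofs by induction on the order in which atoms enter $U$. For a seed atom $a$, the sequent $\Gamma \Rightarrow a$ is an instance of $(id)$ (the clause $a$ lies in $\Gamma$). For an atom $h$ added via a clause $\bigwedge Q \to h \in \Gamma$ with $Q=\{q_1,\dots,q_k\}\subseteq U$ derived earlier, I combine the already-built proofs of $\Gamma \Rightarrow q_j$ by repeated $(R\wedge)$ to obtain $\Gamma \Rightarrow \bigwedge Q$, then apply $(L\to)$ against the axiom instance $\Gamma, h \Rightarrow h$ to get $\Gamma, \bigwedge Q \to h \Rightarrow h$, and finally absorb the clause by left contraction (it already belongs to $\Gamma$) to reach $\Gamma \Rightarrow h$. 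In the inconsistency branch the very same recipe, but with $(L\bot)$ supplying the right premise of $(L\to)$ (or directly $(L\bot)$ when $\bot\in\Gamma$), produces $\Gamma \Rightarrow \bot$; a single cut against the $(L\bot)$-instance $\bot \Rightarrow p_1$ then yields $\Gamma \Rightarrow p_1$, so I output $i=1$.

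The third step selects the index $i$ in the consistent branch via the least-model argument for Horn theories. When the loop halts without declaring inconsistency, I claim some $p_i\in U$. Define the valuation $v$ by $v(a)=\top$ iff $a\in U$ and check that $v$ classically satisfies every implicational Horn formula of $\Gamma$: unit atoms lie in $U$; for a definite clause $\bigwedge Q \to r$ with $r$ an atom, either $Q\not\subseteq U$ (antecedent false under $v$) or else $r\in U$ by the fixpoint property of $U$; and each negative clause $\bigwedge Q \to \bot$ has $Q\not\subseteq U$, since otherwise inconsistency would have been flagged. Hence $v \models \Gamma$, and as $S$ is classically valid we get $v \models \bigvee_{i=1}^{n} p_i$, forcing $p_i \in U$ for some $i$. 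I then output that $i$ together with the recorded proof of $\Gamma \Rightarrow p_i$ from the previous step.

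For feasibility, at most $|\mathrm{At}|\le|S|$ atoms are ever added, and the proof attached to each newly derived atom refers back to the proofs of the members of its triggering $Q$; the natural object is therefore a \emph{dag-like} derivation of polynomial size. The one point that genuinely needs care is the size bound: naively substituting the proof of each $q_j$ into that of $h$ duplicates earlier subproofs and courts an exponential blow-up, so I must either keep the derivation dag-like and invoke the feasible dag-to-tree simulation recalled in Section~\ref{SectionPrem} (via \cite{Jan}), or thread the construction through cuts so that each previously built proof is inserted exactly once. This is precisely where the presence of the cut rule in $\mathbf{LJ}$ is exploited to keep the extracted tree-like $\mathbf{LJ}$-proof of polynomial size, and it is the only nontrivial obstacle; the correctness of the index choice and of the inference steps is otherwise routine.
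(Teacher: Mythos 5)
Your proof is correct, but it organizes the unit propagation differently from the paper. The paper follows \cite{Unit}: the clause multiset itself is rewritten stage by stage (clauses with head $u$ deleted, bodies containing $u$ shrunk, fully resolved clauses promoting their heads to new units), and the key invariant is that each stage is $\mathbf{LJ}$-provably equivalent to the previous one, i.e. $\mathbf{LJ} \vdash \bigwedge \Gamma_k \Leftrightarrow \bigwedge \Gamma_{k+1}$; the output proof is then just the trivial proof of $\Gamma_N \Rightarrow p_i$ (where $p_i$ or $\bot$ occurs as a unit of $\Gamma_N$) chained back to $\Gamma_0 = \Gamma$ through these equivalences by $N-1$ cuts. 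Since the resulting proof is a linear chain in which each equivalence proof is used exactly once, the polynomial bound $O(|S|^3)$ on a \emph{tree-like} proof is immediate, and the subproof-duplication problem you worry about never arises. You instead keep $\Gamma$ fixed, grow a monotone set $U$ of derived atoms (textbook forward chaining), and extract a proof of $\Gamma \Rightarrow h$ for each derived atom $h$ by recursion on the firing order; this is naturally dag-like, and you correctly identify that naive tree unfolding could duplicate subproofs exponentially, resolving it either by the feasible dag-to-tree simulation recalled in Section \ref{SectionPrem} or by threading the construction through cuts (the latter would need a little more detail, e.g. carrying the conjunction of all atoms derived so far, but it is realizable). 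The two correctness arguments are close model-theoretic cousins: you read off the least model of the Horn theory from $U$ at the fixpoint, while the paper uses the valuation making $W_N$ true and everything else false, applied to the residual clause set $T_N$. What your route buys is the standard, semantically transparent forward-chaining picture tied directly to minimal-model semantics; what the paper's route buys is that feasibility of the extracted tree-like $\mathbf{LJ}$-proof is built into the construction, with no appeal to dag-to-tree conversion.
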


\begin{proof}
By Definition \ref{ImplicationalHorn}, the elements of any multiset $\Pi$ of implicational Horn formulas are either atoms (or $\bot$), called \emph{units}, or formulas of the implicational form $\bigwedge Q \to r$, called \emph{the rest} or \emph{non-units} of $\Pi$, where $Q$ is a non-empty sequence of atomic formulas and $r$ is an atom or $\bot$. W.l.o.g, assume that the multiset $\Gamma$ is presented by a linear order on its elements. Let $U$ and $R$ be the ordered sets consisting of the units and the rest of $\Gamma$, respectively. We may assume that $U$ and $R$ have the same order as they appear in $\Gamma$. 

\noindent \emph{The algorithm:} Start by setting $V=U$, $W=\emptyset$ and $T=R$:

- if $V=\emptyset$, halt. Otherwise, take $u$ as the first unit in $V$;

- if $u=\bot$, then halt and output $p_1$;

- if $u=p_i$ for some $1 \leq i \leq n$, then halt and output $p_i$;

- otherwise, take the first formula $\phi$ in $T$ and check if $u$ can be

  \emph{unit resolved} against $\phi$, as defined below:

$  \left\{ 
  \begin{array}{ l }
    \text{- if}\; \phi= \bigwedge Q \to u, \text{delete}\; \phi\; \text{from}\; T;                 \\
    \text{- if}\; \phi=\bigwedge Q \to s \;\text{and}\; s \neq u \; \text{and}\; u \in Q,
     \text{delete}\; \phi \;\text{from}\; T\; \text{and}\; \text{add}\;\\ \psi=\bigwedge Q' \to s \; \text{to the end of}\; T, \;\text{where}\; Q'= Q -\{u\} \; \text{is non-empty}; \\
    \text{- if}\; \phi=u \to s\; \text{and}\; s \neq u, \text{delete}\; \phi \; \text{from}\; T \; \text{and add}\; s \; \text{to the end of}\; V.
  \end{array}
\right.$

\noindent Repeat the unit resolution process for the rest of the formulas in $T$. If $u$ cannot be unit resolved against any formula in $T$, change $V$ and $W$ to $V-\{u\}$ and $W \cup \{u\}$, respectively, and go to the first line of the algorithm.

\emph{Feasibility:} 
In the worst case, the algorithm keeps running while $V \neq \emptyset$ that takes at most $|S|$ many runs, where $|S|$ is the size of $S$. In each run, it takes a unit $u$, checks if it equals to any $p_i$ or $\bot$, and scans $T$ to see if $u$ can be unit resolved against any formulas in $T$. Then, it applies some small modifications on the elements of $T$, which makes them shorter in size. Hence, each run takes at most $O(|S|)$ many steps. Therefore, the time that the algorithm needs is at most $O(|S|^2)$. For more explanation, see \cite{Unit}.

\emph{Soundness:} We show that the algorithm halts before reaching $V = \emptyset$ and hence outputs $p_i$, for some $1 \leq i \leq n$. We also show that the algorithm indirectly provides a proof $\pi$ for $\Gamma \Rightarrow p_i$ in $\mathbf{LJ}$. Define a \emph{stage} of the algorithm as one run of the while loop and assume that the algorithm took $N$ stages to halt. Suppose, for the sake of contradiction, that the algorithm did not halt before reaching $V=\emptyset$. Let $V_k$, $W_k$ and $T_k$ be the ordered sets by which the stage $k$ of the algorithm starts and let $\Gamma_k=V_k \cup W_k \cup T_k$, for each $0 \leq k \leq N$. First, as $V_0=U$, $W_0=\emptyset$ and $T_0=R$, we have $\Gamma_0=V_0 \cup T_0=U \cup R=\Gamma$. Second, for any $0 \leq k \leq N$, none of the elements of $W_k$ occurs in $T_k$, as $W_k$ consists of the units that we have eliminated from any formula in $T$ before reaching the current stage $k$. Third, there are $\mathbf{LJ}$-proofs of $\bigwedge \Gamma_k \Leftrightarrow \bigwedge \Gamma_{k+1}$, for any $0 \leq k \leq N-1$, since the following are provable in $\LJ$:

\footnotesize \begin{center}
\begin{tabular}{c c c}
$u \wedge (\bigwedge Q \to u) \Leftrightarrow u$ , & $u \wedge (\bigwedge Q \to s) \Leftrightarrow u \wedge (\bigwedge Q' \to s)$ ,
& $u \wedge (u \to s) \Leftrightarrow u \wedge s$\\
\end{tabular}
\end{center}
\normalsize where in the middle case, $s \neq u$ and $Q'= Q -\{u\}\neq \emptyset$. For the later reference, notice that constructing each proof takes $O(|S|)$ many steps.

As $\bigwedge \Gamma_k \Leftrightarrow \bigwedge \Gamma_{k+1}$ for any $0 \leq k \leq N-1$,
and $\Gamma_0 \Rightarrow \bigvee_{i=1}^n p_i$ is classically valid, $\Gamma_N \Rightarrow \bigvee_{i=1}^n p_i$ is also classically valid.
Since the algorithm did not halt before reaching $V_N=\emptyset$, the used units did not intersect with $\{p_i\}_{i=1}^n$, and none of them were $\bot$. Hence, $W_N \cap \{p_1, \cdots, p_n, \bot\}=\emptyset$. As $\Gamma_N \Rightarrow \bigvee_{i=1}^n p_i$ is classically valid and $\Gamma_N=W_N \cup T_N$, we have $W_N , T_N \Rightarrow \bigvee_{i=1}^n p_i$ is classically valid. This is a contradiction for the following reason. Recall that all the formulas in $T_N$ are non-units and hence implicational and none of the elements of $W_N$ occurs in $T_N$. Therefore, taking the valuation that makes every atomic formula in $W_N$ true and the rest false, satisfies $W_N \cup T_N$. Moreover, as $W_N \cap \{p_1, \cdots, p_n, \bot\}=\emptyset$, the valuation maps $\bigvee_{i=1}^n p_i$ to false, which contradicts the fact that $W_N , T_N \Rightarrow \bigvee_{i=1}^n p_i$ is classically valid. Hence, the algorithm halts and outputs a $p_i$, before reaching $V=\varnothing$. 

Now, we have to show that the algorithm indirectly provides an $\LJ$-proof $\pi$ for $\Gamma \Rightarrow p_i$. Assume that the algorithm halts after $N$ many stages. Then, either $p_i \in V_N$ or $\bot \in V_N$. Hence, as $\Gamma_N=V_N \cup W_N \cup T_N$, we have $\mathbf{LJ} \vdash  \Gamma_N \Rightarrow p_i$ and as $\mathbf{LJ} \vdash  \bigwedge \Gamma_N \Leftrightarrow \bigwedge \Gamma_0=\bigwedge \Gamma$, we have a proof $\pi$ for $\Gamma \Rightarrow p_i$ in $\LJ$. Note that the proof $\pi$ uses the aforementioned $\LJ$-equivalence between $\bigwedge \Gamma_k$ and $\bigwedge \Gamma_{k+1}$, some basic propositional rules and $N-1$ many cuts. As the proofs of the equivalences take $O(|S|)$ many steps and $N \leq O(|S|^2)$, by the feasibility part, the time to produce $\pi$ is $O(|S|^3)$.
\end{proof}

\begin{rem}
Here are two remarks on the algorithm provided in Theorem \ref{UnitPropagation}. First, notice that the algorithm only uses the classical validity of the sequent $S=(\Gamma \Rightarrow \bigvee_{i=1}^{n} p_i)$ and not its classical proof. This observation becomes helpful later in the proof of our main result, Theorem \ref{FeasibleHarropForIK}. Focusing on the validity rather than the proof allows us to only control the complexity of the construction of the sequent $S$, to which we will apply Theorem \ref{UnitPropagation}, and not its proof.
Second, note that despite the fact that the assumption is the classical validity of $S$, the algorithm finally provides an $\mathbf{LJ}$-proof for some $\Gamma \Rightarrow p_i$ and not just a proof in classical logic. This second point also plays a crucial role in this paper. In the next subsection, we will see that our main method to lift the unit propagation to the modal setting is extending the modal calculus to collapse all the modalities. The fact that the unit propagation only needs the classical validity makes it possible to extend the calculus even to the classical systems and then using the $\mathbf{LJ}$-proof provided by Theorem \ref{UnitPropagation}, we can land in the intuitionistic realm again.
\end{rem}

\subsection{$T$-freeness and $T$-fullness}\label{SubsectionTfreeTfull}
As mentioned before, the unit propagation is only applicable to the propositional language, while we need a similar machinery for the modal setting. For that purpose, one way to proceed is to reduce the provability in a modal calculus to the classical validity of a propositional formula to employ the unit propagation. Our strategy is to extend the given modal calculus to a calculus for a classical modal logic, where the modalities have no real role and hence can be eliminated. In doing so, there are two canonical logics that one can use, i.e., the logics of the constructive modal Kripke frame with one, either reflexive or irreflexive, node. $T$-freeness and $T$-fullness capture this idea, first applied to logics and then to sequent calculi.

\begin{dfn}\label{Def: T-free T-full logic} 
Let $\mathfrak{L} \in \{\mathcal{L}, \mathcal{L}_{\Box}, \mathcal{L}_\Diamond\}$ be a language and $L$ be a logic over $\mathfrak{L}$. The logic $L$ is called \emph{$T$-free over $\mathfrak{L}$}, if it is valid in the irreflexive node frame $\mathcal{K}_i$ (see Definition \ref{def: Kripke Frames and Models}), and

\begin{itemize}
    \item 
    if $\mathfrak{L}=\mathcal{L}$, then $\mathsf{CK} \subseteq L$.
     \item 
    if $\mathfrak{L}=\mathcal{L}_\Box$, then $\mathsf{CK}_\Box \subseteq L$.
     \item 
    if $\mathfrak{L}=\mathcal{L}_\Diamond$, then $\mathsf{BLL} \subseteq L$.
\end{itemize}
The logic $L$ is \emph{$T$-full over $\mathfrak{L}$} if it is valid in the reflexive node frame $\mathcal{K}_r$ and
\begin{itemize}
    \item 
    if $\mathfrak{L}=\mathcal{L}$, then $\mathsf{CK} \subseteq L$ and $T_a, T_b \in L$.
     \item 
    if $\mathfrak{L}=\mathcal{L}_\Box$, then $\mathsf{CK}_\Box \subseteq L$ and $T_a \in L$.
     \item 
    if $\mathfrak{L}=\mathcal{L}_\Diamond$, then $\mathsf{BLL} \subseteq L$ and $T_b \in L$.
\end{itemize}
\end{dfn}

\begin{exam}\label{ExampleOfTfreeTfullLogic}
Let $L$ be an intermediate logic and $\mathcal{A}$ and $\mathcal{B}$ two finite sets of the axioms in Table \ref{tableAxiom} such that $\mathcal{A}$ does not include any of the axioms $T_a$, $T_b$, $D_a$, $D_b$, $D$, $den_{n,a}$ and $den_{n,b}$, for $n=0$, and $ga_{klmn}$, for $k=m=0$. Then,  $L\mathsf{CK}+\mathcal{A}$ (recall Definition \ref{DefIntermediateModalLogics}) and $L\mathsf{IK}+\mathcal{A}$ are $T$-free and $L\mathsf{CK}+\mathcal{B} \cup \{T_a, T_b\}$ and $L\mathsf{IK}+\mathcal{B} \cup \{T_a, T_b\}$ are $T$-full over $\mathcal{L}$. We only prove the cases for the  $\mathsf{CK}$ versions, the others being similar. Notice that these logics extend $\mathsf{CK}$, all the theorems of $L$ are valid in any constructive modal Kripke frame with one node, and modus ponens and necessitation respect the validity in $\mathcal{K}_i$ and $\mathcal{K}_r$, i.e., the validity of the premises of the rules imply the validity of the conclusion. Therefore, it is enough to show that any axiom in $\mathcal{A}$ (resp. $\mathcal{B}$) is valid in $\mathcal{K}_i$ (resp. $\mathcal{K}_r$). We only check the case for the axiom $ga_{klmn}: \Diamond^k \Box^l p \to \Box^m \Diamond^n p$. The rest are similar. The axiom $ga_{klmn}$ is valid in $\mathcal{K}_r$, as $\Box \phi$ and $\Diamond \phi$ are equivalent to $\phi$ in $\mathcal{K}_r$, for any $\phi \in \mathcal{L}$. Hence, the axiom $ga_{klmn}$ is equivalent to $p \to p$ which is clearly valid in $\mathcal{K}_r$. For $\mathcal{K}_i$, if either $k \neq 0$ or $m \neq 0$, then the axiom $ga_{klmn}$ is valid, simply because for any $\phi \in \mathcal{L}$, the formulas $\Box \phi$ and $\Diamond \phi$ are equivalent to $\top$ and $\bot$ in $\mathcal{K}_i$, respectively. Therefore, $\mathcal{K}_i$ sees those instances of $ga_{klmn}$ as an implication with either $\bot$ in its antecedent or $\top$ in its succedent, which are clearly valid.

A similar claim holds for the fragments $\mathcal{L}_{\Box}$ and $\mathcal{L}_{\Diamond}$. More precisely, if we restrict $\mathcal{A}$ to $\Diamond$-free (resp. $\Box$-free) axioms in Table \ref{tableAxiom} with the restricting condition that $\mathcal{A}$ does not include $D_a$ (resp. $D_b$), then $L\mathsf{CK}_{\Box}+\mathcal{A}$ (resp. $L\mathsf{BLL}+\mathcal{A}$) is either $T$-free or $T$-full over $\mathcal{L}_{\Box}$ (resp. $\mathcal{L}_\Diamond$).

To name a non-example, consider the logic $\mathsf{CK}+D$, which is neither $T$-free nor $T$-full over $\mathcal{L}$. It is not $T$-full, as it cannot prove $T_a$. It is not $T$-free, as $D$ is not valid in the irreflexive node frame. The example may explain the terminology we use. We see $T$-free logics and their validity in one irreflexive node as a witness that not only the axioms $T_a$ and $T_b$ are not provable in the logic, but also the logic has no shadow of these axioms. Dually, $T$-full logics are the ones that embrace the full power of the $T$-axioms by proving both $T_a$ and $T_b$. In this sense, it is clear that $\mathsf{CK}+D$ stands somewhere in between $T$-freeness and $T$-fullness and although it cannot prove the $T$-axioms, it proves $D$ which is a shadow of the $T$-axioms.
\end{exam}

\begin{dfn}\label{Def: T-free T-full calculus}
Let $\mathfrak{L} \in \{\mathcal{L}, \mathcal{L}_{\Box}, \mathcal{L}_\Diamond\}$ be a language. A sequent calculus $G$ over the language $\mathfrak{L}$ is called \emph{$T$-free} over $\mathfrak{L}$ if it is strong over $\mathfrak{L}$ and valid in $\mathcal{K}_i$. $G$ is called \emph{$T$-full} over $\mathfrak{L}$ if it is strong over $\mathfrak{L}$, valid in $\mathcal{K}_r$ and:
\begin{itemize}
\item
if $\mathfrak{L}=\mathcal{L}$, then both the axioms $T_a$ and $T_b$ are provable in it;
\item
if $\mathfrak{L}=\mathcal{L}_\Box$, then the axiom $T_a$ is provable in it;
\item
if $\mathfrak{L}=\mathcal{L}_\Diamond$, then the axiom $T_b$ is provable in it.
\end{itemize}
\end{dfn}

\begin{exam}\label{ExamOfTfreeTfullSequent}
Let $\mathcal{A}$ and $\mathcal{B}$ be two finite sets of the axioms in Table \ref{tableAxiom} such that $\mathcal{A}$ does not include any of the axioms $T_a$, $T_b$, $D_a$, $D_b$, $D$, $den_{n,a}$ and $den_{n,b}$, for $n=0$, and $ga_{klmn}$ for $k=m=0$. Then, $\mathbf{CK}+\mathcal{A}$ and $\mathbf{IK}+\mathcal{A}$ are $T$-free and $\mathbf{CK}+\mathcal{B}\cup \{T_a, T_b\}$ and $\mathbf{IK}+\mathcal{B}\cup \{T_a, T_b\}$ are $T$-full over $\mathcal{L}$. For the fragments, if we restrict $\mathcal{A}$ to $\Diamond$-free (resp. $\Box$-free) axioms in Table \ref{tableAxiom} such that $\mathcal{A}$ does not include $D_a$ (resp. $D_b$), then $\mathbf{CK}_{\Box}+\mathcal{A}$ (resp. $\mathbf{BLL}+\mathcal{A}$) is either $T$-free or $T$-full over $\mathcal{L}_\Box$ (resp. $\mathcal{L}_{\Diamond}$). In all these claims, the proof for the validity in $\mathcal{K}_i$ and $\mathcal{K}_r$ is easy and similar to the ones in Example \ref{ExampleOfTfreeTfullLogic}. 
\end{exam}

\begin{rem}\label{RemarkOnTransportabilityOfTfree}
If $G$ is a sequent calculus for a logic $L$ and $G$ is strong, then $G$ is $T$-free ($T$-full) if{f} $L$ is $T$-free ($T$-full). Moreover, if the strong calculi $G$ and $H$ are equivalent, then $G$ is $T$-free ($T$-full) if{f} $H$ is $T$-free ($T$-full).
\end{rem}
Having the reduction machinery established, we now state and prove the reduction lemma. In the rest of this subsection and in Subsection \ref{SubsectionMain}, we work over the language $\mathcal{L}$ and hence by $T$-free ($T$-full), we always mean $T$-free ($T$-full) over $\mathcal{L}$. We will address the fragments later in Section \ref{SectionFragments}.

\begin{lem}\label{Reduction}
Let $G=\mathbf{CK}+\mathcal{C}$ be a $T$-free or a $T$-full calculus, where $\mathcal{C}$ is a finite set of constructive formulas. There is a feasible algorithm that reads a $G$-provable sequent $\Sigma, \{\neg q_j\}_{j=1}^m  \Rightarrow \bigvee_{i=1}^n p_i$, where $\Sigma$ is a multiset of modal Horn formulas and $p_i$'s and $q_j$'s are atomic formulas, and outputs a multiset $\Sigma'$ consisting of \emph{implicational} Horn formulas and a $G$-proof $\pi$ such that:
\begin{enumerate}
    \item \label{label1}
$\Sigma' \Rightarrow \bigvee_{i=1}^n p_i \vee \bigvee_{j=1}^m q_j $ is classically valid, and
\item \label{label2}
$G\vdash^{\pi} \Sigma \Rightarrow \bigwedge \Sigma'$.
\end{enumerate}
\end{lem}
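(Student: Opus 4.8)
The plan is to exploit that a $T$-free (resp. $T$-full) calculus $G$ is valid in the single irreflexive (resp. reflexive) node frame $\mathcal{K}_i$ (resp. $\mathcal{K}_r$), where every modality collapses, so that $G$-provability of the input sequent produces a purely propositional \emph{classical} validity. I would first introduce a collapse map $c:\mathcal{L}^+\to\mathcal{L}^+$ that is the identity on atoms, $\bot$ and $\top$, commutes with $\wedge,\vee,\to$, and acts on modalities according to the relevant node frame: in the $T$-full case $c(\Box\psi)=c(\Diamond\psi)=c(\psi)$ (erase all modalities), while in the $T$-free case $c(\Box\psi)=\top$ and $c(\Diamond\psi)=\bot$. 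A direct check against Definition \ref{def: Kripke Frames and Models} shows that for every formula $\psi$ and every valuation, the truth value of $\psi$ at the single world of $\mathcal{K}_r$ (resp. $\mathcal{K}_i$) equals the classical truth value of $c(\psi)$; hence validity in $\mathcal{K}_r$ (resp. $\mathcal{K}_i$) coincides with classical validity of the $c$-image.

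Next I would read off $\Sigma'$ from $\Sigma$. By the shape imposed in Definition \ref{ImplicationalHorn}, the collapse $c(\phi)$ of a modal Horn formula $\phi$ is either $\bot$, an atom, or a right-nested implication whose successive antecedents are (nonempty) conjunctions of atoms and whose innermost succedent is an atom or $\bot$ (in the $T$-free case some subformulas short-circuit to $\top$). Currying such a formula, using the $\mathbf{LJ}$-equivalence $(A\to(B\to C))\Leftrightarrow((A\wedge B)\to C)$, turns it into a single implicational Horn formula $\phi'$, and I discard the ones equal to $\top$; I set $\Sigma'=\{\phi'\mid \phi\in\Sigma,\ \phi'\neq\top\}$, which is computable in polynomial time by structural recursion. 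Since each $\phi'$ is classically equivalent to $c(\phi)$, $\Sigma'$ is classically equivalent to $c(\Sigma)$. From the $G$-provability of $\Sigma,\{\neg q_j\}_{j=1}^m\Rightarrow\bigvee_i p_i$ and the validity of $G$ in the node frame, the formula $\bigwedge c(\Sigma)\wedge\bigwedge_j\neg q_j\to\bigvee_i p_i$ is classically valid; moving the negated atoms across the turnstile (a classical tautological equivalence) gives that $\Sigma'\Rightarrow\bigvee_i p_i\vee\bigvee_j q_j$ is classically valid, which is Condition \eqref{label1}.

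For Condition \eqref{label2} I would prove $G\vdash\phi\Rightarrow\phi'$ for each $\phi\in\Sigma$ by induction on the modal Horn structure of $\phi$, then assemble the result by weakening and $(R\wedge)$. The atomic and $\bot$ cases are immediate. For $\phi=\Box\psi$: in the $T$-free case $c(\phi)=\top$ and $\Box\psi\Rightarrow\top$ by $(R\top)$; in the $T$-full case I cut the $G$-provable $T_a$-instance $\Box\psi\Rightarrow\psi$ against the induction hypothesis $\psi\Rightarrow c(\psi)$. For $\phi=(\bigwedge_i\Diamond^{n_i}p_i)\to B$ with all $n_i=0$, an application of $(R\to)$ reduces the goal to $\phi,\bigwedge_i p_i\Rightarrow c(B)$, which follows from $(L\to)$ and the induction hypothesis on $B$. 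The crucial case is $T$-full with some $n_i\geq 1$: here I reinsert the diamonds in the antecedent using the iterated $T_b$-instances $p_i\Rightarrow\Diamond^{n_i}p_i$ (obtained by $n_i$ cuts on $p\to\Diamond p$), so that after $(R\to)$ and $(L\to)$ the goal again reduces to the induction hypothesis on $B$; in the $T$-free case with some $n_i\geq 1$ one has $c(\phi)=\top$ and nothing to prove. A final flattening via the currying equivalence yields $G\vdash\phi\Rightarrow\phi'$. Since $G$ is strong and, in the $T$-full case, provably contains $T_a$ and $T_b$ (hence feasibly, as a provable rule of $\mathfrak{R}$ is feasibly provable by Corollary \ref{EquivProvableAndFeasiblyProvable}), every step produces a proof of polynomial size, so the entire algorithm is feasible.

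I expect the main obstacle to be the asymmetry of the modal cases between $T$-freeness and $T$-fullness. The collapse $c$ discards information and is therefore \emph{not} a $G$-equivalence, so one must verify that only the provable direction $\phi\Rightarrow\phi'$ is ever required, and that the antecedent diamonds — which occur in a negative position, where collapsing is a priori unsound — can nonetheless be recovered on the proof side: this is precisely what the iterated $T_b$ supplies in the $T$-full case, and what the short-circuit to $\top$ (followed by discarding) handles in the $T$-free case. Getting this single delicate direction right, uniformly across both kinds of calculi, is where the care of the argument lies.
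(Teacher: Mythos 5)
Your proposal is correct and takes essentially the same route as the paper's own proof: classical validity of the collapsed sequent is extracted semantically from the validity of $G$ in the one-node frame $\mathcal{K}_i$ (resp. $\mathcal{K}_r$), and the syntactic obligation $G \vdash \Sigma \Rightarrow \bigwedge \Sigma'$ is discharged in the $T$-full case exactly as in the paper, via iterated $T_a$/$T_b$ instances made feasible through Corollary \ref{EquivProvableAndFeasiblyProvable}. The only cosmetic difference is in the $T$-free case, where the paper simply takes $\Sigma'$ to be the modality-free part of $\Sigma$ (so Condition \ref{label2} is mere weakening), while you collapse every formula under $c(\Box\psi)=\top$, $c(\Diamond\psi)=\bot$ and discard those that short-circuit to $\top$ — which yields the same multiset up to classical equivalence, since every modal Horn formula containing a modality collapses to a tautology.
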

\begin{proof}
Observe that any modality-free modal Horn formula $F$ is in the form $\bigwedge_{j_1=1}^{n_1} r_{1j_1} \to (\bigwedge_{j_2=1}^{n_2} r_{2j_2} \to (\bigwedge_{j_3=1}^{n_3} r_{3j_3} \to \cdots (\bigwedge_{j_m=1}^{n_m} r_{mj_m} \to s) \cdots ))$, where $r_{ij_k}$'s are atoms and $s$ is either an atom or $\bot$. Clearly, this formula is $\mathbf{LJ}$-equivalent to the implicational Horn formula $F'=\bigwedge_{i=1}^{m}\bigwedge_{l=1}^{n_i} r_{il} \to s$. The process of computing $F'$ and the $\mathbf{LJ}$-proof of the equivalence is feasible in $F$. Therefore, to prove the lemma, it is sufficient to provide a multiset $\Sigma'$ of modality-free modal Horn formulas with the mentioned properties. 

Now, we have to investigate two cases. If $G$ is $T$-free, set $\Sigma' \subseteq \Sigma$ as the set of the modality-free formulas in $\Sigma$. Note that the process of providing this multiset takes polynomial time in the size of $\Sigma$ and hence in the size of the input sequent $\Sigma, \{\neg q_j\}_{j=1}^m  \Rightarrow \bigvee_{i=1}^n p_i$. To show Condition \eqref{label1}, suppose otherwise, i.e., $\Sigma' \Rightarrow \bigvee_{i=1}^n p_i \vee \bigvee_{j=1}^m q_j $ is not classically valid. Then, there is a classical model $I$ such that $I \vDash \Sigma'$ but $I \nvDash p_i$ and $I \nvDash q_j$, for all $1 \leq i \leq n$ and $1 \leq j \leq m$. Consider the irreflexive node frame $\mathcal{K}_i=(\{w\}, =, \varnothing, \varnothing)$. Define the valuation function $V$ by $V(r)=\{w\}$ if $I(r)=1$ and $V(r)=\varnothing$ if $I(r)=0$, for any atom $r$. Clearly, for any modality-free formula $\phi$ we have $w \vDash \phi$ if and only if $I(\phi)=1$. Hence, $w \vDash \Sigma'$. To show that $w$ also satisfies the other elements of $\Sigma$, we prove a stronger claim that if $A$ is a modal Horn formula that is not modality-free, then $w \vDash A$. First, as the node $w$ is irreflexive, for any formula $B$, $w \vDash \Box B$, and $w \nVdash \Diamond B$. Second, if the modal Horn formula $A$ is not modality-free, it is either in the form $\Box^k C$, for some $k\geq 1$ or it is in the form $\bigwedge_{n_{i} \geq 0} \Diamond^{n_i} r_i \to C$, where either $n_i \geq 1$, for some $i$ or all $n_i=0$, which means that $C$ is a modal Horn formula that contains some modality. In either case we have $w \vDash A$. Hence, $w \vDash \Sigma$. Now, the atoms $p_i$ and $q_j$ are not satisfied in the model $(\mathcal{K}_i, V)$, while $G$ is valid in it, which is a contradiction. Hence, $\Sigma' \Rightarrow \bigvee_{i=1}^n p_i \vee \bigvee_{j=1}^m q_j $ is classically valid. Condition \eqref{label2} is clear as $\Sigma' \subseteq \Sigma$. The algorithm in this case is feasible as the time that it needs to compute $\pi$ is polynomial in the size of $\Sigma$ and hence in the size of the input sequent $\Sigma, \{\neg q_j\}_{j=1}^m  \Rightarrow \bigvee_{i=1}^n p_i$.

If $G$ is $T$-full, define the forgetful function $f: \mathcal{L} \to \mathcal{L}_p$ that deletes all the occurrences of $\Box$ and $\Diamond$ in the formula $A$ and outputs $A^f$. As an example $f(\Box (\Diamond p \wedge \Diamond q \to \Box r))$ is $p \wedge q \to r$. Now, define $\Sigma'$ as $\Sigma^f=\{A^f \mid A \in \Sigma\}$. Again, the process of computing $\Sigma'$ takes polynomial time in the size of $\Sigma$ and hence in the size of the input sequent $\Sigma, \{\neg q_j\}_{j=1}^m  \Rightarrow \bigvee_{i=1}^n p_i$. Now, for the sake of contradiction, suppose $\Sigma' \Rightarrow \bigvee_{i=1}^n p_i \vee \bigvee_{j=1}^m q_j$ is not classically valid. Then, there exists a classical model $I$ such that $I \vDash \Sigma'$ and $I \nvDash p_i$, and $I \nvDash q_j$, for any $1 \leq i \leq n$ and $1 \leq j \leq m$. Consider the reflexive node frame $\mathcal{K}_r=(\{w\}, =, \{(w, w)\}, \varnothing)$. Define the valuation function $V$ by $V(r)=\{w\}$ if $I(r)=1$ and $V(r)=\varnothing$ if $I(r)=0$, for any atom $r$. Similar to the previous case, for any modality-free formula $\phi$, we have $w \vDash \phi$ if and only if $I(\phi)=1$, and hence $w \vDash \Sigma'$. Moreover, since the model only consists of one reflexive world $w$, for any formula $\psi$ we have 
\begin{center}
$w \vDash \psi$ \; if{f}\; $w \vDash \Box^k \psi$\; if{f}\; $w \vDash \Diamond^l \psi$, \; for any $k, l \geq 0$. $\quad (*)$
\end{center}
Therefore, it is clear that $w$ thinks that any formula $\phi$ is equivalent to $\phi^f$. Hence, $w \vDash \Sigma$. 
Now, notice that $w \nvDash p_i$ and $w \nvDash q_j$, for any $1 \leq i \leq n$ and $1 \leq j \leq m$. Since $G$ is valid in the model $(\mathcal{K}_r, V)$, we reach a contradiction. Hence, $\Sigma' \Rightarrow \bigvee_{i=1}^n p_i \vee \bigvee_{j=1}^m q_j $ is classically valid. For Condition \eqref{label2}, we first provide a feasible algorithm that reads a modal Horn formula $A$ and outputs a $G$-proof $\pi_A$ for $A \Rightarrow A^f$. For that purpose, consider the meta-sequents $\Pi, \Box r \Rightarrow r$ and $\Pi, r \Rightarrow \Diamond r$, which can be also read as two rules with no premises. As they have the form mentioned in Definition \ref{Def:RulesR}, they are in $\mathfrak{R}$. Now, note that both of these rules are provable in $G$, as $G$ is $T$-full.
As $G$ is strong, by Corollary \ref{EquivProvableAndFeasiblyProvable}, $G$ feasibly proves both the meta-sequents. Therefore, there are feasible functions $g$ and $h$ such that $g(C)$ and $h(C)$ are the $G$-proofs of $ \Box C \Rightarrow C$ and $ C \Rightarrow \Diamond C$, respectively.
Now, let us explain the algorithm that computes $\pi_A$ from $A$, by using a recursion on the structure of $A$. Call the time of the algorithm $T(A)$. If $A$ is an atom or $\bot$, then the algorithm outputs $A \Rightarrow A$ as an instance of the identity axiom. In this case, $T(A) \leq O(1)$. If $A= \Box B$, consider the proof
\begin{center}
\begin{tabular}{c c c}
 \AxiomC{$g(B)$}
 \noLine\UnaryInfC{$\Box B \Rightarrow B$}
 \AxiomC{$\pi_B$}
   \noLine\UnaryInfC{$B \Rightarrow B^f$}
 \BinaryInfC{$\Box B \Rightarrow B^f$}
 \DisplayProof
\end{tabular}
\end{center}
where $\pi_B$ is provided by the recursive step. Note that, as the time to compute $g(B)$ is polynomial in $|B|$, we have $T(\Box B) \leq T(B)+|B|^{O(1)}$. If $A=\bigwedge_{i=1}^{k} \Diamond^{n_i}r_i \to B$, where $B$ is a modal Horn formula, by the recursive step, $G\vdash^{\pi_B} B \Rightarrow B^f$. The algorithm first provides a proof for $r_i \Rightarrow \Diamond^{n_i}r_i$ for each $1\leq i\leq k$, by using $n_i-1$ many cuts on the proofs $h(\Diamond^m r_i)$, for any $0 \leq m \leq n_i-1$. As $|\Diamond^m r_i| \leq |A|$ and $n_i \leq |A|$, this part takes $|A|^{O(1)}$ steps. Using the rule $(L \wedge)$, the algorithm continues to provide a proof for $\bigwedge_{i=1}^k r_i \Rightarrow \Diamond^{n_i} r_i$ for each $1\leq i\leq k$, and then, using $(R \wedge)$, a proof for $\bigwedge_{i=1}^k r_i \Rightarrow \bigwedge_{i=1}^k \Diamond^{n_i} r_i$. The time this part takes is trivially $|A|^{O(1)}$. Finally, using the proof for $\bigwedge_{i=1}^k r_i \Rightarrow \bigwedge_{i=1}^k \Diamond^{n_i} r_i$ and $\pi_B$, the algorithm provides a proof for $\bigwedge_{i=1}^k \Diamond^{n_i} r_i \to B \Rightarrow \bigwedge_{i=1}^k r_i \to B^f$. Using the bounds mentioned for the time of each part, we can easily see that $T(A) \leq T(B)+|A|^{O(1)}$. Using all these cases, it is clear that the algorithm provide a proof $\pi_A$ for $A \Rightarrow A^f$ in time polynomial in $|A|$. Therefore, using $\pi_A$'s, for all $A \in \Sigma$ and some weakening rules followed by some applications of $(R\wedge)$, we can provide a $G$-proof for $\Sigma \Rightarrow \bigwedge \Sigma'$, feasibility in the size of $\Sigma$ and hence in the size of the input sequent $\Sigma, \{\neg q_j\}_{j=1}^{m} \Rightarrow \{p_i\}_{i=1}^n$.
\end{proof}

\subsection{The Main Theorem}\label{SubsectionMain}
In this subsection, we put together the two ingredients we developed in this section to prove the main result of the paper. First, let us generalize the feasible admissibility of Visser's rules that we mentioned before to also allow the Harrop formulas in the antecedent of the sequents.

\begin{dfn}\label{DfnFeasibleDPHarropProperty}
A sequent calculus $G$ has the \emph{feasible Visser-Harrop property}, if there exists a feasible function $f$ that reads a $G$-proof $\pi$ of \[\Gamma,  \{ A_i \to B_i \}_{i \in I} \Rightarrow C \vee D,\] where $\Gamma$ is a multiset of Harrop formulas, and $I$ is a (possibly empty) finite index set, and outputs a $G$-proof for one of the following sequents:
\small \begin{center}
\begin{tabular}{c c c}
$\Gamma,  \{ A_i \to B_i \}_{i \in I} \Rightarrow C\;$ or & $\Gamma,  \{ A_i \to B_i \}_{i \in I} \Rightarrow D\;$ or
& $\Gamma,  \{ A_i \to B_i \}_{i \in I} \Rightarrow A_i$,\\
\end{tabular}
\end{center}
\normalsize for some $i \in I$. In the case that $\Gamma=I=\emptyset$, we call it the \emph{feasible disjunction property} of $G$. The \emph{Visser-Harrop property} or \emph{disjunction property} is defined in the same way if we drop the feasibility condition.
\end{dfn}

\begin{thm} \label{FeasibleHarropForIK}
Let $G=\mathbf{CK}+\mathcal{C}$ be either a $T$-free or a $T$-full sequent calculus, where $\mathcal{C}$ is a finite set of constructive formulas. Then, $G$ has the feasible Visser-Harrop property.
\end{thm}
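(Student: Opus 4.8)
The plan is to chain the tools of this section into a single feasible pipeline. Fix the input $G$-proof $\pi$ of $\Gamma,\{A_i\to B_i\}_{i\in I}\Rightarrow C\vee D$ with $\Gamma$ a multiset of Harrop formulas. First I would run the provability preservation, Theorem~\ref{MainTheorem}, on $\pi$ to obtain a multiset $\Sigma_{\pi}$ of modal Horn formulas built from angled atoms, together with a proof $\sigma_{\pi}$, such that $G\vdash \Sigma_{\pi},\Gamma^{t},\{(A_i\to B_i)^{t}\}_{i\in I}\Rightarrow (C\vee D)^{t}$ and $G\vdash^{\sigma_{\pi}}\ \Rightarrow\bigwedge\Sigma_{\pi}^{s}$. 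Using $(C\vee D)^{t}=(C^{t}\vee D^{t})\wedge\langle C\vee D\rangle$ together with Lemma~\ref{TranslationAndAtoms} (giving $C^{t}\Rightarrow\langle C\rangle$ and $D^{t}\Rightarrow\langle D\rangle$) and the rules $(L\wedge),(L\vee),(R\vee)$, I would cut the succedent down to the disjunction of angled atoms $\langle C\rangle\vee\langle D\rangle$. In parallel, Lemma~\ref{LemHarrop} applied to each Harrop $H\in\Gamma$ yields a modal Horn multiset $\Gamma_{H}$ with $\mathbf{CK}\vdash\Gamma_{H}\Rightarrow H^{t}$ and $\bigwedge\Gamma_{H}^{s}\Leftrightarrow H$; cutting against the sequents $\Gamma_{H}\Rightarrow H^{t}$ replaces $\Gamma^{t}$ in the antecedent by the union $\Gamma_{*}=\bigcup_{H}\Gamma_{H}$.

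The delicate step is the treatment of the implication hypotheses. Writing $(A_i\to B_i)^{t}=(A_i^{t}\to B_i^{t})\wedge\langle A_i\to B_i\rangle$ and splitting the conjunction by $(L\wedge)$, I would replace the functional conjunct $A_i^{t}\to B_i^{t}$ in the antecedent by $\neg\langle A_i\rangle$. This is sound because $\mathbf{CK}\vdash\neg\langle A_i\rangle\Rightarrow A_i^{t}\to B_i^{t}$: from $A_i^{t}\Rightarrow\langle A_i\rangle$ (Lemma~\ref{TranslationAndAtoms}) one gets $A_i^{t},\neg\langle A_i\rangle\Rightarrow\bot$, hence $A_i^{t},\neg\langle A_i\rangle\Rightarrow B_i^{t}$ by $(L\bot)$, and then $(R\to)$; a cut against this sequent performs the replacement. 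Crucially, one does not try to reproduce the whole of $(A_i\to B_i)^{t}$ — the leftover atoms $\langle A_i\to B_i\rangle$ are simply kept as harmless modal Horn units, absorbed into the Horn part. The outcome is a $G$-provable sequent of exactly the shape required by the reduction lemma, namely $\Sigma,\{\neg\langle A_i\rangle\}_{i\in I}\Rightarrow\langle C\rangle\vee\langle D\rangle$, where $\Sigma=\Sigma_{\pi}\cup\Gamma_{*}\cup\{\langle A_i\to B_i\rangle\}_{i\in I}$ is modal Horn.

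Next I would invoke Lemma~\ref{Reduction} (this is the only place the $T$-free/$T$-full hypothesis is used) to obtain a multiset $\Sigma'$ of implicational Horn formulas with $G\vdash\Sigma\Rightarrow\bigwedge\Sigma'$ and with $\Sigma'\Rightarrow\langle C\rangle\vee\langle D\rangle\vee\bigvee_{i\in I}\langle A_i\rangle$ classically valid. Feeding this classically valid, purely propositional disjunction of atoms into unit propagation, Theorem~\ref{UnitPropagation}, returns a single atom $p\in\{\langle C\rangle,\langle D\rangle\}\cup\{\langle A_i\rangle\}_{i\in I}$ together with an $\mathbf{LJ}$-proof of $\Sigma'\Rightarrow p$; cutting this against $G\vdash\Sigma\Rightarrow\bigwedge\Sigma'$ gives $G\vdash\Sigma\Rightarrow p$. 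Finally I would apply the standard substitution $s$ to this whole $G$-proof — legitimate since the rules are schematic in atoms, so $s$ maps $G$-proofs to $G$-proofs, and feasible by Lemma~\ref{LemTimeT}. Because $\langle A_i\to B_i\rangle^{s}=A_i\to B_i$ and $p^{s}\in\{C,D,A_i\}$, and because the auxiliary antecedent formulas $\Sigma_{\pi}^{s}$ and $\Gamma_{*}^{s}$ can be discharged by cutting with $\sigma_{\pi}$ and with the equivalences $\bigwedge\Gamma_{H}^{s}\Leftrightarrow H$ from Lemma~\ref{LemHarrop}, one lands on $G\vdash\Gamma,\{A_i\to B_i\}_{i\in I}\Rightarrow p^{s}$, which is exactly one of the three target sequents.

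The main content is already carried by the cited results, so this proof is essentially their orchestration, and the work that remains is bookkeeping. I expect the implication step of the second paragraph to be the real obstacle: one must massage $(A_i\to B_i)^{t}$ into precisely the $\{\neg\langle A_i\rangle\}$-shape demanded by Lemma~\ref{Reduction} while staying $G$-provable, and the point that makes this possible is that $\neg\langle A_i\rangle$ implies only the functional conjunct $A_i^{t}\to B_i^{t}$ and not the atomic conjunct $\langle A_i\to B_i\rangle$, which therefore has to be carried along as an inert Horn unit. A secondary, routine-but-necessary obstacle is the back-translation: checking that $s$ restores exactly the original antecedent $\Gamma,\{A_i\to B_i\}_{i\in I}$ and that the harmless multisets can be cut away, and threading the polynomial size and time bounds through the composition of the four feasible subroutines so that the overall algorithm remains feasible.
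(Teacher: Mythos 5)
Your proposal is correct and follows essentially the same route as the paper's own proof: Theorem \ref{MainTheorem} to translate the proof, Lemma \ref{TranslationAndAtoms} and Lemma \ref{LemHarrop} plus the observation that $\neg\langle A_i\rangle$ suffices to recover the functional conjunct of $(A_i\to B_i)^t$ (keeping $\langle A_i\to B_i\rangle$ as an inert Horn unit) to massage the sequent into the shape required by Lemma \ref{Reduction}, then Theorem \ref{UnitPropagation}, and finally the standard substitution $s$ together with $\sigma_\pi$ and the Harrop equivalences to return to the original sequents. The paper's only cosmetic difference is that it derives $\langle A_i\to B_i\rangle, \neg A_i^t \Rightarrow (A_i\to B_i)^t$ first and then passes from $\neg A_i^t$ to $\neg\langle A_i\rangle$, whereas you combine these into one step.
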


\begin{proof}
Suppose a proof $\pi$ of the sequent $ \Gamma, \{A_i \to B_i\}_{i \in I} \Rightarrow C \vee D$ is given, where $\Gamma$ is a multiset of Harrop formulas. By Theorem \ref{MainTheorem}, feasibly in $\pi$ we can get a multiset $\Sigma_{\pi}$ and a $G$-proof $\sigma_{\pi}$, where the former consists of modal Horn formulas constructed only from angled atoms such that 
\begin{center}
    $G\vdash \Sigma_{\pi}, \Gamma^t, \{(A_i \to B_i)^t\}_{i \in I} \Rightarrow (C \vee D)^t$ \quad \text{and}\quad $G\vdash^{\sigma_{\pi}} (\, \Rightarrow \bigwedge \Sigma_{\pi}^s)$ .
\end{center}
By Definition \ref{Translation}, $\mathbf{CK} \vdash (C \vee D)^t \Rightarrow C^t \vee D^t$. Using the $\mathbf{CK}$-provable sequents $A_i^t \to \bot \Rightarrow A_i^t \to B_i^t$ for each $i \in I$, and hence $\langle A_i \to B_i \rangle, A_i^t \to \bot \Rightarrow (A_i \to B_i)^t$, we have 
\begin{center}
$G\vdash \Sigma_{\pi}, \Gamma^t, \{\langle A_i \to B_i \rangle\}_{i \in I}, \{ \neg A_i^t\}_{i \in I} \Rightarrow C^t \vee D^t$ ,
\end{center}
which by Lemma \ref{TranslationAndAtoms} implies 
\begin{center}
$G\vdash \Sigma_{\pi}, \Gamma^t, \{\langle A_i \to B_i \rangle\}_{i \in I}, \{ \neg \langle A_i \rangle\}_{i \in I}  \Rightarrow \langle C \rangle \vee \langle D \rangle$ . 
\end{center}
By Lemma \ref{LemHarrop}, for any $\gamma \in \Gamma$, feasibly in $\gamma$ and hence in $\pi$, we can find a multiset $\Lambda_\gamma$ and a $\mathbf{CK}$-proof $\sigma_{\gamma}$, where $\Lambda_\gamma$ consists of modal Horn formulas built from angled atoms such that $\mathbf{CK} \vdash \Lambda_\gamma \Rightarrow \gamma^t$ and $\mathbf{CK} \vdash^{\sigma_{\gamma}} \bigwedge \Lambda_\gamma^s \Leftrightarrow \gamma$. Take $\Lambda_\Gamma=\bigcup_{\gamma \in \Gamma} \Lambda_\gamma$ and $\sigma_{\Gamma}$ constructed from $\sigma_\gamma$'s such that $\mathbf{CK} \vdash \Lambda_\Gamma \Rightarrow \bigwedge \Gamma^t$ and $\mathbf{CK} \vdash^{\sigma_{\Gamma}} \bigwedge \Lambda_\Gamma^s \Leftrightarrow \bigwedge \Gamma$. It is easy to see that $\Lambda_\Gamma$ and $\sigma_\Gamma$ can be obtained feasibly from $\Lambda_\gamma$ and $\sigma_\gamma$ for each $\gamma \in \Gamma$ and hence feasibly in $\pi$. Therefore, $G$ proves the sequent
\begin{center}
$S=\Sigma_{\pi}, \Lambda_\Gamma, \{\langle A_i \to B_i \rangle\}_{i \in I}, \{ \neg \langle A_i \rangle\}_{i \in I}  \Rightarrow \langle C \rangle \vee \langle D \rangle$. 
\end{center}
By Lemma \ref{Reduction}, feasibly in $S$ and hence in $\pi$, we can provide a multiset $\Omega$ and a $G$-proof $\alpha$, where $\Omega$ consists of implicational Horn formulas, and $\Omega \Rightarrow  \bigvee_{i \in I} \langle A_i \rangle \vee \langle C \rangle \vee \langle D \rangle$ is classically valid
and
$G\vdash^{\alpha} \Sigma_{\pi}, \Lambda_{\Gamma}, \{\langle A_i \to B_i \rangle\}_{i \in I} \Rightarrow \bigwedge \Omega$. Take $U= (\Omega \Rightarrow  \bigvee_{i \in I} \langle A_i \rangle \vee \langle C \rangle \vee \langle D \rangle)$. We can use Theorem \ref{UnitPropagation} to feasibly in $U$ and hence in $\pi$ find $\tau$ and $1 \leq i \leq n$, such that 
\begin{center}
    $\mathbf{LJ} \vdash^{\tau} \Omega \Rightarrow \langle A_{i} \rangle$ \;\text{or}\; $\mathbf{LJ} \vdash^{\tau} \Omega \Rightarrow \langle C \rangle$ \;\text{or}\; $\mathbf{LJ} \vdash^{\tau} \Omega \Rightarrow \langle D \rangle$.
\end{center}
We only address the case where $\mathbf{LJ} \vdash^{\tau} \Omega \Rightarrow \langle C \rangle$. The rest are similar. Since $G$ extends $\mathbf{LJ}$, we also have $G\vdash^{\tau} \Omega \Rightarrow \langle C \rangle$. Using the fact that $G\vdash^{\alpha} \Sigma_{\pi}, \Lambda_{\Gamma}, \{\langle A_i \to B_i \rangle\}_{i \in I} \Rightarrow \bigwedge \Omega$, we get a proof $\beta$ feasible in $\pi$ such that $G\vdash^{\beta} \Sigma_{\pi} , \Lambda_{\Gamma},  \{\langle A_i \to B_i \rangle\}_{i \in I} \Rightarrow \langle C \rangle$. The sequent will be provable for any substitution, specially the standard substitution. Notice that the process of substitution is feasible. Hence, using the $\mathbf{CK}$-proof $\sigma_{\Gamma}$ for $\bigwedge \Lambda_{\Gamma}^s \Leftrightarrow \bigwedge \Gamma$ and the $G$-proof $\sigma_{\pi}$ for $\Rightarrow \bigwedge \Sigma_{\pi}^s$, both feasible in $\pi$, we reach a proof for $\Gamma,  \{ A_i \to B_i\}_{i \in I} \Rightarrow C$, feasibly in $\pi$.
\end{proof}

The previous theorem proves the main result for $T$-free or $T$-full calculi in the form $\mathbf{CK}+\mathcal{C}$
, where $\mathcal{C}$ is a finite set of constructive formulas. The next corollary generalizes the result to its ultimate form:
\begin{cor}\label{thmKolli}
Let $G$ be either a $T$-free or a $T$-full constructive sequent calculus. Then, $G$ has the feasible Visser-Harrop property.
\end{cor}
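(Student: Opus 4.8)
The plan is to reduce the general case to the special shape $\mathbf{CK}+\mathcal{C}$ already treated in Theorem \ref{FeasibleHarropForIK}, and then transport the property back along a pd-simulation. First, note that by Definition \ref{Def: T-free T-full calculus} every $T$-free or $T$-full calculus is strong, so $G$ is a strong constructive sequent calculus over $\mathcal{L}$. Hence Corollary \ref{thm: p-simulation of admissibly strong} applies and produces a finite set of constructive $\mathcal{L}$-formulas $\mathcal{C}$ together with pd-simulations in both directions witnessing that $G$ is pd-equivalent to $H:=\mathbf{CK}+\mathcal{C}$.

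The next step is to check that $H$ itself is $T$-free (resp. $T$-full), so that Theorem \ref{FeasibleHarropForIK} is applicable to it. Here I would invoke Remark \ref{RemarkOnTransportabilityOfTfree}: the calculus $H=\mathbf{CK}+\mathcal{C}$ is trivially strong, since it contains every rule of $\mathbf{CK}$ (Remark \ref{RemarkRule}), and $G$ and $H$ are equivalent because pd-equivalence implies they prove exactly the same sequents. As $T$-freeness and $T$-fullness depend only on the set of provable sequents — through validity in $\mathcal{K}_i$ (resp. $\mathcal{K}_r$) and the provability of $T_a, T_b$ — the property transfers: $G$ is $T$-free (resp. $T$-full) if and only if $H$ is. Since $G$ is assumed $T$-free (resp. $T$-full), so is $H$, and Theorem \ref{FeasibleHarropForIK} furnishes a feasible algorithm $h$ witnessing the feasible Visser-Harrop property for $H$.

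Finally, I would assemble the algorithm for $G$ by composing three feasible maps. Let $f_{GH}$ be the pd-simulation of $G$ in $H$ and $f_{HG}$ the pd-simulation of $H$ in $G$. Given a $G$-proof $\pi$ of $\Gamma, \{A_i \to B_i\}_{i\in I} \Rightarrow C \vee D$ with $\Gamma$ a multiset of Harrop formulas, the map $f_{GH}$ produces an $H$-proof $f_{GH}(\pi)$ of the very same sequent, since pd-simulations leave the proved sequent unchanged. Applying $h$ yields an $H$-proof of one of the three target sequents $\Gamma, \{A_i \to B_i\}_{i\in I} \Rightarrow C$, or $\Rightarrow D$, or $\Rightarrow A_i$; then $f_{HG}$ turns this into a $G$-proof of the same target sequent. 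Because each of $f_{GH}$, $h$, $f_{HG}$ runs in polynomial time and the composition of polynomial-time functions is again polynomial-time, the overall procedure $\pi \mapsto f_{HG}(h(f_{GH}(\pi)))$ is feasible, which is exactly the feasible Visser-Harrop property for $G$.

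I do not foresee a genuine obstacle, since all the substantive work lives in the earlier results; the only points demanding care are (i) confirming that $T$-freeness and $T$-fullness are invariant under the equivalence, so that Theorem \ref{FeasibleHarropForIK} may legitimately be applied to $H$, and (ii) verifying that the Visser-Harrop algorithm composes cleanly with the two simulations, which hinges on pd-simulations preserving the endsequent. Both are routine given Remark \ref{RemarkOnTransportabilityOfTfree} and the definition of pd-simulation.
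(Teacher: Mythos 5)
Your proposal is correct and follows exactly the paper's route: the paper's own proof is the one-line observation that the claim follows from Corollary \ref{thm: p-simulation of admissibly strong} and Theorem \ref{FeasibleHarropForIK}, and your argument simply makes explicit the details that the paper leaves implicit (strongness of $G$ from Definition \ref{Def: T-free T-full calculus}, transfer of $T$-freeness/$T$-fullness to $\mathbf{CK}+\mathcal{C}$ via Remark \ref{RemarkOnTransportabilityOfTfree}, and composition of the pd-simulations with the feasible Visser-Harrop algorithm). These are precisely the steps the paper itself spells out when it runs the same reduction for the fragments, e.g.\ in the proof of Theorem \ref{FeasibleHarropForDiamonFree}, so no gap remains.
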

\begin{proof}
It is an immediate consequence of Theorem \ref{thm: p-simulation of admissibly strong} and Theorem \ref{FeasibleHarropForIK}.
\end{proof}

Corollary \ref{thmKolli} has two types of applications. First, in its positive form, it proves the feasible Visser-Harrop property for any known $T$-free or $T$-full constructive sequent calculus. As a consequence, it also shows that their corresponding logics have the Visser-Harrop property and hence admit Visser's rules. 
Notice that as the constructive rules have a general form and $T$-freeness and $T$-fullness are quite weak conditions, there are many calculi to which Corollary \ref{thmKolli} is applicable. It is also worth mentioning that even in the cases where the feasibility of Visser's rules is not of interest, Corollary \ref{thmKolli} and the machinery around it are still useful. Usually, to provide a proof-theoretic proof for admissibility of a rule in a logic, one must design a well-behaved proof system for the logic in which the cut rule is admissible. This is unfortunately not possible for many logical systems. However, Corollary \ref{thmKolli} deals with the calculi with the explicit cut rule in them without any need to eliminate it. As a consequence, if a logic is presented by a non-well-behaved proof system (e.g., with many initial sequents) in which we cannot eliminate the cut, then Corollary \ref{thmKolli} is still applicable as long as the axioms and the rules of the system are constructive.

\begin{cor} \label{ConcreteApplication}
(Positive application) Let $\mathcal{A}$ and $\mathcal{B}$ be two finite sets of the axioms in Table \ref{tableAxiom} such that $\mathcal{A}$ does not include any of the axioms $T_a$, $T_b$, $D_a$, $D_b$, $D$, $den_{n,a}$ and $den_{n,b}$, for $n=0$, and $ga_{klmn}$, for $k=m=0$. Then, the sequent calculi $\mathbf{CK}+\mathcal{A}, \mathbf{IK}+\mathcal{A}, \mathbf{CK}+\mathcal{B}\cup\{T_a, T_b\}$, and $\mathbf{IK}+\mathcal{B}\cup\{T_a, T_b\}$  enjoy the feasible Visser-Harrop property and hence feasible disjunction property. As a consequence, the logic of any of these systems has Visser-Harrop property and hence admits all Visser's rules. 
\end{cor}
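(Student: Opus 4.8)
The plan is to recognize the corollary as a direct instantiation of Corollary \ref{thmKolli}, whose only hypotheses are that the calculus be constructive and either $T$-free or $T$-full. Accordingly, for each of the four calculi I would verify these two hypotheses separately and then read off the conclusion. The feasible disjunction property then comes for free, by specializing the feasible Visser-Harrop property to the case $\Gamma = I = \emptyset$ in Definition \ref{DfnFeasibleDPHarropProperty}.

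For constructiveness I would appeal to Definition \ref{ConstructiveSystem} in two stages. First, the base calculi are built only from constructive rules and the permitted modal rules: $\mathbf{CK} = \mathbf{LJ} + \{K_\Box, K_\Diamond\}$, where every rule of $\mathbf{LJ}$ is constructive (as verified in the example following Table \ref{table}) and $(K_\Box), (K_\Diamond)$ are explicitly allowed over $\mathcal{L}$, while $\mathbf{IK}$ extends $\mathbf{CK}$ only by the constructive rules and axioms realizing $\Diamond\vee$, $\Box\to$, and $\Diamond\bot$. Second, the adjoined formula-axioms $(\Rightarrow A)$ are constructive axioms, because every axiom $A$ of Table \ref{tableAxiom} is a constructive formula: a uniform check against Definition \ref{DefPositiveFormulas} shows that in each case the antecedent of $A$ is almost positive and its succedent is constructive (for instance $T_a = \Box p \to p$ and $T_b = p \to \Diamond p$ have the almost positive antecedents $\Box p$, $p$ and the constructive succedents $p$, $\Diamond p$). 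Hence $\mathbf{CK}+\mathcal{A}$, $\mathbf{IK}+\mathcal{A}$, $\mathbf{CK}+\mathcal{B}\cup\{T_a,T_b\}$, and $\mathbf{IK}+\mathcal{B}\cup\{T_a,T_b\}$ are all constructive, and strongness is immediate from Remark \ref{RemarkRule}, since each of them contains every rule and axiom of $\mathbf{CK}$.

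For the $T$-free/$T$-full dichotomy I would cite Example \ref{ExamOfTfreeTfullSequent} verbatim: under the stated restriction on $\mathcal{A}$ the calculi $\mathbf{CK}+\mathcal{A}$ and $\mathbf{IK}+\mathcal{A}$ are valid in $\mathcal{K}_i$ and hence $T$-free, while $\mathbf{CK}+\mathcal{B}\cup\{T_a,T_b\}$ and $\mathbf{IK}+\mathcal{B}\cup\{T_a,T_b\}$ are valid in $\mathcal{K}_r$ with $T_a, T_b$ provable, hence $T$-full. Each calculus therefore satisfies the hypotheses of Corollary \ref{thmKolli} and enjoys the feasible Visser-Harrop property, and a fortiori the feasible disjunction property. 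Finally, to pass to the logic, I would use that each calculus is the canonical sequent calculus of its logic $L$, so that provability of $\Sigma \Rightarrow \Lambda$ is equivalent to $I(\Sigma\Rightarrow\Lambda)\in L$; dropping feasibility from the Visser-Harrop property of the calculus and restricting to empty $\Gamma$ then yields precisely that $\bigwedge_{i\in I}(A_i\to B_i)\to (C\vee D)\in L$ forces one of $\bigwedge_{i\in I}(A_i\to B_i)\to C$, $\bigwedge_{i\in I}(A_i\to B_i)\to D$, or $\bigwedge_{i\in I}(A_i\to B_i)\to A_i$ into $L$, which by the remark after Definition \ref{Def: Visser Rules} is the admissibility of all Visser's rules. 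The only step requiring any care is the uniform constructiveness check of the second paragraph, but this is mere pattern-matching against Definition \ref{DefPositiveFormulas}, so I anticipate no substantive obstacle.
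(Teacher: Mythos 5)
Your proposal is correct and takes essentially the same route as the paper: its proof likewise cites Example \ref{ExamOfTfreeTfullSequent} for $T$-freeness/$T$-fullness, notes that $\mathbf{CK}$, $\mathbf{IK}$, and all axioms in Table \ref{tableAxiom} are constructive, and concludes by Corollary \ref{thmKolli}. Your additional details (the pattern-matching against Definition \ref{DefPositiveFormulas}, strongness via Remark \ref{RemarkRule}, and the explicit passage from the calculus to its logic) simply spell out steps the paper leaves implicit.
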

\begin{proof}
By Example \ref{ExamOfTfreeTfullSequent}, and the fact that $\mathbf{CK}$, $\mathbf{IK}$, and  all the axioms in Table \ref{tableAxiom} are constructive, the claim is a consequence of Corollary \ref{thmKolli}.
\end{proof}

\begin{cor}
The sequent calculi $\mathbf{CK}X$ and $\mathbf{IK}X$, for any $X \subseteq \{T, B, 4, 5\}$, specially  $\mathbf{CS4}$, $\mathbf{CS5}$, $\mathbf{IS4}$ and  $\mathbf{IS5}$ (also known as $\mathbf{MIPC}$) 
enjoy the feasible Visser-Harrop property and hence the feasible disjunction property. As a consequence, the logic of any of these systems has Visser-Harrop property and hence admits all Visser's rules. 
\end{cor}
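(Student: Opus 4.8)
The plan is to reduce the statement to the already-established Corollary \ref{ConcreteApplication} by exhibiting each listed calculus as an instance of one of the four families treated there. Recall that by definition $\mathbf{CK}X$ (resp. $\mathbf{IK}X$) is $\mathbf{CK}$ (resp. $\mathbf{IK}$) extended by the axioms in $X$ in both their $(a)$ and $(b)$ versions, and that every axiom $T_a, T_b, B_a, B_b, 4_a, 4_b, 5_a, 5_b$ of Table \ref{tableAxiom} is constructive, as recorded in Example \ref{ExampleOfFormulas} and the surrounding discussion. Hence each of these calculi is a constructive sequent calculus, so by Corollary \ref{thmKolli} it remains only to verify the $T$-freeness or $T$-fullness hypothesis, which for these concrete systems is exactly what Example \ref{ExamOfTfreeTfullSequent} and Corollary \ref{ConcreteApplication} supply.

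First I would split on whether $T \in X$. If $T \notin X$, then $X \subseteq \{B, 4, 5\}$; setting $\mathcal{A}$ to be the $(a)$ and $(b)$ versions of the axioms in $X$, this $\mathcal{A}$ consists solely of instances of $B$, $4$ and $5$, none of which lies among the forbidden axioms $T_a, T_b, D_a, D_b, D, den_{0,a}, den_{0,b}$ and $ga_{klmn}$ with $k=m=0$. Therefore Corollary \ref{ConcreteApplication} applies and yields the feasible Visser-Harrop property for $\mathbf{CK}+\mathcal{A} = \mathbf{CK}X$ and $\mathbf{IK}+\mathcal{A} = \mathbf{IK}X$. If instead $T \in X$, write $X = \{T\} \cup X'$ with $X' \subseteq \{B, 4, 5\}$ and let $\mathcal{B}$ be the $(a)$ and $(b)$ versions of the axioms in $X'$. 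Then $\mathbf{CK}X = \mathbf{CK}+\mathcal{B}\cup\{T_a, T_b\}$ and $\mathbf{IK}X = \mathbf{IK}+\mathcal{B}\cup\{T_a, T_b\}$; since $\mathcal{B}$ again avoids the forbidden list, Corollary \ref{ConcreteApplication} gives the feasible Visser-Harrop property for this $T$-full branch.

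For the named systems I would simply record the standard identifications $\mathbf{CS4} = \mathbf{CK}\{T,4\}$ and $\mathbf{IS4} = \mathbf{IK}\{T,4\}$, the fact that $\mathbf{CS5}$ and $\mathbf{IS5}$ arise as $\mathbf{CK}X$ and $\mathbf{IK}X$ for an $X \subseteq \{T,B,4,5\}$ containing $T$, and that $\mathbf{IS5} = \mathbf{MIPC}$ matches the definition $\mathsf{MIPC} = \mathsf{IK}T45$; all of these contain $T$ and hence fall under the $T$-full case just treated. Finally, the feasible disjunction property is the special case $\Gamma = I = \emptyset$ of Definition \ref{DfnFeasibleDPHarropProperty}, and dropping the feasibility clause turns the feasible Visser-Harrop property into the Visser-Harrop property of the underlying logic, which, as explained after Definition \ref{Def: Visser Rules}, is a mild strengthening of the admissibility of all Visser's rules. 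I do not expect a genuine obstacle here, since the whole analytic content has been front-loaded into Corollary \ref{ConcreteApplication} and what remains is purely a case split followed by bookkeeping; the only point demanding minor care is confirming that $B$, $4$ and $5$ (in both versions) indeed escape the excluded axiom list and that the $T$-containing systems provably derive both $T_a$ and $T_b$, which is immediate as both are present as axioms.
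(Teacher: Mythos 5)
Your proposal is correct and follows exactly the paper's route: the paper proves this corollary in one line as a direct consequence of Corollary \ref{ConcreteApplication}, and your case split on whether $T \in X$ (taking $\mathcal{A}$ to be the $(a)$/$(b)$ versions of the axioms in $X \subseteq \{B,4,5\}$, respectively $\mathcal{B}$ those of $X \setminus \{T\}$ together with $\{T_a,T_b\}$) is precisely the bookkeeping that reduction amounts to. The only superfluous step is checking that $\mathcal{B}$ avoids the forbidden axiom list, since Corollary \ref{ConcreteApplication} imposes that restriction only on $\mathcal{A}$; this is harmless.
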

\begin{proof}
The claim is a direct consequence of Corollary \ref{ConcreteApplication}.
\end{proof}

As the negative application of Corollary \ref{thmKolli}, we have:

\begin{cor}(Negative application) \label{NegAppMain}
Let $L$ be a $T$-free or a $T$-full logic. If there is at least one Visser's rule that is not admissible in $L$, then $L$ does not have a constructive sequent calculus.
\end{cor}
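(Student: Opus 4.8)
The plan is to argue by contraposition: assuming that $L$ possesses a constructive sequent calculus, I will show that $L$ must admit all of Visser's rules, contradicting the hypothesis. So suppose $G$ is a constructive sequent calculus over $\mathcal{L}$ whose logic is $L$. The first obstacle is that the definition of a constructive calculus (Definition \ref{ConstructiveSystem}) does not force $G$ to be strong, whereas Corollary \ref{thmKolli} applies only to strong, $T$-free or $T$-full, calculi. To remedy this I would pass to $G+\mathbf{CK}$. Since $L$ is $T$-free or $T$-full, Definition \ref{Def: T-free T-full logic} guarantees $\mathsf{CK}\subseteq L$, so Lemma \ref{LemAdStrong} tells us that $G+\mathbf{CK}$ is again a calculus for $L$. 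Moreover $G+\mathbf{CK}$ is strong by Remark \ref{RemarkRule}, and it is still constructive: the rules of $\mathbf{CK}$ are the (constructive) rules of $\mathbf{LJ}$ together with $(K_{\Box})$ and $(K_{\Diamond})$, which are exactly the non-constructive rules permitted in a constructive calculus over $\mathcal{L}$.

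Next I would transport the $T$-freeness (resp. $T$-fullness) hypothesis from the logic to this calculus. Since $G+\mathbf{CK}$ is a strong calculus for the $T$-free (resp. $T$-full) logic $L$, Remark \ref{RemarkOnTransportabilityOfTfree} yields that $G+\mathbf{CK}$ is itself $T$-free (resp. $T$-full). Thus $G+\mathbf{CK}$ is a strong, $T$-free or $T$-full, constructive sequent calculus, and Corollary \ref{thmKolli} applies directly: $G+\mathbf{CK}$ has the feasible Visser-Harrop property, and in particular the plain Visser-Harrop property of Definition \ref{DfnFeasibleDPHarropProperty}.

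Finally I would convert this property of the calculus into admissibility of Visser's rules for the logic. Suppose $\bigwedge_{i\in I}(A_i\to B_i)\to (C\vee D)\in L$ for some finite index set $I$. As $G+\mathbf{CK}$ is a calculus for $L$, the sequent $\{A_i\to B_i\}_{i\in I}\Rightarrow C\vee D$, whose formula interpretation is precisely $\bigwedge_{i\in I}(A_i\to B_i)\to(C\vee D)$, is provable in $G+\mathbf{CK}$; here I take the empty multiset of Harrop formulas for $\Gamma$, which is admissible in the Visser-Harrop setting. The Visser-Harrop property then furnishes a proof of one of $\{A_i\to B_i\}_{i\in I}\Rightarrow C$, or $\{A_i\to B_i\}_{i\in I}\Rightarrow D$, or $\{A_i\to B_i\}_{i\in I}\Rightarrow A_i$ for some $i\in I$. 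Reading these sequents back as their formula interpretations shows that one of $\bigwedge_{i\in I}(A_i\to B_i)\to C$, $\bigwedge_{i\in I}(A_i\to B_i)\to D$, or $\bigwedge_{i\in I}(A_i\to B_i)\to A_i$ lies in $L$.

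By the reformulation following Definition \ref{Def: Visser Rules}, this is exactly the statement that $L$ admits all of Visser's rules, contradicting the assumption that at least one Visser's rule is not admissible in $L$. Hence $L$ can have no constructive sequent calculus. The only genuinely delicate point of the argument is the upgrade from an arbitrary constructive calculus for $L$ to a strong one (via $G+\mathbf{CK}$) while simultaneously preserving constructiveness, the logic, and the $T$-free/$T$-full status; once that is secured, everything else is bookkeeping with the machinery already established earlier in this section.
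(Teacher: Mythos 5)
Your proposal is correct and follows essentially the same route as the paper's own proof: pass from $G$ to $H=G+\mathbf{CK}$ via Lemma \ref{LemAdStrong} (using $\mathsf{CK}\subseteq L$ from $T$-freeness/$T$-fullness), observe $H$ is strong and constructive, transport the $T$-free/$T$-full status by Remark \ref{RemarkOnTransportabilityOfTfree}, and invoke Corollary \ref{thmKolli} to derive a contradiction. The only difference is that you spell out the final bookkeeping step (converting the Visser-Harrop property of the calculus into admissibility of Visser's rules in $L$ via formula interpretations), which the paper leaves implicit.
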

\begin{proof}
Let $G$ be a constructive sequent calculus for $L$. By Lemma \ref{LemAdStrong}, all the rules and axioms of $\mathbf{CK}$ are admissible in $L$. Define $H$ as $G+\mathbf{CK}$. Therefore, $H$ is also a sequent calculus for $L$. The system $H$ is clearly strong and constructive. Moreover, by Remark \ref{RemarkOnTransportabilityOfTfree}, $H$ is either $T$-free or $T$-full, as $L$ is $T$-free or $T$-full. Therefore, by Corollary \ref{thmKolli}, $H$ and hence $L$ admits all Visser's rules which is a contradiction.
\end{proof}
A good source of intuitionistic modal logics that do not admit all Visser's rules are the modal versions of the proper intermediate extensions of $\mathsf{IPC}$. First, let us recall the following propositional characterization of the logics in which all Visser's rules are admissible:
\begin{thm}
\label{ThmRose}\cite{IemhoffAdAdAdAd}
$\mathsf{IPC}$ is the only intermediate logic that admits all Visser's rules.
\end{thm}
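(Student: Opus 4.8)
The plan is to prove the two directions separately, since almost all the content sits in one of them. That $\mathsf{IPC}$ itself admits every $V_n$ is the classical fact that $\{V_n\}_{n\ge 0}$ is a basis for the admissible rules of $\mathsf{IPC}$, so I would simply invoke that result; it already gives admissibility of each individual $V_n$ in $\mathsf{IPC}$. The real work is the converse: every intermediate logic $L$ with $\mathsf{IPC} \subsetneq L \subseteq \mathsf{CPC}$ fails to admit at least one $V_n$. So I would fix such a proper extension $L$ and aim to produce a single substitution instance of some $V_n$ whose premise lies in $L$ while none of the disjuncts in its conclusion does.

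The first step is to extract a finite witness for properness. Since $\mathsf{IPC}$ has the finite model property and the variety of Heyting algebras is generated by its finite members, a proper extension $L$ must exclude some finite subdirectly irreducible algebra; equivalently $L$ proves the Jankov--de Jongh formula $\chi_F$ of a finite rooted Kripke frame $F$ with $\chi_F \notin \mathsf{IPC}$. Thus $L$ is sound and complete with respect to a class of frames none of which contains $F$ as a (generated, $p$-morphic) substructure, whereas the class of all $\mathsf{IPC}$-frames does contain $F$. This forbidden finite configuration $F$ is the object I would feed into the counterexample.

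The second step is the semantic characterization of admissibility of $V_n$. I would use the Kripke reading under which admissibility of the family $\{V_n\}$ amounts to closure of the relevant class of rooted models under adjoining a fresh root below a finite family of models: at the level of formulas this operation is exactly what turns ``the root forces the conjunction of implications together with a disjunction'' into ``the root already forces one disjunct,'' which re-explains why all $\mathsf{IPC}$-frames, being closed under root-adjunction, validate every $V_n$. The key point is that the frame class of a proper extension $L$ is \emph{not} so closed. I would assemble copies of suitable $L$-models arranged around the forbidden frame $F$, adjoin a new root, and choose the substitution so that the premise of an appropriately sized $V_n$ is forced throughout while the glued model refutes every conclusion disjunct at its new root. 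Using $\chi_F \in L$ to guarantee that all $L$-models are $F$-free, one then upgrades the single-model failure into the statement that the premise instance is a genuine theorem of $L$ while the conclusion is not, i.e. $V_n$ is not admissible.

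The main obstacle I expect is precisely this second step: pinning down the exact root-adjoining construction together with the substitution that converts the forbidden frame $F$ into a failing instance of $V_n$ with $n$ controlled by the size of $F$, and verifying simultaneously that the premise is an $L$-theorem and that a single $L$-model refutes all conclusion disjuncts. A secondary difficulty is that $L$ need not have the finite model property; I would handle this by working throughout with the variety of Heyting algebras and descriptive frames rather than finite Kripke frames, so that the excluded finite subdirectly irreducible algebra $F$ and its Jankov formula remain available even when $L$ is itself incomplete with respect to finite frames.
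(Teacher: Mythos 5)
First, a point of reference: the paper does not prove this statement at all. It is imported as a black box from the literature via the citation \cite{IemhoffAdAdAdAd} (Iemhoff's work on intermediate logics and Visser's rules) and is used only to feed the negative applications. So your attempt has to be measured against the cited literature proof rather than anything internal to the paper. Your skeleton does match that proof in its two pillars: admissibility of every $V_n$ in $\mathsf{IPC}$ is quoted from Iemhoff's basis theorem, and properness of $L$ is converted, via the finite model property of $\mathsf{IPC}$ and the splitting property, into a Jankov--de Jongh formula $\chi_F \in L \setminus \mathsf{IPC}$ for some finite rooted frame $F$; both of these steps are correct and standard, as is your retreat to algebraic/descriptive semantics to avoid assuming Kripke completeness of $L$.

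The genuine gap is in the central construction, and as described it would fail. To refute admissibility of some $V_n$ in $L$ you need a substitution $\sigma$ with $\sigma(\mathrm{premise}) \in L$ and $\sigma(\mathrm{conclusion}) \notin L$, and the second half requires an $L$-model refuting $\sigma(\mathrm{conclusion})$. Your plan is to refute all conclusion disjuncts at the fresh root of a model ``assembled around the forbidden frame $F$''; but any model containing the forbidden configuration $F$ is exactly what $\chi_F \in L$ excludes from being an $L$-model, so a refutation on such a model says nothing about non-theoremhood in $L$. Worse, a model obtained by adjoining a fresh root below $L$-models is precisely the kind of object that fails to be an $L$-model when $L \supsetneq \mathsf{IPC}$ (that failure of closure is the whole point), so root-adjunction cannot produce the countermodel you need. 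In the actual argument the roles are distributed differently: one takes $F$ size-minimal among finite rooted frames refuting $L$, so that all its proper generated subframes \emph{are} $L$-frames; the premise of the rule instance is essentially $\chi_F$ itself, which lies in $L$ by splitting and whose shape (a conjunction of implications entailing a disjunction) is what makes $V_n$ applicable; admissibility of $V_n$ together with the disjunction property (which is $V_0$, hence part of ``all Visser's rules'' under the paper's Definition of Visser's rules) would then place a \emph{single} conclusion disjunct in $L$; and each single disjunct is refuted on a model over the proper generated subframes of $F$, which are $L$-models. Using $V_0$ to split the disjunction is exactly what lets you avoid ever needing one glued $L$-model refuting all disjuncts simultaneously --- the step your sketch cannot supply. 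Finally, you explicitly defer ``pinning down the substitution'' and the verification of both halves of the admissibility failure; that verification (the interaction of the de Jongh formulas of $F$ with $V_n$, including the handling of atomic conjuncts in the antecedent and of variants of the glued model) is not a routine detail but is the entire technical content of the cited result, so what you have is a plan pointing at the right literature rather than a proof.
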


Now, we use Corollary \ref{NegAppMain} on a vast range of modal intermediate logics.

\begin{cor} \label{NegAppSpecialMain}
Let $L \neq \mathsf{IPC}$ be an intermediate logic and $\mathcal{A}$ and $\mathcal{B}$ be two finite sets of axioms in Table \ref{tableAxiom} such that $\mathcal{A}$ does not include any of the axioms $T_a$, $T_b$, $D_a$, $D_b$, $D$, $den_{n,a}$ and $den_{n,b}$, for $n=0$, and $ga_{klmn}$ for $k=m=0$. Then, none of the logics $L\mathsf{CK}+\mathcal{A}$, $L\mathsf{IK}+\mathcal{A}$, $L\mathsf{CK}+\mathcal{B} \cup \{T_a, T_b\}$ and $L\mathsf{IK}+\mathcal{B} \cup \{T_a, T_b\}$ have a constructive calculus.
\end{cor}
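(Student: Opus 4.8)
The plan is to deduce the statement from Corollary \ref{NegAppMain} together with Theorem \ref{ThmRose}. By Example \ref{ExampleOfTfreeTfullLogic}, the stated hypotheses on $\mathcal{A}$ and $\mathcal{B}$ guarantee that $L\mathsf{CK}+\mathcal{A}$ and $L\mathsf{IK}+\mathcal{A}$ are $T$-free over $\mathcal{L}$, while $L\mathsf{CK}+\mathcal{B}\cup\{T_a,T_b\}$ and $L\mathsf{IK}+\mathcal{B}\cup\{T_a,T_b\}$ are $T$-full. Hence, by Corollary \ref{NegAppMain}, it suffices to show that each of these four logics fails to admit at least one of Visser's rules. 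Since $L\neq\mathsf{IPC}$, Theorem \ref{ThmRose} supplies an index $n$ together with a propositional (i.e.\ $\mathcal{L}_p$-) substitution instance of $V_n$ whose premise $\psi_{\mathrm{prem}}$ lies in $L$ but whose conclusion $\psi_{\mathrm{concl}}$ does not.

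Writing $M$ for any of the four modal logics, and using that $M$ extends $L$, we get $\psi_{\mathrm{prem}}\in M$; so this same instance witnesses the non-admissibility of $V_n$ in $M$ as soon as we know $\psi_{\mathrm{concl}}\notin M$. Thus the whole statement reduces to the propositional conservativity of each modal extension over its base,
\[
M\cap\mathcal{L}_p = L,
\]
the inclusion $L\subseteq M\cap\mathcal{L}_p$ being trivial. To establish $M\cap\mathcal{L}_p\subseteq L$ I would argue algebraically: by algebraic completeness of intermediate logics, any $\phi\in\mathcal{L}_p\setminus L$ is refuted by some valuation on a Heyting algebra $\mathcal{H}$ lying in the variety of $L$. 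I then expand $\mathcal{H}$ to an interpretation of the full modal language by setting, in the $T$-free case, $\Box x=1$ and $\Diamond x=0$ for all $x$, and in the $T$-full case, $\Box x=\Diamond x=x$. Reading the modal connectives this way, one checks directly that every axiom of $\mathsf{CK}$ (resp.\ $\mathsf{IK}$) and every axiom in $\mathcal{A}$ (resp.\ $\mathcal{B}\cup\{T_a,T_b\}$) evaluates to $1$ under all valuations, and that modus ponens, substitution and necessitation preserve the property of evaluating to $1$; since the propositional reduct is $\mathcal{H}$, all of $M$ is sound for this interpretation. As $\phi$ is refuted, $\phi\notin M$, which is exactly the required conservativity. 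Combining this with the previous paragraph and Corollary \ref{NegAppMain} finishes the proof.

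The one step deserving genuine care — and the main obstacle — is the axiom verification inside the conservativity argument, and it is precisely here that the side conditions on $\mathcal{A}$ are used. Under $\Box x=1,\ \Diamond x=0$ the interpretation mirrors the irreflexive node $\mathcal{K}_i$ of Example \ref{ExampleOfTfreeTfullLogic}: the formula $ga_{klmn}$ evaluates to $1$ exactly when $k\neq 0$ (its antecedent collapses to $0$) or $m\neq 0$ (its succedent collapses to $1$), so the sole obstruction is $k=m=0$, which is excluded; likewise $T_a,T_b,D_a,D_b,D$ and the $n=0$ density axioms are exactly the axioms that would fail under this interpretation and are exactly the ones forbidden in $\mathcal{A}$. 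In the $T$-full case $\Box x=\Diamond x=x$ collapses every entry of Table \ref{tableAxiom} to an instance of $p\to p$, so no restriction on $\mathcal{B}$ is needed and $T_a,T_b$ hold automatically. Since these collapses make each check routine, the substance of the argument is the reduction to conservativity plus the algebraic model construction; I would present the axiom-by-axiom verification only schematically, exactly in the style of Example \ref{ExampleOfTfreeTfullLogic}.
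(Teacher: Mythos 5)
Your proposal is correct, and its skeleton is exactly the paper's: Example \ref{ExampleOfTfreeTfullLogic} for $T$-freeness/$T$-fullness, Theorem \ref{ThmRose} to obtain a Visser rule not admissible in $L$, propositional conservativity of the modal logic over $L$ to transfer that non-admissibility upward, and finally Corollary \ref{NegAppMain}. The one genuinely different step is how you prove conservativity. The paper does it syntactically and uniformly: the forgetful translation $f$ that deletes every $\Box$ and $\Diamond$ maps any derivation of a propositional formula in the modal logic to a derivation in $L$, and the only thing to check is $\mathsf{IPC} \vdash B^f$ for each axiom $B$ of Table \ref{tableAxiom} --- which holds for \emph{every} entry of the table (each collapses to an instance of something like $p \to p$), so no case distinction and no use of the side conditions on $\mathcal{A}$ occurs in this step. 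You instead argue semantically: expand a Heyting algebra refuting $\phi \notin L$ by trivial modal operations and verify soundness, splitting into the constant interpretation $\Box x = 1$, $\Diamond x = 0$ for the $T$-free case and the identity interpretation for the $T$-full case. This is correct (modulo the standard algebraic completeness of intermediate logics, and the minor wording point that for $V_0$, i.e.\ DP, non-admissibility involves two candidate conclusions rather than one), but the case split is avoidable: the identity interpretation $\Box x = \Diamond x = x$, which is the algebraic mirror of the paper's translation, validates every axiom in Table \ref{tableAxiom} and so handles all four logics at once; it is only your constant interpretation that needs the exclusions on $\mathcal{A}$, in effect re-doing the $T$-freeness computation of Example \ref{ExampleOfTfreeTfullLogic} where it is not required. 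What your route buys is a proof-free, purely model-theoretic argument; what the paper's buys is uniformity and economy, since one translation and one axiom check cover all cases.
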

\begin{proof}
We only prove the cases $L\mathsf{CK}+\mathcal{A}$ and $L\mathsf{CK}+\mathcal{B} \cup \{T_a, T_b\}$. The other two are similar. First, note that $L\mathsf{CK}+\mathcal{A}$ and $L\mathsf{CK}+\mathcal{B} \cup \{T_a, T_b\}$ are conservative over $L$: let $f$ be the forgetful translation that deletes all the occurrences of $\Box$ and $\Diamond$ in a modal formula. To prove the claim, it suffices to read a proof $\pi$ of a propositional formula $A$ in each of these logics and apply $f$ to $\pi$. It is easy to see that $f(\pi)$ is a proof in $L$. The only non-trivial part is showing that $\mathsf{IPC} \vdash B^f$, for any axiom $B$ in Table \ref{tableAxiom} which is easy by the form of the axioms. This completes the proof of the conservativity. By Theorem \ref{ThmRose}, there is a Visser's rule not admissible in $L$. Therefore, it is not admissible in $L\mathsf{CK}+\mathcal{A}$ and $L\mathsf{CK}+\mathcal{B} \cup \{T_a, T_b\}$, either. Finally, note that $L\mathsf{CK}+\mathcal{A}$ is $T$-free and $L\mathsf{CK}+\mathcal{B} \cup \{T_a, T_b\}$ is $T$-full, by Example \ref{ExampleOfTfreeTfullLogic}. Hence, by Corollary \ref{NegAppMain}, we get the result.
\end{proof}

Let us mention that the logics where Visser's rules are not admissible are not limited to the modal versions of the intermediate logics. There are also logics with non-trivial modal disjunctions, such as the logic $\mathsf{CK}+\Box p \vee \Box \neg \Box p$, that lack the disjunction property and hence do not admit all Visser's rules. 

\section{Fragments}\label{SectionFragments}
In this section, we will prove the analogue of Corollary \ref{thmKolli} for the fragments $\mathcal{L_{\Box}}, \mathcal{L_{\Diamond}},$ and $\mathcal{L}_p$. Our main technique is reducing the claim to Corollary \ref{thmKolli}, by changing the language to the full language $\mathcal{L}$ in an appropriate manner. To explain how, we need the following definition:

\begin{dfn}\label{dfn: feasible conservative}
Let $G$ and $H$ be two sequent calculi over the languages $\mathcal{L}_1$ and $\mathcal{L}_2$, respectively, such that $\mathcal{L}_1 \subseteq \mathcal{L}_2$ and any proof in $G$ is also a proof in $H$. We say that $H$ is \emph{feasibly conservative} over $G$ if there exists a feasible function $f$ that reads a proof $\pi$, such that $H \vdash^{\pi} S$ implies $G \vdash^{f(\pi)} S$, for any $H$-proof $\pi$ and any sequent $S$ over the language $\mathcal{L}_1$. 
\end{dfn}
To prove Corollary \ref{thmKolli} for the fragments, we first extend the given calculus $G$ defined over a fragment of $\mathcal{L}$ to a calculus $H$ over the extended language $\mathcal{L}$ in a way that $H$ is feasibly conservative over $G$ and if $G$ is constructive, $T$-free or $T$-full, so is $H$. This way,  we move from $G$ to $H$ to apply Corollary \ref{thmKolli} and as $H$ is feasibly conservative over $G$, we can come back to the original calculus $G$. We employ this strategy in the next three subsections.

\subsection{$\Diamond$-free Fragment} \label{SubsectionDiamond}
Define the forgetful function $f_{c}: \mathcal{L} \to \mathcal{L}_{\Box}$, for any $c \in \{i, r\}$ as follows: $f_c(p)=p$, for any atom $p$ (including $\bot$ and $\top$); $f_c(A \circ B)= f_c(A) \circ f_c(B)$, for $\circ \in \{\wedge, \vee, \to\}$; $f_c(\Box A)=\Box f_c(A)$; and $f_{r}(\Diamond A)=f_r(A)$ and $f_{i}(\Diamond A)=\bot$. The functions $f_{r}$ and $f_{i}$ are clearly feasible and can be extended to multisets, sequents and proofs in a natural way. Let $G$ be a sequent calculus over $\mathcal{L}_{\Box}$. Define $G_{i}=\G+\{K_{\Diamond}\}$ and $\G_{r}=\G+\{K_{\Diamond}, T_b\}$ over $\mathcal{L}$. The following lemma connects these systems to $G$ via the corresponding translations.

\begin{lem}\label{lemG*1}
Let $G$ be a strong sequent calculus over $\mathcal{L}_{\Box}$. Then:
\begin{description}
\item[$(i)$]
There is a feasible algorithm that reads a $\G_{i}$-proof of a sequent $S$ and 
provides a $G$-proof of $f_i(S)$. Hence, $\G_i$ is feasibly conservative over $G$.
\item[$(ii)$]
If $G$ proves $T_a$, then there exists a feasible algorithm that reads a $\G_{r}$-proof of a sequent $S$ and provides a $G$-proof for $f_r(S)$. Consequently, $\G_r$ is feasibly conservative over $G$.
\end{description}
\end{lem}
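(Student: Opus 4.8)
The plan is to process a $G_c$-proof one inference at a time, applying the forgetful map $f_c$ to every sequent occurring in it and checking that the $f_c$-image of each inference can be reconstructed, feasibly, inside $G$. Since $f_c$ fixes every $\mathcal{L}_\Box$-formula (it touches only $\Diamond$, which is absent from $\mathcal{L}_\Box$), once such a reconstruction is available the case of an $\mathcal{L}_\Box$-sequent $S$ is immediate, because then $f_c(S)=S$ and we obtain a feasible $G$-proof of $S$ itself, which is exactly feasible conservativity. Concretely, I would maintain, for each sequent $S_j$ in the given proof, a $G$-proof of $f_c(S_j)$, and define the output proof by gluing these pieces together along the original proof tree.

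The heart of the argument is a case distinction on the last rule used to obtain $S_j$. If this rule is one of the rules or axioms of $G$ itself, then because $f_c$ is a homomorphism for $\{\wedge,\vee,\to,\Box\}$ and the identity on atoms (so that $f_c(\Box\Sigma)=\Box f_c(\Sigma)$), applying $f_c$ to that instance yields exactly the same rule-schema instantiated with the $f_c$-images of its principal and side formulas; this is again a legitimate $G$-instance, and since $G$ is strong it is feasibly provable. The only genuinely new rule to handle is $(K_{\Diamond})$, with premise $\Sigma, A \Rightarrow B$ and conclusion $\Box\Sigma, \Diamond A \Rightarrow \Diamond B$, and here the two translations diverge. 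For $f_i$ one has $f_i(\Diamond A)=f_i(\Diamond B)=\bot$, so the image of the conclusion is $\Box f_i(\Sigma), \bot \Rightarrow \bot$, an instance of $(L\bot)$ that $G$ proves feasibly by strongness; the premise is not even needed. For $f_r$ the image of the conclusion is $\Box f_r(\Sigma), f_r(A)\Rightarrow f_r(B)$, while the inductive hypothesis supplies a $G$-proof of the premise image $f_r(\Sigma), f_r(A)\Rightarrow f_r(B)$, so the only discrepancy is the extra boxes on $f_r(\Sigma)$.

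This is where the hypothesis $G\vdash T_a$ enters, and it is the main obstacle of part $(ii)$. From $G\vdash (\,\Rightarrow \Box p \to p)$, substituting $p:=f_r(\sigma)$ and cutting against the $\mathbf{LJ}$-provable (hence feasibly $G$-provable) sequent $\Box C \to C, \Box C \Rightarrow C$, I obtain $G\vdash \Box f_r(\sigma)\Rightarrow f_r(\sigma)$ feasibly for each $\sigma\in\Sigma$. Cutting these in one at a time, interleaved with weakenings, transforms the premise image into the conclusion image using $\parallel\!\Sigma\!\parallel$ many cuts. I should also check the newly added axiom $T_b$: its image under $f_r$ is $f_r(p\to\Diamond p)=p\to p$, which is trivially $G$-provable, so it causes no trouble; under $f_i$ this case never arises, as $T_b\notin G_i$.

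Finally, feasibility follows from the locality of the construction, exactly as in Lemma \ref{LocalToGlobal}: each inference of the input proof is replaced by a $G$-derivation whose size is polynomially bounded in the sizes of the sequents involved, and these are all bounded by $|\pi|$; summing over the at most $|\pi|$ inferences and invoking the tree-like inequality gives a polynomial overall bound. Since $f_i(S)=S$ and $f_r(S)=S$ for every $\mathcal{L}_\Box$-sequent $S$, both conservativity statements drop out as immediate corollaries of the respective algorithms.
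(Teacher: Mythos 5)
Your proposal is correct and follows essentially the same route as the paper's proof: recursion on the structure of the proof, using that $f_c$ commutes with every axiom and rule of $G$ over $\mathcal{L}_\Box$, dispatching $(K_{\Diamond})$ under $f_i$ as an instance of $(L\bot)$ (premise unused), handling $(K_{\Diamond})$ under $f_r$ by stripping the boxes on $f_r(\Sigma)$ one formula at a time using $T_a$ and cut, treating $T_b$ via $\Rightarrow p \to p$, and summing local polynomial costs for feasibility. The only cosmetic difference is that where you build the box-stripping and $p \to p$ derivations by hand (substitution into the fixed $T_a$-proof plus cuts and weakenings), the paper packages them as premise-free or one-premise rules in $\mathfrak{R}$ that are provable in $G$ and invokes Corollary \ref{EquivProvableAndFeasiblyProvable} to conclude they are feasibly provable.
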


\begin{proof}
We only prove $(ii)$, the part $(i)$ is similar. For $(ii)$, we provide an algorithm $g$ that reads a $\G_r$-proof $\pi$ of $S$ over $\mathcal{L}$ and returns a $G$-proof of $f_{r}(S)$ over $\mathcal{L}_\Box$. Denoting the time to compute $g(\pi)$ by $T_g(\pi)$, we also show that $T_g(\pi) \leq |\pi|^{O(1)}$. To define $g$, we use recursion on the structure of $\pi$.
If $S$ is an instance of an axiom in $\G_{r}$, then it is either an instance of an axiom in $G$ or an instance of $T_b$. In the former case, assume that $S$ is the instance $\Gamma(\overline{\phi}) \Rightarrow \Delta(\overline{\phi})$ of the axiom $\Gamma(\overline{p}) \Rightarrow \Delta(\overline{p})$ in $G$. Since, $\Gamma(\overline{p}) \cup \Delta(\overline{p}) \subseteq \mathcal{L}_{\Box}$ and $f_r$ preserves every connective in $\mathcal{L}_{\Box}$, we have $f_r(S)=\Gamma(\overline{f_r(\phi})) \Rightarrow \Delta(\overline{f_r(\phi}))$. Therefore, $f_r(S)$ is an instance of the same axiom of $G$. Hence, it is enough to define $g(\pi)$ as the axiom $f_r(S)$. Note that as $f_r$ is a polynomial time computable function, in this case, $T_g(\pi)$ is polynomial in $|S|$ and hence polynomial in $|\pi|$. If $S$ is an instance of $T_b$, then it is of the form $S=(\, \Rightarrow A \to \Diamond A)$ and $f_r(S)=(\, \Rightarrow f_r(A)\to f_r(A))$. 
Now, consider the meta-sequents $(\Pi \Rightarrow p \to p)$, which can be also read as a rule with no premises. As it has the form mentioned in Definition \ref{Def:RulesR}, it is in $\mathfrak{R}$. Now, note that this rule is provable in $G$, as $G$ is strong. Then, as $(\Pi \Rightarrow p \to p)$ is a rule in $\mathfrak{R}$, by Corollary \ref{EquivProvableAndFeasiblyProvable}, $G$ feasibly proves $(\Pi \Rightarrow p \to p)$. Thus, there is a feasible function $h$ such that 
$h(T)$ is a $G$-proof of $T=(\, \Rightarrow B \to B)$, for any $B \in \mathcal{L}_{\Box}$. Therefore,
$h(f_r(S))$ is a $G$-proof of $(\, \Rightarrow f_r(A) \to f_r(A))$. Define $g(\pi)=h(f_r(S))$. Note that in this case, as both $h$ and $f_r$ are feasible, $T_g(\pi)$ is polynomial in $|S|$ and hence polynomial in $|\pi|$.

If the last rule in $\pi$ is in $G$ with the subproofs $\pi_1, \ldots, \pi_n$ for the premises $S_1, \ldots, S_n$, then by recursion, we have $G \vdash^{g(\pi_i)} f_r(S_i)$, for any $1 \leq i \leq n$. Similar to the case of the axioms, as $f_r$ commutes with the rules of $G$, applying the same rule in $G$ to $g(\pi_i)$'s will result in $f_r(S)$. Therefore, it is enough to define $g(\pi)$ as the application of the rule on $g(\pi_i)$'s. Here, $T_g(\pi)$ is bounded by $\sum_{i=1}^n T_g(\pi_i)$ plus the additional step of implementing the last rule. The latter takes at most $O(|f_r(S)|)$ many steps, which is polynomial in $|S|$ and hence in $|\pi|$. Therefore, $T_g(\pi) \leq \sum_{i=1}^n T_g(\pi_i)+|\pi|^{O(1)}$.

If the last rule of $\pi$ is the rule $(K_{\Diamond})$, 
then the premise of the rule is in the form $S'=(\Gamma, A \Rightarrow B)$ and $S$ is of the form $\Box \Gamma, \Diamond A \Rightarrow \Diamond B$. Note that $f_{r}(S')=(f_r(\Gamma), f_r(A) \Rightarrow f_r(B))$ and $f_r(S)=(\Box f_r(\Gamma), f_r(A) \Rightarrow f_r(B))$. Let $\pi'$ be the subproof of $\pi$ with the conclusion $S'$. By recursion, $G \vdash^{g(\pi')} f_r(S')$.
As $G$ is strong and proves the axiom $T_a$, it also proves the rule 
\begin{center}
\begin{tabular}{c c c}
 \AxiomC{$\Pi, p \Rightarrow q$}
\UnaryInfC{$\Pi, \Box p \Rightarrow q$}
 \DisplayProof
 \end{tabular}
\end{center}
Thus, as this rule is in $\mathfrak{R}$, by Corollary \ref{EquivProvableAndFeasiblyProvable}, $G$ feasibly proves this rule. Therefore, there is a feasible function $h$ such that $h(R', R)$ is a $G$-proof of $R=(\Sigma, \Box C \Rightarrow D)$ from $R'=(\Sigma, C \Rightarrow D)$, for any $\Sigma \cup \{C,D\} \subseteq \mathcal{L}_\Box$. Using $h$ for $\parallel \Gamma \parallel$ many times, we reach a $G$-proof of $f_r(S)=(\Box f_r(\Gamma), f_r(A) \Rightarrow f_r(B))$ from $f_r(S')=(f_r(\Gamma), f_r(A) \Rightarrow f_r(B))$. Adding this proof to the end of $g(\pi')$, we get a $G$-proof, called $g(\pi)$, for $f_r(S)$. In this case, as $f_r$ and $h$ are feasible, it is easy to see that $T_g(\pi) \leq T_g(\pi')+|\pi|^{O(1)}$.  This completes the recursive construction of $g(\pi)$.  
Finally, to show $T_g(\pi) \leq |\pi|^{O(1)}$, note that in the case of the axioms $T_g(\pi) \leq |\pi|^{O(1)}$ and in the case where $\pi$ is an application of a rule on $\pi_i$'s, $T_g(\pi) \leq \sum_{i=1}^n T_g(\pi_i)+|\pi|^{O(1)}$. Employing these two upper bounds, by induction on the structure of $\pi$ we can easily show that $T_g(\pi) \leq |\pi|^{O(1)}$.
\end{proof}

\begin{thm} \label{FeasibleHarropForDiamonFree}
Let $G$ be a $T$-free or a $T$-full constructive sequent calculus over the language $\mathcal{L}_{\Box}$.
Then, $G$ has the feasible Visser-Harrop property.
\end{thm}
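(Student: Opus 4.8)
The plan is to reduce the statement to its full-language counterpart, Corollary~\ref{thmKolli}, through the conservative extensions introduced just before Lemma~\ref{lemG*1}. Set $H := G_i$ when $G$ is $T$-free and $H := G_r$ when $G$ is $T$-full, where $G_i = G + \{K_\Diamond\}$ and $G_r = G + \{K_\Diamond, T_b\}$ are calculi over the full language $\mathcal{L}$. Since every $G$-proof is literally an $H$-proof and the input sequent of the feasible Visser-Harrop property lives over $\mathcal{L}_\Box \subseteq \mathcal{L}$, the strategy is: read the given $G$-proof $\pi$ as an $H$-proof, run the feasible Visser-Harrop algorithm for $H$ furnished by Corollary~\ref{thmKolli}, and then push the resulting $H$-proof back down to $G$ using the feasible conservativity of $H$ over $G$ established in Lemma~\ref{lemG*1}.

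The first task is to verify that $H$ meets the hypotheses of Corollary~\ref{thmKolli}, i.e., that $H$ is a strong constructive calculus over $\mathcal{L}$ that is $T$-free (resp.\ $T$-full). Strength is immediate: $G$ is strong over $\mathcal{L}_\Box$, so it feasibly proves all of $\mathbf{CK}_\Box = \mathbf{LJ} + \{K_\Box\}$, and $H$ additionally contains $(K_\Diamond)$ as a rule, so $H$ feasibly proves every rule and axiom of $\mathbf{CK}$ and is thus strong over $\mathcal{L}$ in the sense of Definition~\ref{dfnAdmissiblyStrong}. For constructivity (Definition~\ref{ConstructiveSystem}), observe that over $\mathcal{L}_\Box$ every rule of $G$ is either constructive or $(K_\Box)$, which stays the case over $\mathcal{L}$; the added rule $(K_\Diamond)$ is one of the two modal rules explicitly permitted over $\mathcal{L}$; and the axiom $T_b = p \to \Diamond p$ is constructive, since $p$ is basic hence almost positive and $\Diamond p$ is basic hence constructive by Definition~\ref{DefPositiveFormulas}. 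Hence $H$ is constructive over $\mathcal{L}$.

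Next I would check that $H$ inherits the one-node validity of Definition~\ref{Def: T-free T-full calculus}. If $G$ is $T$-free, then $G$ is valid in $\mathcal{K}_i$, and $(K_\Diamond)$ preserves validity in $\mathcal{K}_i$: at the single irreflexive world $\Diamond\varphi$ is always false, so the conclusion $\Box\Gamma, \Diamond p \Rightarrow \Diamond q$ has a $\bot$-equivalent antecedent and is valid irrespective of its premise; thus $H = G_i$ is valid in $\mathcal{K}_i$ and, being strong, is $T$-free. If $G$ is $T$-full, then $G$ is valid in $\mathcal{K}_r$ and proves $T_a$; at the single reflexive world both $\Box\varphi$ and $\Diamond\varphi$ are equivalent to $\varphi$, so $(K_\Diamond)$ and $T_b$ are both valid in $\mathcal{K}_r$, and $H = G_r$ proves $T_a$ (inherited from $G$) and $T_b$ (an axiom of $H$); hence $H$ is $T$-full. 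In either case Corollary~\ref{thmKolli} applies and $H$ enjoys the feasible Visser-Harrop property.

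Finally I would assemble the algorithm and argue feasibility. Given a $G$-proof $\pi$ of $\Gamma, \{A_i \to B_i\}_{i\in I} \Rightarrow C \vee D$ with $\Gamma$ a multiset of Harrop formulas over $\mathcal{L}_\Box$ (which are Harrop over $\mathcal{L}$ by Definition~\ref{DfnHarrop}), view $\pi$ as an $H$-proof and apply $H$'s feasible Visser-Harrop algorithm to obtain, feasibly, an $H$-proof $\pi'$ of one of the three target sequents $S'$. Each such $S'$ is a sequent over $\mathcal{L}_\Box$, so the forgetful map $f_c$ ($c \in \{i,r\}$) acts as the identity on it, i.e.\ $f_c(S') = S'$. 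Thus the feasibly conservative algorithm of Lemma~\ref{lemG*1} converts $\pi'$ into a $G$-proof of $f_c(S') = S'$, again feasibly, and the composition of these three feasible steps is feasible. This is exactly the feasible Visser-Harrop property for $G$. The one genuinely delicate point is this last type-correctness: one must confirm that the outputs of the feasible Visser-Harrop algorithm are $\Diamond$-free, so that the forgetful collapse leaves them untouched and we recover precisely one of the prescribed disjuncts rather than a $\Diamond$-erased variant.
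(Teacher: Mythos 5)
Your overall route is the paper's: pass from $G$ to $G_i$ (resp.\ $G_r$) over the full language, apply Corollary \ref{thmKolli} there, and pull the answer back with the feasible conservativity of Lemma \ref{lemG*1}; your final pull-back step, including the observation that the three target sequents are $\Diamond$-free and hence fixed by the forgetful map, is exactly right. The genuine gap lies in your verification that $H$ satisfies the hypotheses of Corollary \ref{thmKolli}, and chiefly in the validity claim. You argue: ``$G$ is valid in $\mathcal{K}_i$, and $(K_\Diamond)$ preserves validity in $\mathcal{K}_i$, thus $G_i$ is valid in $\mathcal{K}_i$.'' This conflates validity of a \emph{calculus} in a frame (every provable sequent is valid, which is all that $T$-freeness of $G$ gives) with \emph{soundness of its rules} (every rule application preserves validity). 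Inside a $G_i$-proof, rules of $G$ are applied to premises that need not be $G$-provable at all---for instance, sequents containing $\Diamond$ produced by $(K_\Diamond)$---and their atoms are instantiated by arbitrary $\mathcal{L}$-formulas rather than $\mathcal{L}_\Box$-formulas; the hypothesis that $G$'s theorems are valid in $\mathcal{K}_i$ says nothing about such rule applications, so the induction on $G_i$-proofs that your ``thus'' requires does not go through as stated. (The conclusion can be rescued for constructive rules via Theorem \ref{AxR}: each rule $R$ of $G$ is interderivable over $\mathbf{LJ}$ with the axiom $Ax_R$, which is $G$-provable, hence $\mathcal{K}_i$-valid, and frame validity is closed under substitution---but no such argument appears in your proposal.) The paper avoids the issue entirely by running Lemma \ref{lemG*1} in \emph{both} directions: if $G_i \vdash S$, then $G \vdash f_i(S)$, so $f_i(S)$ is valid in $\mathcal{K}_i$ by $T$-freeness of $G$, and since the one-node frame cannot distinguish $S$ from $f_i(S)$, the sequent $S$ itself is valid. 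You have exactly the needed lemma in hand but deploy it only at the very end.

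A second, more technical gap is the claim that strength of $H$ over $\mathcal{L}$ is ``immediate.'' By Definition \ref{dfnAdmissiblyStrong}, strength of $G$ over $\mathcal{L}_\Box$ only provides feasible provability of the $\mathbf{CK}_\Box$-rules for their $\mathcal{L}_\Box$-\emph{instances}, whereas strength of $H$ over $\mathcal{L}$ demands it for all $\mathcal{L}$-instances; these quantifiers do not coincide, and bridging them needs an argument (e.g.\ abstracting maximal $\Diamond$-subformulas as fresh atoms and substituting back into the resulting $G$-proofs). The paper dissolves both this problem and the smooth applicability of Lemma \ref{lemG*1} by its very first move, which your proposal omits: invoke Corollary \ref{thm: p-simulation of admissibly strong} and Remark \ref{RemarkOnTransportabilityOfTfree} to replace $G$ by the pd-equivalent calculus $\mathbf{CK}_\Box + \mathcal{C}$, in which the rules of $\mathbf{CK}_\Box$ are \emph{primitive} and hence trivially feasibly provable for arbitrary instances once the language is extended. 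With that reduction in place and the validity transfer done through the conservativity lemma, your skeleton becomes the paper's proof.
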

\begin{proof}
First, by Corollary \ref{thm: p-simulation of admissibly strong}, $G$ is pd-equivalent to $\mathbf{CK}_{\Box}+\mathcal{C}$, for a finite set $\mathcal{C}$ of constructive formulas. Therefore, by Remark \ref{RemarkOnTransportabilityOfTfree}, it is enough to prove the claim for $G=\mathbf{CK}_{\Box}+\mathcal{C}$.  Hence, $G$ has all the rules of $\mathbf{CK}_{\Box}$ as its primitive rules and $G_i$ and $G_r$ are both strong over $\mathcal{L}$. Second, we show that if $G$ is $T$-free (resp. $T$-full) over $\mathcal{L}_{\Box}$, then $\G_{i}$ (resp. $\G_{r}$) is $T$-free (resp. $T$-full) over $\mathcal{L}$. We only prove the $T$-full case. The other is similar.
As $G$ is $T$-full, it is valid in the reflexive node frame, $\mathcal{K}_r$. To show the same property for $G_{r}$, assume that $S$ is provable in $\G_{r}$. As $G$ is $T$-full, it is strong over $\mathcal{L}_{\Box}$ and proves $T_a$. Thus, by Lemma \ref{lemG*1}, $f_{r}(S)$ is provable in $G$. Therefore, $f_{r}(S)$ is valid in $\mathcal{K}_r$. However, $\mathcal{K}_r$ reads $\Diamond A$ as $A$, for any $A \in \mathcal{L}$. Hence, for any $B \in \mathcal{L}$, the formula $B$ is valid in $\mathcal{K}_r$ if and only if $f_{r}(B)$ is valid there. Therefore, $S$ is also valid in $\mathcal{K}_r$.\\
Now, we showed that if $G$ is $T$-free (resp. $T$-full) over $\mathcal{L}_\Box$,
then $\G_{i}$ (resp. $\G_{r}$) is $T$-free (resp. $T$-full) over $\mathcal{L}$ and as both $\G_{i}$ and $\G_{r}$ are clearly constructive, by Corollary \ref{thmKolli}, $\G_{i}$ (resp. $\G_{r}$) has the feasible Visser-Harrop property. In the following, we show that $G$ also has the feasible Visser-Harrop property. Assume $G$ is $T$-free and $\pi$ is a $G$-proof of $\Gamma, \{ A_j \to B_j \}_{j \in J} \Rightarrow C \vee D$, where $\Gamma$ is a multiset of Harrop formulas. As  $\G_{i}$ extends $G$, the proof $\pi$ is also a proof in the calculus $\G_{i}$. Using the feasible Visser-Harrop property for $\G_{i}$, we can feasibly extract a $\G_{i}$-proof for either 
\small \begin{center}
\begin{tabular}{c c c}
$\Gamma, \{ A_j \to B_j \}_{j \in J} \Rightarrow C\;$ or & $\Gamma,  \{ A_j \to B_j \}_{j \in J} \Rightarrow D\;$ or
& $\Gamma, \{ A_j \to B_j \}_{j \in J} \Rightarrow A_j$,\\
\end{tabular}
\end{center}
\normalsize for some $j \in J$. By Lemma \ref{lemG*1}, as $G_{i}$ is feasibly conservative over $G$ and the three sequents are over $\mathcal{L}_{\Box}$, we feasibly reach a $G$-proof for one of them.
\end{proof}

\begin{cor} \label{ConcreteApplicationDiamonFree}
(Positive application) Let $\mathcal{A}$ be a finite set of $\Diamond$-free axioms in Table \ref{tableAxiom} such that $\mathcal{A}$ does not include $D_a$. Then, the sequent calculus $\mathbf{CK}_{\Box}+\mathcal{A}$, especially $\mathbf{CK}_\Box X$, for any $X \subseteq \{T, 4\}$, enjoys the feasible Visser-Harrop property and hence the feasible disjunction property. Consequently, the logic of any of these systems has the Visser-Harrop property.
\end{cor}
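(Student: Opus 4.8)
The plan is to recognize this corollary as a direct instance of Theorem \ref{FeasibleHarropForDiamonFree}: it suffices to check that $\mathbf{CK}_{\Box}+\mathcal{A}$ is a constructive sequent calculus over $\mathcal{L}_{\Box}$ which is either $T$-free or $T$-full over $\mathcal{L}_{\Box}$, after which every remaining assertion follows by unwinding definitions. So the whole argument splits into two verifications of hypotheses and one application of the theorem.

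First I would verify constructiveness. Recall $\mathbf{CK}_{\Box}=\mathbf{LJ}+\{K_{\Box}\}$. Every rule of $\mathbf{LJ}$ is constructive (as observed in the examples following Definition \ref{DefnegativeRules}), and by Definition \ref{ConstructiveSystem} the rule $(K_{\Box})$ is exactly the extra rule permitted in a constructive calculus over $\mathcal{L}_{\Box}$. Each member of $\mathcal{A}$ is a $\Diamond$-free axiom from Table \ref{tableAxiom}, and all the axioms of Table \ref{tableAxiom} are constructive formulas --- this is the same fact invoked in the proof of Corollary \ref{ConcreteApplication} --- so adding the axiom $(\,\Rightarrow A)$ for each $A \in \mathcal{A}$ yields constructive axioms. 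Hence $\mathbf{CK}_{\Box}+\mathcal{A}$ is constructive, and since all rules of $\mathbf{CK}_{\Box}$ are primitive in it, it is also trivially strong over $\mathcal{L}_{\Box}$.

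Next I would invoke the $T$-freeness/$T$-fullness dichotomy, which is precisely the fragment clause of Example \ref{ExamOfTfreeTfullSequent}: under the hypothesis that $\mathcal{A}$ consists of $\Diamond$-free axioms of Table \ref{tableAxiom} not containing $D_a$, the calculus $\mathbf{CK}_{\Box}+\mathcal{A}$ is either $T$-free or $T$-full over $\mathcal{L}_{\Box}$, the validity in $\mathcal{K}_i$ or $\mathcal{K}_r$ being checked axiom-by-axiom exactly as in Example \ref{ExampleOfTfreeTfullLogic} (with $D_a$ being the one $\Diamond$-free axiom that is valid in neither node, which is why it must be excluded). With both hypotheses of Theorem \ref{FeasibleHarropForDiamonFree} now in place, that theorem gives the feasible Visser-Harrop property for $\mathbf{CK}_{\Box}+\mathcal{A}$. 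The feasible disjunction property is then the special case $\Gamma=I=\varnothing$ of Definition \ref{DfnFeasibleDPHarropProperty}, and discarding the polynomial-time witness yields the Visser-Harrop property of the corresponding logic.

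Finally, the claim about $\mathbf{CK}_{\Box}X$ for $X\subseteq\{T,4\}$ is the instance in which $\mathcal{A}$ consists of the $(a)$-versions $T_a,4_a$ of the axioms named in $X$; these are $\Diamond$-free axioms of Table \ref{tableAxiom} and never include $D_a$, so the general case applies verbatim. I do not expect a genuine obstacle here, since all the substantive work is already packaged in Theorem \ref{FeasibleHarropForDiamonFree} (which itself reduces to the full-language Corollary \ref{thmKolli} via the feasibly conservative extensions $G_i$ and $G_r$ of Lemma \ref{lemG*1}). The only points requiring care are matching the side condition on $\mathcal{A}$ to the hypotheses of Example \ref{ExamOfTfreeTfullSequent} and confirming that the $\mathbf{CK}_{\Box}X$ family indeed falls under that side condition.
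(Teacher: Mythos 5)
Your proposal is correct and follows essentially the same route as the paper's own proof: invoke Example \ref{ExamOfTfreeTfullSequent} for the $T$-free/$T$-full dichotomy, note that $\mathbf{CK}_{\Box}+\mathcal{A}$ is clearly constructive (and strong), apply Theorem \ref{FeasibleHarropForDiamonFree}, and treat $\mathbf{CK}_{\Box}X$ as a special case. The only slip is your parenthetical claim that $D_a$ is valid in neither node frame --- in fact $D_a$ \emph{is} valid in $\mathcal{K}_r$; it must be excluded because a calculus containing $D_a$ without $T_a$ fails validity in $\mathcal{K}_i$ while also failing the provability-of-$T_a$ requirement for $T$-fullness --- but this is immaterial, since the exclusion of $D_a$ is a hypothesis of the corollary rather than something your argument needs to justify.
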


\begin{proof}
By Example \ref{ExamOfTfreeTfullSequent}, we know that $\mathbf{CK}_{\Box}+\mathcal{A}$ is either $T$-free or $T$-full over $\mathcal{L}_\Box$. As it is clearly constructive, using Theorem \ref{FeasibleHarropForDiamonFree}, we get the result. The claim for $\mathbf{CK}_\Box X$ is a direct consequence of the first part.
\end{proof}

\begin{cor}(Negative application)
Let $L$ be a $T$-free or a $T$-full logic over $\mathcal{L}_{\Box}$. If there is at least one Visser's rule that is not admissible in $L$, then $L$ does not have a constructive sequent calculus.
\end{cor}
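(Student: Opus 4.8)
The plan is to replay the argument of Corollary~\ref{NegAppMain} verbatim, but over the fragment $\mathcal{L}_\Box$, substituting $\mathbf{CK}_\Box$ for $\mathbf{CK}$ and Theorem~\ref{FeasibleHarropForDiamonFree} for Corollary~\ref{thmKolli}. I argue by contradiction. Suppose $L$ is $T$-free (or $T$-full) over $\mathcal{L}_\Box$, fails at least one Visser's rule, and yet admits a constructive sequent calculus $G$ over $\mathcal{L}_\Box$.

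First I would pass from $G$ to a strong calculus for the same logic. Since $L$ is $T$-free or $T$-full over $\mathcal{L}_\Box$, Definition~\ref{Def: T-free T-full logic} forces $\mathsf{CK}_\Box \subseteq L$. Hence the $(\mathcal{L}_\Box, \mathsf{CK}_\Box, \mathbf{CK}_\Box)$ instance of Lemma~\ref{LemAdStrong} tells me that $H := G + \mathbf{CK}_\Box$ is again a sequent calculus for $L$, as adjoining rules admissible in $L$ cannot change its provable sequents. By Remark~\ref{RemarkRule}, $H$ is strong over $\mathcal{L}_\Box$, since it literally contains every rule of $\mathbf{CK}_\Box$. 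Moreover $H$ is still constructive in the sense of Definition~\ref{ConstructiveSystem}: the rules of $G$ are constructive by assumption, the $\mathbf{LJ}$-rules are constructive, and the only extra modal rule is $(K_\Box)$, which is exactly the rule permitted over $\mathcal{L}_\Box$.

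Next I would transport the $T$-freeness (resp. $T$-fullness) from the logic to the calculus. Because $H$ is a strong calculus whose logic is $L$, Remark~\ref{RemarkOnTransportabilityOfTfree} gives that $H$ is $T$-free (resp. $T$-full) if and only if $L$ is; by hypothesis the latter holds, so $H$ is $T$-free or $T$-full over $\mathcal{L}_\Box$. Theorem~\ref{FeasibleHarropForDiamonFree} then applies directly to $H$ and yields the feasible Visser-Harrop property, in particular the plain Visser-Harrop property. To convert this calculus-level statement into admissibility for the logic, take any theorem $\bigwedge_{i\in I}(A_i\to B_i)\to(C\vee D)\in L$ with $I$ possibly empty; this is the formula interpretation of the sequent $\{A_i\to B_i\}_{i\in I}\Rightarrow C\vee D$, which is therefore $H$-provable. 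The Visser-Harrop property (with empty Harrop context) produces an $H$-proof of one of $\{A_i\to B_i\}_{i\in I}\Rightarrow C$, $\{A_i\to B_i\}_{i\in I}\Rightarrow D$, or $\{A_i\to B_i\}_{i\in I}\Rightarrow A_i$, so that one of $\bigwedge_{i\in I}(A_i\to B_i)\to C$, $\bigwedge_{i\in I}(A_i\to B_i)\to D$, $\bigwedge_{i\in I}(A_i\to B_i)\to A_i$ belongs to $L$. By the remark following Definition~\ref{Def: Visser Rules}, this means $L$ admits all Visser's rules, contradicting the hypothesis.

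The only genuinely fragment-specific points, and hence the main thing to verify rather than merely transcribe, are that $H$ stays constructive after adjoining $(K_\Box)$ --- which relies precisely on the language clause of Definition~\ref{ConstructiveSystem} licensing $(K_\Box)$ over $\mathcal{L}_\Box$ --- and that the $\mathcal{L}_\Box$ versions of Lemma~\ref{LemAdStrong} and Remark~\ref{RemarkOnTransportabilityOfTfree} are indeed the ones invoked. Given those, the proof is a line-for-line shadow of the full-language Corollary~\ref{NegAppMain}.
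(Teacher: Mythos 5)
Your proof is correct and follows essentially the same route as the paper, whose own proof is simply the instruction to rerun Corollary~\ref{NegAppMain} with $\mathbf{CK}_\Box$ in place of $\mathbf{CK}$: form $H = G + \mathbf{CK}_\Box$ via Lemma~\ref{LemAdStrong}, transfer $T$-freeness/$T$-fullness by Remark~\ref{RemarkOnTransportabilityOfTfree}, and apply Theorem~\ref{FeasibleHarropForDiamonFree} to get a contradiction with the failure of some Visser's rule. Your added care in checking that $\mathsf{CK}_\Box \subseteq L$ licenses Lemma~\ref{LemAdStrong}, that $(K_\Box)$ is permitted in a constructive calculus over $\mathcal{L}_\Box$, and that the calculus-level Visser--Harrop property yields logic-level admissibility via the remark after Definition~\ref{Def: Visser Rules} just makes explicit what the paper leaves implicit.
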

\begin{proof}
Similar to the proof of Corollary \ref{NegAppMain}, using $\mathbf{CK}_{\Box}$ instead of $\mathbf{CK}$.
\end{proof}

\begin{cor}
Let $L \neq \mathsf{IPC}$ be an intermediate logic and $\mathcal{A}$ be a finite set of $\Diamond$-free axioms in Table \ref{tableAxiom} such that $\mathcal{A}$ does not include $D_a$. Then, $L\mathsf{CK}_{\Box}+\mathcal{A}$ does not have a constructive sequent calculus.
\end{cor}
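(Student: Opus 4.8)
The plan is to follow the template of the proof of Corollary \ref{NegAppSpecialMain}, adapting it to the $\Diamond$-free fragment by replacing $\mathbf{CK}$ with $\mathbf{CK}_{\Box}$ and the full forgetful translation by its $\Box$-only version. First I would establish that $L\mathsf{CK}_{\Box}+\mathcal{A}$ is conservative over $L$. Let $f:\mathcal{L}_{\Box}\to\mathcal{L}_p$ be the forgetful function that erases every occurrence of $\Box$. This $f$ commutes with the propositional connectives and respects modus ponens and necessitation (forgetting $\Box$ trivially validates the necessitation rule), so applying $f$ to any derivation of a propositional formula $A$ in $L\mathsf{CK}_{\Box}+\mathcal{A}$ yields a derivation in $L$, provided $\mathsf{IPC}\vdash B^f$ for every axiom $B$ among $K_a$ and the members of $\mathcal{A}$. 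This last point is immediate from the shape of the axioms: $K_a^f=(p\to q)\to(p\to q)$, and each $\Diamond$-free axiom in Table \ref{tableAxiom} collapses under $f$ to an intuitionistic tautology. Hence a propositional formula lies in $L\mathsf{CK}_{\Box}+\mathcal{A}$ exactly when it lies in $L$.

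Next I would transfer the failure of admissibility. Since $L\neq\mathsf{IPC}$, Theorem \ref{ThmRose} furnishes some Visser's rule $V_n$ that is not admissible in $L$; that is, there is an instance whose premise interpretation $I(S_1)$ lies in $L$ while its conclusion interpretation $I(S)$ does not. All these formulas are propositional, so by the conservativity just established the premise still lies in $L\mathsf{CK}_{\Box}+\mathcal{A}$ and the conclusion still does not. Therefore the same instance witnesses that $V_n$ fails to be admissible in $L\mathsf{CK}_{\Box}+\mathcal{A}$ as well.

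Finally I would invoke the structural input. By Example \ref{ExampleOfTfreeTfullLogic}, under the stated restriction on $\mathcal{A}$ (in particular the exclusion of $D_a$), the logic $L\mathsf{CK}_{\Box}+\mathcal{A}$ is either $T$-free or $T$-full over $\mathcal{L}_{\Box}$. Combining this with the failure of admissibility of $V_n$, the $\mathcal{L}_{\Box}$ negative-application corollary stated above (the $\Box$-fragment analogue of Corollary \ref{NegAppMain}) yields that $L\mathsf{CK}_{\Box}+\mathcal{A}$ cannot have a constructive sequent calculus, which is the claim. The only step requiring genuine care is the conservativity argument, and even there the real work reduces to checking that $f$ sends each axiom to an $\mathsf{IPC}$-theorem; the remaining induction on the structure of derivations is routine, since $f$ commutes with every rule of the calculus. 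I expect no substantive obstacle beyond this bookkeeping.
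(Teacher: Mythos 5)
Your proposal is correct and follows exactly the route the paper intends: the paper's own proof is just the remark that the argument of Corollary \ref{NegAppSpecialMain} carries over, i.e., conservativity over $L$ via the ($\Box$-erasing) forgetful translation, non-admissibility of some Visser's rule transferred from $L$ using Theorem \ref{ThmRose}, $T$-freeness/$T$-fullness from Example \ref{ExampleOfTfreeTfullLogic}, and then the $\mathcal{L}_{\Box}$ negative-application corollary. Your write-up fills in precisely these steps, including the only nontrivial check that $K_a$ and the $\Diamond$-free axioms collapse to $\mathsf{IPC}$-theorems under the translation.
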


\begin{proof}
The proof is similar to the proof of Corollary \ref{NegAppSpecialMain}.
\end{proof}

\subsection{$\Box$-free Fragment} \label{SubsectionBox}
Define the forgetful function $g: \mathcal{L}\to \mathcal{L}_{\Diamond}$ as follows: $g(p)=p$, for any atom $p$ (including $\bot$ and $\top$); $g(A \circ B)= g(A) \circ g(B)$, for $\circ \in \{\wedge, \vee, \to\}$; $g(\Diamond A)=\Diamond g(A)$ and $g(\Box A)=g(A)$. Clearly, $g$ is polynomial time computable. Let $G$ be a calculus over $\mathcal{L}_{\Diamond}$. Define $\G_{i}=G+ \{K_{\Box}, K_{\Diamond}\}$ and $\G_{r}=G+ \{K_{\Box}, K_{\Diamond}, T_a\}$ over $\mathcal{L}$. The following connects these systems to $G$ via the translation $g$.
\begin{lem}\label{lemG*2}
Let $G$ be a strong sequent calculus over $\mathcal{L}_{\Diamond}$. Then, there exists a feasible algorithm that reads a $\G_{r}$-proof of a sequent $S$ and provides a $G$-proof for $g(S)$. Consequently, $G_i$ and $\G_r$ are feasibly conservative over $G$.
\end{lem}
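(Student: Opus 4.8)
The plan is to mirror the proof of Lemma \ref{lemG*1}, interchanging the roles of $\Box$ and $\Diamond$ and using the diamond rule $(\Diamond L)$ of $\mathbf{BLL}$ in place of the box machinery. Concretely, I would exhibit a feasible algorithm that reads a $G_r$-proof $\pi$ of a sequent $S$ over $\mathcal{L}$ and returns a $G$-proof of $g(S)$, proceeding by recursion on the structure of $\pi$ while tracking that its running time $T(\pi)$ satisfies $T(\pi) \le |\pi|^{O(1)}$.

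First I would treat the base case where $S$ is an instance of an axiom of $G_r$. If it is an instance of an axiom of $G$, then since $G$ is over $\mathcal{L}_\Diamond$ and $g$ fixes every connective of $\mathcal{L}_\Diamond$, the sequent $g(S)$ is again an instance of the same axiom and may be output directly. If instead $S = (\,\Rightarrow \Box A \to A)$ is an instance of $T_a$, then $g(S) = (\,\Rightarrow g(A) \to g(A))$; since $G$ is strong it feasibly proves all $\mathbf{LJ}$-rules, so the meta-sequent $(\Pi \Rightarrow p \to p) \in \mathfrak{R}$ is feasibly provable in $G$ by Corollary \ref{EquivProvableAndFeasiblyProvable}, yielding a $G$-proof of $g(S)$, exactly as $T_b$ was handled in Lemma \ref{lemG*1}.

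For the inductive step I split on the last rule. If it is a rule of $G$, then because $g$ commutes with all the $\mathcal{L}_\Diamond$-connectives (including $\Diamond$, and in particular on $(\Diamond L)$ itself), applying the same rule to the recursively obtained $G$-proofs of the $g$-images of the premises produces $g(S)$. The genuinely new cases are the three operators added in $G_r$. The rule $(K_\Box)$ is the one place where the swap is \emph{not} symmetric to Lemma \ref{lemG*1}: its premise $\Gamma \Rightarrow A$ and conclusion $\Box \Gamma \Rightarrow \Box A$ have the same $g$-image, since $g$ erases every box, so $g(S)$ already equals the $g$-image of the premise and the recursive proof furnishes it with no extra work. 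For $(K_\Diamond)$, whose conclusion is $\Box \Gamma, \Diamond A \Rightarrow \Diamond B$ and premise $\Gamma, A \Rightarrow B$, the $g$-images are $g(\Gamma), \Diamond g(A) \Rightarrow \Diamond g(B)$ and $g(\Gamma), g(A) \Rightarrow g(B)$, so passing from the latter to the former is precisely an instance of $(\Diamond L)$; as $(\Diamond L)$ is a rule of $\mathbf{BLL}$ and $G$ is strong over $\mathcal{L}_\Diamond$, it is feasibly provable in $G$, and I append its short $G$-derivation to the recursive proof. The per-step cost is polynomial in $|\pi|$ in each case, so an induction on the structure of $\pi$ gives $T(\pi) \le |\pi|^{O(1)}$.

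Finally, for the conservativity conclusion I would observe that for a sequent $S$ over the smaller language $\mathcal{L}_\Diamond$ we have $g(S) = S$, because $\mathcal{L}_\Diamond$ contains no boxes and $g$ is the identity on $\mathcal{L}_\Diamond$-formulas; hence the algorithm already witnesses that $G_r$ is feasibly conservative over $G$. Since $G_i = G + \{K_\Box, K_\Diamond\}$ has a subset of the rules of $G_r$, every $G_i$-proof is also a $G_r$-proof, so the same algorithm yields feasible conservativity of $G_i$ over $G$ as well. I expect the only point requiring genuine care to be the bookkeeping that $g$ collapses $(K_\Box)$ to a no-op while turning $(K_\Diamond)$ into exactly $(\Diamond L)$; everything else is a routine transcription of the argument for Lemma \ref{lemG*1}.
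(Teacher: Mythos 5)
Your proposal is correct and follows essentially the same route as the paper's proof: recursion on the $G_r$-proof, commutation of $g$ with the axioms and rules of $G$, handling $T_a$ via the feasibly provable meta-sequent $(\Pi \Rightarrow p \to p)$ from Corollary \ref{EquivProvableAndFeasiblyProvable}, translating $(K_\Diamond)$ into an instance of $(\Diamond L)$ (feasibly provable since $G$ is strong), and deducing conservativity from $g$ being the identity on $\mathcal{L}_\Diamond$ together with $G_i \subseteq G_r$. Your explicit observation that $(K_\Box)$ becomes a no-op under $g$ is exactly the reason behind the paper's remark that this case ``is easy.''
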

\begin{proof}
The proof is similar to the proof of Lemma \ref{lemG*1}. 
We provide an algorithm $f$ that reads a $\G_{r}$-proof $\pi$ of $S$ and returns a $G$-proof of $g(S)$. Denoting the time to compute $f(\pi)$ by $T_f(\pi)$, we will also show that $T_f(\pi) \leq |\pi|^{O(1)}$. To define $f$, we use recursion on the structure of $\pi$. If $S$ is an instance of an axiom of $G$, as $g$ commutes with all the connectives in $\mathcal{L}_{\Diamond}$, similar to the argument in the proof of Lemma \ref{lemG*1}, it is easy to see that $g(S)$ is also an instance of the same axiom. Hence, it is enough to define $f(\pi)=g(S)$. Notice $T_f(\pi) \leq |\pi|^{O(1)}$, as $g$ is a feasible function and $|S| \leq |\pi|$. If $S$ is the instance $(\, \Rightarrow \Box A \to A)$ of the axiom $T_a$, then $g(S)=(\, \Rightarrow g(A) \to g(A))$. Again, similar to the proof of Lemma \ref{lemG*1}, there is a feasible function $h$ such that 
$h(g(S))$ is a $G$-proof of $(\, \Rightarrow g(A) \to g(A))$. Define $f(\pi)=h(g(S))$. As both $g$ and $h$ are feasible, $T_f(\pi)$ is polynomial in $|S|$ and hence in $|\pi|$.

The case that the last rule in $\pi$ is in $G$ is similar to the same case in the proof of Lemma \ref{lemG*1}. If the last rule in $\pi$ is $(K_{\Box})$, the proof is easy. The only interesting case is when the last rule in $\pi$ is $(K_{\Diamond})$. Then, $S$ is of the form $\Box \Gamma, \Diamond A \Rightarrow \Diamond B$ and the premise of the rule is of the form $S'=(\Gamma, A \Rightarrow B)$ with the proof $\pi'$. By recursion, $G \vdash^{f(\pi')} g(S')$. As $G$ is strong, it feasibly proves $(\Diamond L)$ by Definition \ref{dfnAdmissiblyStrong}. Therefore, there is a feasible algorithm $h$ such that $h(g(S'), g(S))$ is a $G$-proof of the rule
\begin{center}
\begin{tabular}{c c c}
 \AxiomC{$g(\Gamma), g(A) \Rightarrow g(B)$}
\UnaryInfC{$ g(\Gamma), \Diamond g(A) \Rightarrow \Diamond g(B)$}
 \DisplayProof
 \end{tabular}
\end{center}
Add $h(g(S'), g(S))$ to the end of $f(\pi')$ and call this proof $f(\pi)$. It is clear that $f(\pi)$ is a $G$-proof for $g(S)$. Note that as both $g$ and $h$ are polynomial time computable, computing $h(g(S'), g(S))$ takes $(|S|+|S'|)^{O(1)}$ many steps. As $|S|, |S'| \leq |\pi|$, then the time of the additional part is $|\pi|^{O(1)}$. Hence, $T_f(\pi) \leq T_f(\pi')+|\pi|^{O(1)}$. This completes the recursive construction of $f(\pi)$. Finally, using the upper bound on $T_f(\pi)$, established in each case, proving the feasibility of $f$ is clear and similar to Lemma \ref{lemG*1}.

For the last part, the feasible conservativity of $G_r$ over $G$ is clear from the first part. For $G_i$, the claim is clear as its rules are contained in $G_r$.
\end{proof}


\begin{thm} \label{FeasibleHarropForBoxFree}
Let $G$ be either a $T$-free or a $T$-full constructive sequent calculus over the language $\mathcal{L}_{\Diamond}$. Then, $G$ has the feasible Visser-Harrop property.
\end{thm}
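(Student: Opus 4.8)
The plan is to mirror the proof of Theorem \ref{FeasibleHarropForDiamonFree}, now using the forgetful function $g$ and the companion calculi $\G_i=G+\{K_\Box,K_\Diamond\}$ and $\G_r=G+\{K_\Box,K_\Diamond,T_a\}$ over $\mathcal{L}$. First I would invoke Corollary \ref{thm: p-simulation of admissibly strong} for the pair $(\mathcal{L}_\Diamond,\mathbf{BLL})$ to replace $G$ by a pd-equivalent calculus of the form $\mathbf{BLL}+\mathcal{C}$, where $\mathcal{C}$ is a finite set of constructive formulas. Since the feasible Visser-Harrop property transfers along a pd-equivalence (simulate the given proof into $\mathbf{BLL}+\mathcal{C}$, extract a disjunct there, then simulate back), and $T$-freeness and $T$-fullness are likewise invariant by Remark \ref{RemarkOnTransportabilityOfTfree}, it suffices to treat $G=\mathbf{BLL}+\mathcal{C}$. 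For this $G$ the rules of $\mathbf{BLL}$ are primitive, so both $\G_i$ and $\G_r$ contain every rule of $\mathbf{CK}$ and are therefore strong over $\mathcal{L}$; they are plainly constructive, as the only additions to the constructive calculus $G$ are $(K_\Box)$, $(K_\Diamond)$ and the constructive axiom $T_a$.

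The heart of the argument is to show that $G$ being $T$-free (resp.\ $T$-full) over $\mathcal{L}_\Diamond$ forces $\G_i$ (resp.\ $\G_r$) to be $T$-free (resp.\ $T$-full) over $\mathcal{L}$. The $T$-full case is smooth: $\G_r$ proves $T_a$ by fiat and $T_b$ because $G$ does, and for validity in $\mathcal{K}_r$ I would push a $\G_r$-provable sequent $S$ through Lemma \ref{lemG*2} to a $G$-proof of $g(S)$, which is valid in $\mathcal{K}_r$; since $\mathcal{K}_r$ reads $\Box A$ exactly as $A$ while $g$ only erases boxes and keeps diamonds, $B$ is valid in $\mathcal{K}_r$ iff $g(B)$ is, whence $S$ is valid in $\mathcal{K}_r$. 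The $T$-free case is where the symmetry with the $\Diamond$-free fragment breaks, and this is the main obstacle. The single map $g$ collapses $\Box$ to the identity, which matches the reflexive node but not the irreflexive node $\mathcal{K}_i$, where $\Box A\equiv\top$ whereas $g(\Box A)=g(A)$ need not be valid; so I cannot pull validity in $\mathcal{K}_i$ back along $g$ as I did for $\mathcal{K}_r$. Instead I would verify $\G_i$'s validity in $\mathcal{K}_i$ directly by soundness, in the spirit of Examples \ref{ExampleOfTfreeTfullLogic} and \ref{ExamOfTfreeTfullSequent}: the rules and axioms of $G$ preserve $\mathcal{K}_i$-validity (truth at a single node depends only on the truth values of the subformulas, so validity for $\mathcal{L}_\Diamond$-instances extends to all $\mathcal{L}$-instances), $(K_\Box)$ yields an always-valid conclusion because $\Box p\equiv\top$ in $\mathcal{K}_i$, and $(K_\Diamond)$ yields an always-valid conclusion because $\Diamond p\equiv\bot$ makes its antecedent false.

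With $\G_i$ (resp.\ $\G_r$) now a $T$-free (resp.\ $T$-full) constructive calculus over $\mathcal{L}$, Corollary \ref{thmKolli} supplies the feasible Visser-Harrop property for it. To descend back to $G$, given a $G$-proof $\pi$ of $\Gamma,\{A_j\to B_j\}_{j\in J}\Rightarrow C\vee D$ with $\Gamma$ a multiset of Harrop formulas, I observe that $\pi$ is already a $\G_i$- (resp.\ $\G_r$-) proof, apply the feasible Visser-Harrop algorithm to extract a proof of one of the three disjunct sequents, and then feed that proof to the feasibly conservative translation of Lemma \ref{lemG*2}. The decisive final point is that all three candidate sequents live over $\mathcal{L}_\Diamond$, on which $g$ is the identity, so the conservativity algorithm returns an honest $G$-proof of the very sequent extracted, and the whole composite runs in polynomial time. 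The only ingredient that is not a verbatim transcription of Theorem \ref{FeasibleHarropForDiamonFree} is this direct semantic soundness check of $\G_i$ in $\mathcal{K}_i$, forced on us precisely because a single forgetful map must here serve both one-node frames.
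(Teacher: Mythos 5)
Your proposal is correct and takes essentially the same route as the paper's own proof: reduce to $G=\mathbf{BLL}+\mathcal{C}$ via Corollary \ref{thm: p-simulation of admissibly strong} and Remark \ref{RemarkOnTransportabilityOfTfree}, establish $T$-fullness of $\G_r$ by pulling validity back along $g$ using Lemma \ref{lemG*2}, establish $T$-freeness of $\G_i$ by a direct soundness induction in $\mathcal{K}_i$ (which is exactly what the paper does too, since $g$ matches the reflexive but not the irreflexive node), and then apply Corollary \ref{thmKolli} together with feasible conservativity to descend back to $G$. The only difference is that you make explicit the substitution argument for $\mathcal{L}$-instances of the axioms in $\mathcal{C}$ at the one-node frame, a point the paper leaves implicit.
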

\begin{proof}
First, by Corollary \ref{thm: p-simulation of admissibly strong}, $G$ is pd-equivalent to $\mathbf{BLL}+\mathcal{C}$, for a finite set $\mathcal{C}$ of constructive formulas. Therefore, using Remark \ref{RemarkOnTransportabilityOfTfree}, it is enough to prove the claim for $G=\mathbf{BLL}+\mathcal{C}$.  Hence, $G$ has all the rules of $\mathbf{LJ}$ as its primitive rules and $G_i$ and $G_r$ are strong over $\mathcal{L}$. Now, to prove the claim, we first show that if $G$ is $T$-full (resp. $T$-free) over $\mathcal{L}_{\Diamond}$, then $G_{r}$ (resp. $G_i$) is $T$-full (resp. $T$-free) over $\mathcal{L}$. 
For $T$-fullness, as $G$ is $T$-full, the axiom $T_b$ is provable in it. Therefore, $T_a$ and $T_b$ are both provable in $G_{r}$. Moreover, for the validity in the reflexive node frame, $\mathcal{K}_r$, assume $G_{r} \vdash S$. Then, as $G$ is $T$-full, it is strong over $\mathcal{L}_{\Diamond}$. Hence, by Lemma \ref{lemG*2}, $g(S)$ is provable in $G$. Since $G$ is valid in $\mathcal{K}_r$, the sequent $g(S)$ is valid in $\mathcal{K}_r$. However, $\Box A$ and $A$ are equivalent in $\mathcal{K}_r$, for any $A \in \mathcal{L}$. Thus, $S$ and $g(S)$ are equivalent in $\mathcal{K}_r$. Therefore, $S$ is also valid in $\mathcal{K}_r$, and by Definition \ref{Def: T-free T-full calculus}, $G_r$ is $T$-full over $\mathcal{L}$.

For $T$-freeness, we have to show that if $G$ is $T$-free over $\mathcal{L}_{\Diamond}$, then $G_i$ is $T$-free over $\mathcal{L}$. As $G_i$ is strong over $\mathcal{L}$, we only have to show that if $G_i \vdash S$, then $S$ is valid in the irreflexive node frame, $\mathcal{K}_i$. First, notice that as $G=\mathbf{BLL}+\mathcal{C}$, we have $G_i=\mathbf{CK}+\{\Diamond L\}+\mathcal{C}$. Now, we prove the claim by induction on the structure of the proof of $S$ in $G_i$. Let $S$ be an instance of an axiom in $\mathcal{C}$. Then, as all the axioms of $\mathcal{C}$ are provable in $G$ and $G$ is $T$-free over $\mathcal{L}_{\Diamond}$, then $S$ is valid in $\mathcal{K}_i$. Now, if the last rule in the proof of $S$ is an axiom or a rule in $\mathbf{CK}+\{\Diamond L\}$,  there is nothing to prove as they respect the validity in $\mathcal{K}_i$.

We showed that if $G$ is $T$-free (resp. $T$-full) over $\mathcal{L}_\Diamond$, then $\G_{i}$ (resp. $\G_{r}$) is $T$-free (resp. $T$-full) over $\mathcal{L}$. As both $\G_{i}$ and $\G_{r}$ are clearly constructive, $\G_{i}$ (resp. $\G_{r}$) has the feasible Visser-Harrop property  by Corollary \ref{thmKolli}. Similar to the proof of Theorem \ref{FeasibleHarropForDiamonFree}, as $G_i$ and $\G_r$ are feasible conservative over $G$ by Lemma \ref{lemG*2}, it is easy to prove the feasible Visser-Harrop property for $G$.
\end{proof}

\begin{cor} \label{ConcreteApplicationBoxFree}
(Positive application) Let $\mathcal{A}$ be a finite set of $\Box$-free axioms in Table \ref{tableAxiom} such that $\mathcal{A}$ does not include $D_b$. Then, the calculus $\mathbf{BLL}+\mathcal{A}$, specially the sequent calculus $\mathbf{PLL}$ for the propositional lax logic, enjoys the feasible Visser-Harrop property and hence the feasible disjunction property. Consequently, the logic of any of these calculi has the Visser-Harrop property.
\end{cor}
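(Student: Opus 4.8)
The plan is to derive this as a direct application of Theorem \ref{FeasibleHarropForBoxFree}, mirroring the proof of the $\Diamond$-free positive application, Corollary \ref{ConcreteApplicationDiamonFree}. That theorem reduces the feasible Visser-Harrop property for a calculus over $\mathcal{L}_\Diamond$ to two hypotheses: that the calculus is \emph{constructive}, and that it is either $T$-free or $T$-full over $\mathcal{L}_\Diamond$. Hence the entire task splits into checking that $G=\mathbf{BLL}+\mathcal{A}$ meets both hypotheses and then transferring the conclusion down to the logic level.

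First I would verify constructivity. The base calculus $\mathbf{BLL}=\mathbf{LJ}+\{\Diamond L\}$ is constructive: the $\mathbf{LJ}$-rules are constructive, and $(\Diamond L)$ is left constructive, since its premise $\Gamma, p \Rightarrow \Delta$ carries the basic (hence constructive) formula $p$ while its conclusion introduces the almost positive formula $\Diamond p$, matching the form permitted for $\mathcal{L}_\Diamond$ in Definition \ref{ConstructiveSystem}. It then remains to observe that each $\Box$-free axiom of Table \ref{tableAxiom} is a constructive formula in the sense of Definition \ref{DefPositiveFormulas}; for the relevant axioms ($T_b$, $4_b$, $c_b$, $den_{n,b}$, $4_{n,m,b}$, $tra_{n,b}$, $M^{\to}_{\Diamond}$) this is a routine check against the classification, and several instances already appear in Example \ref{ExampleOfFormulas}. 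Since adding constructive axioms to a constructive calculus preserves constructivity, $G$ is constructive.

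Next I would settle the $T$-free/$T$-full dichotomy, which is exactly the content of Example \ref{ExamOfTfreeTfullSequent}: with $\mathcal{A}$ restricted to $\Box$-free axioms and excluding $D_b$, the calculus $\mathbf{BLL}+\mathcal{A}$ is either $T$-free or $T$-full over $\mathcal{L}_\Diamond$. The role of the $D_b$ exclusion is to rule out the in-between case, the $\Box$-free analogue of $D$, that is valid at neither $\mathcal{K}_i$ nor under the $T_b$-requirement of $T$-fullness. With both hypotheses secured, Theorem \ref{FeasibleHarropForBoxFree} delivers the feasible Visser-Harrop property, and the feasible disjunction property follows by setting $\Gamma=I=\emptyset$ in Definition \ref{DfnFeasibleDPHarropProperty}. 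The logic-level Visser-Harrop property is obtained by forgetting feasibility: as $G$ is a calculus for its logic, provability of $\Gamma, \{A_i\to B_i\}_{i\in I}\Rightarrow C\vee D$ in $G$ corresponds to membership of its formula interpretation in the logic, so the existence of a $G$-proof of one of the three target sequents translates into the defining condition of the Visser-Harrop property for the logic.

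Finally, $\mathbf{PLL}$ is covered as the special case $\mathcal{A}=\{T_b,4_b\}$: neither axiom is $D_b$, so the hypothesis applies, and since $T_b\in\mathbf{PLL}$ the active side of the dichotomy is $T$-fullness. I do not expect any genuine obstacle at this stage; all the real work lives in Theorem \ref{FeasibleHarropForBoxFree} and the feasible conservativity Lemma \ref{lemG*2} behind it, so the only care needed here is the bookkeeping of confirming that every admitted $\Box$-free axiom is constructive and that the $D_b$ exclusion does force the clean $T$-free/$T$-full dichotomy.
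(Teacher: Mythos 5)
Your proposal is correct and follows essentially the same route as the paper's proof: invoke Example \ref{ExamOfTfreeTfullSequent} for the $T$-free/$T$-full dichotomy, observe that $\mathbf{BLL}+\mathcal{A}$ is constructive, and apply Theorem \ref{FeasibleHarropForBoxFree}. One minor bookkeeping slip that does not affect the argument: the rule $(\Diamond L)$ of $\mathbf{BLL}$ has premise $\Gamma, p \Rightarrow q$ and conclusion $\Gamma, \Diamond p \Rightarrow \Diamond q$ (a formula, not a multiset variable, in the succedent), so it is a \emph{right} constructive rule rather than a left constructive one — the rule you describe is the one for $c_b$ in Table \ref{table} — but $(\Diamond L)$ is constructive either way, so the verification goes through.
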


\begin{proof}
By Example \ref{ExamOfTfreeTfullSequent}, $\mathbf{BLL}+\mathcal{A}$ is either $T$-free or $T$-full. As it is clearly constructive, by Theorem \ref{FeasibleHarropForBoxFree} we get the result.
\end{proof}

\begin{cor}(Negative application)
Let $L$ be either a $T$-free or a $T$-full logic over $\mathcal{L}_{\Diamond}$.  If there is at least one Visser's rule that is not admissible in $L$, then $L$ does not have a constructive sequent calculus.
\end{cor}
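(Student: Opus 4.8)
The plan is to mirror the proof of Corollary \ref{NegAppMain} almost verbatim, replacing the base calculus $\mathbf{CK}$ by $\mathbf{BLL}$ and the positive main result Corollary \ref{thmKolli} by its $\Box$-free counterpart, Theorem \ref{FeasibleHarropForBoxFree}. I would argue by contradiction: suppose $L$ has a constructive sequent calculus $G$ over $\mathcal{L}_{\Diamond}$. Since $L$ is $T$-free or $T$-full over $\mathcal{L}_{\Diamond}$, Definition \ref{Def: T-free T-full logic} guarantees $\mathsf{BLL} \subseteq L$, so the $(\mathcal{L}_{\Diamond}, \mathsf{BLL}, \mathbf{BLL})$ instance of Lemma \ref{LemAdStrong} applies and yields that $H := G + \mathbf{BLL}$ is again a sequent calculus for $L$.

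The next step is to verify that $H$ inherits the three hypotheses required to invoke Theorem \ref{FeasibleHarropForBoxFree}. Strength is immediate: since $H$ contains every rule and axiom of $\mathbf{BLL}$, it is strong over $\mathcal{L}_{\Diamond}$ by Remark \ref{RemarkRule}. Constructivity also holds: the rules of $G$ are constructive because $G$ is a constructive calculus over $\mathcal{L}_{\Diamond}$ (over this fragment no non-constructive modal rule is permitted), and the rules of $\mathbf{BLL} = \mathbf{LJ} + \{\Diamond L\}$ are all constructive, since the $\mathbf{LJ}$ rules were already observed to be constructive and $(\Diamond L)$ is right constructive (its conclusion $\Gamma, \Diamond p \Rightarrow \Diamond q$ has an almost positive antecedent formula and a basic, hence constructive, succedent, built from a premise with basic $p$ and almost positive $q$). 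Finally, because $H$ is strong and is a calculus for $L$, Remark \ref{RemarkOnTransportabilityOfTfree} transports the $T$-free (resp.\ $T$-full) status of $L$ to $H$.

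With $H$ now established as a strong, constructive, and $T$-free-or-$T$-full calculus over $\mathcal{L}_{\Diamond}$, Theorem \ref{FeasibleHarropForBoxFree} delivers the feasible Visser-Harrop property for $H$. Dropping the feasibility clause, $H$ enjoys the plain Visser-Harrop property; specializing its statement to $\Gamma = I = \varnothing$ and using that $H$ is a calculus for $L$, the formula-level reformulation of admissibility recorded in the remark following Definition \ref{Def: Visser Rules} shows that $L$ admits every Visser's rule $V_n$. This contradicts the hypothesis that at least one Visser's rule fails to be admissible in $L$, which completes the argument.

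I do not anticipate a genuine obstacle, as all the heavy machinery has been packaged into Theorem \ref{FeasibleHarropForBoxFree} and its supporting lemmas; the proof is a structural copy of Corollary \ref{NegAppMain}. The only points demanding care are the bookkeeping that $H = G + \mathbf{BLL}$ stays constructive and strong over the restricted language $\mathcal{L}_{\Diamond}$ (in particular that adjoining the whole of $\mathbf{BLL}$ smuggles in no non-constructive rule), and the standard but essential passage from the calculus-level Visser-Harrop property to the logic-level admissibility of Visser's rules via the formula interpretation of sequents.
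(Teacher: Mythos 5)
Your proposal is correct in substance and takes exactly the paper's route: the paper proves this corollary by declaring it ``similar to the proof of Corollary~\ref{NegAppMain}'', i.e.\ form $H := G + \mathbf{BLL}$ via the $(\mathcal{L}_{\Diamond}, \mathsf{BLL}, \mathbf{BLL})$ instance of Lemma~\ref{LemAdStrong}, check that $H$ is strong, constructive and $T$-free or $T$-full (Remark~\ref{RemarkOnTransportabilityOfTfree}), and invoke Theorem~\ref{FeasibleHarropForBoxFree} in place of Corollary~\ref{thmKolli} to get a contradiction. Your verification that $(\Diamond L)$ is right constructive, so that adjoining $\mathbf{BLL}$ keeps $H$ constructive over $\mathcal{L}_{\Diamond}$, is the right bookkeeping point and it checks out.

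One slip in your final step: you specialize the Visser-Harrop property to $\Gamma = I = \varnothing$, but that specialization yields only the disjunction property $V_0$, not the admissibility of every $V_n$. You must set only $\Gamma = \varnothing$ and keep the index set $I$ arbitrary; then provability of $\{A_i \to B_i\}_{i \in I} \Rightarrow C \vee D$ in $H$ yields provability of one of the three target sequents, and since $H$ is a calculus for $L$, the formula interpretation together with the remark following Definition~\ref{Def: Visser Rules} gives admissibility of all Visser's rules in $L$, producing the desired contradiction. This is a wording error rather than a missing idea, since the remark you cite already concerns arbitrary finite $I$.
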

\begin{proof}
The proof is similar to the proof of Corollary \ref{NegAppMain}.
\end{proof}

\begin{cor}
Let $L\neq \mathsf{IPC}$ be an intermediate logic and $\mathcal{A}$ be a finite set of $\Box$-free axioms in Table \ref{tableAxiom} such that $\mathcal{A}$ does not include $D_b$. Then, the logic $L\mathsf{BLL}+\mathcal{A}$ does not have a constructive sequent calculus.
\end{cor}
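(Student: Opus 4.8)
The plan is to follow the template of Corollary \ref{NegAppSpecialMain} almost verbatim, reducing the statement to the negative application result for $\mathcal{L}_\Diamond$ established just above. The two facts I will need are that $L\mathsf{BLL}+\mathcal{A}$ is $T$-free or $T$-full over $\mathcal{L}_\Diamond$, and that it fails to admit at least one of Visser's rules; the latter I will obtain from Rose's theorem (Theorem \ref{ThmRose}) together with conservativity of $L\mathsf{BLL}+\mathcal{A}$ over the intermediate logic $L$.

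First I would establish conservativity. Let $f\colon \mathcal{L}_\Diamond \to \mathcal{L}_p$ be the forgetful translation erasing every occurrence of $\Diamond$, i.e.\ $f$ is the identity on atoms and on $\bot,\top$, commutes with $\wedge,\vee,\to$, and sends $\Diamond A$ to $f(A)$. A routine induction shows $f$ commutes with substitution, so it suffices to check that $f$ sends every axiom of $L\mathsf{BLL}+\mathcal{A}$ to an $\mathsf{IPC}$-theorem and that it respects the closure rules. The closure part is immediate and in fact \emph{easier} than in the full language: over $\mathcal{L}_\Diamond$ a logic is closed only under substitution and modus ponens (Definition \ref{DefLogic}), there is no necessitation rule to track, and $f$ trivially turns an application of modus ponens into one. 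For the axioms, the $\mathsf{IPC}$-axioms of $L$ are unchanged, $f(M^{\to}_\Diamond)=(p\to q)\to(p\to q)$, and every $\Box$-free axiom $B$ of Table \ref{tableAxiom} satisfies $\mathsf{IPC}\vdash f(B)$ by direct inspection (each $\Diamond^k p$ collapses to $p$, so e.g.\ $T_b$ and $4_b$ become $p\to p$). Reading any proof $\pi$ of a propositional $A\in\mathcal{L}_p$ in $L\mathsf{BLL}+\mathcal{A}$ and applying $f$ therefore yields an $L$-proof of $A=f(A)$, which gives conservativity over $L$.

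Then I would transfer the failure of admissibility. Since $L\neq\mathsf{IPC}$, Theorem \ref{ThmRose} supplies a Visser rule that is not admissible in $L$; fix a witnessing instance, consisting of propositional formulas, in which the premise $\bigwedge_{i}(A_i\to B_i)\to(C\vee D)$ lies in $L$ while none of the three conclusion formulas does (in the sense of the characterization following Definition \ref{Def: Visser Rules}). As all these formulas are propositional and $L\mathsf{BLL}+\mathcal{A}$ is conservative over $L$, the premise still belongs to $L\mathsf{BLL}+\mathcal{A}$ while none of the conclusions do, so this Visser rule is not admissible in $L\mathsf{BLL}+\mathcal{A}$ either. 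Finally, by Example \ref{ExampleOfTfreeTfullLogic} the hypothesis on $\mathcal{A}$ (in particular the exclusion of $D_b$) guarantees that $L\mathsf{BLL}+\mathcal{A}$ is $T$-free or $T$-full over $\mathcal{L}_\Diamond$, so the negative application corollary for $\mathcal{L}_\Diamond$ proved above applies and rules out any constructive sequent calculus for it.

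The only genuinely non-trivial step is the axiom check inside conservativity; everything else is bookkeeping, and I expect no real obstacle there, since erasing $\Diamond$ sends each listed $\Box$-free axiom to a propositional theorem of $\mathsf{IPC}$. The one point worth flagging is the role of the exclusion of $D_b$: here $f(D_b)$ is the harmless $\top\to\top$, so $D_b$ would cause no trouble for conservativity, and it must instead be excluded to secure the frame-validity needed for $T$-freeness/$T$-fullness. I would therefore be careful to attribute its exclusion to Example \ref{ExampleOfTfreeTfullLogic} rather than to the conservativity argument.
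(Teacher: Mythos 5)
Your proof is correct and follows essentially the same route as the paper: the paper's proof of this corollary is just ``similar to the proof of Corollary~\ref{NegAppSpecialMain}'', and you have faithfully instantiated that template for $\mathcal{L}_\Diamond$ --- conservativity over $L$ via the $\Diamond$-erasing translation, Theorem~\ref{ThmRose} to transfer the failure of a Visser rule, Example~\ref{ExampleOfTfreeTfullLogic} for $T$-freeness/$T$-fullness, and the preceding negative-application corollary for $\mathcal{L}_\Diamond$. Your closing remark correctly locates the role of excluding $D_b$ in the frame-validity step rather than in conservativity, which matches the paper's intent.
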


\begin{proof}
The proof is similar to the proof of Corollary \ref{NegAppSpecialMain}.
\end{proof}

\subsection{Propositional Fragment} \label{SubsectionPropositional}
Define the forgetful function $h:\mathcal{L} \to \mathcal{L}_p$ as $h(p)=p$, for any atom $p$ (including $\bot$ and $\top$), $h(A \circ B)= h(A) \circ h(B)$, for $\circ \in \{\wedge, \vee, \to\}$, $h(\Box A)=\top$, and $h(\Diamond A)=\bot$. Let $G$ be a strong sequent calculus over $\mathcal{L}_{p}$. Define $\G_m$ over $\mathcal{L}$ as $G+ \{K_{\Box}, K_{\Diamond}\}$. The following connects $G_m$ to $G$ via the translation $h$.
\begin{lem}\label{lemG*3}
If $G$ is a strong sequent calculus over $\mathcal{L}_{p}$, then there exists a feasible algorithm that reads a $G_m$-proof of a sequent $S$ over $\mathcal{L}$ and outputs a $G$-proof of $h(S)$. Consequently, $\G_m$ is feasibly conservative over $G$.
\end{lem}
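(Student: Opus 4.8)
The plan is to follow exactly the recursive strategy used in the proofs of Lemma \ref{lemG*1} and Lemma \ref{lemG*2}. I would define an algorithm $f$ that reads a $G_m$-proof $\pi$ of a sequent $S$ over $\mathcal{L}$ and returns a $G$-proof of $h(S)$, proceeding by recursion on the structure of $\pi$, while simultaneously bounding its running time $T_f(\pi)$ by $|\pi|^{O(1)}$. Since $h$ restricts to the identity on $\mathcal{L}_p$-formulas, the ``Consequently'' clause is then immediate from Definition \ref{dfn: feasible conservative}: for a sequent $S$ over $\mathcal{L}_p$ we have $h(S)=S$, so $f$ directly witnesses that $G_m$ is feasibly conservative over $G$.

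For the base case and the propositional rules, the argument is identical to the corresponding cases in Lemma \ref{lemG*1}. The key observation is that $h$ commutes with every connective of $\mathcal{L}_p$ and with substitution; hence if $S$ is an instance of an axiom of $G$, then $h(S)$ is an instance of the same axiom (now substituting the $h$-images of the original formulas), and if the last rule of $\pi$ is a rule of $G$ applied to premises $S_1,\dots,S_n$, then the same rule applied to $h(S_1),\dots,h(S_n)$ derives $h(S)$. In these cases I would set $f(\pi)$ accordingly, using the recursively computed proofs $f(\pi_i)$; the feasibility of $h$ yields the recurrences $T_f(\pi)\le|\pi|^{O(1)}$ for axioms and $T_f(\pi)\le\sum_{i}T_f(\pi_i)+|\pi|^{O(1)}$ for rules, exactly as before.

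The genuinely new --- but in fact easier --- feature of this fragment is the treatment of the modal rules $(K_\Box)$ and $(K_\Diamond)$, and this is where the proof is simpler than in the previous two lemmas. Because $h$ collapses every modality, sending $\Box C$ to $\top$ and $\Diamond C$ to $\bot$, the $h$-image of the conclusion of a modal rule becomes trivial. Concretely, an application of $(K_\Box)$ with conclusion $\Box\Gamma\Rightarrow\Box A$ maps under $h$ to a sequent of the form $\top,\dots,\top\Rightarrow\top$, and an application of $(K_\Diamond)$ with conclusion $\Box\Gamma,\Diamond A\Rightarrow\Diamond B$ maps to $\top,\dots,\top,\bot\Rightarrow\bot$. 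Since $G$ is strong over $\mathcal{L}_p$ (Definition \ref{dfnAdmissiblyStrong}), it feasibly proves all the rules of $\mathbf{LJ}$, in particular $(R\top)$, $(L\bot)$ and left weakening $(Lw)$; hence I can directly emit a $G$-proof of each such trivial sequent \emph{without recursing into the subproof at all}. This gives $T_f(\pi)\le|\pi|^{O(1)}$ for the modal cases as well.

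Having the three families of recurrences in hand, a routine induction on the structure of $\pi$ (as in Lemma \ref{lemG*1}) shows $T_f(\pi)\le|\pi|^{O(1)}$, establishing feasibility. The only point requiring a little care --- and the closest thing to an obstacle --- is verifying that $h$ genuinely turns instances of $G$-axioms and applications of $G$-rules into instances and applications of the \emph{same} schema, i.e.\ that $h$ commutes with the substitution of $\mathcal{L}$-formulas for the variables and atoms of the schemata. This is immediate from the definition of $h$ together with the fact that the schemata of $G$ live in $\mathcal{L}_p$ and therefore contain no modalities on which $h$ could act nontrivially, so the collapse of modalities only ever occurs at the two modal rules handled above.
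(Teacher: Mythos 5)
Your proof is correct and follows essentially the same route as the paper, whose own proof of this lemma is simply ``similar to the proof of Lemma \ref{lemG*1}'': a recursion on the $G_m$-proof, using that $h$ commutes with the modality-free schemata of $G$ (so axioms and rules of $G$ translate to instances of the same schemata), and handling $(K_\Box)$ and $(K_\Diamond)$ by directly emitting feasible $G$-proofs of the trivialized sequents $\top,\dots,\top\Rightarrow\top$ (an instance of $(R\top)$) and $\top,\dots,\top,\bot\Rightarrow\bot$ (an instance of $(L\bot)$). Your observation that no recursion into the modal subproofs is needed is precisely the simplification that the collapse $h(\Box A)=\top$, $h(\Diamond A)=\bot$ affords, and it is the natural adaptation of the cited argument to this fragment.
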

\begin{proof}
The proof is similar to the proof of Lemma \ref{lemG*1}.  
\end{proof}

\begin{thm} \label{FeasibleHarropForProp}
Let $G$ be a strong constructive sequent calculus over $\mathcal{L}_{p}$. Then, $G$ has the feasible Visser-Harrop property.
\end{thm}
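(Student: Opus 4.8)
The plan is to reduce the statement to the full language $\mathcal{L}$ and invoke Corollary \ref{thmKolli}, exactly as in the $\Diamond$-free and $\Box$-free cases (Theorems \ref{FeasibleHarropForDiamonFree} and \ref{FeasibleHarropForBoxFree}); the only genuinely new point is that over $\mathcal{L}_p$ there is \emph{no} $T$-free/$T$-full hypothesis to lean on, so that property must be produced by hand. First, by Corollary \ref{thm: p-simulation of admissibly strong} the calculus $G$ is pd-equivalent to $\mathbf{LJ}+\mathcal{C}$ for a finite set $\mathcal{C}$ of constructive $\mathcal{L}_p$-formulas. Since the feasible Visser-Harrop property is preserved under pd-equivalence (given an input $G$-proof, push it through the pd-simulation into $\mathbf{LJ}+\mathcal{C}$, run the extraction algorithm there, and pull the resulting proof back through the converse pd-simulation; the composite of feasible maps is feasible), it suffices to treat $G=\mathbf{LJ}+\mathcal{C}$. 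I then set $G_m=G+\{K_{\Box},K_{\Diamond}\}=\mathbf{CK}+\mathcal{C}$ over $\mathcal{L}$. This calculus is strong over $\mathcal{L}$, and, as its only rules outside $\mathbf{LJ}$ are $K_{\Box}$ and $K_{\Diamond}$, it is constructive over $\mathcal{L}$.

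The heart of the argument is to show that $G_m$ is $T$-free over $\mathcal{L}$ in the sense of Definition \ref{Def: T-free T-full calculus}, i.e.\ that it is valid in the irreflexive node frame $\mathcal{K}_i$ of Definition \ref{def: Kripke Frames and Models}. On the single irreflexive world $w$ every propositional connective is evaluated classically, whereas $w\vDash\Box A$ holds vacuously and $w\nvDash\Diamond A$ always; hence for each $\mathcal{L}$-formula $A$ we have $\mathcal{K}_i\vDash A$ if and only if $h(A)$ is a classical tautology, where $h$ is the forgetful map with $h(\Box A)=\top$ and $h(\Diamond A)=\bot$. In particular an instance $C(\overline{\phi})$ of an axiom $C\in\mathcal{C}$ is valid in $\mathcal{K}_i$ exactly when $C$ is a classical tautology, since $h$ commutes with the propositional connectives and tautologies are closed under substitution. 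Because the $\mathbf{CK}$-rules $K_{\Box},K_{\Diamond}$ and all $\mathbf{LJ}$-rules preserve validity in $\mathcal{K}_i$, a rule-by-rule soundness argument then shows that $G_m$ is valid in $\mathcal{K}_i$ as soon as every member of $\mathcal{C}$ is a classical tautology.

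Everything therefore turns on whether the formulas of $\mathcal{C}$ are classical tautologies, and this is settled by the consistency dichotomy for superintuitionistic logics. If $\mathbf{LJ}+\mathcal{C}$ proves $(\,\Rightarrow\bot)$, then $G$ is inconsistent and the feasible Visser-Harrop property is trivial: a fixed proof of $(\,\Rightarrow\bot)$ can be turned feasibly, via $(L\bot)$, cut and the feasibly provable weakening rules, into a proof of $\Gamma,\{A_i\to B_i\}_{i\in I}\Rightarrow C$ for any input. Otherwise $G$ is consistent, so its logic is an intermediate logic and hence contained in $\mathsf{CPC}$ by the Post-completeness of classical propositional logic; in particular each $C\in\mathcal{C}$, being a theorem, is a classical tautology, so by the previous paragraph $G_m$ is $T$-free, and Corollary \ref{thmKolli} yields the feasible Visser-Harrop property for $G_m$. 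I then transfer this back to $G$ as in Theorem \ref{FeasibleHarropForDiamonFree}: any $G$-proof $\pi$ of $\Gamma,\{A_i\to B_i\}_{i\in I}\Rightarrow C\vee D$, with $\Gamma$ a multiset of $\mathcal{L}_p$-Harrop (hence $\mathcal{L}$-Harrop) formulas, is also a $G_m$-proof, so feasibly in $\pi$ I obtain a $G_m$-proof of one of the three required sequents; these are sequents over $\mathcal{L}_p$, so $h$ fixes them, and Lemma \ref{lemG*3} feasibly converts the $G_m$-proof into a $G$-proof of the very same sequent. The main obstacle is precisely the middle step: unlike in the fragment theorems, $T$-freeness is not granted but must be read off from the structure of $G$, and this is exactly where the classical reading of $\mathcal{K}_i$ through $h$ together with the consistency dichotomy enters.
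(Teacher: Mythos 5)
Your proposal is correct, and it follows the paper's overall skeleton: reduce to $\mathbf{LJ}+\mathcal{C}$ by Corollary \ref{thm: p-simulation of admissibly strong}, pass to $G_m=\mathbf{CK}+\mathcal{C}$ over $\mathcal{L}$, prove that $G_m$ is $T$-free, apply Corollary \ref{thmKolli}, and pull the result back through Lemma \ref{lemG*3}. Where you genuinely differ is the implementation of the crucial middle step. The paper, after the same consistency dichotomy, proves from scratch that \emph{every} $G$-provable sequent $\Gamma\Rightarrow\Delta$ is classically valid: if not, a $\top/\bot$-substitution instance of $\neg(\bigwedge\Gamma\to\bigvee\Delta)$ is classically valid, hence intuitionistically provable by Glivenko's theorem, and cutting it against the substituted provable sequent yields $G\vdash\,\Rightarrow\bot$; $T$-freeness of $G_m$ then follows by combining Lemma \ref{lemG*3} (from $G_m\vdash S$ infer $G\vdash h(S)$) with the equivalence of $\mathcal{K}_i$-validity of $S$ and classical validity of $h(S)$. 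You instead need classical tautology-hood only of the finitely many axioms in $\mathcal{C}$, obtained by citing the maximality of $\mathsf{CPC}$ among consistent superintuitionistic logics, and you then run a rule-by-rule soundness induction over $G_m$-proofs; this is sound, matches the style the paper itself uses for $T$-freeness in Theorem \ref{FeasibleHarropForBoxFree}, and spares one invocation of Lemma \ref{lemG*3} (you still need it for the final transfer). Two caveats, neither a gap: your citation label is slightly off, since Post-completeness of $\mathsf{CPC}$ (no proper consistent extension) is not literally the fact you use, which is that $\mathsf{CPC}$ is the \emph{greatest} consistent logic extending $\mathsf{IPC}$; that fact is standard, but its usual proof is exactly the $\top/\bot$-substitution-plus-Glivenko argument the paper inlines, so the paper's treatment is self-contained where yours leans on an external result. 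Also, when quoting it, avoid saying the logic of $G$ ``is an intermediate logic and hence contained in $\mathsf{CPC}$'' --- by the paper's Definition \ref{DefIntermediateModalLogics} being intermediate already presupposes containment in $\mathsf{CPC}$, so the containment is the conclusion of the maximality fact, not a consequence of being intermediate.
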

\begin{proof}
If $G$ is inconsistent, it has the feasible Visser-Harrop property. Hence, assume $G$ is consistent. As the first step, we prove that if $G \vdash S=(\Gamma \Rightarrow \Delta)$, then $S$ is classically valid. Assume otherwise. 
Then, there is a substitution $\sigma$ (mapping atoms to $\bot$ and $\top$) such that $\sigma(A)$ is classically valid, where $A=\neg [\bigwedge \Gamma \to \bigvee \Delta]$. As a consequence of Glivenko's theorem, as $\sigma(A)$ is a negative propositional formula, its classical validity implies its intuitionistic provability. Hence, $\mathbf{LJ} \vdash \, \Rightarrow \sigma(A)$ which implies $\mathbf{LJ} \vdash \sigma(\bigwedge \Gamma) \to \sigma(\bigvee \Delta) \Rightarrow \bot$. As $G$ is strong, it proves all the rules of $\mathbf{LJ}$. Hence, $G \vdash \sigma(\bigwedge \Gamma) \to \sigma(\bigvee \Delta) \Rightarrow \bot$. As $G \vdash \Gamma \Rightarrow \Delta$, we have $G \vdash \sigma(\Gamma) \Rightarrow \sigma(\Delta)$, by substitution. Therefore, $G \vdash \, \Rightarrow \sigma(\bigwedge \Gamma) \to \sigma(\bigvee \Delta)$, as all the rules of $\mathbf{LJ}$ are provable in $G$. Hence, by the cut rule, also provable in $G$, we have $G \vdash \, \Rightarrow \bot$ which is impossible. Hence, $\Gamma \Rightarrow \Delta$ is classically valid.

Now, using what we showed, we prove that $G_m$ is $T$-free over $\mathcal{L}$. Again, by Corollary \ref{thm: p-simulation of admissibly strong}, $G$ is pd-equivalent to $\mathbf{LJ}+\mathcal{C}$, for a finite set $\mathcal{C}$ of constructive formulas. Therefore, using Remark \ref{RemarkOnTransportabilityOfTfree}, it is enough to prove the claim for $G=\mathbf{LJ}+\mathcal{C}$.  Therefore, $G$ has all the rules of $\mathbf{LJ}$ as its primitive rules and hence $G_m$ is strong over $\mathcal{L}$. For the other condition, if $S$ is provable in $G_m$, then by Lemma \ref{lemG*3}, $h(S)$ is provable in $G$. Thus, by the first part of the present proof, $h(S)$ is classically valid and hence valid in the irreflexive node frame $\mathcal{K}_i$. However, $\mathcal{K}_i$ reads $\Diamond B$ as $\bot$ and $\Box B$ as $\top$, for any $B \in \mathcal{L}$. Hence, for any $C \in \mathcal{L}$, the formula $C$ is valid in $\mathcal{K}_i$ if and only if $h(C)$ is valid there. Thus, $S$ is also valid $\mathcal{K}_i$. Therefore, $G_m$ is $T$-free over $\mathcal{L}$. 

Finally, as $G_m$ is clearly constructive, by Corollary \ref{thmKolli}, $G_m$ has the feasible Visser-Harrop property. By Lemma \ref{lemG*3}, it is easy to derive the feasible Visser-Harrop property for $G$.
\end{proof}

\begin{cor} \label{ConcreteApplicationProp}
(Positive application) $\LJ$ has the feasible Visser-Harrop property and hence feasible disjunction property.
\end{cor}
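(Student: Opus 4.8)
The plan is to read this corollary as nothing more than the instance $G=\LJ$ of Theorem~\ref{FeasibleHarropForProp}, so that the whole task collapses to checking that $\LJ$, regarded as a calculus over the propositional fragment $\mathcal{L}_p$, meets the two hypotheses of that theorem: that it is \emph{strong} over $\mathcal{L}_p$ and that it is \emph{constructive} over $\mathcal{L}_p$. Once both are in place, Theorem~\ref{FeasibleHarropForProp} delivers the feasible Visser-Harrop property verbatim, and the feasible disjunction property follows as the degenerate case $\Gamma=I=\varnothing$ of Definition~\ref{DfnFeasibleDPHarropProperty}.

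For strongness I would invoke Remark~\ref{RemarkRule} directly. Since $\LJ$ is by definition the calculus of Table~\ref{tableLJ}, it literally contains every axiom and rule of $\mathbf{LJ}$ as one of its own, so each such rule is feasibly provable in $\LJ$ by applying the rule itself. Hence, by Definition~\ref{dfnAdmissiblyStrong}, $\LJ$ is trivially strong over $\mathcal{L}_p$; no real argument is needed. For constructiveness I would appeal to the verification already carried out in the example following Table~\ref{table}, where it is observed that all rules of $\mathbf{LJ}$ are constructive. Concretely, against Definition~\ref{DefnegativeRules}: the initial sequents $(id)$, $(L\bot)$, $(R\top)$ are constructive axioms; the structural rules $(Lw)$, $(Rw)$, $(Lc)$ and the left rules $(L\wedge_1)$, $(L\wedge_2)$, $(L\vee)$, $(L\to)$ fit the left constructive form; the cut rule is left constructive because its cut formula is at once almost positive and constructive; and $(R\wedge)$, $(R\vee)$, $(R\to)$ are right constructive since the schematic antecedent atom is basic, the succedent atom is almost positive, and the introduced formulas $p\wedge q$, $p\vee q$, $p\to q$ are constructive. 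Because $\mathfrak{L}=\mathcal{L}_p$ admits no auxiliary modal rules in Definition~\ref{ConstructiveSystem}, this exhausts the rules to be checked.

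With both hypotheses confirmed, the conclusion is immediate from Theorem~\ref{FeasibleHarropForProp}. I do not anticipate any genuine obstacle: all of the substance has already been absorbed into Theorem~\ref{FeasibleHarropForProp} and, through it, into the translation and unit-propagation machinery of Section~\ref{SectionMain} together with the passage to the modal extension $G_m=G+\{K_\Box,K_\Diamond\}$ over the full language. The only care required at this final stage is the purely routine bookkeeping of matching each $\LJ$ rule to the clauses of Definition~\ref{DefnegativeRules}, which I would present tersely rather than expand.
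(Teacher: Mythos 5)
Your proposal is correct and takes essentially the same approach as the paper, whose proof likewise just observes that $\LJ$ is strong over $\mathcal{L}_p$ (by Definition \ref{dfnAdmissiblyStrong}) and clearly constructive, and then invokes Theorem \ref{FeasibleHarropForProp}. (One harmless slip in your bookkeeping: $(Rw)$ is a \emph{right} constructive rule, since its conclusion has a formula rather than the multiset variable $\Delta$ in the succedent, not a left constructive one—but it is constructive either way, so the verification goes through.)
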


\begin{proof}
By Definition \ref{dfnAdmissiblyStrong}, $\LJ$ is strong over $\mathcal{L}_p$. As it is clearly constructive, by Theorem \ref{FeasibleHarropForProp}, we get the result.
\end{proof}

\begin{cor}(Negative application) \label{NegAppProp}
Let $L \supsetneq \mathsf{IPC}$ be a logic over $\mathcal{L}_p$. Then, $L$ does not have a constructive sequent calculus.
\end{cor}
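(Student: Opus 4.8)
The plan is to derive Corollary \ref{NegAppProp} as a direct application of the negative machinery already assembled, mirroring the structure of Corollary \ref{NegAppSpecialMain} but now entirely within the propositional fragment $\mathcal{L}_p$. Suppose, for contradiction, that $L \supsetneq \mathsf{IPC}$ is a logic over $\mathcal{L}_p$ that \emph{does} have a constructive sequent calculus $G$. First I would upgrade $G$ to a strong constructive calculus for $L$: by Lemma \ref{LemAdStrong}, applied to the triple $(\mathcal{L}_p, \mathsf{IPC}, \mathbf{LJ})$, all the axioms and rules of $\mathbf{LJ}$ are admissible in $L$, so $H := G + \mathbf{LJ}$ is still a sequent calculus for $L$. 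By Remark \ref{RemarkRule}, $H$ is strong over $\mathcal{L}_p$, and it remains constructive since the rules of $\mathbf{LJ}$ are constructive and no $K_\Box$ or $K_\Diamond$ rule enters at the propositional level.

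Next I would invoke Theorem \ref{FeasibleHarropForProp}: any strong constructive sequent calculus over $\mathcal{L}_p$ has the feasible Visser-Harrop property. Applying this to $H$, and specializing to the case $\Gamma = I = \varnothing$ in Definition \ref{DfnFeasibleDPHarropProperty}, we obtain (after dropping the feasibility bookkeeping) that $L$ admits all Visser's rules in the sense of Definition \ref{Def: Visser Rules}; in particular $L$ has the disjunction property and is closed under every $V_n$. The logic $L$ is an intermediate logic because $\mathsf{IPC} \subseteq L$, and since $L \subsetneq \mathsf{CPC}$ is forced by the fact that $\mathsf{CPC}$ fails the disjunction property (hence cannot admit $V_0$), we have $\mathsf{IPC} \subseteq L \subseteq \mathsf{CPC}$, so $L$ qualifies as an intermediate logic in the sense of Definition \ref{DefIntermediateModalLogics}.

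Finally I would apply Theorem \ref{ThmRose}, which states that $\mathsf{IPC}$ is the \emph{only} intermediate logic admitting all Visser's rules. Since $L$ admits all Visser's rules and is intermediate, this forces $L = \mathsf{IPC}$, contradicting the hypothesis $L \supsetneq \mathsf{IPC}$. Hence no such constructive sequent calculus $G$ can exist, which is exactly the claim.

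I expect the only genuinely delicate point to be confirming that $L \subsetneq \mathsf{CPC}$ so that $L$ is legitimately intermediate and Theorem \ref{ThmRose} applies — but this is immediate once we know $L$ has the disjunction property, since $\mathsf{CPC} \vdash p \vee \neg p$ while neither disjunct is a theorem, so $\mathsf{CPC}$ violates $V_0$ and cannot coincide with $L$. Everything else is a routine chaining of results stated earlier in the paper: Lemma \ref{LemAdStrong} to gain strength, Theorem \ref{FeasibleHarropForProp} to gain admissibility of Visser's rules, and Theorem \ref{ThmRose} to collapse the logic to $\mathsf{IPC}$. The proof therefore needs no new constructions and should read as a short corollary-style argument, exactly parallel to the modal negative applications in the preceding sections.
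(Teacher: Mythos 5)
Your overall strategy coincides with the paper's, which proves this corollary by the same template as Corollary \ref{NegAppSpecialMain}: assume a constructive calculus $G$ for $L$, pass to $H = G + \mathbf{LJ}$ via Lemma \ref{LemAdStrong} (so that $H$ is a strong constructive calculus for the same logic, by Remark \ref{RemarkRule}), invoke Theorem \ref{FeasibleHarropForProp} to conclude that $L$ admits all Visser's rules, and then contradict Theorem \ref{ThmRose}. One small slip along the way: to extract admissibility of every $V_n$ from Definition \ref{DfnFeasibleDPHarropProperty} you must keep the index set $I$ arbitrary and set only $\Gamma = \varnothing$; specializing to $\Gamma = I = \varnothing$, as you write, yields only the disjunction property $V_0$.

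The genuine gap is the step where you certify that $L$ is an intermediate logic. From ``$L$ admits $V_0$'' and ``$\mathsf{CPC}$ does not'' you may conclude $L \neq \mathsf{CPC}$, but you assert that this \emph{forces} $L \subsetneq \mathsf{CPC}$ --- an inclusion, which simply does not follow from inequality; and without $L \subseteq \mathsf{CPC}$, Theorem \ref{ThmRose} does not apply. The inclusion needs a separate, standard argument: if $L$ is \emph{consistent}, then $L \subseteq \mathsf{CPC}$, because any $\phi \in L$ that is classically refutable has a substitution instance $\sigma(\phi) \in L$ (substitute $\top/\bot$ for the atoms according to a falsifying valuation; logics are closed under substitution), and a $\{\top,\bot\}$-formula that is classically false is $\mathsf{IPC}$-provably equivalent to $\bot$, so $\bot \in L$. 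The inconsistent case, however, cannot be repaired by your method at all: the inconsistent logic properly extends $\mathsf{IPC}$, trivially admits all Visser's rules (so no contradiction with Theorem \ref{ThmRose}, which speaks only of intermediate logics, can arise), and it even possesses a constructive sequent calculus, namely $\mathbf{LJ}$ extended by the constructive axiom $\Gamma \Rightarrow \bot$ (recall $\bot$ is basic, hence constructive). So your proof is sound only under the additional hypothesis that $L$ is consistent, equivalently intermediate --- which is the reading the paper intends (see the sentence following the corollary) --- and your DP-based justification of intermediacy should be replaced by the consistency argument above.
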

\begin{proof}
The proof is similar to the proof of Corollary \ref{NegAppSpecialMain}.
\end{proof}

Characterizing $\mathsf{IPC}$ by the form of its sequent calculus, Corollary \ref{NegAppProp} shows that it is the only intermediate logic with a constructive sequent calculus. Moreover, as the constructive rules follow the constructive heuristics, one may read Corollary \ref{NegAppProp} as a justification that $\mathsf{IPC}$ is the only intermediate logic that is constructively acceptable.

\section{Conclusion and Future Work}
Over the modal language $\mathcal{L}=\{\wedge, \vee, \to, \top, \bot, \Box, \Diamond\}$ and its fragments, we introduced a family of sequent-style rules called the constructive rules. The main motivation was to capture the constructively valid axioms and rules over the language $\mathcal{L}$. We managed to accomplish this goal by allowing formulas in which disjunction, diamond and implication appear in a restricted form. Then, we proved that for any sequent calculus $G$ consisting of these constructive rules and possibly the rules $(K_{\Box})$ and $(K_\Diamond)$, if $G$ is either $T$-free or $T$-full, then $G$ has the feasible Visser-Harrop property, which is a generalization of the feasible version of the admissibility of Visser's rules. Here, $T$-freeness (resp. $T$-fullness) of either a logic or a proof system is a mild technical condition that essentially states that the logic or the system is strong enough while it is valid in an irreflexive (resp. a reflexive) one node Kripke frame.
Using this machinery, we first showed that the sequent calculi for various intuitionistic modal logics enjoy the feasible Visser-Harrop property. The generality of our constructive rules, then, was witnessed by the fact that the main result is applicable to the sequent calculi for several well-known intuitionistic modal logics. Second, we used the theorem to show that if a $T$-free or a $T$-full logic does not admit Visser's rules, it cannot have a sequent calculus consisting of constructive rules and the rules $(K_{\Box})$ and $(K_{\Diamond})$. 

For the future work, it is important to emphasize that the machinery provided here is quite general and is not restricted to the modal language. Consequently, the next natural step is to generalize the constructive rules from the modal language to more complex languages, specially the first-order language. Doing so, the technique then, can be used as a mathematical tool to prove a low complexity version of the disjunction and the existence property in constructive theories. Moreover, on the philosophical level, it can also provide a general form for the constructively acceptable rules in a more complex settings of arithmetical and set-theoretical languages.
As another possible expansion of the present study, it is also interesting to see how the form of constructive rules can be relaxed to capture the disjunction property rather than the full Visser's rules. As there are many intermediate logics
with the disjunction property, such an investigation can be interesting.

\bibliographystyle{plain}
\bibliography{newbib}

\end{document}